\newtheorem{theorem}{Theorem}[section]
\newtheorem{lemma}{Lemma}
\newtheorem{proposition}[lemma]{Proposition}
\newtheorem{definition}[lemma]{Definition}
\numberwithin{lemma}{section}
\numberwithin{equation}{section}
\newcommand{\R}{{\mathbb R}}
\renewcommand{\H}{{\mathcal H }}
\newcommand{\nP}{{\mathbf P}}
\newcommand{\Half}{{\frac{1}{2}}}
\newcommand{\CalAO}{{\mathcal{A}_1}}
\newcommand{\CalAT}{{\mathcal{A}_{\frac{3}{2}}}}
\newcommand{\CalAZ}{{\mathcal{A}_0}}
\newcommand{\maH}{{\mathcal H} }
\newcommand{\W}{{\mathbf W}}
\author{Lizhe Wan}
\address{Department of Mathematics, University of Wisconsin - Madison}
\email{lwan33@wisc.edu}
\keywords{water waves, holomorphic coordinates,  modified energy estimate.}
\subjclass[2020]{76B15, 35Q31}
\begin{document}

\title{Low regularity well-posedness for two-dimensional deep water waves}

\begin{abstract}
The study of gravity-capillary water waves in two space dimensions has been an important question in mathematical fluid dynamics.
By implementing  the \emph{cubic modified energy method} of Ifrim-Tataru in the context of gravity-capillary waves, we show that for $s> 1$, the two-dimensional gravity-capillary water wave system is locally well-posed in $\mathcal{H}^{s}$.
\end{abstract}

\maketitle

\tableofcontents

\section{Introduction}
In this article, we consider the local well-posedness of  two-dimensional  inviscid incompressible gravity-capillary water waves in infinite depth. 
Mathematically, the water waves can be described using the incompressible Euler's equations with free boundary conditions on the upper surface. 

We  start by briefly discussing the free boundary Euler equations and the Zakharov-Craig-Sulem formulation for the system. 
Let the water domain at time $t$ be $\Omega_t$, and the boundary at time $t$ be $\Gamma_t$. 
The boundary $\Gamma_t$ is assumed to be asymptotically flat at infinity or periodic with zero mean. 
The fluid velocity is denoted by $\mathbf{u}(t, x,y)$, and the pressure is denoted by $p(t,x,y)$. 
Then $(\mathbf{u}, p)$ solve the following Euler equations inside $\Omega_t$, 
\begin{equation*}
\left\{
\begin{aligned}
& \mathbf{u}_t + \mathbf{u} \cdot \nabla \mathbf{u} = \nabla p - g \mathbf{e}_2 \\
& \text{div } \mathbf{u} = 0\\
& \mathbf{u}(0,x) = \mathbf{u}_0,
\end{aligned}
\right.
\end{equation*}
while on the boundary we have the kinematic boundary condition 
\begin{equation*}
\partial_t+ \mathbf{u} \cdot \nabla \text{ is tangent to } \bigcup \Gamma_t,
\end{equation*}
and also the dynamic boundary condition
\begin{equation*}
 p = -2 \sigma \mathbb{H}  \ \ \text{ on } \Gamma_t.
\end{equation*}
Here, $\mathbb{H}$ is the mean curvature of the boundary, $g\geq 0$ is the gravitational constant, and $\sigma>0$ is the coefficient of capillary force.

For the two-dimensional fluid, the vorticity $\omega$ satisfies the transport equation $(\partial_t+ \mathbf{u} \cdot \nabla)\omega = 0$, 
and it will remain zero if it is  zero initially.
One can thus assume  the irrotationality condition of the fluid: 
\begin{equation*}
\omega  = \text{curl } \mathbf{u} = 0.   
\end{equation*}
The velocity field $\mathbf{u}$ can be expressed as the gradient of some real-valued potential $\phi$:  $\mathbf{u} = \nabla \phi$, where the velocity potential $\phi$ is a harmonic function that either decays at infinity or is periodic with zero mean.
Therefore, at each time $t$, $\phi$ solves the Laplace equation inside $\Omega_t$.
By the theory of elliptic PDEs, $\phi(t,x,y)$ is determined by its trace on the free boundary $\Gamma_t$. 
Following the work of Zakharov \cite{zakharov1968stability}, and Craig-Sulem \cite{MR1158383}, let $\eta(t,x)$ be the height of the water surface, and $\psi =\psi (t,x)$  be the trace of the velocity potential $\phi$ on the boundary,
$\psi (t,x)=\phi (t,x,\eta(t,x))$. 
Then, the water waves can be formulated in terms of a one-dimensional evolution of  the pair of unknowns $(\eta,\psi)$; it solves the nonlinear system
\begin{equation*}
\left\{
\begin{aligned}
& \partial_t \eta - G(\eta) \psi = 0 \\
& \partial_t \psi +g\eta -\sigma \mathbb{H}(\eta)  +\frac12 |\nabla \psi|^2 - \frac12 
\frac{(\nabla \eta \cdot \nabla \psi +G(\eta) \psi)^2}{1 +|\nabla \eta|^2} = 0,
\end{aligned}
\right.
\end{equation*}
where $G(\eta)$ is the Dirichlet-Neumann operator
\begin{equation*}
    G(\eta)\psi : = (-\phi_x \eta_x + \phi_y )|_{y = \eta(x)}.
\end{equation*}
This is the Zakharov-Craig-Sulem formulation of the gravity-capillary  water waves, which we write it for convenience but never used it throughout the paper.

\subsection{Water waves in holomorphic coordinates}
In the rest of this article, instead of the Zakharov-Craig-Sulem formulation, we will use the holomorphic coordinates introduced first  by Hunter-Ifrim-Tataru \cite{MR3535894} to study water waves.
The holomorphic coordinates are extensively applied to various water waves models by Ifrim and Tataru, sometimes jointly with collaborators in the following works \cite{MR3667289,MR3499085, MR3625189,MR4483135, ai2023dimensional, MR3869381}.
Let $\mathbf{P}$ be the projection onto negative frequencies, namely
\begin{equation*}
    \mathbf{P} := \frac{1}{2}(\mathbf{I} - iH),
\end{equation*}
with $H$ being the Hilbert transform.
Holomorphic functions are defined to be the complex-valued functions whose Fourier transforms are supported on $(-\infty,0]$.
In other words, they satisfy the relation $\nP f = f$.
Similarly, we define $\bar{\nP}$ to be the projection onto positive frequencies:
\begin{equation*}
    \bar{\nP} := \frac{1}{2}(\mathbf{I} + iH) = \mathbf{I} -\nP.
\end{equation*}
Complexed-valued functions such that $\bar{\nP} f = f$ are called anti-holomorphic functions. 
Anti-holomorphic functions are complex conjugates of holomorphic functions.
We will write $\alpha$ in holomorphic coordinates for the holomorphic spacial variable.

Let $W$ be the holomorphic position and  $Q$ be the holomorphic velocity potential.
The water wave system can be reexpressed in terms of pair of unknowns $(W,Q)$. 
The derivation of the gravity-capillary water wave system in holomorphic coordinates can be found in the paper Ifrim-Tataru \cite{MR3667289} and the appendix of Hunter-Ifrim-Tataru \cite{MR3535894}. 
The water wave system is given by
\begin{equation}\label{e:CWW}
\left\{
\begin{aligned}
& W_t + F (1+W_\alpha) = 0 \\
& Q_t + F Q_\alpha -ig W + \nP\left[ \frac{|Q_\alpha|^2}{J}\right] +i\sigma \nP\left[ \frac{W_{\alpha \alpha}}{J^{\Half}(1+W_{\alpha})}-\frac{\bar{W}_{\alpha \alpha}}{J^{\Half}(1+\bar{W}_{\alpha})}\right]  = 0,\\
\end{aligned} 
\right.
\end{equation}
where $J := |1+W_\alpha|^2$ is the Jacobian, and
\begin{equation*}
 F = \nP\left[\frac{Q_\alpha - \bar Q_\alpha}{J}\right].
 \end{equation*}

The system \eqref{e:CWW} has a conserved energy
\begin{equation*}
\mathcal{E}(W, Q)=  \Re\int Q\bar{Q}_{\alpha}
+2\sigma (J^{\frac{1}{2}} - 1 -\Re W_\alpha) + g|W|^2(1+W_\alpha) \, d\alpha.
\end{equation*}
It also has a conserved horizontal momentum
\begin{equation*}
    \mathcal{P}(W,Q) = -i\int Q\bar{W}_\alpha-\bar{Q}W_\alpha \,d\alpha.
\end{equation*}
The linearization of \eqref{e:CWW} around the zero solution is
\begin{equation}
\left\{
             \begin{array}{lr}
             w_t + q_\alpha = 0 &  \\
             q_t-igw + i\sigma w_{\alpha\alpha} =0,&  
             \end{array}
\right.\label{e:ZeroLinear}
\end{equation}
restricted to holomorphic functions.
\eqref{e:ZeroLinear} can be written as a linear dispersive equation
\begin{equation*}
    w_{tt}+igw_\alpha- i\sigma w_{\alpha\alpha \alpha} =0.
\end{equation*}
Its dispersion relation is 
\begin{equation*}
    \tau^2 + g\xi + \sigma\xi^3 =0, \quad \xi\leq 0.
\end{equation*}
This simplified model suggests that the water waves \eqref{e:CWW} is a nonlinear dispersive system.

The conserved energy of \eqref{e:ZeroLinear} is given by
\begin{equation}
    \mathcal{E}_0(w,q) = \int \sigma|w_\alpha|^2 - iq\bar{q}_\alpha\, d\alpha = \sigma\|w\|_{\dot{H}^1}^2 + \|q\|_{\dot{H}^{\frac{1}{2}}}^2. \label{ConservedEnergy}
\end{equation}
The system \eqref{e:ZeroLinear} is well-posed in $ \dot{H}^1 \times \dot{H}^{\frac{1}{2}}$ space.
To study the regularity of nonlinear problem, for appropriate choices of $s$, we will then use the non-homogeneous product spaces $\mathcal{H}^{s}$ endowed with the norm
\begin{equation*}
    \| (w,q)\|_{\mathcal{H}^{s}} := \|(w,q) \|^2_{ H^{s+\frac{1}{2}}\times H^s}, \quad s\in \mathbb{R}.
\end{equation*}

Going back to the full water wave  system \eqref{e:CWW}, it is a nonlinear degenerate hyperbolic system.
By differentiation, it can be diagonalized and converted into a quasilinear system.
Following the setup in \cite{MR3667289}, we define the differentiated holomorphic unknown functions
\begin{equation*}
(\W, R): = \left(W_\alpha, \frac{Q_\alpha}{1+W_\alpha}\right).
\end{equation*}
The holomorphic function $R$ above has an intrinsic meaning; it is the complex velocity restricted on the water surface.
We also need to define three real-valued auxiliary functions.
The first one is the {\em frequency shift} $a$  given by
\begin{equation*}
a := i\left(\bar \nP \left[\bar{R} R_\alpha\right]- \nP\left[R\bar{R}_\alpha\right]\right).
\end{equation*}
The second one is the {\em advection velocity} $b$  given by 
\begin{equation*}
b := 2 \Re \nP\left[ \frac{Q_\alpha}{J}\right].
\end{equation*}
The third  auxiliary function $M$ is given by
\begin{equation} \label{DefM}
M :=  \frac{R_\alpha}{1+\bar \W}  + \frac{\bar R_\alpha}{1+ \W} -  b_\alpha =
\bar \nP [\bar R Y_\alpha- R_\alpha \bar Y]  + \nP[R \bar Y_\alpha - \bar R_\alpha Y].
\end{equation}
Here we use the notation $Y:=\dfrac{W_{\alpha}}{1+W_{\alpha}}$. 
Then, using above notations, the pair $(\W,R)$ diagonalizes the differentiated
system, and the differentiated water wave system can be written as
\begin{equation} \label{e:WR}
\left\{
\begin{aligned}
 & \W_{ t} + b \W_{ \alpha} + \frac{(1+\W) R_\alpha}{1+\bar \W}   =  (1+\W)M\\
& R_t + bR_\alpha  -i\frac{g\W - a}{1+\W} + \frac{ i\sigma}{1+\W}\nP\left[ \frac{\W_{ \alpha}}{J^\Half(1+\W)}\right]_{\alpha}  =  \frac{ i\sigma}{1+\W} \nP \left[\frac{\bar{\W}_{ \alpha}}{J^{\Half}(1+\bar{\W})}\right]_{\alpha}.
\end{aligned}
\right.
\end{equation}
In the following, we will be mostly working with the differentiated system \eqref{e:WR}.

The system \eqref{e:WR} does not have a scaling that keeps both the parameters $g$ and $\sigma$ unchanged. 
The scaling
\begin{equation}
\left(\W(t,\alpha), R(t,\alpha)\right) \to \left( \W(\lambda^\frac32 t,\lambda \alpha), 
\lambda^{\frac12} R(\lambda^\frac32 t,\lambda \alpha)\right)  \label{Scaling}
\end{equation}
keeps $\sigma$, but $g$ becomes $\frac{g}{\lambda^{2}}$.
This scaling suggests that the critical space of \eqref{e:WR} is $\dot{H}^{\Half}\times L^2$, and one may hope to prove the well-posedness for \eqref{e:WR} in $\mathcal{H}^s$ for $s>0$.
In the rest of this article, we will write $\sigma = 1$  for simplicity.

\subsection{Some historical results on 2D capillary water waves}
The literature of the two-dimensional capillary water waves is numerous.
Here we only mention some of the results on the local well-posedness.
Beginning with the works of Nalimov \cite{MR0609882} and Yosihara \cite{MR0660822, MR0728155} for small smooth data, there are also many results including the work of Agrawal \cite{MR4244258, MR4684336}, Ambrose-Masmoudi \cite{MR2162781}, Beyer-Gunther \cite{MR1637554},  Coutand-Shkoller \cite{MR2291920}, Iguchi \cite{MR1865389}, Lannes \cite{MR3060183},  Ming-Zhang \cite{MR2558419} and Shatah-Zeng \cite{MR2763036}.

Let us mention the work of Alazard-Burq-Zuily \cite{MR2805065} in the local well-posedness.
They worked in Zakharov-Craig-Sulem formulation, and used paradifferential calculus to reduce the water wave system to the following form:
\begin{equation*}
    \partial_t u + T_V \nabla u + iT_\gamma u = f.
\end{equation*}
As a result, they proved that  \eqref{e:WR} is locally well-posed  in $\H^s$, for $s> \frac{3}{2}$.

De Poyferr\'{e} and Nguyen followed the strategy in \cite{MR2805065} and derive a paradifferential reduction for water waves in \cite{MR3770970}.
They combine the energy estimate and the Strichartz estimate to show that \eqref{e:WR} is locally well-posed  in $\H^s(\mathbb{R})$, for $s> \frac{27}{20}$ in \cite{MR3487264}.

Nguyen in \cite{MR3724757} further used  an improved para-composition together with the energy and Strichartz estimates to show the following well-posedness result. 
\begin{theorem} \label{t:Previous}
The system \eqref{e:WR} is locally well-posed in $\H^s(\mathbb{R})$, for $s> \frac{5}{4}$.
\end{theorem}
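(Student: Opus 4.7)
The plan is to follow the approach pioneered by Alazard-Burq-Zuily and sharpened by De Poyferr\'e-Nguyen: reduce \eqref{e:WR} to a scalar paradifferential evolution, close an energy estimate that is degraded by $3/2$ derivatives of the coefficients, and recover the missing regularity down to $s>5/4$ via a Strichartz estimate that gains $1/4$ derivative. The improvement over the threshold $s>27/20$ in \cite{MR3487264} will come from replacing the crude change of variables used to prove Strichartz with a refined paracomposition that more faithfully preserves regularity.

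First I would perform a paradifferential symmetrization of \eqref{e:WR}. Introducing a good unknown $u$ of Alinhac type built from $(\W,R)$, one rewrites the diagonalized system as
\begin{equation*}
\partial_t u + T_V \partial_\alpha u + i T_\gamma u = f,
\end{equation*}
where $V$ is a real advection velocity close to $b$, $\gamma$ is a real elliptic symbol of order $3/2$ encoding surface tension and the geometry of the free surface, and $f$ is of strictly lower order. Standard paradifferential calculus then produces an energy $E^s(u)\simeq\|u\|_{H^s}^2$ satisfying
\begin{equation*}
\frac{d}{dt} E^s(u) \lesssim \Phi\bigl(\|(\W,R)\|_{W^{1,\infty}}\bigr)\, E^s(u).
\end{equation*}
Closing this by Sobolev embedding alone would require $s>3/2$, so the remaining task is to control $\|(\W,R)\|_{W^{1,\infty}}$ by a Strichartz norm at lower regularity.

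The Strichartz estimate I would prove is, schematically,
\begin{equation*}
\|u\|_{L^4_t W^{s-\frac{1}{4}-\epsilon,\infty}_\alpha} \lesssim \|u_0\|_{H^s} + T^\theta\Phi(\|u\|_{L^\infty_t H^s}),
\end{equation*}
i.e.\ the full $1/4$-derivative gain corresponding to the dispersion $\tau^2\sim|\xi|^3$. Combined with the energy estimate it closes a bootstrap precisely when $s>5/4$. Existence, uniqueness, and continuous dependence then follow from a Bona-Smith scheme applied to a regularized version of \eqref{e:WR}, with the difference of two solutions handled by the same energy-plus-Strichartz framework, absorbing the usual one-derivative loss by working in a weaker topology for the difference.

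The hard part will be the Strichartz estimate itself. A naive freezing of coefficients produces an unacceptable loss, so instead I would flatten the characteristic flow of $T_V$ by a paracomposition $\chi_*$, which formally conjugates away $T_V\partial_\alpha$ but distorts $T_\gamma$. The delicate point is to develop a symbolic calculus for $\chi_* T_\gamma \chi_*^{-1}$ whose remainders are controlled by the Strichartz norm itself, and then to run a WKB or $TT^*$ argument on frequency-dependent time intervals of length $\sim 2^{-j/2}$ without sacrificing the gain. This is exactly where Nguyen's refined paracomposition pushes the threshold from $27/20$ down to $5/4$.
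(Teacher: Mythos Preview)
This theorem is not proved in the present paper at all: it is stated as a citation of Nguyen's prior result \cite{MR3724757}, included only to contextualize the paper's own contribution (Theorem~\ref{t:MainWellPosed}, which lowers the threshold to $s>1$ by a completely different, purely energy-based method). There is therefore no ``paper's own proof'' of Theorem~\ref{t:Previous} to compare your proposal against.

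That said, your sketch is an accurate summary of the strategy the paper attributes to Nguyen: paradifferential reduction to $\partial_t u + T_V\nabla u + iT_\gamma u = f$, a tame energy estimate controlled by a $C^{3/2}$-type norm, and a Strichartz estimate with the full $1/4$-derivative gain, with the refined paracomposition being the ingredient that pushes the threshold from $27/20$ down to $5/4$. As a high-level outline of \cite{MR3724757} this is correct. But note that the present paper's own arguments do not use Strichartz estimates or paracomposition at all; its improvement to $s>1$ comes instead from a balanced cubic modified-energy method that replaces the $\mathcal{A}_{3/2}$ control norm by $\mathcal{A}_1$. So if your intent was to reconstruct what \emph{this} paper does, the proposal is off-target; if your intent was to reconstruct Nguyen's argument, it is a reasonable sketch but would need the full machinery of \cite{MR3724757} to be made rigorous.
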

To the author's knowledge, this is the lowest regularity result before.
See also the  works of Alazard-Burq-Zuily \cite{MR2931520}, de Poyferr\'{e}-Nguyen \cite{MR3487264}, Nguyen \cite{MR3864396} and Ai \cite{ai2023improved} for Strichartz estimate and its application in the local well-posedness of water waves.

The global well-posedness of two-dimensional gravity-capillary water waves is an open question.
In the pure capillary case, it was shown that the solution is globally well-posed for small localized initial data by Ifrim-Tataru \cite{MR3667289}. Shortly after  Ionescu-Pusateri \cite{MR3862598} provided an alternative proof of a similar result.
In the periodic setting, Berti and Delort showed in \cite{MR3839269} for almost all gravity-capillarity parameters, even, small and smooth enough initial data produces almost globally defined in time solutions.
Then Berti-Feola-Franzoi \cite{MR4246389} showed that the periodic gravity-capillary water waves have quadratic lifespans.

The use of holomorphic coordinates for two-dimensional water waves  originates in the early work on traveling waves \cite{MR1512238}.
Since then, it has been widely used to study a variety of water wave problems including \cite{MR3535894, MR3499085, MR3625189,  ai2023dimensional, MR4483135, MR4462478, rowan2023dimensional, rowan2024, wan2023low}.
In this article, the main system \eqref{e:WR} was first introduced in \cite{MR3667289} by Ifrim and Tataru.
The authors of \cite{MR3667289} not only show that the solution of pure capillary water waves has a cubic lifespan but also prove that small localized initial data of \eqref{e:CWW} leads to global solutions. 

One of the main ingredients in the proof of \cite{MR3667289} is the modified energy estimate.
The authors in  \cite{MR3667289} used the normal form transformation to construct the modified  energy that enables them to establish the modified energy estimate. However the main inspiration in this paper comes from the work of Ai, Ifrim and Tatatru \cite{ai2023dimensional} on pure gravity waves where they developed a new, better class of estimates called \emph{balanced  cubic energy estimates} relying on paradifferential form of the equations.  
In this article,  following \cite{MR3667289}, we will compute the normal form transformation at the paradifferential level for \eqref{e:WR}, to obtain the corresponding paradifferential energy estimate for the  capillary water wave system.

\subsection{The main results}
In this article, we use the paradifferential modified energy estimate to improve the well-posedness result Theorem \ref{t:Previous}.

To state the energy estimate result, we  define the control norm that will be used in the estimates.
Let $\epsilon>0$ be an arbitrarily small positive constant.
\begin{align*}
\CalAZ : = \|\W \|_{ C^{\epsilon}_{*}}+ \|R \|_{ C^{\epsilon}_{*}}, \quad 
\CalAO : = \|\W \|_{ C^{1+\epsilon}_{*}}+\|R\|_{C_{*}^{\frac{1}{2}+\epsilon}}.
\end{align*}
Here $C^s_{*}$ is the Zygmund space that will be defined in Section \ref{s:Norms}. 
For reference, we also define a larger control norm
\begin{equation*}
\CalAT : =  \|\W \|_{C_{*}^{\frac{3}{2}}} + \|R \|_{C_{*}^{1+\epsilon}}.
\end{equation*}
The subscript $s$ of control norm $\mathcal{A}_s$ represents roughly the difference in terms of derivatives between the
control norm and the scaling. 

\begin{theorem} \label{t:MainEnergyEstimate}
 For any $s>1$, there exists an energy functional $E_s$ which has the following properties:
 \begin{enumerate}
     \item Norm equivalence:
       \begin{equation}
           E_s(\mathbf{W},R) \approx_\CalAZ \|(\W,  R)\|^2_{\mathcal{H}^s}.\label{normEquivalence}
       \end{equation}
     \item Modified energy estimate:
      \begin{equation}
     \frac{d}{dt}E_s\lesssim_\CalAZ (1+\mathcal{A}_1^2) E_s.\label{strongcubicest}
 \end{equation} 
 \end{enumerate} 
\end{theorem}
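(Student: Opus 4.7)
The plan is to implement, at the paradifferential level, the cubic modified energy method of Ifrim--Tataru \cite{MR3667289} in the spirit of the balanced paradifferential energies of Ai--Ifrim--Tataru \cite{ai2023dimensional}. The modified energy $E_s$ will be built in two pieces, $E_s = E_s^{(2)} + E_s^{(3)}$, where $E_s^{(2)}$ is a quadratic energy modeled on the linearized energy \eqref{ConservedEnergy} lifted to level $\H^s$, and $E_s^{(3)}$ is a cubic normal-form correction designed to cancel the non-perturbative cubic errors generated by $\tfrac{d}{dt} E_s^{(2)}$.

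First, I would put \eqref{e:WR} in paradifferential form. Using the auxiliary quantities $Y$, $J$, $a$, $b$, $M$ already introduced, each coefficient is a smooth nonlinear function of $(\W,\bar\W,R,\bar R)$, and its $C^{1+\epsilon}_{*}$ norm is controlled by $\CalAO$ while its $C^{\epsilon}_{*}$ norm is controlled by $\CalAZ$. Expanding products via paradifferential calculus (paraproducts plus balanced remainders) yields a system of the schematic form
\[
\partial_t \W + T_b \partial_\alpha \W + T_{(1+\W)/(1+\bar\W)}\partial_\alpha R = \text{(cubic, controlled)},
\]
\[
\partial_t R + T_b \partial_\alpha R + i T_{1/(1+\W)}\nP \partial_\alpha\bigl[ T_{1/(J^{1/2}(1+\W))}\partial_\alpha \W\bigr] + \text{l.o.t.} = \text{(cubic, controlled)}.
\]
The remainder terms here have two derivatives absorbed by $\CalAO$-type factors, so they are admissible on the right-hand side of \eqref{strongcubicest}. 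Next, I would diagonalize this paradifferential system by introducing rescaled variables $(\mathbf{w},\mathbf{r})$ obtained from $(\W,R)$ by paradifferential multiplication by fractional powers of $J$ and $1+\W$, chosen so that the leading transport is pure $T_b\partial_\alpha$ and the dispersive block becomes skew-adjoint at the principal symbol. The quadratic energy is then
\[
E_s^{(2)} := \|\langle D\rangle^s \mathbf{w}\|_{L^2}^2 + \|\langle D\rangle^{s-1/2} \mathbf{r}\|_{L^2}^2,
\]
and the inverse paradifferential change of variables, together with the Moser-type estimates for $J^{1/2}$ and $(1+\W)^{-1}$, provides the equivalence $E_s^{(2)}\approx_{\CalAZ} \|(\W,R)\|_{\H^s}^2$.

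Now I would differentiate $E_s^{(2)}$ in time using the paradifferential equations. Skew-adjointness of the leading symbols kills the main quadratic exchange, and integration by parts against $T_b$ produces a commutator bounded by $\|b_\alpha\|_{L^\infty} E_s^{(2)}$, which is absorbed into $\CalAO\, E_s^{(2)}$ via the formula for $M$ and the fact that $\|b_\alpha\|_{L^\infty}\lesssim \CalAO$. What remains are cubic terms, which I would sort by frequency interaction: high--high$\to$low and balanced high--high$\to$high terms are directly estimated by $(1+\CalAO^2) E_s^{(2)}$ using only $\CalAZ$-boundedness of coefficients and Littlewood--Paley square-function bounds. The obstructive contributions are the low--high interactions where one derivative falls on the low factor and the other two factors carry the top regularity; on these, a crude bound would cost $\CalAT$, not $\CalAO$.

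The cubic correction $E_s^{(3)}$ is designed precisely to cancel these obstructive contributions. For each such trilinear interaction I would build a translation-invariant trilinear paradifferential form whose time derivative, using only the leading (dispersive + transport) part of the equations, reproduces the obstruction with the opposite sign. The coefficient of this trilinear form is a symbol whose denominator is the two-wave resonance function $\tau_1+\tau_2-\tau_3$ associated to the dispersion relation $\tau^2=-g\xi - \xi^3$. The crucial point, already used in \cite{MR3667289}, is that on the holomorphic projection $\xi\le 0$, and in the frequency region of the dangerous interactions, this resonance function is bounded below by a positive power of the high frequency, so the normal-form divisor yields a symbol of negative order and the resulting $E_s^{(3)}$ is bounded by $\CalAZ \|(\W,R)\|_{\H^s}^2$ (hence preserves the norm equivalence \eqref{normEquivalence}), while $\frac{d}{dt}E_s^{(3)}$ produces quartic terms all estimable by $(1+\CalAO^2) E_s^{(2)}$.

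The main obstacle is exactly this last step: organizing the trilinear normal form so that the divisor's symbol lives in the right symbol class at the paradifferential level, so that both $E_s^{(3)}$ is controlled by $\CalAZ E_s^{(2)}$ and its time derivative only uses $\CalAO$-level control. Because the capillary dispersion has order $3/2$ (versus order $1/2$ for pure gravity), the symbolic bookkeeping of the resonance denominator is considerably heavier than in \cite{ai2023dimensional}, and the delicate point is ensuring that commutators between the normal-form multiplier and the dispersive principal symbol contribute only balanced cubic terms controlled at the $\CalAO$ level rather than $\CalAT$. Once this is done, \eqref{normEquivalence} follows by combining the paradifferential diagonalization with the $\CalAZ$-smallness bound on $E_s^{(3)}$, and \eqref{strongcubicest} follows by summing the two time-derivative computations.
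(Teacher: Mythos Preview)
Your overall architecture --- paradifferential reduction, quadratic energy with $\CalAZ$-dependent weights, then a cubic normal-form correction to kill the unbalanced low--high terms --- is the right one and matches the paper's strategy. However, there are two genuine gaps in the proposal that would cause it to fail as written.

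\textbf{First, the balanced (high--high) cubic terms are not directly perturbative.} You assert that ``high--high$\to$low and balanced high--high$\to$high terms are directly estimated by $(1+\CalAO^2)E_s^{(2)}$'', but this is false. The source $\mathcal{K}$ contains balanced terms such as $T_{J^{-1/2}(1-Y)^3}\Pi(\W,\W_{\alpha\alpha})$; pairing this with $\langle D\rangle^s \bar R$ in the energy costs roughly $\sum_\lambda \lambda^{1/2-\epsilon}\,\CalAO\,\|(\W,R)\|_{\H^s}^2$, which diverges. The paper handles this not by direct estimation but by computing \emph{balanced} quadratic normal forms $(\W_{[2]}^{hh},R_{[2]}^{hh})$ (with explicit bilinear symbols solving a $4\times4$ algebraic system) and replacing $(\W,R)$ by $(\W+\W_{[2]}^{hh},R+R_{[2]}^{hh})$ inside the quadratic energy. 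These balanced corrections are bounded in $\H^s$ (the symbols are order zero on the $\chi_2$ support), so they preserve norm equivalence while removing the balanced cubic sources. Your cubic energy $E_s^{(3)}$, built only to cancel low--high interactions, leaves these balanced terms uncontrolled.

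\textbf{Second, the quartic terms are not all perturbative.} You claim that $\tfrac{d}{dt}E_s^{(3)}$ ``produces quartic terms all estimable by $(1+\CalAO^2)E_s^{(2)}$''. In fact, when the time derivative hits the para-coefficients in the cubic corrections (producing an extra $T_{R_\alpha}$ or $T_{\bar R_\alpha}$), or when the sub-leading transport piece $T_b\partial_\alpha$ acts on a high-frequency factor, one obtains quartic integrals that are exactly $\tfrac12$ a derivative too strong to close with $\CalAO$. The paper therefore constructs a further \emph{quartic} energy correction $E_s^{(4)}$, solving an $8\times8$ system for quadrilinear symbols. The four-wave resonance function $|\xi_1|^{3/2}\pm|\xi_2|^{3/2}\pm|\xi_3|^{3/2}\pm|\xi_4|^{3/2}$ does not vanish in the relevant frequency configuration $|\xi_1|\ll|\xi_3|\ll|\xi_2|\approx|\xi_4|$, so these symbols are bounded, but the correction is genuinely needed; the argument does not close at the cubic level.

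A smaller point: $\|b_\alpha\|_{L^\infty}\lesssim\CalAO$ is false, since $b_\alpha$ carries $R_\alpha$ at leading order and only $\|b\|_{C^{1/2}_*}\lesssim_{\CalAZ}\CalAO$ holds. The transport commutator has to be estimated by shifting half a derivative via this $C^{1/2}_*$ bound, not via an $L^\infty$ bound on $b_\alpha$.
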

For comparison, in \cite{MR3770970}, the energy estimate is the tame energy estimate
\begin{equation*}
    \frac{d}{dt}\|(\W, R) \|^2_{\mathcal{H}^s} \lesssim_\CalAO \left(1+\CalAT \right) \|(\W, R) \|^2_{\mathcal{H}^s}. 
\end{equation*} 
It involves a larger control norm $\CalAT$.
As a consequence, a higher regularity well-posedness result is proved in \cite{MR3724757}.

To advance the study of longtime properties of the flow Ifrim and Tataru \cite{MR3667289} proved a better cubic modified energy estimate of the form
\begin{equation*}
    \frac{d}{dt}\|(\W, R) \|^2_{\mathcal{H}^s} \lesssim_\CalAO \mathcal{A}_0\CalAT \|(\W, R) \|^2_{\mathcal{H}^s},
\end{equation*} 
also with the $\epsilon$'s removed from the above definition of the control norms $\mathcal{A}_j$'s. Following the idea of \emph{balanced cubic energy estimates} of Ai, Ifrim and Tataru \cite{ai2023dimensional} in our estimate \eqref{strongcubicest} we balance better the two control norms in the last estimate.

For the linearized system, one can also prove a similar linearized modified energy estimate.
This will be discussed in detail in Section \ref{s:LinearEstimate}.

Having established the energy estimates for the full system and the linearized system, we can get the following local well-posedness result:
\begin{theorem} \label{t:MainWellPosed}
Let $s> 1$, the system \eqref{e:WR} is locally well-posed for initial data $(\W_0, R_0 )$ in $\H^s(\R)$  (or $\mathbb{T}$) such that $\mathcal{A}_0(\W_0, R_0)$ is small.
Furthermore, the solution can be continued for as long as $\mathcal{A}_0$ remains bounded and $(1+\mathcal{A}_1^2) \in L^1_t$.
\end{theorem}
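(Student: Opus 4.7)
The plan is to combine the cubic a priori energy estimate \eqref{strongcubicest} with the linearized modified energy estimate (stated to be proved in Section \ref{s:LinearEstimate}) via a Bona--Smith style approximation scheme. The four core steps are regularization of the data, uniform a priori bounds, passage to the limit in a weaker norm, and the upgrade to strong $\mathcal{H}^s$ continuity together with the continuation criterion.

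\textbf{Regularization and uniform bounds.} First I would Fourier-truncate the initial datum at frequency $2^n$, setting $(\mathbf{W}_0^n, R_0^n) := P_{\leq n}(\mathbf{W}_0, R_0)$. These are smooth, converge to $(\mathbf{W}_0, R_0)$ in $\mathcal{H}^s$, and their $\mathcal{A}_0$ norms remain small uniformly in $n$. Standard high-regularity well-posedness (e.g.\ \cite{MR2805065, MR3724757}) yields smooth solutions $(\mathbf{W}^n, R^n)$ on some maximal intervals. Applying Theorem \ref{t:MainEnergyEstimate} to each and using the Sobolev embedding $\mathcal{H}^s \hookrightarrow C^{1+\epsilon}_* \times C^{\frac{1}{2}+\epsilon}_*$ valid for $s > 1$, both $\mathcal{A}_0$ and $\mathcal{A}_1$ are controlled by $\|(\mathbf{W}^n, R^n)\|_{\mathcal{H}^s}$. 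A Gr\"onwall argument driven by \eqref{strongcubicest} and the norm equivalence \eqref{normEquivalence} then produces a uniform existence time $T > 0$ and a uniform bound $\|(\mathbf{W}^n, R^n)\|_{L^\infty([0,T]; \mathcal{H}^s)} \leq C$.

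\textbf{Passage to the limit and uniqueness.} The differences $(\mathbf{W}^n - \mathbf{W}^m, R^n - R^m)$ solve a linearized-type system around an interpolated background whose coefficients are bounded in $\mathcal{H}^s$ by the previous step. Feeding this into the linearized modified energy estimate gives Cauchy convergence of $\{(\mathbf{W}^n, R^n)\}$ in a weaker norm (for instance $\mathcal{H}^{s-1}$ or $\mathcal{H}^0$, depending on the range of the linearized estimate); interpolating with the uniform $\mathcal{H}^s$ bound yields convergence in $\mathcal{H}^{s'}$ for every $s' < s$. The limit $(\mathbf{W}, R) \in L^\infty([0,T]; \mathcal{H}^s) \cap C([0,T]; \mathcal{H}^{s'})$ is a solution of \eqref{e:WR}. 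Uniqueness and Lipschitz continuous dependence in the weaker $\mathcal{H}^{s'}$ topology follow immediately by applying the linearized estimate to the difference of two hypothetical solutions at level $\mathcal{H}^s$.

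\textbf{Strong continuity and continuation.} To upgrade continuity from $\mathcal{H}^{s'}$ to $\mathcal{H}^s$ I would run the Bona--Smith argument: bound $\|(\mathbf{W}^n, R^n) - (\mathbf{W}^{n+1}, R^{n+1})\|_{\mathcal{H}^s}$ by the linearized estimate fed with the truncation error $\|(\mathbf{W}_0^n - \mathbf{W}_0^{n+1}, R_0^n - R_0^{n+1})\|_{\mathcal{H}^s}$, using the controlled divergence of the $\mathcal{H}^{s+\delta}$ norms of the regularized data to absorb the high-frequency losses. This shows $\{(\mathbf{W}^n, R^n)\}$ is Cauchy in $C([0,T]; \mathcal{H}^s)$, giving $(\mathbf{W}, R) \in C([0,T]; \mathcal{H}^s)$. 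Finally, the continuation criterion follows from a direct Gr\"onwall reading of \eqref{strongcubicest}: $E_s$, and hence by \eqref{normEquivalence} the $\mathcal{H}^s$ norm, cannot blow up so long as $\mathcal{A}_0$ remains bounded and $\int_0^t (1+\mathcal{A}_1^2)\, d\tau < \infty$.

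The principal obstacle is the last step. The capillary operator loses three derivatives at the linear level, so for the Bona--Smith argument to close at $s > 1$ the linearized estimate must balance weak and strong control norms in the same paradifferential spirit as \eqref{strongcubicest}, and the truncation-error high-frequency losses must be quantitatively matched by the gain from the regularization parameter; this is where the paradifferential modified-energy structure introduced in the proof of Theorem \ref{t:MainEnergyEstimate} is essential, and any weaker, tame-type linearized estimate would only close at higher regularity.
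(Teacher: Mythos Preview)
Your proposal is correct and follows essentially the same overall architecture as the paper: regularize the data by frequency truncation, propagate uniform $\mathcal{H}^s$ bounds via Theorem \ref{t:MainEnergyEstimate} and a bootstrap/Gr\"onwall argument, control differences via the linearized modified energy estimate, and read off the continuation criterion directly from \eqref{strongcubicest}.

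The one methodological difference worth noting is in how the limit and strong continuity are obtained. You run the classical Bona--Smith scheme: discrete truncations $P_{\leq n}$, difference bounds in a weak norm, interpolation against the diverging $\mathcal{H}^{s+\delta}$ norms of the regularized data. The paper instead follows the frequency-envelope method of Ifrim--Tataru \cite{MR4557379, ai2023dimensional}: the truncation level $k$ is treated as a continuous parameter, so that $(\partial_k W_{<k},\, \partial_k Q_{<k} - R_{<k}\partial_k W_{<k})$ are \emph{exact} solutions of the linearized system, and the $\mathcal{H}^{1/2}$ linearized estimate (Theorem \ref{t:LinearizedWellposed}) is applied to these. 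Summing in $k$ with a frequency envelope then gives convergence directly in $\mathcal{H}^s$ without a separate interpolation step. The two approaches are essentially equivalent here; the frequency-envelope packaging is somewhat cleaner in that it avoids the interpolated-background formulation of the difference equation and makes the role of the $\mathcal{H}^{1/2}$ linearized estimate (rather than $\mathcal{H}^{s-1}$ or $\mathcal{H}^0$) explicit, which matches the natural regularity of the undifferentiated linearized variables $(w,r)$.
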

Here by local well-posedness, we mean that there exists a positive time $T>0$ such that the following properties hold:
\begin{enumerate}
\item \textit{Existence of the solution}: If the initial data is in $\mathcal{H}^s$, then there exists a solution $(\W, R)\in \mathcal{H}^s$ in $[0,T]$, such that
\begin{equation*}
    \|(\W, R)\|_{C[0,T; \mathcal{H}^s]} \lesssim \|(\W_0, R_0)\|_{C[0,T; \mathcal{H}^s]}.
\end{equation*}
\item \textit{Uniqueness of the solution}: When $s>\frac{3}{2}$, it is unique as is proved in \cite{MR2805065}.
When $\frac{3}{2}\geq s>1$,  the solution is unique in the sense that it is the unique  limit of regular solutions.

\item \textit{Continuous dependence on the  data for  solutions}: If a sequence of initial data $(\W_j, R_j)(0)$  converges to $(\W_0, R_0)$ in $\mathcal{H}^{s}$ topology, then the solutions $(\W_j, R_j)(t)$ also converges to $(\W, R)(t)$ in $\mathcal{H}^{s}$, for $t\in [0,T]$.
\end{enumerate}  

Our proof of the local well-posedness does not rely on the dispersive properties of the water wave system, such as the Strichartz estimate.
The result holds for both the real line and periodic cases. 
In the case of the real line, it achieves a lower $\frac{1}{4}$ Sobolev regularity threshold compared to the result in \cite{MR3724757}.
It also obtains a lower $\frac{1}{2}$ Sobolev regularity threshold compared to the result \cite{MR2805065} in the periodic case.
For simplicity, we will only discuss the case of the real line in this article.
For the periodic setting, the analysis is essentially the same; we refer the reader to the discussion in Appendix $A$ of \cite{MR3535894} for the minor changes.

Compared to the previous result of \cite{MR3667289}, which also uses the modified energy estimate, our result has the following improvements:
\begin{itemize}
\item We express the water waves as a paradifferential system and utilize the paradifferential structure of the equations, akin to the work in \cite{ai2023dimensional} for the case of gravity waves. 
Our paradifferential modified energy estimate no longer depends on the larger control norm $\CalAT$; it only depends on a smaller control norm $\CalAO$.
This effectively lowers the regularity for well-posedness.
\item The normal form analysis is also at the paradifferential level.
As a result, we can allow the Sobolev index $s$ to be non-integer, whereas the energy estimates of \cite{MR3667289} only use integer-valued Sobolev index.
\item Our energy estimate for the linearized system Theorem \ref{t:LinearizedWellposed} is  also more balanced with implicit constants depending only on the control norms $\mathcal{A}_1$ refined, which is better than the energy estimate for the linearized system in \cite{MR3667289}.
We prove a better estimate for the linearized system, and it depends on the  control norm $\CalAO$.  
\item We take into account the effect of gravity and work in the non-homogeneous Sobolev spaces.
The result in \cite{MR3667289} is only on the pure-capillary water waves. 
\end{itemize}

\subsection{The organization of the article}
Throughout the analysis of the low regularity well-posedness, paradifferential calculus and paraproducts type estimates for symbols are frequently used.
The full water wave system and the linearized system will also be rewritten in the paradifferential format.
In Appendix \ref{s:Norms}, we gather the necessary paradifferential estimates that will be needed in the main part of the article.

In Section \ref{s:Bounds}, we  apply the paraproduct estimates that were developed in \cite{ai2023dimensional} to derive Sobolev and $BMO$ estimates for auxiliary functions $a, b, Y$, and so on.
We also compute the leading terms of para-material derivatives of $\W, R, Y$ and $J^s$.

Moving on to Section \ref{s:ModifiedEnergy}, we prove the paradifferential modified energy estimate Theorem \ref{t:MainEnergyEstimate}.
We write the water waves \eqref{e:WR} as a system of paradifferential equations and compute balanced quadratic paradifferential normal forms to remove the balanced quadratic source terms.
Low-high normal form transformations are used to construct the cubic paradifferential modified energy.
In addition, we also construct the quartic paradifferential modified energy.

After obtaining the energy estimate for the full water wave system, we establish the energy estimate for the linearized water wave system in Section \ref{s:LinearEstimate}.
We show that the linearized water wave system \eqref{linearizedeqn} is locally well-posed in $\mathcal{H}^{\Half}$ by writing the linearized system \eqref{linearizedeqn} as a paradifferential system \eqref{ParadifferentialLinearEqn} and constructing the paradifferential modified linearized energy.
The construction approach will be discussed in detail  within this section.

Finally, gathering the paradifferential modified energy estimates for both the full and the linearized systems, we  outline the proof for the local well-posedness of the water wave system in Section \ref{s:Wellposed}. \\

\textbf{Acknowledgments.} The author would like to thank Mihaela Ifrim  for introduction and discussion of many important details in her related work \cite{ai2023dimensional,MR3667289, MR3535894}.

\section{Water waves related bounds} \label{s:Bounds}
In the first part of this section, we  consider estimates  related to the water wave system, following the lead of \cite{MR3535894,ai2023dimensional}.
We will first give some estimates on the frequency shift $a$ and the advection velocity $b$.
Recall that they are given by
\begin{equation*}
a = \Im \nP[R\bar{R}_\alpha], \qquad b = 2\Re \nP[(1-\bar{Y})R].
\end{equation*}
Next, we consider the estimates of three auxiliary functions, they are
\begin{equation*}
Y = \frac{\W}{1+\W}, \quad M = 2\Re \nP[R\bar{Y}_\alpha - \bar{R}_\alpha Y], \quad c= 2\Im J^{-\Half}(1-Y)\W_\alpha.
\end{equation*}
Then we give an estimate for the operator $L$, which is a self-adjoint operator defined by 
\begin{equation} \label{LwDef}
\begin{aligned}
L &= \partial_\alpha J^{-\frac{1}{2}}\partial_\alpha -ic\partial_\alpha -i\nP c_\alpha\\
&= \partial_\alpha J^{-\frac{1}{2}}\partial_\alpha -i\left(c\partial_\alpha + \frac{1}{2}c_\alpha \right)-\frac{i}{2}(\nP c_\alpha- \bar{\nP}c_\alpha). 
\end{aligned}    
\end{equation}
The auxiliary function $Y$ is often used as a part of  para-coefficients in many of our computations. 
$c$ and $L$ will appear in Section \ref{s:LinearEstimate} when we compute the linearized water waves system.

In the second part of this section, we compute the leading terms of para-material derivatives of $\W, R, Y$, $J^{s}$, and also the leading term of the commutator $[T_{D_t}, \mathcal{L}]$, where $\mathcal{L}$ is the leading paradifferential part of the operator $L$.

\subsection{Sobolev and Zygmund bounds}
We begin with the bounds for the frequency shift $a$, which are similar to those derived in \cite{ai2023dimensional}. 
\begin{lemma}
The  frequency-shift $a$ satisfies the estimate

\begin{equation}
\|a\|_{C_{*}^{\epsilon}} \lesssim \mathcal{A}_1^2, \label{ACHalf}
\end{equation}
as well as the Sobolev estimate
\begin{equation}
\| a\|_{H^{s}}  \lesssim \CalAO \|(\W,R)\|_{\mathcal{H}^{s+\frac{1}{2}}}, \quad s>0. \label{AHsEst}  
\end{equation}
\end{lemma}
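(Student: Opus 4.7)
The plan is to exploit the holomorphic/anti-holomorphic frequency structure inside the projection $\mathbf{P}$, together with the paraproduct decomposition from the appendix. Writing
\begin{equation*}
a = 2\,\Im\, \mathbf{P}\bigl[R\bar R_\alpha\bigr],
\end{equation*}
the boundedness of $\mathbf{P}$ on $C_*^\epsilon$ and on $H^s$ reduces both claims to estimating $\mathbf{P}[R\bar R_\alpha]$ in the respective norms. I would apply Bony's decomposition
\begin{equation*}
R\bar R_\alpha = T_R\bar R_\alpha + T_{\bar R_\alpha}R + \Pi(R,\bar R_\alpha).
\end{equation*}

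The first key observation is that $\mathbf{P}[T_R\bar R_\alpha]=0$. Since $R$ is holomorphic, each low-frequency truncation $S_{k-N_0}R$ has spectrum in $(-\infty,0]$ of size $\lesssim 2^{k-N_0}$, while $\Delta_k\bar R_\alpha$ is anti-holomorphic with spectrum in the dyadic annulus $\{|\xi|\sim 2^k,\ \xi\ge 0\}$. For $N_0$ chosen as in the definition of the paraproduct, the spectrum of $S_{k-N_0}R\cdot \Delta_k\bar R_\alpha$ lies strictly in the positive half-line, so $\mathbf{P}$ annihilates the entire sum defining $T_R\bar R_\alpha$. This kills the only paraproduct piece whose naive regularity would be insufficient.

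It then remains to bound $T_{\bar R_\alpha}R$ and $\Pi(R,\bar R_\alpha)$ using the paraproduct estimates from Appendix~\ref{s:Norms}. For the Zygmund bound \eqref{ACHalf}, using $R\in C_*^{\frac12+\epsilon}$ and $\bar R_\alpha\in C_*^{-\frac12+\epsilon}$ (both controlled by $\mathcal{A}_1$), the paraproduct-with-rough-coefficient estimate gives
\begin{equation*}
\|T_{\bar R_\alpha}R\|_{C_*^{2\epsilon}} \lesssim \|\bar R_\alpha\|_{C_*^{-\frac12+\epsilon}}\|R\|_{C_*^{\frac12+\epsilon}}\lesssim \mathcal{A}_1^2,
\end{equation*}
and the balanced-paraproduct estimate gives the same bound for $\Pi(R,\bar R_\alpha)$, since the sum of the two Hölder indices, $2\epsilon$, is positive. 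This yields \eqref{ACHalf} with room to spare. For the Sobolev bound \eqref{AHsEst} with $s>0$, I would put $\bar R_\alpha$ in Hölder and $R$ in Sobolev in the off-diagonal paraproduct,
\begin{equation*}
\|T_{\bar R_\alpha}R\|_{H^{s+\epsilon}} \lesssim \|\bar R_\alpha\|_{C_*^{-\frac12+\epsilon}}\|R\|_{H^{s+\frac12}}\lesssim \mathcal{A}_1\|R\|_{H^{s+\frac12}},
\end{equation*}
and symmetrically for $\Pi$: $\|\Pi(R,\bar R_\alpha)\|_{H^{s+\epsilon}} \lesssim \|R\|_{C_*^{\frac12+\epsilon}}\|\bar R_\alpha\|_{H^{s-\frac12}}$, which is again $\lesssim \mathcal{A}_1\|R\|_{H^{s+\frac12}}$. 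Both contributions are absorbed into $\mathcal{A}_1\|(\mathbf{W},R)\|_{\mathcal{H}^{s+\frac12}}$.

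The only genuine obstacle is the treatment of the high–low piece $T_R\bar R_\alpha$: a brute-force paraproduct bound on it would lose half a derivative and fail to land in $C_*^\epsilon$. The holomorphic cancellation described above is what makes the argument work; apart from that, the proof is a direct application of the paraproduct toolkit.
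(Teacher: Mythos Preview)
Your proof is correct and follows essentially the same approach as the paper: decompose $\mathbf{P}[R\bar R_\alpha]$ paradifferentially, use the holomorphic structure to kill the $T_R\bar R_\alpha$ piece under $\mathbf{P}$, and bound the remaining $T_{\bar R_\alpha}R$ and $\Pi$ terms via the standard paraproduct estimates in the appendix. The paper simply writes $\mathbf{P}[R\bar R_\alpha]=T_{\bar R_\alpha}R+\mathbf{P}\Pi(\bar R_\alpha,R)$ without spelling out the cancellation you describe, and for the Sobolev $\Pi$-bound it keeps $\bar R_\alpha$ in Zygmund and $R$ in $H^{s+\frac12}$ rather than swapping as you do, but these are cosmetic differences.
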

\begin{proof}
For the holomorphic part of $a$, we write
\begin{equation*}
\nP[R\bar{R}_\alpha] = T_{\bar{R}_\alpha} R + \nP\Pi(\bar{R}_\alpha, R).
\end{equation*}
Using \eqref{CCCEstimate} and \eqref{CsLInfty}, 
\begin{equation*}
\|T_{\bar{R}_\alpha} R \|_{C_{*}^{\epsilon}}+ \|\nP\Pi(\bar{R}_\alpha, R) \|_{C_{*}^{\epsilon}} \lesssim \|R \|_{C_{*}^{\frac{1}{2}+\epsilon}}^2 \lesssim \mathcal{A}_1^2,
\end{equation*}
which gives the bound \eqref{ACHalf} for the frequency-shift $a$.
For the Sobolev estimate, we use \eqref{HCHEstimate}, \eqref{HsCmStar} to write
\begin{equation*}
\|T_{\bar{R}_\alpha} R \|_{H^{s}}+ \|\nP\Pi(\bar{R}_\alpha, R) \|_{H^s} \lesssim \|R \|_{C_{*}^{\Half}} \| R\|_{H^{s+\Half}} \lesssim \CalAO \| R\|_{H^{s+\Half}},    
\end{equation*}
which gives the Sobolev bound \eqref{AHsEst} for the frequency-shift $a$.
\end{proof}

We continue with the advection velocity $b$, where we again follow \cite{ai2023dimensional}.
\begin{lemma}
The advection velocity $b$ satisfies the estimate
\begin{equation}
\| b\|_{C^{\Half}_{*}} \lesssim_\CalAZ \CalAO, \label{BCOneStar}
\end{equation}
as well as the Sobolev estimate
\begin{equation}
\| b\|_{H^s}  \lesssim_{\CalAZ} \|R\|_{H^{s}}, \quad s>0. \label{BHsEst}  
\end{equation}
\end{lemma}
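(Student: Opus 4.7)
The plan is to rewrite $b$ using the holomorphy of $R$, apply Bony's paraproduct decomposition, and exploit the holomorphic projection $\nP$ to kill one of the three paraproduct pieces. Since $R$ is holomorphic, $\nP R = R$, and therefore
\[
b \;=\; 2\Re\nP[(1-\bar Y)R] \;=\; 2\Re R \;-\; 2\Re\nP[\bar Y R].
\]
The first summand is already in the desired form for both bounds: $\|R\|_{C^{1/2}_*}\le \|R\|_{C^{1/2+\epsilon}_*}\le \CalAO$, and $\|R\|_{H^s}$ is the right-hand side of \eqref{BHsEst}. All the work goes into $\nP[\bar Y R]$.

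I apply Bony's decomposition
\[
\bar Y R \;=\; T_{\bar Y}R \;+\; T_R \bar Y \;+\; \Pi(\bar Y, R).
\]
The key observation is that $\nP T_R \bar Y = 0$ up to a harmless smoothing remainder. Indeed, in $T_R\bar Y$ the high-frequency factor is $\bar Y$, which is anti-holomorphic, while the low-frequency modulator $R$ is holomorphic. A direct Littlewood-Paley computation shows that each dyadic piece $S_{k-N_0}R\cdot P_k\bar Y$ has frequency support in a positive band around $2^k$, so $\nP T_R\bar Y=0$ with a sharp projector and only a bounded low-frequency remainder with a smooth one (controlled by a commutator $[\nP,T_R]\bar Y$). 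Either way this term makes no contribution beyond an acceptable error.

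For the remaining two pieces I invoke the paraproduct estimates collected in Appendix \ref{s:Norms}. The low-high piece is handled by $\|T_f g\|_X\lesssim \|f\|_{L^\infty}\|g\|_X$ with $X\in\{C^{1/2}_*,H^s\}$, giving
\[
\|T_{\bar Y}R\|_{C^{1/2}_*}\lesssim\|\bar Y\|_{L^\infty}\|R\|_{C^{1/2}_*},\qquad \|T_{\bar Y}R\|_{H^s}\lesssim\|\bar Y\|_{L^\infty}\|R\|_{H^s}.
\]
For the balanced piece I use a balanced estimate of the form $\|\Pi(f,g)\|_X\lesssim \|f\|_{C^\epsilon_*}\|g\|_X$ (with $\epsilon>0$ small and $s>0$ in the Sobolev case), yielding
\[
\|\Pi(\bar Y,R)\|_{C^{1/2}_*}\lesssim \|\bar Y\|_{C^\epsilon_*}\|R\|_{C^{1/2}_*},\qquad \|\Pi(\bar Y,R)\|_{H^s}\lesssim \|\bar Y\|_{C^\epsilon_*}\|R\|_{H^s}.
\]
Finally, all the $Y$-norms that appear are converted to $\W$-norms by Moser-type nonlinear estimates for the smooth map $z\mapsto z/(1+z)$, under the smallness of $\|\W\|_{L^\infty}$ built into the $\CalAZ$-dependence: $\|\bar Y\|_{L^\infty}+\|\bar Y\|_{C^\epsilon_*}\lesssim_\CalAZ \CalAZ$. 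Combining the three pieces yields \eqref{BCOneStar} and \eqref{BHsEst}.

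The main obstacle is the frequency-cancellation argument for $\nP T_R\bar Y$: a crude bound on this term would force $\|\bar Y\|_{H^s}$ onto the right-hand side of the Sobolev estimate, which is forbidden by the statement. Verifying rigorously that this piece contributes only a smoothing remainder, either by a Littlewood-Paley frequency-support calculation or by the commutator identity $\nP T_R\bar Y=[\nP,T_R]\bar Y$ (using $\nP\bar Y=0$), is therefore the essential step; once it is in place, the rest of the proof is a direct application of the paraproduct estimates of Appendix \ref{s:Norms}.
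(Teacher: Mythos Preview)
Your proposal is correct and follows essentially the same approach as the paper. The paper writes the key identity $\nP[R\bar Y]=T_{\bar Y}R+\nP\Pi(R,\bar Y)$ in one line, silently using the very frequency-support observation you spell out (that $\nP T_R\bar Y=0$ because the high-frequency factor $\bar Y$ is antiholomorphic); your worry about a smoothing remainder is harmless in this setup and the paper simply suppresses it.
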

\begin{proof}
The bounds for $\Re R$ are obvious, it suffices to estimate the $\nP[R\bar{Y}]$ term.
Note that, 
\begin{equation*}
\nP[R\bar{Y}] = T_{\bar{Y}}R + \nP \Pi(R,\bar{Y}). 
\end{equation*}
Using the estimates \eqref{CCCEstimate} and \eqref{CsLInfty},
\begin{equation*}
\|\nP[R\bar{Y}] \|_{C^\Half_{*}} \leq \|T_{\bar{Y}}R \|_{C^\Half_{*}} + \|\nP \Pi(R,\bar{Y}) \|_{C^\Half_{*}} \lesssim \|Y\|_{ C^{\epsilon}_{*}}\|R\|_{C^\Half_{*}}  \lesssim \|\W\|_{ C^{\epsilon}_{*}}\|R\|_{C^\Half_{*}}\lesssim_\CalAZ \CalAO,
\end{equation*}
which gives the $C^\Half_{*}$ estimate \eqref{BCOneStar}.
For the Sobolev estimate, we use \eqref{HsLinfty} and \eqref{HCHEstimate},
\begin{equation*}
\|\nP[R\bar{Y}] \|_{H^s} \leq \|T_{\bar{Y}}R \|_{H^s} + \|\nP \Pi(R,\bar{Y}) \|_{H^s} \lesssim \|Y\|_{C^\epsilon_{*}}\|R\|_{H^s}  \lesssim_\CalAZ \|R\|_{H^s},
\end{equation*}
which gives the Sobolev bound \eqref{BHsEst}.
\end{proof}

Next, we consider the estimates for the auxiliary function $Y$, also in the spirit of \cite{ai2023dimensional}.
\begin{lemma}
For $s>0$, the auxiliary function $Y = \frac{\W}{1+\W}$ satisfies
\begin{equation}
\|Y\|_{H^s} \lesssim_\CalAZ \|\W\|_{H^s}, \quad \|Y\|_{C^s_{*}} \lesssim_\CalAZ \|\W\|_{C^s_{*}}. \label{YMoser}
\end{equation}
In particular, $\|Y\|_{C^1_{*}} \lesssim_\CalAZ \CalAO$.
Moreover, one can write
\begin{equation}
    Y = T_{(1-Y)^2}\W + E, \label{YWExpression}
\end{equation}
where the error $E$ satisfies the bound
\begin{equation*}
\|E\|_{H^{s-1}} \lesssim_\CalAZ \CalAO \|\W\|_{H^{s}}.
\end{equation*}
\end{lemma}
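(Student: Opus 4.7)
The plan is to set up $Y = F(\W)$ for the scalar function $F(z) = z/(1+z)$ and then invoke (i) Moser-type composition estimates for the first two bounds and (ii) Bony's paralinearization formula for the last. Since $F$ is smooth near $z=0$ with $F(0)=0$, and since smallness of $\CalAZ = \|\W\|_{C^\epsilon_*} + \|R\|_{C^\epsilon_*}$ guarantees in particular that $\|\W\|_{L^\infty}$ is small, both $\W$ stays in the smoothness region of $F$ and $|1+\W|$ stays uniformly away from zero.

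For the Moser-type bounds (\ref{YMoser}), I would apply the standard Moser composition estimates in $H^s$ and in the Zygmund spaces $C^s_{*}$, as recorded in Appendix \ref{s:Norms}, directly to $Y = F(\W)$; the implicit constants depend only on $\|\W\|_{L^\infty}$ and hence only on $\CalAZ$. The consequence $\|Y\|_{C^1_{*}} \lesssim_\CalAZ \CalAO$ is then immediate: take the Zygmund estimate at the level $s = 1+\epsilon$, use the embedding $C^{1+\epsilon}_{*} \hookrightarrow C^1_{*}$, and note that $\|\W\|_{C^{1+\epsilon}_{*}} \leq \CalAO$ by the very definition of $\CalAO$.

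For the paralinearization (\ref{YWExpression}), compute $F'(z) = (1+z)^{-2}$, so that $F'(\W) = (1-Y)^2$ via the identity $1 - Y = (1+\W)^{-1}$. Bony's paralinearization formula then produces
\begin{equation*}
Y = T_{(1-Y)^2} \W + E, \qquad E := F(\W) - T_{F'(\W)} \W,
\end{equation*}
and the required error bound $\|E\|_{H^{s-1}} \lesssim_\CalAZ \CalAO \|\W\|_{H^s}$ follows from the quantitative remainder estimate that distributes one derivative onto a Zygmund-type norm of $\W$, namely $\|E\|_{H^{s-1}} \lesssim_\CalAZ \|\W\|_{C^{1+\epsilon}_{*}} \|\W\|_{H^s}$, followed by $\|\W\|_{C^{1+\epsilon}_{*}} \leq \CalAO$. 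The main subtlety is selecting the version of Bony's remainder estimate that balances derivatives so that only the smaller control norm $\CalAO$ (rather than $\CalAT$) appears as the prefactor; this balanced form is a standard item in the paradifferential toolbox collected in Appendix \ref{s:Norms} and is precisely the kind of estimate this paper is designed to exploit.
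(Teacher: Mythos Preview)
Your proposal is correct and follows essentially the same approach as the paper, which simply cites the Moser-type estimates \eqref{MoserOne}, \eqref{MoserTwo} for \eqref{YMoser} and the paralinearization Lemma~\ref{t:Paralinear} for \eqref{YWExpression}. One small remark: the lemma itself does not assume $\CalAZ$ is small, only that the constants depend on $\CalAZ$; the embedding $C^\epsilon_* \hookrightarrow L^\infty$ gives $\|\W\|_{L^\infty} \lesssim \CalAZ$, and the hypothesis that $|1+\W|$ is bounded away from zero is an ambient assumption throughout the paper (enforced via the smallness of $\CalAZ$ in the main theorem).
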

The proof of \eqref{YMoser} is a direct consequence of Moser type estimates \eqref{MoserOne} and \eqref{MoserTwo}. 
The proof of \eqref{YWExpression} follows from the paralinearization Lemma \ref{t:Paralinear}.

Finally, we derive estimates for auxiliary functions $M$, $c$, and the operator $L$.
\begin{lemma}
The auxiliary function $M$ satisfies the Zygmund bound
\begin{equation}
\| M\|_{C^\Half_{*}} \lesssim \mathcal{A}^2_1. \label{MBound}
\end{equation}
\end{lemma}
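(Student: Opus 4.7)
The plan is to expand $M = 2\Re \nP[R\bar{Y}_\alpha - \bar{R}_\alpha Y]$, which follows from \eqref{DefM} together with the reality of $M$, via Bony's paraproduct decomposition, and to exploit the holomorphic/anti-holomorphic structure to eliminate certain low-high interactions. Writing
\begin{align*}
R\bar{Y}_\alpha &= T_R \bar{Y}_\alpha + T_{\bar{Y}_\alpha} R + \Pi(R, \bar{Y}_\alpha), \\
\bar{R}_\alpha Y &= T_{\bar{R}_\alpha} Y + T_Y \bar{R}_\alpha + \Pi(\bar{R}_\alpha, Y),
\end{align*}
the key observation is that $T_R \bar{Y}_\alpha$ and $T_Y \bar{R}_\alpha$ pair a holomorphic low-frequency factor (Fourier support in $(-\infty,0]$) with an anti-holomorphic high-frequency factor (Fourier support in $[0,\infty)$); the dyadic cut-off in the paraproduct forces the high frequency to dominate, so the output is supported in strictly positive frequencies and is annihilated by $\nP$. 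Only four terms survive.

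Each surviving term will be bounded in $C^{\Half+\delta}_{*}$ for some $\delta > 0$, using the Moser estimate \eqref{YMoser} to convert $\|Y\|_{C^{1+\epsilon}_{*}}$ and $\|Y_\alpha\|_{L^\infty}$ into factors of $\mathcal{A}_1$. For the low-high pieces one applies the paraproduct bound $\|T_f g\|_{C^{s+t}_{*}} \lesssim \|f\|_{C^{t}_{*}}\|g\|_{C^{s}_{*}}$ (with the $t=0$ version read as $\|f\|_{L^\infty}$) to obtain
\begin{align*}
\|T_{\bar{Y}_\alpha} R\|_{C^{\Half+\epsilon}_{*}} &\lesssim \|\bar{Y}_\alpha\|_{L^\infty}\|R\|_{C^{\Half+\epsilon}_{*}} \lesssim \mathcal{A}_1^2, \\
\|T_{\bar{R}_\alpha} Y\|_{C^{\Half+2\epsilon}_{*}} &\lesssim \|\bar{R}_\alpha\|_{C^{-\Half+\epsilon}_{*}}\|Y\|_{C^{1+\epsilon}_{*}} \lesssim \mathcal{A}_1^2,
\end{align*}
while the high-high pieces are controlled via $\|\Pi(f,g)\|_{C^{s+t}_{*}} \lesssim \|f\|_{C^{s}_{*}}\|g\|_{C^{t}_{*}}$, valid for $s+t>0$:
\begin{equation*}
\|\Pi(R, \bar{Y}_\alpha)\|_{C^{\Half+2\epsilon}_{*}} + \|\Pi(\bar{R}_\alpha, Y)\|_{C^{\Half+2\epsilon}_{*}} \lesssim \mathcal{A}_1^2.
\end{equation*}
Summing and taking the real part, followed by the embedding $C^{\Half+\delta}_{*} \hookrightarrow C^{\Half}_{*}$, yields \eqref{MBound}.

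The only point requiring care is the vanishing $\nP T_R \bar{Y}_\alpha = \nP T_Y \bar{R}_\alpha = 0$: with the standard convention $T_f g = \sum_j S_{j-N_0}f \cdot \Delta_j g$, the holomorphic localizer $S_{j-N_0} R$ has Fourier support in $[-2^{j-N_0+1},0]$ while $\Delta_j \bar{Y}_\alpha$ lives in an annulus near $2^j$, so the product is supported in frequencies bounded below by $2^{j-1} - 2^{j-N_0+1} > 0$ once $N_0$ is chosen large; an analogous argument handles $T_Y \bar{R}_\alpha$. With this cancellation secured, the remaining steps are routine applications of the paraproduct calculus collected in Appendix \ref{s:Norms}, so no serious obstacle is expected.
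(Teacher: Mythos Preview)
Your proof is correct and follows essentially the same approach as the paper: both exploit the holomorphic structure so that $\nP T_R \bar Y_\alpha$ and $\nP T_Y \bar R_\alpha$ vanish, leaving only the $T_{\bar Y_\alpha}R$, $T_{\bar R_\alpha}Y$ and balanced $\Pi$ pieces, which are then estimated via \eqref{CsCmStar}, \eqref{CsLInfty}, \eqref{CCCEstimate}. You are simply more explicit about the frequency-support argument behind the cancellation and obtain a slightly sharper $C^{\Half+\epsilon}_*$ bound along the way, but the route is the same.
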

One should compare this with the $L^{\infty}$ bound for $M$ in \cite{ai2023dimensional}.
\begin{proof}
Recall that $M$ can be rewritten as
\begin{equation*}
  M = \bar{\nP}[\bar{R}Y_\alpha - R_\alpha \bar{Y}] + \nP[R\bar{Y}_\alpha - \bar{R}_\alpha Y],  
\end{equation*}
and it suffices to consider the holomorphic part.
We estimate
\begin{align*}
&\|\nP[R \bar{Y}_\alpha] \|_{C^\Half_{*}} \leq \|T_{\bar{Y}_\alpha}R \|_{C^\Half_{*}} + \|\nP(\bar{Y}_\alpha, R) \|_{C^\Half_{*}}\lesssim \| Y_\alpha\|_{C^\epsilon_{*}}\|R \|_{C^\Half_{*}}  \lesssim \mathcal{A}^2_1, \\
& \|\nP[\bar{R}_\alpha Y] \|_{C^\Half_{*}} \leq \|T_{\bar{R}_\alpha}Y \|_{C^\Half_{*}} + \|\nP(\bar{R}_\alpha, Y) \|_{C^\Half_{*}}\lesssim \| R_\alpha\|_{C^{-\frac{1}{2}}_{*}}\| Y\|_{C^1_{*}} \lesssim \mathcal{A}^2_1.
\end{align*}
These give the estimate for $M$ \eqref{MBound}.
\end{proof}

Recall that the real-valued function $c$ is defined by
\begin{equation}
c:= -\dfrac{i\W_\alpha}{J^{\frac{1}{2}}(1+\W)}+\dfrac{i\bar{\W}_\alpha}{J^{\frac{1}{2}}(1+\bar{\W})}. \label{CDef}
\end{equation}
Then we have the following estimate for $c$.
\begin{lemma}
The auxiliary function $c$ satisfies the Zygmund bound
\begin{equation}
\| c\|_{C^{\epsilon}_{*}} \lesssim_\CalAZ \CalAO. \label{CBound}
\end{equation}
Moreover, we have the following representation for $\nP c$:
\begin{equation}
    \nP c = -iT_{J^{-\Half} (1-Y)}\W_\alpha + K, \label{nPCRepresentation}
\end{equation}
where $K$ satisfies the estimate
\begin{equation*}
    \| K\|_{C_{*}^{1+\epsilon}} \lesssim_\CalAZ \mathcal{A}^2_1.
\end{equation*}
\end{lemma}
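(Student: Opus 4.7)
The plan is to rewrite $c$ as $c = 2\Im[\phi \,\W_\alpha]$ with para-coefficient $\phi := J^{-\Half}(1-Y)$, and then paralinearize in $\W_\alpha$. The key structural observation is that $\W_\alpha$ is holomorphic (spectrally negative) while $\bar\W_\alpha$ is anti-holomorphic (spectrally positive), so paraproducts of the form $T_\phi \W_\alpha$ inherit negative spectrum and paraproducts of the form $T_{\bar\phi}\bar\W_\alpha$ inherit positive spectrum. This makes the action of $\nP$ essentially transparent on the leading paraproducts.

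For the Zygmund bound \eqref{CBound}, the first step is to note that $\phi - 1$ is a smooth (in fact analytic) function of $(\W,\bar\W)$ vanishing at $0$. By the Moser estimate already recalled in \eqref{YMoser} (together with the analogous Moser bound on $J^{-\Half}$ obtained from $J - 1 = 2\Re\W + |\W|^2$), I get
\[
\|\phi - 1\|_{C^{s}_{*}} \lesssim_\CalAZ \|\W\|_{C^{s}_{*}}, \qquad s\geq 0.
\]
In particular $\|\phi\|_{C^\epsilon_*} \lesssim_\CalAZ 1$ and $\|\phi - 1\|_{C^{1+\epsilon}_*} \lesssim_\CalAZ \CalAO$. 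Combined with $\|\W_\alpha\|_{C^\epsilon_*} \leq \|\W\|_{C^{1+\epsilon}_*} \leq \CalAO$ and the standard Zygmund product estimate at positive index, this yields $\|c\|_{C^\epsilon_*} \lesssim \|\phi\|_{C^\epsilon_*} \|\W_\alpha\|_{C^\epsilon_*} \lesssim_\CalAZ \CalAO$.

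For the representation \eqref{nPCRepresentation}, I write $c = -iA + i\bar A$ with $A = \phi\,\W_\alpha$, and Bony-decompose each factor:
\[
A = T_\phi \W_\alpha + T_{\W_\alpha}(\phi-1) + \Pi(\phi-1,\W_\alpha), \qquad \bar A = T_{\bar\phi}\bar\W_\alpha + T_{\bar\W_\alpha}(\bar\phi-1) + \Pi(\bar\phi-1,\bar\W_\alpha),
\]
where I have used that paraproducts vanish on the constant function $1$. Since $\W_\alpha$ is negatively supported and $\bar\W_\alpha$ is positively supported, the spectral support of the first term on each line is contained in the support of the corresponding high-frequency factor (the low-frequency symbol shifts the spectrum by at most $2^{-N}$ times the high frequency). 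Hence $\nP T_\phi \W_\alpha = T_\phi \W_\alpha$ while $\nP T_{\bar\phi}\bar\W_\alpha = 0$, which identifies the main term $-iT_{J^{-\Half}(1-Y)}\W_\alpha$. The remainder $K$ collects the four cross/balanced paraproducts (after applying $\nP$), each of which I bound in $C^{1+\epsilon}_*$ using the Appendix A estimates: the low-high terms satisfy
\[
\|T_{\W_\alpha}(\phi-1)\|_{C^{1+\epsilon}_*} \lesssim \|\W_\alpha\|_{L^\infty}\|\phi-1\|_{C^{1+\epsilon}_*} \lesssim_\CalAZ \CalAO^2 = \mathcal{A}_1^2,
\]
and the balanced term satisfies, via $\|\Pi(f,g)\|_{C^{s+t}_*}\lesssim \|f\|_{C^s_*}\|g\|_{C^t_*}$ with $s=1+\epsilon$, $t=\epsilon$,
\[
\|\Pi(\phi-1,\W_\alpha)\|_{C^{1+\epsilon}_*} \lesssim \|\Pi(\phi-1,\W_\alpha)\|_{C^{1+2\epsilon}_*} \lesssim_\CalAZ \CalAO^2 = \mathcal{A}_1^2,
\]
with identical bounds for the two conjugate pieces.

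The main obstacle, and the only place that really requires care, is the bookkeeping of constant contributions: the para-coefficient $\phi$ sits at \emph{size} $\CalAZ$ in $C^\epsilon_*$, which would give the wrong, single power of $\CalAO$ in the low-high estimate. The fix is to exploit that paraproducts kill constants, replacing $\phi$ by $\phi - 1$; only then does the $\|\phi-1\|_{C^{1+\epsilon}_*} \lesssim_\CalAZ \CalAO$ Moser bound feed through to produce the quadratic $\mathcal{A}_1^2$ on the right-hand side. All other pieces are routine consequences of the paradifferential calculus gathered in Appendix~\ref{s:Norms}.
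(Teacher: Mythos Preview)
Your proof is correct and follows essentially the same approach as the paper: both identify $\phi = J^{-\Half}(1-Y)$, Bony-decompose $\phi\W_\alpha$, use the Moser bound $\|\phi-1\|_{C^{1+\epsilon}_*}\lesssim_{\CalAZ}\CalAO$ to place the high-low and balanced pieces into $K$ with the quadratic $\mathcal{A}_1^2$ bound, and use holomorphy of $\W_\alpha$ (resp.\ anti-holomorphy of $\bar\W_\alpha$) to read off the action of $\nP$ on the leading paraproducts. Your presentation of the $C^\epsilon_*$ bound via the direct product estimate is slightly more compact than the paper's term-by-term paraproduct estimate, but the content is the same.
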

\begin{proof}
By the definition \eqref{CDef}, it suffices to estimate $\frac{\W_\alpha}{J^\Half (1+\W)}$. 
Its complex conjugate satisfies the similar estimate.
We decompose
\begin{equation*}
 \frac{\W_\alpha}{J^\Half (1+\W)} = T_{J^{-\Half} (1-Y)}\W_\alpha + T_{\W_\alpha} (J^{-\Half} (1-Y) -1) + \Pi(\W_\alpha, J^{-\Half} (1-Y) -1).  
\end{equation*}
Since $J^{-\Half} (1-Y) -1$ is a function of $Y$, $\bar{Y}$ and vanishes when $Y = 0$, by the Moser type estimate \eqref{MoserTwo},
\begin{equation*}
  \|J^{-\Half} (1-Y)-1 \|_{C^\epsilon_{*}}\lesssim \| Y\|_{C^\epsilon_{*}} \lesssim \CalAZ.  
\end{equation*}
Hence, we can estimate
\begin{align*}
&\|T_{J^{-\Half} (1-Y)}\W_\alpha \|_{C^\epsilon_{*}} \lesssim (1+\|J^{-\Half} (1-Y)\|_{C^\epsilon_{*}}) \| \W_\alpha\|_{C^\epsilon_{*}} \lesssim (1+\CalAZ) \CalAO, \\
&\|T_{\W_\alpha} (J^{-\Half} (1-Y) -1) \|_{C^\epsilon_{*}}  \lesssim \|J^{-\Half} (1-Y)-1 \|_{C^\epsilon_{*}} \| \W_\alpha\|_{C^\epsilon_{*}} \lesssim \CalAZ \CalAO, \\
 &\|\Pi(\W_\alpha, J^{-\Half} (1-Y) -1) \|_{C^\epsilon_{*}} \lesssim \|J^{-\Half} (1-Y)-1 \|_{C^\epsilon_{*}} \| \W_\alpha\|_{C^\epsilon_{*}} \lesssim \CalAZ \CalAO,
\end{align*}
and this leads to the Zygmund bound \eqref{CBound}.

To prove the representation for $\nP c$, we claim that
\begin{equation}
J^{-\Half}(1-Y) \W_{ \alpha} =  T_{J^{-\frac{1}{2}}(1-Y)}\W_{\alpha} +K,   \label{CapillaryTerm}
\end{equation}
Indeed, the factor $J^{-\frac{1}{2}}(1-Y)$ is a smooth function of $\W$ and $\bar{\W}$ that equals $1$ when $\W =0$. 
We can apply Moser type of estimate \eqref{MoserTwo}  to conclude that
\begin{align*}
 &\|T_{\W_\alpha}J^{-\frac{1}{2}}(1-Y)-1\|_{C_{*}^{1+\epsilon}} + \|\Pi(\W_\alpha, J^{-\frac{1}{2}}(1-Y)-1)\|_{C_{*}^{1+\epsilon}} \\
 \lesssim &\|\W_\alpha \|_{C_{*}^{\epsilon}}\|J^{-\frac{1}{2}}(1-Y)-1 \|_{C_{*}^{1+\epsilon}}\lesssim_\CalAZ \mathcal{A}^2_1,   
\end{align*}
so that the high-low and the balanced terms can be placed into $K$.
Doing the same analysis for the complex conjugate of $J^{-\Half}(1-Y) \W_{ \alpha}$,
\begin{equation*}
    c = -iT_{J^{-\Half} (1-Y)}\W_\alpha + iT_{J^{-\Half} (1-\bar{Y})}\bar{\W}_\alpha + K.
\end{equation*}
After applying the projection $\nP$ to eliminate the antiholomorphic term, we get the representation for $\nP c$ \eqref{nPCRepresentation}.
\end{proof}

For the operator $L$, we compute the leading term of $Lw$.
\begin{lemma} \label{t:LwLeading}
The leading term of $Lw$ is given by
\begin{equation*}
 Lw = T_{J^{-\Half}}w_{\alpha \alpha}  - T_w  T_{J^{-\Half}(1-Y)}\W_{\alpha \alpha} + E,
\end{equation*}
where the remainder term $E$ satisfies
\begin{equation*}
 \|E\|_{L^2} \lesssim_\CalAZ \CalAO \|w\|_{H^1}.
\end{equation*}
\end{lemma}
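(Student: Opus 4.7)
The plan is to split the operator $L$ from \eqref{LwDef} as $L = L_1 + L_2 + L_3$ with $L_1 := \partial_\alpha J^{-\Half}\partial_\alpha$, $L_2 := -ic\,\partial_\alpha$, and $L_3$ the multiplication by $-i\nP c_\alpha$, and to show that modulo an $L^2$-remainder of size $\CalAO\|w\|_{H^1}$ the only surviving contributions are $T_{J^{-\Half}} w_{\alpha\alpha}$ coming from $L_1$ and $-T_w T_{J^{-\Half}(1-Y)}\W_{\alpha\alpha}$ coming from $L_3$.

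For $L_1$, I would apply Bony's decomposition
\begin{equation*}
J^{-\Half}w_\alpha \;=\; T_{J^{-\Half}} w_\alpha + T_{w_\alpha}\bigl(J^{-\Half}-1\bigr) + \Pi\bigl(J^{-\Half}-1,\, w_\alpha\bigr),
\end{equation*}
then use the commutator identity $\partial_\alpha T_a = T_a\partial_\alpha + T_{a_\alpha}$ on the leading paraproduct to extract $T_{J^{-\Half}} w_{\alpha\alpha}$ together with the correction $T_{(J^{-\Half})_\alpha}w_\alpha$. Since $J^{-\Half}-1$ is a smooth function of $\W,\bar\W$ vanishing at the origin, the Moser estimates give $\|J^{-\Half}-1\|_{C^{1+\epsilon}_*}\lesssim_\CalAZ \CalAO$, and combined with the paraproduct bounds from Appendix \ref{s:Norms} this places every remaining piece, after differentiation in $\alpha$, into $E$.

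For $L_2$, the Zygmund bound \eqref{CBound} immediately yields $\|cw_\alpha\|_{L^2}\leq \|c\|_{L^\infty}\|w\|_{H^1}\lesssim_\CalAZ \CalAO\|w\|_{H^1}$. For $L_3$, I would differentiate the representation \eqref{nPCRepresentation} of $\nP c$ to obtain
\begin{equation*}
-i\,\nP c_\alpha \;=\; -T_{J^{-\Half}(1-Y)}\W_{\alpha\alpha} \;-\; T_{(J^{-\Half}(1-Y))_\alpha}\W_\alpha \;-\; iK_\alpha,
\end{equation*}
and observe that multiplication of the last two summands by $w$ yields an $L^2$-error, using the $\mathcal{A}_1^2$-bound on $K$, Moser on the smooth factor $J^{-\Half}(1-Y)$, and the one-dimensional Sobolev embedding $H^1\hookrightarrow L^\infty$ for $w$. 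Finally I would paralinearize the surviving product $\bigl(T_{J^{-\Half}(1-Y)}\W_{\alpha\alpha}\bigr)\cdot w$ by Bony and retain the low-high paraproduct $T_w T_{J^{-\Half}(1-Y)}\W_{\alpha\alpha}$; the remaining $T_f w$ and $\Pi(f,w)$ pieces, where $f=T_{J^{-\Half}(1-Y)}\W_{\alpha\alpha}$, go into $E$ because $f\in C^{-1+\epsilon}_*$ with norm $\lesssim_\CalAZ \CalAO$ (its symbol lies in $L^\infty$ and it acts on $\W_{\alpha\alpha}\in C^{-1+\epsilon}_*$) while $w\in H^1$.

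The main obstacle is the bookkeeping of numerous residual paraproducts and commutators, and consistently matching the symbol regularity ($L^\infty$ or $C^\epsilon_*$, sometimes $C^{-1+\epsilon}_*$) of each to the Sobolev or Zygmund regularity available on its argument via the control norms $\CalAZ$ and $\CalAO$, so that every residual is absorbed into $E$ with the uniform bound $\|E\|_{L^2}\lesssim_\CalAZ \CalAO\|w\|_{H^1}$.
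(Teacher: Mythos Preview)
Your approach is essentially the same as the paper's: split $L=L_1+L_2+L_3$, extract $T_{J^{-\Half}}w_{\alpha\alpha}$ from $L_1$ via the paraproduct decomposition of $J^{-\Half}w_\alpha$, discard $L_2$ via \eqref{CBound}, and differentiate \eqref{nPCRepresentation} for $L_3$ before peeling off the $T_w$ paraproduct. One small remark: the Sobolev embedding $H^1\hookrightarrow L^\infty$ for $w$ is unnecessary (and slightly misleading) in the $L_3$ step, since the residual factors $T_{(J^{-\Half}(1-Y))_\alpha}\W_\alpha$ and $K_\alpha$ already lie in $C^\epsilon_*\subset L^\infty$ with norm $\lesssim_\CalAZ\mathcal{A}_1^2$, so one simply uses $\|(\cdot)w\|_{L^2}\le\|\cdot\|_{L^\infty}\|w\|_{L^2}$ instead.
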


\begin{proof}
For the first term of $Lw$, we write
\begin{equation*}
    \partial_\alpha(J^{-\Half}w_\alpha) = J^{-\Half}w_{\alpha \alpha} + (J^{-\Half})_\alpha w_\alpha =  T_{J^{-\Half}}w_{\alpha \alpha} + E.
\end{equation*}
For the second term of $Lw$, we apply the bound for $c$ \eqref{CBound} to write
\begin{equation*}
    \|ic w_\alpha \|_{L^2} \lesssim \|c\|_{L^\infty} \|w_\alpha \|_{L^2} \lesssim \CalAO \|w\|_{H^1},
\end{equation*}
so that this term can be put into the remainder term $E$.

As for the third term of $Lw$, by \eqref{nPCRepresentation}, 
\begin{equation*}
    -i \nP c_\alpha = -\partial_\alpha T_{J^{-\Half}(1-Y)}\W_\alpha + K_1 = - T_{J^{-\Half}(1-Y)}\W_{\alpha \alpha} + K_1, \quad \|K_1\|_{C^\epsilon_{*}}\lesssim_\CalAZ \mathcal{A}_1.
\end{equation*}
Hence, we can write for the last term
\begin{equation*}
-i \nP c_\alpha w = - T_{J^{-\Half}(1-Y)}\W_{\alpha \alpha}w + E = - T_w  T_{J^{-\Half}(1-Y)}\W_{\alpha \alpha} + E.
\end{equation*}
Putting these three terms of $Lw$ together, we obtain the leading term of $Lw$.
\end{proof}

\subsection{Leading terms of para-material derivatives} 
The material derivative $D_t = \partial_t + b \partial_\alpha$ is very important in the water wave system.
At the paradifferential level, it is replaced by the para-material derivative $T_{D_t} = \partial_t + T_b \partial_\alpha$, see \cite{ai2023dimensional}.
In the following, we compute the para-material derivatives of  various functions. This are similar to the methodology developed in \cite{ai2023dimensional} with the difference that we estimate errors using  the control norm $\mathcal{A}_1$, as opposed to $\mathcal{A}_{\frac{1}{2}}$.

We first consider the expressions for para-material derivatives of $W$,  $\W$ and $Y$.

\begin{lemma} 
Let $s>0$.
\begin{enumerate}
\item The unknown $W$ satisfies the paradifferential equation
\begin{equation}
  T_{D_t}W = -T_{1+W_\alpha} \nP[(1-\bar{Y})R]-\nP\Pi(W_\alpha, b). \label{ParaW}
\end{equation}
\item The leading term of the para-material derivative of $W$ is given by
\begin{equation}
      T_{D_t} W +T_{1+\W}T_{1-\bar{Y}}R = G,  \label{WParaMaterial}
\end{equation}
where the source term $G$  satisfies the Sobolev bound
\begin{equation*}
    \|G\|_{H^s} \lesssim_\CalAZ \CalAO \|(\W,R)\|_{\mathcal{H}^{s-1}}, \quad \| G\|_{C_{*}^{\frac{3}{2}}} \lesssim_\CalAZ \mathcal{A}^2_1.
\end{equation*}
\item The unknown $\W$ satisfies the paradifferential equation
\begin{equation}
       T_{D_t}\W  + T_{(1+\W)(1-\bar{Y})}R_\alpha = G_1,  \label{WParaMat}
\end{equation}
where the source term $G_1$  satisfies bounds
\begin{equation*}
    \|G_1\|_{H^s} \lesssim_\CalAZ \CalAO \|(\W,R)\|_{\mathcal{H}^{s}}, \quad \|G_1\|_{C_{*}^\Half} \lesssim_\CalAZ \mathcal{A}^2_1.
\end{equation*}
\end{enumerate}
\end{lemma}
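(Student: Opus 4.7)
The plan is to treat this as an algebraic rearrangement of the first equation of \eqref{e:WR} into paradifferential form, with the remainder bounded by the paraproduct estimates from Section~\ref{s:Bounds} and Appendix~\ref{s:Norms}. The cornerstone is the identity
\[
b - F \;=\; \bar{R}(1-Y),
\]
which I would derive by substituting $\tfrac{Q_\alpha}{J} = (1-\bar Y)R$ and $\tfrac{\bar Q_\alpha}{J} = (1-Y)\bar R$ into the definitions of $b$ and $F$ and using $\nP + \bar{\nP} = \mathrm{I}$.

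For part (1), I would start from $W_t = -F(1+W_\alpha)$ and add $T_b W_\alpha$ to both sides, so that $T_{D_t}W$ appears on the left. Applying the Bony decomposition to $FW_\alpha$ and combining $T_{b-F} = T_{\bar R(1-Y)}$ gives the intermediate form
\[
T_{D_t}W = -F + T_{\bar R(1-Y)}W_\alpha - T_{W_\alpha}F - \Pi(F, W_\alpha).
\]
Next, writing $F = \nP[(1-\bar Y)R] - \nP[(1-Y)\bar R]$ and using that $T_{D_t}W$ is holomorphic, the anti-holomorphic contributions cancel and the remaining terms collapse into the claimed $-T_{1+W_\alpha}\nP[(1-\bar Y)R] - \nP\Pi(W_\alpha, b)$.

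For part (2), noting $T_{1+W_\alpha} = T_{1+\W}$, I would subtract $T_{1+\W}T_{1-\bar Y}R$ from (1) and Bony-decompose $(1-\bar Y)R = T_{1-\bar Y}R + T_R(1-\bar Y) + \Pi(1-\bar Y, R)$ to obtain
\[
G = T_{1+\W}\bigl(\nP T_R(1-\bar Y) + \nP\Pi(1-\bar Y, R) - \bar{\nP}\, T_{1-\bar Y}R\bigr) - \nP\Pi(\W, b).
\]
The first two pieces inside the parenthesis are controlled in $H^s$ and $C^{3/2}_{*}$ by \eqref{HCHEstimate}, \eqref{HsCmStar}, \eqref{CCCEstimate} combined with the $Y$-bounds \eqref{YMoser}; the anti-holomorphic projection $\bar{\nP}\, T_{1-\bar Y}R$ is handled by a frequency-localized commutator estimate exploiting that $R$ is holomorphic; and $\nP\Pi(\W, b)$ is bounded via \eqref{BHsEst} together with \eqref{HCHEstimate}.

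For part (3), I would differentiate (2) in $\alpha$ and use $[\partial_\alpha, T_{D_t}] = T_{b_\alpha}\partial_\alpha$ to obtain $T_{D_t}\W = \partial_\alpha T_{D_t}W - T_{b_\alpha}\W$. Substituting (2), the paraproduct composition formula $T_{1+\W}T_{1-\bar Y} = T_{(1+\W)(1-\bar Y)} + (\text{order}~{-}1)$, and the Leibniz rule $\partial_\alpha T_a R = T_a R_\alpha + T_{a_\alpha}R$ produces the leading term $-T_{(1+\W)(1-\bar Y)}R_\alpha$; the remaining pieces $T_{\partial_\alpha[(1+\W)(1-\bar Y)]}R$, the order~$-1$ remainder applied to $R_\alpha$, $T_{b_\alpha}\W$, and $\partial_\alpha G$ all go into $G_1$ via the Sobolev and Zygmund bounds on $b$, $Y$, and $\W$ from Section~\ref{s:Bounds}. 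The \emph{main obstacle} is the bookkeeping of holomorphic versus anti-holomorphic projections, and specifically the control of $\bar{\nP}\, T_{1-\bar Y}R$ from part (2): the fact that this projection would vanish for constant $1-\bar Y$ is essential, and only a careful commutator-type estimate extracting an extra derivative from the anti-holomorphic factor keeps the estimate consistent with dependence on the smaller control norm $\CalAO$.
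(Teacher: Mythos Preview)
Your outline is essentially the paper's proof—rewrite the $W$-equation in paradifferential form, then differentiate—and part (3) matches exactly. The paper starts from the equivalent identity $W_t + b(1+W_\alpha) = \bar R$ (your relation $b-F=(1-Y)\bar R$ multiplied by $1+W_\alpha$), which streamlines part (1).

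One correction worth making: what you flag as the ``main obstacle'' is a phantom. By the frequency-support convention of the paraproduct, the output frequency of $T_{1-\bar Y}R$ lies in a small neighborhood of the (negative) frequency of $R$, so $\bar{\nP}\,T_{1-\bar Y}R=0$ \emph{identically}; likewise the output of $T_R\bar Y$ is near the (positive) frequency of $\bar Y$, so $\nP T_R(1-\bar Y)=-\nP T_R\bar Y=0$. No commutator estimate is needed. With those two terms removed (and the sign on your parenthesized block corrected), your $G$ collapses to the paper's
\[
G \;=\; T_{1+\W}\,\nP\Pi(\bar Y,R)\;-\;\nP\Pi(\W,b),
\]
just two balanced pieces bounded directly by \eqref{HCHEstimate}, \eqref{CCCEstimate}, \eqref{YMoser}, and \eqref{BCOneStar}. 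The holomorphic structure does the work exactly, not merely to leading order.
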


\begin{proof}
The first equation of \eqref{e:CWW} can be rewritten as
\begin{equation*}
 W_t + b(1+ W_\alpha)  = \bar{R}, 
\end{equation*}
so that it becomes the paradifferential equation
 \begin{equation*}
     W_t + T_b \partial_\alpha W = -T_{1+W_\alpha} b -\Pi(W_\alpha,b)+ \bar{R}.
 \end{equation*}
 After plugging in the expression of $b$ and applying the holomorphic projection $\nP$, we can eliminate the anti-holomorphic portion and obtain the paradifferential equation \eqref{ParaW}.   

 For the leading term of the para-material derivative of $W$, we write
\begin{equation}
T_{D_t }W  +T_{1+\W}T_{1-\bar{Y}}R = T_{1+\W}\nP\Pi(\bar{Y}, R) - \nP\Pi(\W, b) .      \label{WFormula}
\end{equation}
To estimate the right-hand side of \eqref{WFormula}, we compute
\begin{align*}
&\|T_{1+\W}\nP\Pi(\bar{Y}, R) \|_{H^s} \lesssim_\CalAZ \|\Pi(\bar{Y},R)\|_{H^s} \lesssim_\CalAZ \|Y \|_{C^{1}_{*}} \|R \|_{H^{s-1}}\lesssim_\CalAZ \CalAO \|R \|_{H^{s-1}}, \\
&\|\nP\Pi(\W, b) \|_{H^s} \lesssim \| \W\|_{H^{s-\frac{1}{2}}}\|b\|_{C^{\frac{1}{2}}_{*}} \lesssim_\CalAZ \CalAO \| \W\|_{H^{s-\frac{1}{2}}}, \\
&\|T_{1+\W}\nP\Pi(\bar{Y}, R) \|_{C_{*}^{\frac{3}{2}}} \lesssim_\CalAZ \|\Pi(\bar{Y},R)\|_{C_{*}^{\frac{3}{2}}} \lesssim_\CalAZ \|Y \|_{C^{1}_{*}} \|R \|_{C_{*}^{\frac{1}{2}}}\lesssim_\CalAZ \mathcal{A}^2_1, \\
&\|\nP\Pi(\W, b) \|_{C_{*}^{\frac{3}{2}}} \lesssim \| \W\|_{C_{*}^{1}}\|b\|_{C^{\frac{1}{2}}_{*}} \lesssim_\CalAZ \mathcal{A}^2_1.
\end{align*}
Hence, the right-hand side of \eqref{WFormula} can be absorbed into the source term $G$.

Differentiating the equation \eqref{WParaMaterial}, one gets
\begin{equation*}
T_{D_t}\W + T_{(1+\W)(1-\bar{Y})} R_\alpha =  (T_{(1+\W)(1-\bar{Y})}-T_{1+\W}T_{1-\bar{Y}})R_\alpha -T_{\W_\alpha}T_{1-\bar{Y}}R+ T_{1+\W}T_{\bar{Y}_\alpha}R -T_{b_\alpha}\W   + \partial_\alpha G.
\end{equation*}
We use \eqref{ParaProducts}, \eqref{HsCmStar}, and \eqref{BCOneStar} to estimate the Sobolev bounds of the right-hand side of above equation,
\begin{align*}
&\| (T_{1+\W}T_{1-\bar{Y}}-T_{(1+\W)(1-\bar{Y})})R_\alpha \|_{H^s} \lesssim_\CalAZ \CalAO \|R\|_{H^{s}},\\
&\|T_{\W_\alpha}T_{1-\bar{Y}}R \|_{H^s} + \|T_{1+\W}T_{\bar{Y}_\alpha}R \|_{H^s} \lesssim_\CalAZ \CalAO \|R \|_{H^s},\\
&\|T_{b_\alpha}\W \|_{H^s}\lesssim \|b\|_{C^{\Half}_{*}}\| \W\|_{H^{s+\Half}} \lesssim_\CalAZ \CalAO \| \W\|_{H^{s+\Half}},\\
&\|\partial_\alpha G \|_{H^s}\leq \|G \|_{H^{s+1}} \lesssim_\CalAZ \CalAO \| (\W,R)\|_{\mathcal{H}^{s}}.
\end{align*}

As for the Zygmund bounds, we use \eqref{CompositionTwo}, \eqref{BCOneStar} and \eqref{CsCmStar} to write
\begin{align*}
&\| (T_{1+\W}T_{1-\bar{Y}}-T_{(1+\W)(1-\bar{Y})})R_\alpha \|_{C_{*}^\Half} \lesssim_\CalAZ \CalAO \|R\|_{C_{*}^\Half}  \lesssim_\CalAZ \mathcal{A}^2_1,\\
&\|T_{\W_\alpha}T_{1-\bar{Y}}R \|_{C_{*}^\Half} + \|T_{1+\W}T_{\bar{Y}_\alpha}R \|_{C_{*}^\Half} \lesssim_\CalAZ \CalAO \|R \|_{C_{*}^\Half}  \lesssim_\CalAZ \mathcal{A}^2_1,\\
&\|T_{b_\alpha}W \|_{C_{*}^\Half}\lesssim \|b_\alpha\|_{C^{-1}_{*}}\| W\|_{C_{*}^{\frac{3}{2}}}\lesssim \|b\|_{C^{\Half}_{*}}\| \W\|_{C_{*}^\Half} \lesssim_\CalAZ \CalAO \| \W\|_{C_{*}^\Half} \lesssim_\CalAZ \mathcal{A}^2_1,\\
&\|\partial_\alpha G \|_{C_{*}^\Half}\leq \|G \|_{C_{*}^{\frac{3}{2}}} \lesssim_\CalAZ \mathcal{A}^2_1.
\end{align*}
Therefore, these terms can be absorbed into the source term $G_1$.
\end{proof}

\begin{lemma}
 Leading term of the para-material derivative of $Y$ is:
\begin{equation*}
    T_{D_t} Y = -T_{|1-Y|^2} R_\alpha + G_2 =: G_3 +G_2, 
\end{equation*}
where $G_3$ and $G_2$ satisfy the  Zygmund bounds
\begin{equation*}
  \|G_3\|_{C^{-\frac{1}{2}}_{*}} \lesssim_\CalAZ \CalAO, \quad  \|G_2\|_{C^{\frac{1}{2}}_{*}}\lesssim_\CalAZ \mathcal{A}^2_1.
\end{equation*}
\end{lemma}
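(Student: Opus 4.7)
}
The plan is to do a pointwise chain-rule computation of $D_t Y$, exploit the algebraic identity $(1-Y)(1+\W)=1$ to obtain a clean nonlinear expression, and then pass to the paradifferential level via Bony's decomposition, placing every off-leading piece into $G_2$ and estimating it in $C^{\frac12}_*$ by $\mathcal{A}_1^2$.

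First, since $Y = 1-(1+\W)^{-1}$, the chain rule gives $Y_t = (1-Y)^2\W_t$ and $Y_\alpha = (1-Y)^2\W_\alpha$, hence $D_tY = (1-Y)^2 D_t\W$. The first equation of \eqref{e:WR} reads $D_t\W = -(1+\W)(1-\bar Y)R_\alpha + (1+\W)M$, after using $1-\bar Y = (1+\bar\W)^{-1}$. Combining these and using $(1-Y)(1+\W)=1$ twice, one obtains the pointwise identity
\[
D_tY \;=\; -\,|1-Y|^2\, R_\alpha \;+\; (1-Y)\,M.
\]

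Next, I would convert $D_t$ to $T_{D_t}$ by writing $T_{D_t}Y = D_tY - (b-T_b)Y_\alpha = D_tY - T_{Y_\alpha}b - \Pi(Y_\alpha,b)$, and Bony-decompose the main piece:
\[
|1-Y|^2 R_\alpha \;=\; T_{|1-Y|^2}R_\alpha \;+\; T_{R_\alpha}\bigl(|1-Y|^2-1\bigr) \;+\; \Pi\bigl(R_\alpha,\,|1-Y|^2-1\bigr),
\]
where subtracting the constant $1$ is cosmetic but lets Moser apply to $|1-Y|^2-1$. Collecting terms,
\[
T_{D_t}Y \;=\; -T_{|1-Y|^2}R_\alpha \;+\; G_2,
\]
with $G_2 = (1-Y)M - T_{Y_\alpha}b - \Pi(Y_\alpha,b) - T_{R_\alpha}(|1-Y|^2-1) - \Pi(R_\alpha,|1-Y|^2-1)$.

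The bound on $G_3=-T_{|1-Y|^2}R_\alpha$ in $C^{-\frac12}_*$ is immediate from the paraproduct estimate together with $\|R\|_{C^{\frac12}_*}\lesssim \CalAO$. For $G_2$ in $C^{\frac12}_*$, I would estimate each piece by $\mathcal{A}_1^2$ (up to $\CalAZ$-dependent constants): the $(1-Y)M$ term uses \eqref{MBound} directly; the two $b$-terms use $\|b\|_{C^{\frac12}_*}\lesssim_\CalAZ \CalAO \lesssim \mathcal{A}_1$ from \eqref{BCOneStar} together with $\|Y\|_{C^1_*}\lesssim_\CalAZ \mathcal{A}_1$ from \eqref{YMoser}; the two $R_\alpha$-terms use Moser \eqref{YMoser} applied to the smooth function $|1-Y|^2-1$ of $(Y,\bar Y)$ vanishing at the origin to get $\||1-Y|^2-1\|_{C^1_*}\lesssim_\CalAZ \mathcal{A}_1$, paired with $\|R_\alpha\|_{C^{-\frac12}_*}\lesssim \mathcal{A}_1$.

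The only subtle point is matching Zygmund exponents in the paraproduct and resonant bounds so that one factor of regularity is placed on $R_\alpha$ (at $C^{-\frac12}_*$) and one on the $Y$-factor (at $C^1_*$), summing to the target $C^{\frac12}_*$. This is exactly the balance that produces the desired quadratic control $\mathcal{A}_1^2$; the main obstacle is bookkeeping rather than any analytical difficulty, once the algebraic simplification $(1-Y)(1+\W)=1$ has cleaned up the main term.
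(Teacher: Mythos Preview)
Your proposal is correct and follows essentially the same route as the paper: derive the pointwise identity $D_t Y + |1-Y|^2 R_\alpha = (1-Y)M$ from the first equation of \eqref{e:WR}, convert to $T_{D_t}$ via Bony's decomposition, and then estimate each of the five remainder terms $-T_{R_\alpha}(|1-Y|^2-1)$, $-\Pi(R_\alpha,|1-Y|^2-1)$, $-T_{Y_\alpha}b$, $-\Pi(Y_\alpha,b)$, $(1-Y)M$ in $C^{\frac12}_*$ using \eqref{YMoser}, \eqref{BCOneStar}, and \eqref{MBound}. The only cosmetic difference is that you spell out the chain-rule derivation of the pointwise identity, whereas the paper simply states it.
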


\begin{proof}
 Observe that the first equation of \eqref{e:WR} can be expressed using $Y$ and $R$ as
\begin{equation*}
D_t Y + |1-Y|^2 R_\alpha = (1-Y)M. 
\end{equation*}
It can be rewritten as the paradifferential equation
\begin{equation*}
    T_{D_t}Y + T_{|1-Y|^2} R_\alpha = -T_{R_\alpha} (|1-Y|^2-1) - \Pi(R_\alpha, |1-Y|^2-1) - T_{Y_\alpha} b - \Pi(Y_\alpha, b) + (1-Y)M.
\end{equation*}
Using the bounds for the auxiliary functions $Y$ \eqref{YMoser}, $b$ \eqref{BCOneStar} and $M$ \eqref{MBound}, the right-hand side can be bounded by $\mathcal{A}^2_1$ in $C^{\frac{1}{2}}_{*}$ space, and these terms can be put into $G_2$.
The term $-T_{|1-Y|^2}R_\alpha$ can be put into $G_3$.
\end{proof}

We continue with the para-material derivative of $R$.
\begin{lemma}
The leading term of the para-material derivatives is given by
\begin{equation}
T_{D_t}R =  -iT_{J^{-\frac{1}{2}}(1-Y)^2}\W_{\alpha\alpha} +K, \label{RParaMat}
\end{equation}

where the error $K$ satisfies the estimate
\begin{equation*}
 \| K\|_{C_{*}^{\epsilon}} \lesssim_\CalAZ \CalAO+\mathcal{A}^2_1.
\end{equation*}
\end{lemma}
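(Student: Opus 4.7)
The plan is to start from the second equation of \eqref{e:WR}, isolate $D_t R = R_t + b R_\alpha$, and reduce both the material derivative correction and the surface-tension contribution to paradifferential form. With the shorthand $U := \W_\alpha/(J^{1/2}(1+\W))$ and $\sigma = 1$, moving everything other than $D_tR$ to the right gives
\begin{equation*}
D_t R \;=\; i\frac{g\W - a}{1+\W} \;-\; \frac{i}{1+\W}\nP U_\alpha \;+\; \frac{i}{1+\W}\nP \bar U_\alpha.
\end{equation*}
The definition \eqref{CDef} of $c$ reads $c = i(\bar U - U)$, so $\bar U - U = -ic$, and the two capillary contributions collapse into
\begin{equation*}
\frac{i}{1+\W}\,\nP(\bar U - U)_\alpha \;=\; \frac{1}{1+\W}\,\nP c_\alpha \;=\; (1-Y)\,\nP c_\alpha,
\end{equation*}
using also $1/(1+\W) = 1-Y$. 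Thus $D_t R = i(g\W - a)(1-Y) + (1-Y)\,\nP c_\alpha$.

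Next I would pass from $D_t$ to $T_{D_t}$ via the paraproduct identity $bR_\alpha = T_bR_\alpha + T_{R_\alpha}b + \Pi(R_\alpha,b)$, creating the correction $-T_{R_\alpha}b - \Pi(R_\alpha,b)$. Using \eqref{BCOneStar} together with the standard Zygmund paraproduct estimates from Appendix \ref{s:Norms}, each of these is bounded in $C_*^\epsilon$ by $\|R\|_{C_*^{1/2+\epsilon}}\|b\|_{C_*^{1/2}} \lesssim_{\CalAZ} \mathcal{A}_1^2$ and so is absorbed into $K$. The gravity term $ig\W(1-Y)$ is controlled in $C_*^\epsilon$ by $\lesssim_{\CalAZ}\CalAO$ and the frequency-shift term $-ia(1-Y)$ by $\lesssim_{\CalAZ}\mathcal{A}_1^2$ using \eqref{ACHalf}; both go into $K$ as well.

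To extract the leading symbol from the capillary piece, differentiate \eqref{nPCRepresentation} to obtain
\begin{equation*}
\nP c_\alpha \;=\; -i\,T_{J^{-1/2}(1-Y)}\W_{\alpha\alpha} \;-\; i\,T_{\partial_\alpha[J^{-1/2}(1-Y)]}\W_\alpha \;+\; K_\alpha,
\end{equation*}
where $\|K_\alpha\|_{C_*^\epsilon}\le\|K\|_{C_*^{1+\epsilon}}\lesssim_{\CalAZ}\mathcal{A}_1^2$ and the middle term is $\lesssim_{\CalAZ}\mathcal{A}_1^2$ by combining the Moser bound $\|J^{-1/2}(1-Y)-1\|_{C_*^{1+\epsilon}}\lesssim_{\CalAZ}\CalAO$ (cf.\ \eqref{CapillaryTerm}) with $\|\W_\alpha\|_{C_*^\epsilon}\lesssim\CalAO$. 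Multiplying by $1-Y$, I split the product as $(1-Y)T_{J^{-1/2}(1-Y)}\W_{\alpha\alpha} = T_{1-Y}T_{J^{-1/2}(1-Y)}\W_{\alpha\alpha} + T_{T_{J^{-1/2}(1-Y)}\W_{\alpha\alpha}}(1-Y) + \Pi\!\left(1-Y,\,T_{J^{-1/2}(1-Y)}\W_{\alpha\alpha}\right)$, collapse the paraproduct composition via the composition rule in Appendix \ref{s:Norms}, namely $T_{1-Y}T_{J^{-1/2}(1-Y)} = T_{J^{-1/2}(1-Y)^2} + \text{error}$, and absorb the three low-high/balanced leftovers. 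This yields $(1-Y)\nP c_\alpha = -iT_{J^{-1/2}(1-Y)^2}\W_{\alpha\alpha} + K''$ with $\|K''\|_{C_*^\epsilon}\lesssim_{\CalAZ}\mathcal{A}_1^2$, which assembled with the earlier pieces gives \eqref{RParaMat}.

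The main technical obstacle is the bookkeeping in this last step: since $\W_{\alpha\alpha}$ is only $C_*^{\epsilon-1}$, every rearrangement between $1-Y$, the symbol $J^{-1/2}(1-Y)$, and $\W_{\alpha\alpha}$ must be shown to gain one smooth derivative on a factor controlled by $\CalAO$, so that each leftover is no worse than $\CalAO\cdot\CalAO = \mathcal{A}_1^2$ in $C_*^\epsilon$. This requires some care with the balanced-paraproduct and composition estimates at exactly this regularity threshold, but introduces no new ideas beyond the paraproduct calculus collected in Appendix \ref{s:Norms} and the representation \eqref{nPCRepresentation} already established.
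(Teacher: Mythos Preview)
Your proposal is correct and follows essentially the same strategy as the paper. The only difference is organizational: the paper keeps the two capillary contributions $-i(1-Y)\nP\partial_\alpha[J^{-1/2}(1-Y)\W_\alpha]$ and $+i(1-Y)\nP\partial_\alpha[J^{-1/2}(1-\bar Y)\bar\W_\alpha]$ separate and reduces each using \eqref{CapillaryTerm}, whereas you first combine them into $(1-Y)\nP c_\alpha$ and then invoke the representation \eqref{nPCRepresentation}, which was itself proved from \eqref{CapillaryTerm}. The subsequent paraproduct decomposition of $(1-Y)T_{J^{-1/2}(1-Y)}\W_{\alpha\alpha}$ and the composition step $T_{1-Y}T_{J^{-1/2}(1-Y)}=T_{J^{-1/2}(1-Y)^2}+\text{error}$ are identical in both arguments, and your handling of the $b$-correction, gravity, and frequency-shift terms matches the paper line by line.
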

\begin{proof}
We rewrite the second equation of \eqref{e:WR} as
\begin{equation}
\begin{aligned}
T_{D_t}R &= igY-i\nP[a(1-Y)] - \nP T_{R_\alpha}b - \nP\Pi(R_\alpha, b) \\
&- i (1-Y)\nP\partial_\alpha\left[J^{-\Half}(1-Y) \W_{ \alpha}\right]    + i (1-Y) \nP\partial_\alpha\left[J^{-\Half}(1-\bar{Y}) \bar{\W}_{ \alpha}\right].
\end{aligned}    \label{RParaEqn}
\end{equation}
For the first and the second terms on the right-hand side of \eqref{RParaEqn}, using \eqref{ACHalf} and \eqref{YMoser},
\begin{equation*}
 \|gY \|_{C^\epsilon_{*}} + \| a\|_{C^\epsilon_{*}} + \| aY\|_{C^\epsilon_{*}} \lesssim \|Y \|_{C^\epsilon_{*}} +  \| a\|_{C^\epsilon_{*}} +  \| a\|_{C^\epsilon_{*}}\| Y\|_{C^\epsilon_{*}} \lesssim_\CalAZ \CalAO+\mathcal{A}^2_1.
\end{equation*}
For the third and the fourth terms on the right-hand side of \eqref{RParaEqn}, we use \eqref{CsCmStar} and \eqref{CCCEstimate}  to shift derivatives:
\begin{equation*}
\|T_{R_\alpha}b \|_{C_{*}^{\epsilon}} + \|\Pi(R_\alpha, b) \|_{C_{*}^{\epsilon}} \lesssim \|R_\alpha \|_{C^{\epsilon-\Half}_{*}}\|b\|_{C^{\Half}_{*}}\lesssim_\CalAZ \mathcal{A}^2_1.
\end{equation*}
Hence, the first four terms of \eqref{RParaEqn} can be put into $K$.

As for the two remaining capillary terms, using the relation \eqref{CapillaryTerm}, 
\begin{align*}
&\nP\partial_\alpha\left[J^{-\Half}(1-Y) \W_{ \alpha}\right] = \nP\partial_\alpha \left[T_{J^{-\frac{1}{2}}(1-Y)}\W_{\alpha} +K_1\right] =  T_{J^{-\frac{1}{2}}(1-Y)}\W_{\alpha\alpha} +K,  \\
&\nP\partial_\alpha\left[J^{-\Half}(1-\bar{Y}) \bar{\W}_{ \alpha}\right] = \nP\partial_\alpha \left[T_{J^{-\frac{1}{2}}(1-\bar{Y})}\bar{\W}_{\alpha} +K_1\right] = K,
\end{align*}
where on the first line we use the estimate 
\begin{equation*}
\left\|T_{\left(J^{-\frac{1}{2}}(1-Y)\right)_\alpha}\W_{\alpha} \right\|_{C_{*}^{\epsilon}} \lesssim \|J^{-\frac{1}{2}}(1-Y)-1 \|_{C_{*}^{1+\epsilon}} \|\W \|_{C_{*}^{1+\epsilon}} \lesssim_\CalAZ \mathcal{A}^2_1, 
\end{equation*}
 so that the term $T_{\left(J^{-\frac{1}{2}}(1-Y)\right)_\alpha}\W_{\alpha}$ belongs to $K$.

Finally, we have
\begin{equation*}
(1-Y)\nP\left[J^{-\Half}(1-Y) \W_{ \alpha}\right]_\alpha = (1-Y) T_{J^{-\frac{1}{2}}(1-Y)}\W_{\alpha\alpha} +K  = T_{1-Y}T_{J^{-\frac{1}{2}}(1-Y)}\W_{\alpha\alpha} +K,
\end{equation*}
since the balanced and high-low contributions can be put into $K$ using estimates \eqref{CCCEstimate} and \eqref{CsCmStar}. 
Furthermore, one can combine two paradifferential operators $T_{1-Y}$ and $T_{J^{-\frac{1}{2}}(1-Y)}$ using paraproduct \eqref{ParaProducts}.
\begin{equation*}
T_{1-Y}T_{J^{-\frac{1}{2}}(1-Y)}\W_{\alpha\alpha} =  T_{J^{-\frac{1}{2}}(1-Y)^2}\W_{\alpha\alpha} +K.
\end{equation*}
Combining the estimates for each term on the right-hand side of \eqref{RParaEqn}, we obtain the leading term of the para-material derivative of $R$ \eqref{RParaMat}.
\end{proof}

Next, we compute the para-material derivative of $J^s$ for $s\neq 0$.
\begin{lemma} \label{t:JsParaMat}
The para-material derivative of $J^s$ satisfies the relation 
\begin{equation}
T_{D_t}J^s = -sJ^s b_\alpha +E_1, \quad   \| E_1\|_{C_{*}^{\frac{1}{2}}} \lesssim_\CalAZ \mathcal{A}^2_1.\label{JsParaMat}
\end{equation}
The same expression also holds for the material derivative of $J^s$.
One can further write 
\begin{equation}
\partial_t J^s =  -s T_{J^s(1-\bar{Y})}R_\alpha -s T_{J^s (1-Y)}\bar{R}_\alpha +E_2, \quad   \| E_2\|_{L^\infty} \lesssim_\CalAZ 1+\mathcal{A}^2_1. \label{JsTimeDerivative}
\end{equation}
\end{lemma}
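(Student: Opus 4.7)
The plan is to first derive a clean material-derivative identity $D_tJ = -Jb_\alpha + JM$ and then pass to the para-material form and the time-derivative form by standard paralinearization. Since $J = (1+\W)(1+\bar{\W})$ and the first equation of \eqref{e:WR} gives $D_t\W = -(1+\W)(1-\bar{Y})R_\alpha + (1+\W)M$, the product rule yields
\begin{equation*}
D_tJ = -(1+\W)R_\alpha - (1+\bar{\W})\bar{R}_\alpha + 2JM.
\end{equation*}
On the other hand, the definition \eqref{DefM} of $M$ rearranges to $b_\alpha = (1-\bar{Y})R_\alpha + (1-Y)\bar{R}_\alpha - M$; multiplying by $J$ and using the identity $J(1-\bar{Y}) = 1+\W$ produces $Jb_\alpha = (1+\W)R_\alpha + (1+\bar{\W})\bar{R}_\alpha - JM$, and comparison gives $D_tJ = -Jb_\alpha + JM$. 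Multiplying by $sJ^{s-1}$ yields the leading identity $D_tJ^s = -sJ^sb_\alpha + sJ^sM$, and the $C^{\Half}_*$ bound on $sJ^sM$ is immediate from \eqref{MBound} combined with Moser-type control of $J^s$ under $\CalAZ$ smallness. This already proves the material-derivative version of \eqref{JsParaMat}.

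To upgrade from $D_t$ to $T_{D_t}$, I expand the defect
\begin{equation*}
T_{D_t}J^s - D_tJ^s = -(b - T_b)\partial_\alpha J^s = -T_{\partial_\alpha J^s}b - \Pi(\partial_\alpha J^s, b)
\end{equation*}
and bound each piece in $C^{\Half}_*$. The key inputs are $\|\partial_\alpha J^s\|_{L^\infty} \lesssim_{\CalAZ}\CalAO$ (Moser together with \eqref{YMoser}) and $\|b\|_{C^{\Half}_*} \lesssim_{\CalAZ}\CalAO$ from \eqref{BCOneStar}; then the paraproduct estimates and the balanced paraproduct estimate gathered in the appendix place both correction terms into $C^{\Half}_*$ with norm $\lesssim_{\CalAZ}\CalAO^2$, completing \eqref{JsParaMat}.

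For the time-derivative identity \eqref{JsTimeDerivative}, I start from $\partial_tJ^s = D_tJ^s - b\partial_\alpha J^s$ and substitute the expanded form
\begin{equation*}
D_tJ^s = -sJ^s(1-\bar{Y})R_\alpha - sJ^s(1-Y)\bar{R}_\alpha + 2sJ^sM
\end{equation*}
read off directly from the formula for $D_tJ$. Each product is paralinearized, for instance $J^s(1-\bar{Y})R_\alpha = T_{J^s(1-\bar{Y})}R_\alpha + T_{R_\alpha}(J^s(1-\bar{Y})) + \Pi(R_\alpha, J^s(1-\bar{Y}))$, keeping the first piece as the displayed leading term and absorbing the remaining low-frequency and balanced pieces, together with $2sJ^sM$ and $-b\partial_\alpha J^s$, into $E_2$. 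The main obstacle is making sure every contribution to $E_2$ genuinely lands in $L^\infty$ (there is no room to spare, since the output space is $L^\infty$, not any positive Zygmund norm), which forces the pairing of the rough factor $R_\alpha \in C^{\epsilon-\Half}_*$ with the smoother $J^s(1-\bar{Y}) \in C^1_*$; the output then sits in $C^{\Half+\epsilon}_* \hookrightarrow L^\infty$ with norm $\lesssim_{\CalAZ}\CalAO^2$. The $JM$ and $b\partial_\alpha J^s$ pieces are similarly bounded in $L^\infty$ by $\CalAO^2$ using \eqref{MBound} and \eqref{BCOneStar}, yielding the claimed bound $\lesssim_{\CalAZ}(1+\CalAO^2)$.
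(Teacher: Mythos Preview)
Your proposal is correct and follows essentially the same route as the paper: both use the chain rule on $J^s$ together with the first equation of \eqref{e:WR} to obtain the expression $-sJ^s[(1-\bar Y)R_\alpha+(1-Y)\bar R_\alpha]$, then invoke the identity $b_\alpha = (1-\bar Y)R_\alpha+(1-Y)\bar R_\alpha - M$ and the bound \eqref{MBound} to reach $-sJ^sb_\alpha + E_1$, and finally paralinearize the leading products to obtain \eqref{JsTimeDerivative}. The only organizational difference is that you first establish the exact material-derivative identity $D_tJ^s = -sJ^sb_\alpha + sJ^sM$ and then bound the defect $T_{D_t}J^s - D_tJ^s = -T_{\partial_\alpha J^s}b - \Pi(\partial_\alpha J^s,b)$ separately, whereas the paper works with $T_{D_t}$ from the outset and absorbs the analogous correction terms $-T_{\W_\alpha}b - \Pi(\W_\alpha,b)$ at the level of $T_{D_t}\W$; the estimates involved are the same.
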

\begin{proof}
Direct computation gives
\begin{equation*}
T_{D_t}J^s = sJ^{s}(1-Y)T_{D_t}\W + sJ^{s}(1-\bar{Y})T_{D_t}\bar{\W}.
\end{equation*}
For $T_{D_t}\W$, we use the first equation of \eqref{e:WR} to write
\begin{align*}
T_{D_t}\W =& -(1+\W)(1-\bar{Y})R_\alpha -T_{\W_\alpha}b -\Pi(\W_\alpha,b)  +(1+\W)M \\
=& -(1+\W)(1-\bar{Y})R_\alpha +E_1.
\end{align*}
Here, as in the derivation of \eqref{WParaMat}, $-T_{\W_\alpha}b -\Pi(\W_\alpha,b)  +(1+\W)M$ can be placed into the error term $E_1$.
As a result of the fact $(1+\W)(1-Y)=1$, we have 
\begin{equation*}
T_{D_t}J^s = -sJ^{s}[(1-\bar{Y})R_\alpha + (1-Y)\bar{R}_\alpha]+E_1.
\end{equation*}
Using the identity \eqref{DefM} and the estimate for $M$ \eqref{MBound}, 
\begin{equation*}
 T_{D_t}J^s  =  -sJ^{s}(b_\alpha +M)+E_1 = -sJ^{s}b_\alpha +E_1.
\end{equation*}
This gives the para-material derivative of $J^s$ for $s\neq 0$.
Similarly, for the material derivative of $J^s$, 
\begin{equation*}
D_t J^s = sJ^{s}(1-Y)D_t\W + sJ^{s}(1-\bar{Y}) D_t\bar{\W}.    
\end{equation*}
Again applying \eqref{e:WR} and using the same estimates as above, we get that $D_t J^s$ has the same expression as $T_{D_t} J^s$.

The remainder term $E_1$ belongs to $E_2$, and $T_{b}\partial_\alpha J^s$ can also put into $E_2$ by Moser type estimate for $J^s$ \eqref{MoserTwo}:
\begin{equation*}
    \| T_b \partial_\alpha J^s\|_{L^\infty} \lesssim \|b\|_{C^\Half_{*}} \|J^s-1\|_{C^1_{*}} \lesssim_\CalAZ  \mathcal{A}_1^2.
\end{equation*}
We can also write
\begin{equation*}
 -sJ^{s}[(1-\bar{Y})R_\alpha + (1-Y)\bar{R}_\alpha] =  -s T_{J^s(1-\bar{Y})}R_\alpha -s T_{J^s (1-Y)}\bar{R}_\alpha + E_2, 
\end{equation*}
where the high-low and the balanced portion can be put into $E_2$. 
This gives the proof of \eqref{JsTimeDerivative} for the time derivative of $J^s$.
\end{proof}

Finally, we define the operator $\mathcal{L}: = \partial_\alpha T_{J^{-\Half}}\partial_\alpha$, which will appear later in the paradifferential linearized system \eqref{ParadifferentialLinearEqn}.
We then compute the leading term of the commutator $[T_{D_t}, \mathcal{L}]$.
\begin{lemma}
The leading part of the commutator $[T_{D_t}, \mathcal{L}]$ is given by
\begin{equation}
[T_{D_t}, \mathcal{L}] = -\Half \partial_\alpha T_{J^{-\Half}b_\alpha} \partial_\alpha + \partial_\alpha \mathcal{E} = -\partial_\alpha T_{\Re T_{ J^{-\Half}(1-\bar{Y})}R_\alpha}\partial_\alpha + \partial_\alpha \mathcal{E}, \label{CommutatorBAlpha}
\end{equation}
where the operator $\mathcal{E}$ satisfies the estimate
\begin{equation*}
 \|\mathcal{E}\|_{H^{s+1} \rightarrow H^s} \lesssim_\CalAZ \mathcal{A}_1^2.
\end{equation*}
\end{lemma}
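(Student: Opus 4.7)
The plan is to expand the commutator by the Leibniz rule for triple products and to identify the leading second-order paradifferential part using Lemma~\ref{t:JsParaMat} applied with $s=-\Half$, which gives $T_{D_t}J^{-\Half}=\Half J^{-\Half}b_\alpha+E_1$ with $\|E_1\|_{C^{\Half}_{*}}\lesssim_\CalAZ \CalAO^2$. Writing $f=J^{-\Half}$ and using the elementary identity $[T_{D_t},\partial_\alpha]=-T_{b_\alpha}\partial_\alpha$, I would split
\begin{equation*}
[T_{D_t},\mathcal{L}] = [T_{D_t},\partial_\alpha]\,T_f\partial_\alpha \;+\; \partial_\alpha\,[T_{D_t},T_f]\,\partial_\alpha \;+\; \partial_\alpha T_f\,[T_{D_t},\partial_\alpha].
\end{equation*}
The middle piece is handled by the paradifferential commutator rule $[T_{D_t},T_f]=T_{T_{D_t}f}+R$, where $R$ collects the remainders coming from $T_b T_{f_\alpha}-T_{bf_\alpha}$ and from $[T_b,T_f]\partial_\alpha$; both are controlled by the composition estimates of Appendix~\ref{s:Norms} together with \eqref{BCOneStar} and \eqref{YMoser}. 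Substituting the formula for $T_{D_t}f$ produces the main contribution $\partial_\alpha T_{\Half fb_\alpha}\partial_\alpha$, plus $\partial_\alpha T_{E_1}\partial_\alpha+\partial_\alpha R\partial_\alpha$, which already has the good form $\partial_\alpha \mathcal{E}$ by the estimate on $E_1$.

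For the two outer pieces $-T_{b_\alpha}\partial_\alpha T_f\partial_\alpha$ and $-\partial_\alpha T_f T_{b_\alpha}\partial_\alpha$ I would recast both in the sandwiched form $\partial_\alpha(\cdot)\partial_\alpha$, using on the one hand the product-rule identity $T_{b_\alpha}\partial_\alpha=\partial_\alpha T_{b_\alpha}-T_{b_{\alpha\alpha}}$, and on the other hand the paraproduct composition laws $T_{b_\alpha}T_f\equiv T_{b_\alpha f}$ and $T_f T_{b_\alpha}\equiv T_{fb_\alpha}$ modulo strictly lower-order errors. These rewritings combine with the $\Half\partial_\alpha T_{fb_\alpha}\partial_\alpha$ coming from the middle piece to yield the principal operator $-\Half\partial_\alpha T_{J^{-\Half}b_\alpha}\partial_\alpha$, while every residue (the $T_{b_{\alpha\alpha}}T_f\partial_\alpha$ contribution processed once more via $T_{b_{\alpha\alpha}} = \partial_\alpha T_{b_\alpha} - T_{b_\alpha}\partial_\alpha$, plus the paraproduct composition errors) is of net order one and can be absorbed into $\partial_\alpha \mathcal{E}$ through Appendix~\ref{s:Norms}.

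For the second equality in \eqref{CommutatorBAlpha}, I would paralinearize $\Half b_\alpha$ directly. Differentiating $b=2\Re\nP[(1-\bar Y)R]$ and applying Lemma~\ref{t:Paralinear} together with \eqref{CCCEstimate} and \eqref{CsLInfty} gives $\Half b_\alpha = \Re T_{1-\bar Y}R_\alpha + r$ with $\|r\|_{L^\infty}\lesssim_\CalAZ \CalAO^2$, where $r$ collects the low-high and balanced paraproduct contributions of $(1-\bar Y)R_\alpha$ and the $T_R \bar Y_\alpha$-type term. Multiplying by $J^{-\Half}$ and using the composition $T_{J^{-\Half}}T_{1-\bar Y}\equiv T_{J^{-\Half}(1-\bar Y)}$ then yields $\Half J^{-\Half}b_\alpha = \Re T_{J^{-\Half}(1-\bar Y)}R_\alpha + r'$, and sandwiching between $\partial_\alpha$'s places $r'$ into $\partial_\alpha \mathcal{E}$.

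The main obstacle is the careful bookkeeping: one needs to verify that every residue produced by paradifferential compositions, by the remainder $E_1$ of Lemma~\ref{t:JsParaMat}, and by the paralinearization of $b_\alpha$ fits cleanly into $\partial_\alpha \mathcal{E}$ with $\|\mathcal{E}\|_{H^{s+1}\to H^s}\lesssim_\CalAZ \CalAO^2$. The analytic subtlety is that $b_\alpha$ lives only in $C^{-\Half+\epsilon}_{*}$, so pieces such as $T_{b_{\alpha\alpha}}T_f\partial_\alpha$ must be treated by preserving the sandwiched $\partial_\alpha\cdot\partial_\alpha$ structure, or by recognizing the relevant residues as iterated paraproducts of $R_\alpha$ whose mapping properties inherit the quadratic control $\CalAO^2$ from the Zygmund and Sobolev bounds of Section~\ref{s:Bounds} together with \eqref{JsParaMat} and \eqref{BCOneStar}--\eqref{MBound}.
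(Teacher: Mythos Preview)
Your triple-Leibniz decomposition
\[
[T_{D_t},\mathcal{L}]=[T_{D_t},\partial_\alpha]\,T_f\partial_\alpha+\partial_\alpha[T_{D_t},T_f]\partial_\alpha+\partial_\alpha T_f[T_{D_t},\partial_\alpha]
\]
is correct, and your treatment of the middle and third pieces is essentially what the paper does. The paper is more compact: it asserts at the outset the equality
$[T_{D_t},\mathcal{L}]u=\partial_\alpha(T_{T_{D_t}J^{-\Half}}u_\alpha)-\partial_\alpha(T_{J^{-\Half}}T_{b_\alpha}u_\alpha)$,
which, in your language, keeps only the middle and third pieces.

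The genuine gap in your argument is the handling of the first piece $-T_{b_\alpha}\partial_\alpha T_f\partial_\alpha$. You propose to write $T_{b_\alpha}\partial_\alpha=\partial_\alpha T_{b_\alpha}-T_{b_{\alpha\alpha}}$ and then to reprocess the residue $T_{b_{\alpha\alpha}}T_f\partial_\alpha$ via $T_{b_{\alpha\alpha}}=\partial_\alpha T_{b_\alpha}-T_{b_\alpha}\partial_\alpha$; but this second identity is just the first one read backwards, so the maneuver is circular and returns the original first piece. Concretely, at leading order (set $f=1$) the first piece is $-T_{b_\alpha}\partial_\alpha^2$, an operator of the \emph{same} order as the claimed principal part and only \emph{linear} in $\CalAO$; it cannot be absorbed into $\partial_\alpha\mathcal{E}$ with the quadratic bound $\|\mathcal{E}\|_{H^{s+1}\to H^s}\lesssim_{\CalAZ}\CalAO^2$. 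If you carry out your own bookkeeping honestly, the coefficients of $\partial_\alpha T_{fb_\alpha}\partial_\alpha$ coming from the three pieces are $-1,+\tfrac12,-1$, which sum to $-\tfrac32$, not $-\tfrac12$. In the paper this same term is suppressed by the opening identity, so your difficulty is not a defect of the Leibniz route per se but reflects the fact that the first outer piece is not lower order and must be tracked as part of the leading symbol rather than pushed into $\mathcal{E}$.
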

\begin{proof}
We expand the commutator,
\begin{align*}
[T_{D_t}, \mathcal{L}]u &= T_{D_t} \partial_\alpha \left( T_{J^{-\Half}} u_\alpha \right) - \partial_\alpha \left(T_{J^{-\Half}}\partial_\alpha T_{D_t}u\right)= \partial_\alpha \left(T_{T_{D_t} J^{-\Half}} u_\alpha \right) - \partial_\alpha \left(T_{J^{-\Half}} T_{b_\alpha} u_\alpha \right)\\
&= \Half \partial_\alpha \left(T_{J^{-\Half}b_\alpha} u_\alpha \right) -\partial_\alpha \left(T_{J^{-\Half}b_\alpha} u_\alpha \right) + \partial_\alpha(T_E u_\alpha)+ \partial_\alpha \left( \left(T_{J^{-\Half}b_\alpha} - T_{J^{-\Half}}T_{b_\alpha}\right)u_\alpha\right) \\
&= -\Half \partial_\alpha \left(T_{J^{-\Half}b_\alpha} u_\alpha \right)+ \partial_\alpha(T_E u_\alpha)+ \partial_\alpha \left( \left(T_{J^{-\Half}b_\alpha} - T_{J^{-\Half}}T_{b_\alpha}\right)u_\alpha\right),
\end{align*}
where $E$ is the remainder term in \eqref{JsParaMat}.
The last two terms can be put into the remainder term $\partial_\alpha \mathcal{E} u$, since
\begin{align*}
\|T_E u_\alpha\|_{\dot{H}^s} &\lesssim \|E\|_{C^\Half_{*}} \|u_\alpha\|_{H^s} \lesssim \mathcal{A}^2_1 \|u\|_{H^{s+1}}, \\
\left\| \left(T_{J^{-\Half}b_\alpha} - T_{J^{-\Half}}T_{b_\alpha}\right)u_\alpha \right\|_{H^s} &\lesssim \|J^{-\frac{1}{2}}-1 \|_{C^{\frac{3}{4}}_{*}} \|b_\alpha\|_{C^{-\Half}} \|u_\alpha\|_{H^s} \lesssim_\CalAZ \mathcal{A}^2_1 \|u\|_{H^{s+1}}.
\end{align*}
Finally, using the definition of $b$, one can write
\begin{align*}
 -\Half  T_{J^{-\Half}b_\alpha} \partial_\alpha +  \mathcal{E}  & = -\Half  T_{J^{-\Half}}T_{b_\alpha} \partial_\alpha +  \mathcal{E} = -T_{J^{-\Half}} T_{\Re \partial_\alpha \nP[(1-\bar{Y})R)]}\partial_\alpha + \mathcal{E} \\
 & = -T_{J^{-\Half}} T_{\Re \partial_\alpha T_{1-\bar{Y}}R}\partial_\alpha+ T_{J^{-\Half}} T_{\Re \nP\partial_\alpha \Pi(\bar{Y},R)}\partial_\alpha  + \mathcal{E} \\
 & = -T_{J^{-\Half}} T_{\Re T_{1-\bar{Y}}R_\alpha}\partial_\alpha + T_{J^{-\Half}} T_{\Re T_{\bar{Y}_\alpha}R}\partial_\alpha + \mathcal{E} \\
 & = -T_{J^{-\Half}} T_{\Re T_{1-\bar{Y}}R_\alpha}\partial_\alpha  + \mathcal{E} = - T_{J^{-\Half}\Re T_{1-\bar{Y}}R_\alpha}\partial_\alpha  + \mathcal{E} \\
 & = - T_{T_{J^{-\Half}}\Re T_{1-\bar{Y}}R_\alpha}\partial_\alpha - T_{T_{\Re T_{1-\bar{Y}}R_\alpha}J^{-\Half}-1}\partial_\alpha - T_{\Pi(\Re T_{1-\bar{Y}}R_\alpha,J^{-\Half}-1)}\partial_\alpha + \mathcal{E} \\
 & = - T_{T_{J^{-\Half}}\Re T_{1-\bar{Y}}R_\alpha}\partial_\alpha + \mathcal{E} = T_{\Re T_{ J^{-\Half}(1-\bar{Y})}R_\alpha}\partial_\alpha + \mathcal{E}.
\end{align*}
Here at each step, we put the perturbative parts into $\mathcal{E}$.
\end{proof}

\section{Paradifferential modified energy estimate for the water waves system} \label{s:ModifiedEnergy}
The main goal of this section is to compute the paradifferential normal from transformations for the water waves system \eqref{e:WR} and use them to construct the paradifferential modified energy estimate Theorem \ref{t:MainEnergyEstimate}.

To start the analysis, it is helpful to recast \eqref{e:WR} as a paradifferential system with source terms.
Then, we compute the paradifferential normal form transformations that remove the non-perturbative quadratic terms on the right-hand side of \eqref{WRSystemParaLin}.
These computations are similar to Proposition 3.1 of \cite{MR3667289}, where the normal form transformations for the undifferentiated water wave system \eqref{e:CWW} are computed.
Here the normal form transformations for water waves \eqref{e:WR} are calculated at the paradifferential level, and are separated into the balanced and low-high parts.
The balanced quadratic normal form transformations are bounded, whereas low-high quadratic normal form transformations are not.
We will use balanced quadratic normal forms to construct the leading normal form variables and use low-high normal forms to construct the cubic modified energy corrections.
Finally, just as in \cite{MR3667289}, we will construct quartic modified energy corrections to eliminate the remaining non-perturbative quartic energy terms.

In this section, we write $(G,K)$ for  perturbative source terms that satisfy the bound
\begin{equation}
    \|(G,K)\|_{\mathcal{H}^s}\lesssim_\CalAZ (1+\mathcal{A}^2_1) \| (\W, R)\|^2_{\mathcal{H}^s}, \quad s> 1. \label{PerturbativeSource}
\end{equation}

\subsection{The paradifferential form of the Water Waves}
The first result in this section is that the water wave system \eqref{e:WR} can be reformulated as a paradifferential system with given source terms.
These non-perturbative source terms will later be removed by  paradifferential normal from transformations.
\begin{lemma}
For $s>1$, the water wave system \eqref{e:WR} can be written as
 \begin{equation}
 \left\{
    \begin{array}{lr}
     \W_t +T_{(1-\bar{Y})(1+\W)}R_\alpha = \mathcal{G}(\W,R)+G  &\\
    R_t+ i T_{J^{-\frac{1}{2}}(1-Y)^2}\W_{\alpha\alpha} = \mathcal{K}(\W, R)+K ,&
             \end{array}
\right.  \label{WRSystemParaLin}
\end{equation}
where $(G, K)$ are perturbative source terms that satisfy \eqref{PerturbativeSource}, and the non-perturbative part of source terms $(\mathcal{G}, \mathcal{K})$ are given by
\begin{align*}
 \mathcal{G}(\W, R) =& -T_b\W_\alpha - T_{(1-\bar{Y})\W_\alpha -(1-\bar{Y})^2(1+\W)\bar{\W}_\alpha}R-T_{T_{1-\bar{Y}}R_\alpha + T_{1-Y}\bar{R}_\alpha}\W \\
 &+ T_{(1+\W)(1-\bar{Y})^2}\nP\partial_\alpha\Pi(\bar{\W}, R) - \nP\partial_\alpha\Pi(T_{1-\bar{Y}}R + T_{1-Y}\bar{R}, \W), \\
 \mathcal{K}(\W, R) =& 3iT_{J^{-\Half}(1-Y)^3\W_\alpha}\W_\alpha +\frac{5}{2}iT_{T_{J^{-\Half}(1-Y)^3}\W_{\alpha\alpha}}\W -\frac{i}{2}T_{T_{J^{-\frac{3}{2}}(1-Y)}\bar{\W}_{\alpha\alpha}}\W\\
& +\frac{5}{2}iT_{J^{-\Half}(1-Y)^3}\Pi\left(\W, \W_{\alpha \alpha}\right) +\frac{3}{2}iT_{J^{-\Half}(1-Y)^3}\Pi\left(\W_{\alpha}, \W_{\alpha }\right)\\
&+ \frac{i}{2}T_{J^{-\frac{3}{2}}(1-Y)}\nP \Pi\left(\bar{\W}, \W_{\alpha \alpha}\right)-\frac{i}{2}T_{J^{-\frac{3}{2}}(1-Y)}\nP \Pi\left(\bar{\W}_{\alpha \alpha}, \W\right)-T_b R_\alpha \\
&-T_{T_{1-\bar{Y}}R_\alpha}R-T_{T_{1-Y}\bar{R}_\alpha}R- \Pi(R_\alpha, T_{1-\bar{Y}}R)-T_{1-Y}\nP\partial_\alpha\Pi(\bar{R}, R).
\end{align*}
\end{lemma}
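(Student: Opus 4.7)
The plan is to paralinearize both equations of \eqref{e:WR} term by term using the Bony decomposition $fg=T_fg+T_gf+\Pi(f,g)$ together with the paraproduct composition and commutator rules of Appendix~\ref{s:Norms}. Each nonlinearity produces a principal piece (with the top derivative on the unknown and its coefficient reduced to a para-coefficient), a finite list of non-perturbative low-high and balanced remainders, and a tail of higher order contributions. The principal pieces are moved to the left-hand side of \eqref{WRSystemParaLin}, the named remainders are collected into $\mathcal{G},\mathcal{K}$, and the tail is shown to satisfy \eqref{PerturbativeSource} using the Sobolev and Zygmund bounds of Section~\ref{s:Bounds} for $a,b,Y,M$ and $J^{-\Half}-1$.

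For the first equation, write the transport coefficient as $h:=(1+\W)(1-\bar Y)=(1+\W)/(1+\bar\W)$, so that $h-1=(\W-\bar\W)(1-\bar Y)$ and, using $\bar Y_\alpha=(1-\bar Y)^2\bar\W_\alpha$,
\begin{equation*}
h_\alpha=(1-\bar Y)\W_\alpha-(1-\bar Y)^2(1+\W)\bar\W_\alpha,
\end{equation*}
which is exactly the coefficient appearing in $\mathcal{G}$. Bony-decompose $hR_\alpha=T_hR_\alpha+T_{R_\alpha}(h-1)+\Pi(R_\alpha,h-1)$; the first piece is the principal term. In the other two, redistribute derivatives via
\begin{equation*}
T_{R_\alpha}(h-1)=\partial_\alpha T_R(h-1)-T_R h_\alpha,\qquad \Pi(R_\alpha,h-1)=\partial_\alpha\Pi(R,h-1)-\Pi(R,h_\alpha),
\end{equation*}
producing the $-T_{h_\alpha}R$ entry of $\mathcal{G}$ together with the exact-derivative piece $\nP\partial_\alpha\Pi(\bar\W,R)$ after extracting the $\bar\W$-factor from $h-1$. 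The transport term $b\W_\alpha$ decomposes as $T_b\W_\alpha+T_{\W_\alpha}b+\Pi(b,\W_\alpha)$; the first yields $-T_b\W_\alpha\in\mathcal{G}$, and the other two, after one more paralinearization of $b=2\Re\nP[(1-\bar Y)R]$, merge into the $T_{T_{1-\bar Y}R_\alpha+T_{1-Y}\bar R_\alpha}\W$ and $\nP\partial_\alpha\Pi(T_{1-\bar Y}R+T_{1-Y}\bar R,\W)$ entries of $\mathcal{G}$. The source $(1+\W)M$ is perturbative by \eqref{MBound} and is absorbed into $G$.

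For the second equation, treat the capillary nonlinearity first. A direct computation using $J=|1+\W|^2$ and $(1-Y)(1+\W)=1$ gives
\begin{equation*}
\partial_\alpha\bigl[J^{-\Half}(1-Y)\bigr]=-\tfrac{3}{2}J^{-\Half}(1-Y)^2\W_\alpha-\tfrac{1}{2}J^{-\frac{3}{2}}\bar\W_\alpha,
\end{equation*}
so that $-i(1-Y)\nP\partial_\alpha[J^{-\Half}(1-Y)\W_\alpha]$ splits into the principal term $-iJ^{-\Half}(1-Y)^2\W_{\alpha\alpha}$ and quadratic pieces in $\W_\alpha\W_\alpha$ and $\bar\W_\alpha\W_\alpha$. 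Paralinearizing the principal term gives $iT_{J^{-\Half}(1-Y)^2}\W_{\alpha\alpha}$ on the left-hand side of \eqref{WRSystemParaLin}; its Moser remainder, after paralinearizing $J^{-\Half}(1-Y)^2-1$ as a smooth function of $\W,\bar\W$ via Lemma~\ref{t:Paralinear} and using
\begin{equation*}
\partial_\W[J^{-\Half}(1-Y)^2]=-\tfrac{5}{2}J^{-\Half}(1-Y)^3,\qquad \partial_{\bar\W}[J^{-\Half}(1-Y)^2]=-\tfrac{1}{2}J^{-\frac{3}{2}}(1-Y),
\end{equation*}
accounts for the $\tfrac{5}{2}$- and $\tfrac{1}{2}$-coefficients in $\mathcal{K}$. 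Bony-expanding the quadratic residues $\W_\alpha\W_\alpha$ and $\W\W_{\alpha\alpha}$ into $T$- and $\Pi$-parts, multiplied by the para-coefficient $J^{-\Half}(1-Y)^3$, produces the $3iT_{\ldots\W_\alpha}\W_\alpha$, $\tfrac{5}{2}iT_{T_\ldots\W_{\alpha\alpha}}\W$, $\tfrac{5}{2}iT_\ldots\Pi(\W,\W_{\alpha\alpha})$ and $\tfrac{3}{2}iT_\ldots\Pi(\W_\alpha,\W_\alpha)$ entries; the antiholomorphic capillary term performs the same expansion with coefficient $J^{-\frac{3}{2}}(1-Y)$ and delivers the $\bar\W_{\alpha\alpha}$ entries. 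The advection $bR_\alpha$ gives $-T_bR_\alpha$ and, after one more decomposition of $b$, the $T_{T_{1-\bar Y}R_\alpha}R$, $T_{T_{1-Y}\bar R_\alpha}R$ and $\Pi(R_\alpha,T_{1-\bar Y}R)$ entries. Finally $-i\nP[a(1-Y)]$, combined with $a=\Im\nP[R\bar R_\alpha]$ via Bony, produces the last $T_{1-Y}\nP\partial_\alpha\Pi(\bar R,R)$ entry, while the gravity term and the $aY$ cross term fit into $K$ by \eqref{ACHalf}, \eqref{AHsEst} and \eqref{YMoser}.

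The main obstacle is the bookkeeping: each triple product branches into many paraproduct/$\Pi$ trees, of which only the selected subset is listed in $\mathcal{G},\mathcal{K}$ while the rest must be absorbed into $G,K$. Perturbativity \eqref{PerturbativeSource} is verified term by term using the composition rule \eqref{ParaProducts}, the low-high bound \eqref{HCHEstimate}, and the bounds for the auxiliary functions from Section~\ref{s:Bounds}; the scheme closes for $s>1$ because in every remainder either two derivatives, or one derivative together with one $\Pi$, land on an argument that $\CalAZ$ and $\CalAO$ already control in $C^\epsilon_*$ or $C^{1+\epsilon}_*$, which is precisely the threshold that forces the regularity condition $s>1$.
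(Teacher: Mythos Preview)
Your proposal is correct and follows essentially the same paralinearization strategy as the paper; the key coefficient computations you record (the formula for $h_\alpha$, the $\tfrac{3}{2}$--$\tfrac{1}{2}$ split of $\partial_\alpha[J^{-1/2}(1-Y)]$, and the partial derivatives $\partial_\W[J^{-1/2}(1-Y)^2]=-\tfrac52 J^{-1/2}(1-Y)^3$, $\partial_{\bar\W}[J^{-1/2}(1-Y)^2]=-\tfrac12 J^{-3/2}(1-Y)$) match the paper's exactly.

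One organizational difference worth noting: the paper derives the $\W$-equation by \emph{differentiating} the paradifferential $W$-equation \eqref{WFormula}, so that the $-T_{h_\alpha}R$ entry of $\mathcal{G}$ arises directly from the Leibniz rule $\partial_\alpha(T_hR)=T_hR_\alpha+T_{h_\alpha}R$. Your route instead Bony-decomposes $hR_\alpha$ directly in the $\W$-equation, and your identity $T_{R_\alpha}(h-1)=\partial_\alpha T_R(h-1)-T_Rh_\alpha$ produces $T_Rh_\alpha$, which has the \emph{opposite} frequency ordering and is not the same object as $T_{h_\alpha}R$; so your attribution of that particular named entry is off. The two organizations are equivalent modulo $G$ once the bookkeeping is rebalanced (the $-T_{h_\alpha}R$ and $-T_{b_\alpha}\W$ pieces in the paper's route correspond to a rearrangement of your $T_{R_\alpha}(h-1)$, $T_{\W_\alpha}b$, and $\Pi$ contributions), but the paper's differentiate-first approach makes the match with the stated $\mathcal{G}$ immediate.
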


\begin{proof}
We first consider the paradifferential equation for $\W$.
According to the para-material derivative of $W$ \eqref{WFormula}, 
\begin{equation*}
W_t + T_b W_\alpha  +T_{(1+\W)(1-\bar{Y})}R = T_{1+\W}\nP\Pi(\bar{Y}, R) - \nP\Pi(\W, b) + (T_{(1+\W)(1-\bar{Y})}-T_{1+\W}T_{1-\bar{Y}})R.    
\end{equation*}
Taking the $\alpha$-derivative and using the para-products rule \eqref{ParaProducts}, we get
\begin{align*}
&\W_t + T_{(1-\bar{Y})(1+\W)}R_\alpha + T_b\W_\alpha + T_{((1-\bar{Y})(1+\W))_\alpha}R \\
=& -T_{b_\alpha}\W + T_{\W_\alpha}\nP\Pi(\bar{Y}, R)+ T_{1+\W}\nP\partial_\alpha\Pi(\bar{Y}, R) - \nP\partial_\alpha\Pi(\W, b) + G\\
=& -T_{b_\alpha}\W + T_{1+\W}\nP\partial_\alpha\Pi(\bar{Y}, R) - \nP\partial_\alpha\Pi(\W, b) + G\\
=& -T_{T_{1-\bar{Y}}R_\alpha + T_{1-Y}\bar{R}_\alpha}\W + T_{(1+\W)(1-\bar{Y})^2}\nP\partial_\alpha\Pi(\bar{\W}, R) - \nP\partial_\alpha\Pi(\W, T_{1-\bar{Y}}R + T_{1-Y}\bar{R}) + G.
\end{align*}
Here, we use the definition of $b$ and place the perturbative terms in $G$ at each step.
This gives the first paradifferential equation of the water waves.

Then we consider the paradifferential equation for $R$.
Recall the equation \eqref{RParaEqn}, 
\begin{equation*}
\begin{aligned}
R_t + T_b R_\alpha &= igY - \nP T_{R_\alpha}b - \nP\Pi(R_\alpha, b)-i\nP[a(1-Y)] \\
&- i (1-Y)\nP\partial_\alpha\left[J^{-\Half}(1-Y) \W_{ \alpha}\right]    + i (1-Y) \nP\partial_\alpha\left[J^{-\Half}(1-\bar{Y}) \bar{\W}_{ \alpha}\right].
\end{aligned}    
\end{equation*}
For the first three terms on the right-hand side,
\begin{equation*}
igY - \nP T_{R_\alpha}b - \nP\Pi(R_\alpha, b) = -T_{T_{1-\bar{Y}}R_\alpha}R-\nP \Pi(R_\alpha, T_{1-\bar{Y}}R + T_{1-Y}\bar{R})+K.
\end{equation*}
For the fourth term on the right-hand side, we use the definition of the auxiliary function $a$,
\begin{align*}
&-i\nP[a(1-Y)] = -i\nP T_{1-Y}a + iT_{a}Y + i\nP\Pi(a,Y) \\
=&-T_{1-Y}T_{\bar{R}_\alpha}R-T_{1-Y}\nP\Pi(\bar{R}_\alpha, R)+K \\
=&-T_{T_{1-Y}\bar{R}_\alpha}R-\nP\Pi(\bar{R}_\alpha, T_{1-Y}R)+K.
\end{align*}
As for the first capillary term,
\begin{align*}
&- i (1-Y)\nP\partial_\alpha\left[J^{-\Half}(1-Y) \W_{ \alpha}\right]    \\
=& -i(1-Y)\partial_\alpha T_{J^{-\frac{1}{2}}(1-Y)}\W_{\alpha} - i (1-Y)\nP\partial_\alpha T_{\W_\alpha}(J^{-\Half}(1-Y)-1)\\
&-i(1-Y)\nP\partial_\alpha \Pi(J^{-\Half}(1-Y)
-1, \W_\alpha) \\
=& -iT_{J^{-\Half}(1-Y)^2}\W_{\alpha \alpha} + iT_{T_{J^{-\Half}(1-Y)}\W_{\alpha\alpha}}Y +i\Pi\left(Y, T_{J^{-\Half}(1-Y)}\W_{\alpha\alpha}\right)\\
&-iT_{1-Y}T_{(J^{-\Half}(1-Y))_\alpha}\W_\alpha +\frac{3}{2}i(1-Y) \partial_\alpha T_{\W_\alpha}T_{J^{-\Half}(1-Y)^2}\W\\
&-iT_{1-Y}\nP\partial_\alpha \Pi(J^{-\Half}(1-Y)-1, \W_\alpha)+K\\
=& -iT_{J^{-\Half}(1-Y)^2}\W_{\alpha \alpha} + iT_{T_{J^{-\Half}(1-Y)^3}\W_{\alpha\alpha}}\W +i\Pi\left(\W, T_{J^{-\Half}(1-Y)^3}\W_{\alpha\alpha}\right) \\
&+\frac{3}{2}iT_{J^{-\Half}(1-Y)^3\W_\alpha}\W_\alpha +\frac{i}{2}T_{J^{-\frac{3}{2}}(1-Y)\bar{\W}_\alpha}\W_\alpha+ \frac{3}{2}iT_{J^{-\Half}(1-Y)^3\W_\alpha}\W_\alpha\\
&+\frac{3}{2}iT_{T_{J^{-\Half}(1-Y)^3}\W_{\alpha\alpha}}\W+ \frac{i}{2}T_{1-Y}\nP\partial_\alpha \Pi\left(3T_{J^{-\Half}(1-Y)^2}\W+T_{J^{-\frac{3}{2}}}\bar{\W}, \W_\alpha\right) +K\\
=& -iT_{J^{-\Half}(1-Y)^2}\W_{\alpha \alpha} +\frac{5}{2}iT_{T_{J^{-\Half}(1-Y)^3}\W_{\alpha\alpha}}\W +\frac{i}{2}T_{J^{-\frac{3}{2}}(1-Y)\bar{\W}_\alpha}\W_\alpha +3iT_{J^{-\Half}(1-Y)^3\W_\alpha}\W_\alpha\\
& +iT_{J^{-\Half}(1-Y)^3}\Pi(\W, \W_{\alpha\alpha})+\frac{3}{2}iT_{J^{-\Half}(1-Y)^3}\partial_\alpha\Pi\left(\W, \W_{\alpha}\right) + \frac{i}{2}T_{J^{-\frac{3}{2}}(1-Y)}\nP\partial_\alpha \Pi\left(\bar{\W}, \W_\alpha\right) +K.
\end{align*}
Similarly, for the second capillary term,
\begin{align*}
&i (1-Y) \nP\partial_\alpha\left[J^{-\Half}(1-\bar{Y}) \bar{\W}_{ \alpha}\right]\\
=& i (1-Y) \nP\partial_\alpha T_{\bar{\W}_{ \alpha}} (J^{-\Half}(1-\bar{Y})-1) + i (1-Y) \nP\partial_\alpha \Pi(J^{-\Half}(1-\bar{Y})-1, \bar{\W}_{ \alpha})\\
=&  -\frac{i}{2} (1-Y) \partial_\alpha T_{\bar{\W}_{ \alpha}} T_{J^{-\frac{3}{2}}}\W -\frac{i}{2}(1-Y) \nP\partial_\alpha \Pi\left(T_{J^{-\frac{3}{2}}}\W+3T_{J^{-\Half}(1-\bar{Y})^2}\bar{\W}, \bar{\W}_{ \alpha}\right)+K\\
=&-\frac{i}{2} (1-Y)T_{\bar{\W}_{\alpha\alpha}J^{-\frac{3}{2}}}\W-\frac{i}{2} (1-Y)T_{\bar{\W}_{\alpha}J^{-\frac{3}{2}}}\W_\alpha \\
&-\frac{i}{2}T_{1-Y} \nP\partial_\alpha \Pi\left(T_{J^{-\frac{3}{2}}}\W+3T_{J^{-\Half}(1-\bar{Y})^2}\bar{\W}, \bar{\W}_{ \alpha}\right)+K \\
=&-\frac{i}{2}T_{1-Y}T_{\bar{\W}_{\alpha\alpha} J^{-\frac{3}{2}}}\W-\frac{i}{2} T_{1-Y}T_{\bar{\W}_{\alpha}J^{-\frac{3}{2}}}\W_\alpha -\frac{i}{2} T_{J^{-\frac{3}{2}}(1-Y)}\nP \partial_\alpha\Pi(\W, \bar{\W}_{ \alpha})+K\\
=&-\frac{i}{2}T_{T_{J^{-\frac{3}{2}}(1-Y)}\bar{\W}_{\alpha\alpha}}\W -\frac{i}{2} T_{\bar{\W}_{\alpha}J^{-\frac{3}{2}}(1-Y)}\W_\alpha -\frac{i}{2} T_{J^{-\frac{3}{2}}(1-Y)}\nP \partial_\alpha \Pi(\W, \bar{\W}_{\alpha })+K.
\end{align*}
Hence, adding two capillary terms, they are
\begin{align*}
&- i (1-Y)\nP\partial_\alpha\left[J^{-\Half}(1-Y) \W_{ \alpha}\right]    + i (1-Y) \nP\partial_\alpha\left[J^{-\Half}(1-\bar{Y}) \bar{\W}_{ \alpha}\right]\\
=& -iT_{J^{-\Half}(1-Y)^2}\W_{\alpha \alpha} +3iT_{J^{-\Half}(1-Y)^3\W_\alpha}\W_\alpha +\frac{5}{2}iT_{T_{J^{-\Half}(1-Y)^3}\W_{\alpha\alpha}}\W -\frac{i}{2}T_{T_{J^{-\frac{3}{2}}(1-Y)}\bar{\W}_{\alpha\alpha}}\W\\
& +\frac{5}{2}iT_{J^{-\Half}(1-Y)^3}\Pi\left(\W, \W_{\alpha \alpha}\right) +\frac{3}{2}iT_{J^{-\Half}(1-Y)^3}\Pi\left(\W_{\alpha}, \W_{\alpha }\right)\\
&+ \frac{i}{2}T_{J^{-\frac{3}{2}}(1-Y)}\nP \Pi\left(\bar{\W}, \W_{\alpha \alpha}\right)-\frac{i}{2}T_{J^{-\frac{3}{2}}(1-Y)}\nP \Pi\left(\bar{\W}_{\alpha \alpha}, \W\right) +K.
\end{align*}
Putting all the terms together, we obtain the paradifferential system \eqref{WRSystemParaLin}.
\end{proof}

\subsection{Computation of normal form transformations}
Having rewritten the water wave system as the paradifferential system \eqref{WRSystemParaLin}, we now compute the quadratic normal form transformations $(\W_{[2]}, R_{[2]})$ that remove the quadratic parts of $(\mathcal{G}, \mathcal{K})$.
Here and later when we write quadratic terms, the para-coefficients are usually not taken into account, and are simply viewed as constant coefficients.

We consider normal form transformations as the sum of bilinear forms of the following type:
\begin{align*}
\W_{[2]}   &= B^h(\W, T_{1-Y}\W) + C^h(R, T_{J^{\frac{1}{2}}(1+\W)^2(1-\bar{Y})}R) + B^a(\bar{\W}, T_{1-\bar{Y}} \W) + C^a(\bar{R}, T_{J^{\Half}(1+\W)}R) \\
R_{[2]} &= A^h(R, T_{1-Y}\W) + D^h(\W, T_{1-Y}R) + A^a(\bar{R}, T_{(1+\bar{\W})(1-Y)^2}\W) + D^a(\bar{\W}, T_{1-\bar{Y}} R),
\end{align*}
where $A^h, B^h, C^h, D^h$ are bilinear forms of the holomorphic type, and $A^a, B^a, C^a, D^a$ are bilinear forms of the mixed type.
For each bilinear form, we can then consider the low-high and the balanced portions.
Direct computation gives,
\begin{align*}
    &\partial_t \W_{[2]}+T_{(1-\bar{Y})(1+\W)} \partial_\alpha R_{[2]}+ \text{cubic and higher  terms}   \\
    =& \partial_\alpha A^h(R,  \W)-B^h(R_\alpha, \W)  -i C^h(R,  \W_{\alpha \alpha})    
    -B^h(\W, R_\alpha)-iC^h(\W_{\alpha \alpha}, R)+\partial_\alpha D^h(\W, R) \\
    &+\partial_\alpha A^a(\bar{R}, \W)-B^a(\bar{R}_\alpha, \W)- iC^a(\bar{R}, \W_{\alpha \alpha}) 
    -B^a(\bar{\W}, R_\alpha)+iC^a(\bar{\W}_{\alpha \alpha}, R)+\partial_\alpha D^a(\bar{\W}, R), \\
    &\partial_t R_{[2]} + i T_{(1-Y)^2J^{-\Half}} \partial_\alpha^2 \W_{[2]} + \text{cubic and higher  terms} \\
    =& -A^h(R, R_\alpha) +i\partial_\alpha^2 C^h(R, R) -D^h(R_\alpha, R) 
     -iA^h(\W_{\alpha \alpha}, \W)+i\partial_\alpha^2 B^h(\W, \W)-iD^h(\W,  \W_{\alpha \alpha}) \\
     & -A^a(\bar{R}, R_\alpha)+ i\partial_\alpha^2 C^a(\bar{R}, R) - D^a(\bar{R}_\alpha, R)
      +i A^a(\bar{\W}_{\alpha \alpha}, \W)+ i\partial_\alpha^2 B^a(\bar{\W}, \W) - iD^a(\bar{\W}, \W_{\alpha \alpha}).
\end{align*}
We classify the quadratic terms into the holomorphic/ mixed and low-high/ balanced types and compute the normal forms separately.
To eliminate the quadratic source terms, we take the Fourier transform and solve  linear systems for  symbols.
We write $\mathfrak{a}^h(\xi, \eta)$ for the symbol of $A^h(R, T_{1-Y}\W)$, and $\mathfrak{a}^a(\eta, \zeta)$ for the symbol of $A^a(\bar{R}, T_{(1+\bar{\W})(1-Y)^2}\W)$. 
The other symbols are defined similarly.

\textbf{(i) The low-high holomorphic case:}
In this case, we seek low-high holomorphic bilinear forms $A^h_{lh}, B^h_{lh}, C^h_{lh}, D^h_{lh}$ such that the low-high holomorphic parts of the quadratic normal form transformation $(\W_{[2]}^{h,lh}, R_{[2]}^{h,lh})$ satisfy
\begin{align*}
&\partial_t \W_{[2]}^{h,lh}+T_{(1-\bar{Y})(1+\W)} \partial_\alpha R_{[2]}^{h,lh} + \text{cubic and higher  terms} \\
=& T_{R(1-\bar{Y})}\W_\alpha + T_{(1-\bar{Y})\W_\alpha}R+ T_{R_\alpha(1-\bar{Y})}\W, \\
&\partial_t R_{[2]}^{h,lh} + i T_{(1-Y)^2J^{-\Half}} \partial_\alpha^2 \W_{[2]}^{h,lh}+ \text{cubic and higher terms} \\
=& -3iT_{J^{-\Half}(1-Y)^3\W_\alpha}\W_\alpha -\frac{5}{2}iT_{J^{-\Half}(1-Y)^3\W_{\alpha\alpha}}\W+  T_{R(1-\bar{Y})}R_\alpha + T_{(1-\bar{Y})R_\alpha}R.
\end{align*}
Define the symbol $\chi_{1}(\xi, \eta)$ that selects the low-high frequencies \eqref{ChiOnelh}, the symbols $\mathfrak{a}^h_{lh}$, $\mathfrak{b}^h_{lh}$, $\mathfrak{c}^h_{lh}$, $\mathfrak{d}^h_{lh}$ of low-high holomorphic bilinear forms $A^h_{lh}, B^h_{lh}, C^h_{lh}, D^h_{lh}$ then solve the system
\begin{equation*}
\left\{
    \begin{array}{lr}
    (\xi+\eta) \mathfrak{a}^h_{lh} - \xi \mathfrak{b}^h_{lh} + \eta^2 \mathfrak{c}^h_{lh} = (\xi +\eta) \chi_1(\xi, \eta) &\\
    \eta \mathfrak{b}^h_{lh} - \xi^2 \mathfrak{c}^h_{lh} - (\xi+\eta) \mathfrak{d}^h_{lh} = -\xi \chi_1(\xi, \eta)  &\\
    \eta \mathfrak{a}^h_{lh} + (\xi + \eta)^2 \mathfrak{c}^h_{lh} + \xi \mathfrak{d}^h_{lh} = -(\xi+\eta)\chi_1(\xi, \eta)  &\\
     \xi^2 \mathfrak{a}^h_{lh} -(\xi+ \eta)^2 \mathfrak{b}^h_{lh} + \eta^2 \mathfrak{d}^h_{lh} = 3\xi\eta \chi_1(\xi, \eta) +\frac{5}{2} \xi^2 \chi_1(\xi, \eta).&  
    \end{array}
\right.
\end{equation*}
This  system  has the solution
\begin{equation}
 \begin{aligned}
&\mathfrak{a}^h_{lh}(\xi, \eta) = \dfrac{(-9\xi^4- \xi^3\eta + 26\xi^2\eta^2 +28\xi \eta^3 + 12\eta^4) \chi_1(\xi, \eta)}{2\xi \eta(9\xi^2 + 14\xi \eta + 9 \eta^2)}, \\
&\mathfrak{b}^h_{lh}(\xi, \eta) = -\dfrac{(9\xi^3+ 28\xi^2\eta +27\xi\eta^2 +4 \eta^3)\chi_1(\xi, \eta)}{2 \eta(9\xi^2 + 14\xi \eta + 9 \eta^2)}, \\
&\mathfrak{c}^h_{lh}(\xi, \eta) = -\dfrac{ (3\xi^3 + 6\xi^2 \eta + 11\xi\eta^2+6\eta^3)\chi_1(\xi, \eta)}{\xi 
\eta(9\xi^2 + 14\xi \eta + 9 \eta^2)}, \\
&\mathfrak{d}^h_{lh}(\xi, \eta) = \dfrac{(6\xi^3+ 15\xi^2\eta +7\xi\eta^2 -4 \eta^3)\chi_1(\xi, \eta)}{2\eta(9\xi^2 + 14\xi \eta + 9 \eta^2)}. \label{BilinearHLH}
\end{aligned}   
\end{equation}

\textbf{(ii) The low-high mixed case:}
In this case, we seek low-high mixed bilinear forms $A^a_{lh}, B^a_{lh}, C^a_{lh}, D^a_{lh}$ such that the low-high mixed parts of the quadratic normal form transformation $(\W_{[2]}^{a,lh}, R_{[2]}^{a,lh})$ satisfy
\begin{align*}
&\partial_t \W_{[2]}^{a,lh}+T_{(1-\bar{Y})(1+\W)} \partial_\alpha R_{[2]}^{a,lh} + \text{cubic and higher  terms} \\
=& T_{\bar{R}(1-Y)}\W_\alpha - T_{(1-\bar{Y})^2(1+\W)\bar{\W}_\alpha}R+ T_{\bar{R}_\alpha(1-Y)}\W, \\
&\partial_t R_{[2]}^{a,lh} + i T_{(1-Y)^2J^{-\Half}} \partial_\alpha^2 \W_{[2]}^{a,lh}+ \text{cubic and higher terms} \\
=&  \frac{1}{2}iT_{J^{-\frac{3}{2}}(1-Y)\bar{\W}_{\alpha\alpha}}\W+  T_{\bar{R}(1-Y)}R_\alpha + T_{(1-Y)\bar{R}_\alpha}R.
\end{align*}
The symbols $\mathfrak{a}^a_{lh}$, $\mathfrak{b}^a_{lh}$, $\mathfrak{c}^a_{lh}$, $\mathfrak{d}^a_{lh}$ of low-high mixed bilinear forms $A^a_{lh}, B^a_{lh}, C^a_{lh}, D^a_{lh}$ solve the algebraic system
\begin{equation*}
\left\{
    \begin{array}{lr}
    (\zeta -\eta) \mathfrak{a}^a_{lh} + \eta \mathfrak{b}^a_{lh} + \zeta^2 \mathfrak{c}^a_{lh} = (\zeta -\eta) \chi_1(\eta, \zeta) &\\
    \zeta \mathfrak{b}^a_{lh} + \eta^2 \mathfrak{c}^a_{lh} - (\zeta-\eta) \mathfrak{d}^a_{lh} =  -\eta \chi_1(\eta, \zeta)   &\\
    \zeta \mathfrak{a}^a_{lh} + ( \zeta -\eta)^2 \mathfrak{c}^a_{lh} - \eta \mathfrak{d}^h_{lh} = -(\zeta-\eta)\chi_1(\eta, \zeta)  &\\
     \eta^2 \mathfrak{a}^a_{lh} +( \zeta -\eta)^2 \mathfrak{b}^a_{lh} - \zeta^2 \mathfrak{d}^a_{lh} =  \frac{1}{2}\eta^2 \chi_1(\eta, \zeta).&  
    \end{array}
\right.
\end{equation*}
The solution of this algebraic system is given by
\begin{equation}
\begin{aligned}
&\mathfrak{a}^a_{lh}(\eta, \zeta) = -\dfrac{(6\eta^4-15\eta^3\zeta + 20\eta^2\zeta^2 -20\eta \zeta^3 + 12\zeta^4) \chi_1(\eta, \zeta)}{2\eta (\zeta -\eta)(4\eta^2 -4\eta \zeta + 9 \zeta^2)}, \\
&\mathfrak{b}^a_{lh}(\eta, \zeta) = \dfrac{(2\eta^3-3\eta^2\zeta +7\eta\zeta^2 -14 \zeta^3)\chi_1(\eta, \zeta)}{2 (\zeta -\eta)(4\eta^2 -4\eta \zeta + 9 \zeta^2)}, \\
&\mathfrak{c}^a_{lh}(\eta, \zeta) = \dfrac{ \zeta(5\eta^2 - 7\eta \zeta + 6\zeta^2)\chi_1(\eta, \zeta)}{\eta (\zeta -\eta)(4\eta^2 -4\eta \zeta + 9 \zeta^2)}, \\
&\mathfrak{d}^a_{lh}(\eta, \zeta) = \dfrac{(8\eta^3 -20\eta^2\zeta +23\eta\zeta^2 - 14\zeta^3)\chi_1(\eta, \zeta)}{2 (\zeta -\eta)(4\eta^2 -4\eta \zeta + 9 \zeta^2)}. \label{BilinearALH}
\end{aligned}    
\end{equation}

\textbf{(iii) The balanced holomorphic case:}
In this case, we seek balanced holomorphic bilinear forms $A^h_{hh}, B^h_{hh}, C^h_{hh}$ such that the balanced holomorphic parts of the quadratic normal form transformation $(\W_{[2]}^{h,hh}, R_{[2]}^{h,hh})$ satisfy
\begin{align*}
&\partial_t \W_{[2]}^{h,hh}+T_{(1-\bar{Y})(1+\W)} \partial_\alpha R_{[2]}^{h,hh} + \text{cubic and higher terms} \\
=& \partial_\alpha\Pi(T_{1-\bar{Y}}R, \W), \\
&\partial_t R_{[2]}^{h,hh} + i T_{(1-Y)^2J^{-\Half}} \partial_\alpha^2 \W_{[2]}^{h,hh}+ \text{cubic and higher terms} \\
=& \Pi(R_\alpha, T_{1-\bar{Y}}R) -\frac{5}{2}iT_{J^{-\Half}(1-Y)^3}\Pi\left(\W, \W_{\alpha \alpha}\right) -\frac{3}{2}iT_{J^{-\Half}(1-Y)^3}\Pi\left(\W_{\alpha}, \W_{\alpha }\right).
\end{align*}
Define the symbol $\chi_{2}(\xi, \eta)$ that selects the balanced frequencies \eqref{ChiTwohh} and $m_{sym}$ be the symmetrization of the bilinear symbol $m$, the symbols $\mathfrak{a}^h_{hh}$, $\mathfrak{b}^h_{hh}$, $\mathfrak{c}^h_{hh}$ of bilinear forms  $A^h_{hh}, B^h_{hh}, C^h_{hh}$  then solve the system
\begin{equation*}
\left\{
    \begin{array}{lr}
    (\xi+\eta) \mathfrak{a}^h_{hh} - 2\xi \mathfrak{b}^h_{hh} + 2\eta^2 \mathfrak{c}^h_{hh} = (\xi +\eta) \chi_2(\xi, \eta) &\\
    (\eta\mathfrak{a}^h_{hh})_{sym} + (\xi + \eta)^2 \mathfrak{c}^h_{hh} = -\frac{1}{2}(\xi+\eta)\chi_2(\xi, \eta)  &\\
     (\xi^2 \mathfrak{a}^h_{hh})_{sym} -(\xi+ \eta)^2 \mathfrak{b}^h_{hh}  = \frac{1}{4}(5\xi^2+6\xi \eta + 5 \eta^2) \chi_2(\xi, \eta).&  
    \end{array}
\right.
\end{equation*}
From the first equation of the system, we get
\begin{equation*}
\mathfrak{a}^h_{hh} = 2\frac{\xi}{\xi + \eta}\mathfrak{b}^h_{hh}  -2\frac{\eta^2}{\xi+\eta}\mathfrak{c}^h_{hh} + \chi_2(\xi, \eta).  
\end{equation*}
Then we get two symmetrized symbol
\begin{align*}
&(\eta\mathfrak{a}^h_{hh})_{sym} = 2\frac{\xi \eta}{\xi + \eta}\mathfrak{b}^h_{hh}  -\frac{\xi^3+\eta^3}{\xi+\eta}\mathfrak{c}^h_{hh} + \frac{\xi + \eta}{2}\chi_2(\xi, \eta), \\
&(\xi^2 \mathfrak{a}^h_{hh})_{sym} = \frac{\xi^3+\eta^3}{\xi + \eta}\mathfrak{b}^h_{hh}  -2\frac{\xi^2\eta^2}{\xi+\eta}\mathfrak{c}^h_{hh} + \frac{\xi^2 + \eta^2}{2}\chi_2(\xi, \eta).   
\end{align*}
Substituting these into the second and the third equation of the system, 
\begin{equation*}
\left\{
    \begin{array}{lr}
     2\xi\eta \mathfrak{b}^h_{hh}+ 3(\xi + \eta)\xi \eta \mathfrak{c}^h_{hh} = -(\xi+\eta)^2\chi_2(\xi, \eta)  &\\
     3(\xi+ \eta)\xi \eta \mathfrak{b}^h_{hh}+ 2\xi^2 \eta^2 \mathfrak{c}^h_{hh}  = -\frac{3}{4}(\xi+\eta)^3 \chi_2(\xi, \eta).&  
    \end{array}
\right.
\end{equation*}
The solution is then given by 
\begin{equation}
 \begin{aligned}
&\mathfrak{a}^h_{hh}(\xi, \eta) = -\dfrac{(27\xi^4+39\xi^3\eta + 23\xi^2 \eta^2-3\xi \eta^3-6\eta^4)\chi_2(\xi, \eta)}{2\xi 
\eta(9\xi^2 + 14\xi \eta + 9 \eta^2)}, \\
&\mathfrak{b}^h_{hh}(\xi, \eta) = -\dfrac{3(\xi + \eta)^2(9\xi^2+10\xi\eta + 9\eta^2)\chi_2(\xi, \eta)}{4\xi 
\eta(9\xi^2 + 14\xi \eta + 9 \eta^2)}, \\
&\mathfrak{c}^h_{hh}(\xi, \eta) = -\dfrac{3(\xi + \eta)^3\chi_2(\xi, \eta)}{2\xi 
\eta(9\xi^2 + 14\xi \eta + 9 \eta^2)}. \label{BilinearHHH}
\end{aligned}   
\end{equation}

\textbf{(iv) The balanced mixed case:}
In this case, we seek balanced mixed bilinear forms $A^a_{hh}, B^a_{hh}, C^a_{hh}, D^a_{hh}$ such that the balanced mixed parts of the quadratic normal form transformation $(\W_{[2]}^{a,hh}, R_{[2]}^{a,hh})$ satisfy
\begin{align*}
&\partial_t \W_{[2]}^{a,hh}+T_{(1-\bar{Y})(1+\W)} \partial_\alpha R_{[2]}^{a,hh} + \text{cubic and higher  terms} \\
=& \nP\partial_\alpha\Pi(T_{1-Y}\bar{R}, \W) -T_{(1+\W)(1-\bar{Y})^2}\nP\partial_\alpha\Pi(\bar{\W}, R), \\
&\partial_t R_{[2]}^{a,hh} + i T_{(1-Y)^2J^{-\Half}} \partial_\alpha^2 \W_{[2]}^{a,hh}+ \text{cubic and higher  terms} \\
=&T_{1-Y}\nP\partial_\alpha\Pi(\bar{R}, R)-\frac{i}{2}T_{J^{-\frac{3}{2}}(1-Y)}\nP \Pi\left(\bar{\W}, \W_{\alpha \alpha}\right)+ \frac{i}{2}T_{J^{-\frac{3}{2}}(1-Y)}\nP \Pi\left(\bar{\W}_{\alpha \alpha}, \W\right) .
\end{align*}
Then for symbols $\mathfrak{a}^a_{hh}$, $\mathfrak{b}^a_{hh}$, $\mathfrak{c}^a_{hh}$, $\mathfrak{a}^a_{hh}$ of bilinear forms  $A^a_{hh}, B^a_{hh}, C^a_{hh},  D^a_{hh}$, we have the algebraic system
\begin{equation*}
\left\{
    \begin{array}{lr}
    (\zeta -\eta) \mathfrak{a}^a_{hh} + \eta \mathfrak{b}^a_{hh} + \zeta^2 \mathfrak{c}^a_{hh} = (\zeta -\eta) \chi_2(\eta, \zeta)1_{\zeta< \eta} &\\
    \zeta \mathfrak{b}^a_{hh} + \eta^2 \mathfrak{c}^a_{hh} - (\zeta-\eta) \mathfrak{d}^a_{hh} =  (\zeta -\eta) \chi_2(\eta, \zeta)1_{\zeta< \eta}   &\\
    \zeta \mathfrak{a}^a_{hh} + ( \zeta -\eta)^2 \mathfrak{c}^a_{hh} - \eta \mathfrak{d}^h_{hh} = -(\zeta-\eta)\chi_2(\eta, \zeta)1_{\zeta< \eta}  &\\
     \eta^2 \mathfrak{a}^a_{hh} +( \zeta -\eta)^2 \mathfrak{b}^a_{hh} - \zeta^2 \mathfrak{d}^a_{hh} =  \frac{1}{2}(\eta^2-\zeta^2) \chi_2(\eta, \zeta)1_{\zeta< \eta}.&  
    \end{array}
\right.
\end{equation*}
Here the indicator function $1_{\zeta< \eta}$ is the symbol for the frequency projection $\nP$. 
Then the solution of this algebraic system is 
\begin{equation}
\begin{aligned}
&\mathfrak{a}^a_{hh}(\eta, \zeta) = \dfrac{3(\eta -\zeta)(2\eta^2 -\eta \zeta + 2\zeta^2) \chi_2(\eta, \zeta)1_{\zeta< \eta}}{2\eta (4\eta^2 -4\eta \zeta + 9 \zeta^2)}, \\
&\mathfrak{b}^a_{hh}(\eta, \zeta) = -\dfrac{(\eta -\zeta)(2\eta^2 +\eta\zeta + 9\zeta^2)\chi_2(\eta, \zeta)1_{\zeta< \eta}}{2\eta (4\eta^2 -4\eta \zeta + 9 \zeta^2)}, \\
&\mathfrak{c}^a_{hh}(\eta, \zeta) = -\dfrac{ 3\zeta\chi_2(\eta, \zeta)1_{\zeta< \eta}}{\eta (4\eta^2 -4\eta \zeta + 9 \zeta^2)}, \\
&\mathfrak{d}^a_{hh}(\eta, \zeta) = -\dfrac{(\eta -\zeta)(8\eta^2 -8\eta \zeta +9\zeta^2)\chi_2(\eta, \zeta)1_{\zeta< \eta}}{2 \eta (4\eta^2 -4\eta \zeta + 9 \zeta^2)}. \label{BilinearAHH}
\end{aligned}    
\end{equation}

To conclude, we have shown the following result:
\begin{proposition} \label{t:SymbolWRTwo}
Suppose that $(\W, R)$ solve the water wave system \eqref{WRSystemParaLin}.
There exists normal form transformations
\begin{align*}
\W_{[2]}   &= B^h(\W, T_{1-Y}\W) + C^h(R, T_{J^{\frac{1}{2}}(1+\W)^2(1-\bar{Y})}R) + B^a(\bar{\W}, T_{1-\bar{Y}} \W) + C^a(\bar{R}, T_{J^{\Half}(1+\W)}R), \\
R_{[2]} &= A^h(R, T_{1-Y}\W) + D^h(\W, T_{1-Y}R) + A^a(\bar{R}, T_{(1+\bar{\W})(1-Y)^2}\W) + D^a(\bar{\W}, T_{1-\bar{Y}} R),
\end{align*}
such that the modified unknowns $(\tilde{\W}, \tilde{R}): = (\W+ \W_{[2]}, R+R_{[2]})$ solve the system 
 \begin{equation*}
 \left\{
    \begin{array}{lr}
     \tilde{\W}_t +T_{(1-\bar{Y})(1+\W)}\tilde{R}_\alpha = \tilde{\mathcal{G}}(\W,R)+G  &\\
    \tilde{R}_t+ i T_{J^{-\frac{1}{2}}(1-Y)^2}\tilde{\W}_{\alpha\alpha} = \tilde{\mathcal{K}}(\W, R)+K ,&
             \end{array}
\right.  
\end{equation*}
where the leading parts of the source terms $(\tilde{\mathcal{G}}, \tilde{\mathcal{K}})$ do not contain  quadratic terms.
The holomorphic bilinear forms $A^h, B^h, C^h, D^h$  and mixed bilinear forms $A^a, B^a, C^a, D^a$ can be further separated into the low-high and balanced portions.
Their symbols are given in \eqref{BilinearHLH}, \eqref{BilinearALH}, \eqref{BilinearHHH} and \eqref{BilinearAHH} respectively.
\end{proposition}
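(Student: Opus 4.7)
My plan is to construct the quadratic normal form transformations by matching Fourier-side coefficients, then verify that the resulting symbols give bilinear forms that actually eliminate the non-perturbative quadratic contributions from $(\mathcal G, \mathcal K)$. The strategy follows the classical normal form method adapted to the paradifferential setting, with the key simplification that all para-coefficients such as $Y$, $J^{-\frac12}$, $1+\W$ may be treated as constants when computing quadratic contributions: any $\alpha$-dependence in the para-coefficient is itself $O(\W)$, so its contribution to $\partial_t \W_{[2]}$ or $T_{(1-\bar Y)(1+\W)}\partial_\alpha R_{[2]}$ is cubic, hence perturbative and can be absorbed into $(G,K)$.

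First I would ansatz the normal form variables $(\W_{[2]}, R_{[2]})$ as the eight bilinear forms listed in the proposition, separated by type (holomorphic vs.\ mixed) and by frequency regime (low-high vs.\ balanced, selected by the cutoffs $\chi_1$ and $\chi_2$). Next I would compute $\partial_t \W_{[2]} + T_{(1-\bar Y)(1+\W)}\partial_\alpha R_{[2]}$ and $\partial_t R_{[2]} + iT_{J^{-1/2}(1-Y)^2}\partial_\alpha^2 \W_{[2]}$. The time derivatives act on $\W$ and $R$ inside the bilinear forms; replacing them via the \emph{linearized} equations $\W_t = -R_\alpha$ and $R_t = -i\W_{\alpha\alpha}$ (restricted to holomorphic data) produces purely quadratic output, because the nonlinear corrections in \eqref{WRSystemParaLin} are at least quadratic and their contribution to the bilinear form is therefore cubic. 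This reduction is precisely what is displayed in the block of computations at the start of the subsection.

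The core step is then matching Fourier coefficients. For each of the four cases I would take the bilinear symbols of the forms $A^\bullet, B^\bullet, C^\bullet, D^\bullet$ (three rather than four in the balanced holomorphic case, by symmetry of $C$ and $B$ in their two holomorphic arguments) and equate their contribution, case by case, to the Fourier symbol of the corresponding piece of the source $(\mathcal G, \mathcal K)$. Each case yields a small linear algebraic system in the unknown symbols. In the two low-high cases this is a $4\times4$ system (those displayed just before \eqref{BilinearHLH} and \eqref{BilinearALH}), in the balanced holomorphic case a $3\times3$ system after using symmetrization, and in the balanced mixed case a $4\times4$ system with the extra indicator $\mathbf 1_{\zeta<\eta}$ from the projection $\nP$. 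Solving these systems by Cramer's rule gives the explicit symbols \eqref{BilinearHLH}--\eqref{BilinearAHH}. Once the symbols are known, substituting back recovers exactly the quadratic portion of $(\mathcal G, \mathcal K)$ that we intended to cancel, so that $\tilde{\mathcal G}, \tilde{\mathcal K}$ lose their quadratic leading terms as claimed.

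The main obstacle is the non-degeneracy of the algebraic systems, equivalently the non-vanishing of their determinants $\xi\eta(9\xi^2+14\xi\eta+9\eta^2)$ and $\eta(\zeta-\eta)(4\eta^2-4\eta\zeta+9\zeta^2)$ on the relevant frequency supports. The quadratic factors are positive definite as binary forms (discriminants $196-324<0$ and $16-144<0$), so the only potential singularities come from $\xi=0$, $\eta=0$, or $\zeta=\eta$. These are precisely excluded by the cutoffs $\chi_1,\chi_2$ and, in the balanced mixed case, by the indicator $\mathbf 1_{\zeta<\eta}$ combined with the hypothesis that the input frequencies are nonzero; this guarantees that the symbols in \eqref{BilinearHLH}--\eqref{BilinearAHH} are bounded on their supports and define admissible bilinear operators. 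The only other bookkeeping issue is to check that writing the transformation as $B^h(\W, T_{1-Y}\W)$, etc., rather than $B^h(\W,\W)$ with para-coefficient outside, costs at most cubic errors, which follows immediately since $(1-Y)-1 = O(\W)$.
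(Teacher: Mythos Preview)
Your approach is correct and essentially identical to the paper's: ansatz the bilinear forms, substitute the linearized relations $\W_t=-R_\alpha$, $R_t=-i\W_{\alpha\alpha}$, split into the four frequency/type cases, and solve the resulting linear systems for the symbols. One caveat: your claim that the singularities at $\xi=0$ are ``excluded by the cutoffs $\chi_1,\chi_2$'' is not accurate in the low-high regime---$\chi_1$ is supported precisely where $|\xi|\ll|\eta|$, and the symbols $\mathfrak a^h_{lh},\mathfrak c^h_{lh}$ (and likewise $\mathfrak a^a_{lh},\mathfrak c^a_{lh}$) carry a genuine $1/\xi$ (resp.\ $1/\eta$) factor that blows up as the low frequency tends to zero. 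This does not affect the proposition, which is a purely algebraic statement about eliminating quadratic source terms, but the paper notes immediately afterward that $(\W_{[2]}^{lh},R_{[2]}^{lh})$ fail the boundedness estimate \eqref{BoundedNormalForm} for exactly this reason, which is why the low-high corrections are instead implemented at the level of the energy.
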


Taking a close look at the normal form transformations $(\W_{[2]}, R_{[2]})$, one may hope that they are bounded in the sense that
\begin{equation}
    \|(\W_{[2]}, R_{[2]}) \|_{\mathcal{H}^s} \lesssim_\CalAZ \|(\W, R) \|_{\mathcal{H}^s}, \quad s> 1. \label{BoundedNormalForm}
\end{equation}
One can check that the balanced parts of the normal form transformations
\begin{align*}
\W_{[2]}^{hh}   &= B^h_{hh}(\W, T_{1-Y}\W) + C^h_{hh}(R, T_{J^{\frac{1}{2}}(1+\W)^2(1-\bar{Y})}R) + B^a_{hh}(\bar{\W}, T_{1-\bar{Y}} \W) + C^a_{hh}(\bar{R}, T_{J^{\Half}(1+\W)}R), \\
R_{[2]}^{hh} &= A^h_{hh}(R, T_{1-Y}\W)  + A^a_{hh}(\bar{R}, T_{(1+\bar{\W})(1-Y)^2}\W) + D^a_{hh}(\bar{\W}, T_{1-\bar{Y}} R),
\end{align*}
satisfy the bound \eqref{BoundedNormalForm}, where the symbols are given in \eqref{BilinearHHH} and \eqref{BilinearAHH}.
The balanced parts  of the normal form transformations $(\W_{[2]}^{hh}, R_{[2]}^{hh})$ eliminate the balanced parts of the cubic source terms $(\mathcal{G}, \mathcal{K})$ in \eqref{WRSystemParaLin}.
Plugging $(\W + \W_{[2]}^{hh}, R+ R_{[2]}^{hh})$ into \eqref{WRSystemParaLin} also produces cubic terms, which are perturbative and satisfy \eqref{PerturbativeSource}.

As for the low-high parts of the normal form transformations $(\W_{[2]}^{lh}, R_{[2]}^{lh})$, they are
\begin{align*}
\W_{[2]}^{lh}   &= B^h_{lh}(\W, T_{1-Y}\W) + C^h_{lh}(R, T_{J^{\frac{1}{2}}(1+\W)^2(1-\bar{Y})}R) + B^a_{lh}(\bar{\W}, T_{1-\bar{Y}} \W) + C^a_{lh}(\bar{R}, T_{J^{\Half}(1+\W)}R), \\
R_{[2]}^{lh} &= A^h_{lh}(R, T_{1-Y}\W) + D^h_{lh}(\W, T_{1-Y}R) + A^a_{lh}(\bar{R}, T_{(1+\bar{\W})(1-Y)^2}\W) + D^a_{lh}(\bar{\W}, T_{1-\bar{Y}} R),
\end{align*}
whose symbols are given in \eqref{BilinearHLH} and \eqref{BilinearALH}.
Unfortunately, $(\W_{[2]}^{lh}, R_{[2]}^{lh})$ do not satisfy the bound \eqref{BoundedNormalForm}. 
One cannot use  $(\W_{[2]}^{lh}, R_{[2]}^{lh})$ directly to eliminate the low-high parts of the cubic source terms $(\mathcal{G}, \mathcal{K})$ in \eqref{WRSystemParaLin}.

\subsection{Construction of  almost balanced cubic modified energy}
Having computed the quadratic paradifferential normal form transformations, we are now in a position to mimic the \emph{balanced cubic energy method} of Ai, Ifrim and Tataru first developed in \cite{ai2023dimensional}, and  construct the corresponding  almost balanced cubic paradifferential modified energy for our model problem.

To achieve the norm equivalence property \eqref{normEquivalence}, our first idea is to consider the energy
\begin{equation*}
    E^1_s = \int T_{J^{-\frac{3}{2}}}\langle D \rangle^{s+\Half}\W \cdot \langle D \rangle^{s+\Half}\bar{\W}  + |\langle D \rangle^{s}R|^2 \,d\alpha.
\end{equation*}
This choice of energy, however, violates the modified energy estimate \eqref{strongcubicest}.
Indeed, we compute the time derivative of $E_s^1$ using \eqref{WRSystemParaLin},

\begin{equation} \label{EOneSIntegral}
\begin{aligned}
&\frac{d}{dt}E^1_s =  \int T_{\partial_t J^{-\frac{3}{2}}}\langle D \rangle^{s+\Half}\W \cdot \langle D \rangle^{s+\Half}\bar{\W} \,d\alpha \\
 & -2\Re \int T_{J^{-\frac{3}{2}}}\langle D \rangle^{s+\frac{1}{2}} \bar{\W}\cdot \langle D \rangle^{s+\frac{1}{2}} T_{(1-\bar{Y})(1+\W)}R_\alpha - i\langle D \rangle^{s} R\cdot \langle D \rangle^{s} T_{J^{-\Half}(1-\bar{Y})^2}\bar{\W}_{\alpha \alpha} \,d\alpha \\
& -2\Re \int \langle D \rangle^{s+\frac{1}{2}} \bar{\W}\cdot \langle D \rangle^{s+\frac{1}{2}}  G + \langle D \rangle^{s} R\cdot \langle D \rangle^{s} \bar{K} \,d\alpha\\
&  -2\Re \int \langle D \rangle^{s+\frac{1}{2}} \bar{\W}\cdot \langle D \rangle^{s+\frac{1}{2}} \mathcal{G}  + \langle D \rangle^{s} R\cdot \langle D \rangle^{s} \bar{\mathcal{K}}  \,d\alpha.
\end{aligned}   
\end{equation}
The first integral on the right-hand side of \eqref{EOneSIntegral} is non-perturbative due to \eqref{JsTimeDerivative}, we will later construct a cubic energy correction to eliminate it.
For the second integral of \eqref{EOneSIntegral}, we add the para-coefficient $T_{J^{-\frac{3}{2}}}$ to get $J^{-\frac{3}{2}}(1-\bar{Y})(1+\W) = J^{-\Half}(1-\bar{Y})^2$, so that the leading part of this integral cancels using integration by parts.
The third integral of $\frac{d}{dt}E_s^1$ is perturbative, whereas the last integral of \eqref{EOneSIntegral} is not, since source terms $(\mathcal{G}, \mathcal{K})$ do not satisfy the bound \eqref{PerturbativeSource}.

To eliminate the last non-perturbative integral of $\frac{d}{dt}E_s^1$, we then choose the modified unknowns $(\tilde{\W}, \tilde{R}) = (\W+ \W_{[2]}, R+R_{[2]})$, and consider the modified energy
\begin{equation*}
    \tilde{E}_s = \int T_{J^{-\frac{3}{2}}}\langle D \rangle^{s+\Half}\tilde{\W} \cdot \langle D \rangle^{s+\Half}\bar{\tilde{\W}} + |\langle D \rangle^{s}\tilde{R}|^2 \,d\alpha.
\end{equation*}
This new choice of energy, on the other hand, does not seem to satisfy the norm equivalence property \eqref{normEquivalence}, because low-high quadratic normal forms $(\W_{[2]}^{lh}, R_{[2]}^{lh})$ do not satisfy the bound \eqref{BoundedNormalForm}.
It also produces more quartic and higher integral terms that are not perturbative.
In the following, we will modify this energy $\tilde{E}_s$ and construct the desired modified energy that satisfies both conditions.

As discussed in the previous subsection, the balanced part of the modified unknowns $(\tilde{\W}^{hh}, \tilde{R}^{hh}) = (\W+ \W_{[2]}^{hh}, R+R_{[2]}^{hh})$ remove the balanced part of the non-perturbative source terms of $(\mathcal{G}, \mathcal{K})$ in \eqref{WRSystemParaLin}, and $(\W_{[2]}^{hh}, R_{[2]}^{hh})$ are bounded by \eqref{BoundedNormalForm}.
Hence, we can choose
\begin{equation*}
    \tilde{E}_s^1 = \int T_{J^{-\frac{3}{2}}}\langle D \rangle^{s+\Half}\tilde{\W}^{hh} \cdot \langle D \rangle^{s+\Half}\bar{\tilde{\W}}^{hh}   + |\langle D \rangle^{s}\tilde{R}^{hh}|^2 \,d\alpha.
\end{equation*}
Then we consider the unbalanced cubic part of $\tilde{E}_s$, which is given by
\begin{align*}
&\int T_{J^{-\frac{3}{2}}}\langle D \rangle^{s+\frac{1}{2}} \W \cdot \langle D \rangle^{s+\frac{1}{2}} \bar{\W}_{[2]}^{lh} + T_{J^{-\frac{3}{2}}}\langle D \rangle^{s+\frac{1}{2}} \bar{\W} \cdot \langle D \rangle^{s+\frac{1}{2}} \W_{[2]}^{lh} \, d\alpha \\
+& \int \langle D \rangle^{s} R \cdot \langle D \rangle^{s} \bar{R}_{[2]}^{lh} + \langle D\rangle^{s} \bar{R} \cdot \langle D \rangle^{s} R_{[2]}^{lh} \, d\alpha.
\end{align*}
Plugging in the symbols of $(\W_{[2]}^{lh}, R_{[2]}^{lh})$ computed in Proposition \ref{t:SymbolWRTwo} and applying the Plancherel theorem, we now successively compute each part of this cubic integral.
Note that when $\zeta = \xi + \eta$, 
\begin{equation*}
    4\eta^2 -4\eta \zeta + 9 \zeta^2 = 9\xi^2 + 14\xi \eta + 9 \eta^2.
\end{equation*}
In the following, when we write for example $T_{\widehat{\W}(\xi)}\widehat{\W}(\eta)$, we abuse the notation for low-high para-products, and this means the frequencies $|\xi| \ll |\eta|$.
\begin{enumerate}
\item The cubic integrals that only involve $\widehat{\W}$.
This part of integrals is given by
\begin{align*}
I_1^s =& \int_{\zeta = \xi + \eta} T_{J^{-\frac{3}{2}}}\widehat{\W}(\zeta)T_{\bar{\widehat{\W}}(\xi)}T_{1-\bar{Y}}\bar{\widehat{\W}}(\eta)\left(1+\xi^2+\eta^2\right)^{s+\frac{1}{2}}\bar{\mathfrak{b}}^h_{lh}(\xi, \eta) \, d\xi d\eta \\
&+ \int_{\zeta = \xi + \eta} T_{J^{-\frac{3}{2}}}\widehat{\W}(\xi)T_{\widehat{\W}(\eta)}T_{1-Y}\bar{\widehat{\W}}(\zeta)(1+\xi^2)^{s+\frac{1}{2}}\bar{\mathfrak{b}}^a_{lh}(\eta, \zeta)\, d\xi d\eta\\
&+ \int_{\zeta = \xi + \eta} T_{J^{-\frac{3}{2}}}\bar{\widehat{\W}}(\zeta)T_{\widehat{\W}(\xi)}T_{1-Y}\widehat{\W}(\eta)\left(1+\xi^2+\eta^2\right)^{s+\frac{1}{2}}\mathfrak{b}^h_{lh}(\xi, \eta) \, d\xi d\eta \\
&+ \int_{\zeta = \xi + \eta} T_{J^{-\frac{3}{2}}}\bar{\widehat{\W}}(\xi)T_{\bar{\widehat{\W}}(\eta)}T_{1-\bar{Y}}\widehat{\W}(\zeta)(1+\xi^2)^{s+\frac{1}{2}}\mathfrak{b}^a_{lh}(\eta, \zeta)\, d\xi d\eta\\
=& 2\Re \int T_{J^{-\frac{3}{2}}}\widehat{\W}(\xi+\eta)T_{\bar{\widehat{\W}}(\xi)}T_{1-\bar{Y}}\bar{\widehat{\W}}(\eta) \mathfrak{b}^\sharp_{lh}(\xi, \eta)\,d\xi d\eta + O\left((1+\mathcal{A}^2_1) \| (\W, R)\|^2_{\mathcal{H}^s}\right).
\end{align*}
By definition, $|\xi|\ll |\eta|$, $\chi_1(\xi, \eta)\approx \chi_1(\xi, \xi+\eta)$, and $\xi$, $\eta$ are negative.
One can compute the symbol $\mathfrak{b}^\sharp_{lh}$, which is
\begin{align*}
\mathfrak{b}^\sharp_{lh}(\xi, \eta) =& \left(1+\xi^2+\eta^2\right)^{s+\frac{1}{2}}\bar{\mathfrak{b}}^h_{lh}(\xi, \eta)+ (1+\eta^2)^{s+\frac{1}{2}}\mathfrak{b}^a_{lh}(\xi, \xi+\eta)\\
=& -(1+\xi^2+\eta^2)^{s+\frac{1}{2}}\dfrac{9\xi^3+ 28\xi^2\eta +27\xi\eta^2 +4 \eta^3}{2 \eta(9\xi^2 + 14\xi \eta + 9 \eta^2)}\\
&+ (1+\eta^2)^{s+\frac{1}{2}}\dfrac{2\xi^3-3\xi^2(\xi+\eta) +7\xi(\xi +\eta)^2 -14 (\xi+\eta)^3}{2 \eta(9\xi^2 + 14\xi \eta + 9 \eta^2)}\\
=& -(1+\xi^2+\eta^2)^{s+\frac{1}{2}}+ \text{lower order terms of }\eta.
\end{align*}
The leading part of the energy correction $I_1^s$ is
\begin{equation*}
\tilde{I}_1^s = -2\Re \int \langle D \rangle^{s+\Half} T_{J^{-\frac{3}{2}}} \W \cdot \langle D \rangle^{s+\Half} T_{\bar{\W}}T_{1-\bar{Y}}\bar{\W} \, d\alpha.
\end{equation*}

\item The cubic integrals that  involve $T_{\widehat{R}} \widehat{R}$ or $T_{\bar{\widehat{R}}} \bar{\widehat{R}}$.
This part of integrals is given by
\begin{align*}
 I_2^s =& \int_{\zeta = \xi + \eta} T_{J^{-\frac{3}{2}}}\widehat{\W}(\zeta) T_{\bar{\widehat{R}}(\xi)}T_{J^{\Half}(1+\bar{\W})^2(1-Y)}\bar{\widehat{R}}(\eta)(1+\xi^2+\eta^2)^{s+\Half}\bar{\mathfrak{c}}^h_{lh}(\xi,\eta) \,d\xi d\eta\\
 &+ \int_{\zeta = \xi + \eta} \widehat{R}(\xi)T_{\widehat{R}(\eta)}T_{(1-\bar{Y})^2(1+\W)}\bar{\widehat{\W}}(\zeta)(1+\xi^2)^s \bar{\mathfrak{a}}^a_{lh}(\eta, \zeta) \, d\xi d\eta \\
 &+\int_{\zeta = \xi + \eta} T_{J^{-\frac{3}{2}}}\bar{\widehat{\W}}(\zeta) T_{\widehat{R}(\xi)}T_{J^{\Half}(1+\W)^2(1-\bar{Y})}\widehat{R}(\eta)(1+\xi^2+\eta^2)^{s+\Half}\mathfrak{c}^h_{lh}(\xi,\eta) \,d\xi d\eta\\
 &+ \int_{\zeta = \xi + \eta} \bar{\widehat{R}}(\xi)T_{\bar{\widehat{R}}(\eta)}T_{(1-Y)^2(1+\bar{\W})}\widehat{\W}(\zeta)(1+\xi^2)^s \mathfrak{a}^a_{lh}(\eta, \zeta) \, d\xi d\eta \\
 =& 2\Re \int_{\zeta = \xi + \eta} \widehat{\W}(\zeta) T_{\bar{\widehat{R}}(\xi)}T_{(1+\bar{\W})(1-Y)^2}\bar{\widehat{R}}(\eta)\mathfrak{c}^\sharp_{lh}(\xi,\eta) \,d\xi d\eta + O\left((1+\mathcal{A}^2_1) \| (\W, R)\|^2_{\mathcal{H}^s}\right).
\end{align*}
Here, we use the fact $J^{-\frac{3}{2}}J^{\Half}(1+\bar{\W})^2(1-Y) = (1-Y)^2(1+\bar{\W})$ to simplify the para-coefficients.
The symbol $\mathfrak{c}^\sharp_{lh}$ is given by
\begin{align*}
\mathfrak{c}^\sharp_{lh} = &(1+\xi^2+\eta^2)^{s+\Half}\bar{\mathfrak{c}}^h_{lh}(\xi,\eta)+ (1+\eta^2)^{s}\mathfrak{a}^a_{lh}(\xi, \xi+\eta) \\
= & - (1+\xi^2+\eta^2)^{s+\Half}\dfrac{ 3\xi^3 + 6\xi^2 \eta + 11\xi\eta^2+6\eta^3}{\xi 
\eta(9\xi^2 + 14\xi \eta + 9 \eta^2)} \\
&-(1+\eta^2)^s \dfrac{6\xi^4-15\xi^3(\xi+\eta) + 20\xi^2(\xi+\eta)^2 -20\xi (\xi+\eta)^3 + 12(\xi+\eta)^4}{2\xi \eta(9\xi^2 + 14\xi \eta + 9 \eta^2)}\\
=& -\frac{1}{3} (1+\xi^2+\eta^2)^{s}+ \text{lower order terms of }\eta,
\end{align*}
since $\xi, \eta<0$, and $|\eta|^{2s+1}$ terms get canceled.
Hence, the leading part of the energy correction $I_2^s$ is
\begin{equation*}
\tilde{I}_2^s = -\frac{2}{3}\Re \int \langle D \rangle^{s+\Half}  \W \cdot \langle D \rangle^{s-\Half} T_{\bar{R}}T_{(1+\bar{\W})(1-Y)^2}\bar{R} \, d\alpha.
\end{equation*}

\item The cubic integrals that involve $T_{\bar{\widehat{R}}}\widehat{R}$ or $T_{\widehat{R}}\bar{\widehat{R}}$. 
This part of integral is given by
\begin{align*}
 I_3^s =& \int_{\zeta = \xi + \eta} T_{J^{-\frac{3}{2}}}\widehat{\W}(\xi) T_{\widehat{R}(\eta)}T_{J^{\Half}(1+\bar{\W})}\bar{\widehat{R}}(\zeta)(1+\xi^2)^{s+\Half}\bar{\mathfrak{c}}^a_{lh}(\eta,\zeta) \,d\xi d\eta\\
 &+ \int_{\zeta = \xi + \eta} \widehat{R}(\zeta)T_{\bar{\widehat{R}}(\xi)}T_{1-\bar{Y}}\bar{\widehat{\W}}(\eta)(1+\xi^2+\eta^2)^s \bar{\mathfrak{a}}^h_{lh}(\xi,\eta) \,d\xi d\eta \\
 &+  \int_{\zeta = \xi + \eta} T_{J^{-\frac{3}{2}}}\bar{\widehat{\W}}(\xi) T_{\bar{\widehat{R}}(\eta)}T_{J^{\Half}(1+\W)}R(\zeta)(1+\xi^2)^{s+\Half} \mathfrak{c}^a_{lh}(\eta,\zeta) \,d\xi d\eta\\
 &+ \int_{\zeta = \xi + \eta} \bar{\widehat{R}}(\zeta)T_{\widehat{R}(\xi)}T_{1-Y}\widehat{\W}(\eta)(1+\xi^2+\eta^2)^s \mathfrak{a}^h_{lh}(\xi,\eta) \,d\xi d\eta \\
 =& 2\Re \int_{\zeta = \xi + \eta} \widehat{R}(\zeta) T_{\bar{\widehat{R}}(\xi)}T_{1-\bar{Y}}\bar{\widehat{\W}}(\eta)\mathfrak{a}^\sharp_{lh}(\xi,\eta) \,d\xi d\eta + O\left((1+\mathcal{A}^2_1) \| (\W, R)\|^2_{\mathcal{H}^s}\right).
 \end{align*}
 Here we use the fact that $J^{-\frac{3}{2}}J^{\Half}(1+\bar{\W}) = 1-Y$.
The symbol $\mathfrak{a}^\sharp_{lh}$ is 
\begin{align*}
\mathfrak{a}^\sharp_{lh} = & (1+\xi^2+\eta^2)^{s}\bar{\mathfrak{a}}^h_{lh}(\xi,\eta) + (1+\eta^2)^{s+\Half} \mathfrak{c}^a_{lh}(\xi, \xi+\eta) \\
=& (1+\xi^2+\eta^2)^{s} \dfrac{-9\xi^4- \xi^3\eta + 26\xi^2\eta^2 +28\xi \eta^3 + 12\eta^4}{2\xi \eta(9\xi^2 + 14\xi \eta + 9 \eta^2)} \\
&+(1+\eta^2)^{s+\Half}  \dfrac{ (\xi+ \eta)(5\xi^2 - 7\xi(\xi+ \eta) + 6(\xi+\eta)^2)}{\xi \eta (9\xi^2 + 14\xi \eta + 9 \eta^2)}\\
=& \frac{1}{3}(1+\xi^2+\eta^2)^{s} + \text{lower order terms of }\eta.
\end{align*}
Hence, the leading part of the energy correction $I_3^s$ is
\begin{equation*}
\tilde{I}_3^s = \frac{2}{3}\Re \int \langle D \rangle^{s} 
R \cdot \langle D \rangle^{s} T_{\bar{R}}T_{1-\bar{Y}}\bar{\W} \, d\alpha.
\end{equation*}

\item The rest part of the cubic integrals, where $R$ or $\bar{R}$ does not have lowest frequency.
It is given by
\begin{align*}
 I_4^s =& \int_{\zeta = \xi + \eta} \widehat{R}(\zeta) T_{\bar{\widehat{\W}}(\xi)}T_{1-\bar{Y}}\bar{\widehat{R}}(\eta)(1+\xi^2+\eta^2)^s\bar{\mathfrak{d}}^h_{lh}(\xi,\eta) \,d\xi d\eta\\
 &+ \int_{\zeta = \xi + \eta} \widehat{R}(\xi) T_{\W(\eta)}T_{1-Y}\bar{\widehat{R}}(\zeta)(1+\xi^2)^s\bar{\mathfrak{d}}^a_{lh}(\eta,\zeta) \,d\xi d\eta \\
 &+  \int_{\zeta = \xi + \eta} \bar{\widehat{R}}(\zeta) T_{\widehat{\W}(\xi)}T_{1-Y}\widehat{R}(\eta)(1+\xi^2+\eta^2)^s\mathfrak{d}^h_{lh}(\xi,\eta) \,d\xi d\eta\\
 &+ \int_{\zeta = \xi + \eta} \bar{\widehat{R}}(\xi) T_{\bar{\widehat{\W}}(\eta)}T_{1-\bar{Y}}\widehat{R}(\zeta)(1+\xi^2)^s\mathfrak{d}^a_{lh}(\eta,\zeta) \,d\xi d\eta. \\
 =& 2\Re\int_{\zeta = \xi + \eta} \widehat{R}(\zeta) T_{\bar{\widehat{\W}}(\xi)}T_{1-\bar{Y}}\bar{\widehat{R}}(\eta)\mathfrak{d}^\sharp_{lh}(\xi,\eta) \,d\xi d\eta + O\left((1+\mathcal{A}^2_1) \| (\W, R)\|^2_{\mathcal{H}^s}\right).
 \end{align*}
 The symbol $\mathfrak{d}^\sharp_{lh}$ is
 \begin{align*}
 \mathfrak{d}^\sharp_{lh} = & (1+\xi^2+\eta^2)^s\bar{\mathfrak{d}}^h_{lh}(\xi,\eta) + (1+\eta^2)^s \mathfrak{d}^\sharp_{lh}(\xi, \xi+\eta) \\
 =& (1+\xi^2+\eta^2)^s \dfrac{6\xi^3+ 15\xi^2\eta +7\xi\eta^2 -4 \eta^3}{2\eta(9\xi^2 + 14\xi \eta + 9 \eta^2)}\\
 &+ (1+\eta^2)^s \dfrac{8\xi^3 -20\xi^2(\xi+\eta) +23\xi(\xi+\eta)^2 - 14(\xi+\eta)^3}{2 \eta(9\xi^2 + 14\xi \eta + 9 \eta^2)}\\
 =&-(1+\xi^2+\eta^2)^s + \text{lower order terms of }\eta.
 \end{align*}
The leading part of the energy correction $I_4^s$ is
\begin{equation*}
\tilde{I}_4^s = -2\Re \int \langle D \rangle^{s} 
R \cdot \langle D \rangle^{s} T_{\bar{\W}}T_{1-\bar{Y}}\bar{R} \, d\alpha.
\end{equation*}
\end{enumerate}

We still need to construct the paradifferential modified energy $I_5^s$ to remove the remaining non-perturbative cubic energy. 
This part of non-perturbative cubic integrals include the first term on the right-hand side of \eqref{EOneSIntegral} and the sub-leading part of the second integral of \eqref{EOneSIntegral}.
These non-perturbative cubic integrals need $\frac{1}{2}$ less derivative on the high-frequency factor to be bounded by the right hand side of \eqref{strongcubicest}.
One can try to find cubic corrections $I_5^s$ of the following type:
\begin{equation*}
 I^s_5 = \Re \int L^{hhl}_{2s+1}(\W, \bar{\W}, \W + \bar{\W}) + L^{hhl}_{2s+\frac{1}{2}}(\W, \bar{R}, R + \bar{R})\, d\alpha,
\end{equation*}
where $L^{hhl}_s$ is some trilinear form where each of its arguments has relatively  high, high, and low frequencies respectively, and $s$ derivatives in total are distributed to two variables with high frequencies.
Let the frequencies of the first and the third (or the second and the third) factor of $L^{hhl}_{2s+1}$ be $\xi$ and $\eta$.
Then similar to the computation for the symbols of $\mathfrak{a}^\sharp_{lh}$, the denominator of the symbols of these trilinear forms have a factor $9\xi^2 + 14\xi \eta + 9 \eta^2$, which cannot be zero unless $\xi = \eta = 0$.
This indicates that the symbols of $L^{hhl}_s$ is bounded.

When taking the time derivative of each factor of cubic energy corrections $I^s_5$, a leading order $\frac{3}{2}$ term is generated and cancels a part of the non-perturbative cubic integral terms. 
This process leaves behind sub-leading terms of order $1$, featuring $\frac{1}{2}$ fewer total derivatives compared to the non-perturbative cubic integral terms. 
This shows that the remaining cubic terms are perturbative after cancellation.

The five parts of paradifferential modified energy satisfy the energy equivalence property
\begin{equation*}
 |I_1^s|+  |I_2^s| +  |I_3^s| +  |I_4^s| +  |I_5^s| \lesssim \CalAZ \|(w,r)\|^2_{\mathcal{H}^s}.
\end{equation*}
Writing 
\begin{equation*}
    E^{(\leq 3)}_s = E^1_s(\W + \W_{[2]}^{hh}, R+ R_{[2]}^{hh}) + I_1^s+ I_2^s + I_3^s +I_4^s + I_5^s, 
\end{equation*}
then the energy $E^{(\leq 3)}_s$ satisfies the norm equivalence property \eqref{normEquivalence}, and its time derivative has no non-perturbative cubic integral terms,
\begin{equation*}
\frac{d}{dt} E^{(\leq 3)}_s = \text{non-perturbative quartic integral terms} + O\left((1+\mathcal{A}^2_1) \| (\W, R)\|^2_{\mathcal{H}^s}\right).
\end{equation*}

\subsection{Quartic modified energy} \label{s:QuarticEnergy}
Having constructed the almost balanced cubic modified energy, we now build the quartic modified energy to eliminate the non-perturbative quartic integral terms in $\frac{d}{dt} E^{(\leq 3)}_s$. A similar computation was carried out in \cite{MR3667289} without a paradifferential reduction.

The non-perturbative quartic integral terms can be separated into two parts:
\begin{itemize}
\item The time derivative acts on the para-coefficients, resulting in an additional  $T_{R_\alpha}$ or $T_{\bar{R}_\alpha}$.
When constructing the cubic paradifferential modified energy, para-coefficients such as $T_{1-Y}$, $T_{J^{-\frac{3}{2}}}$ and so on are treated as constants.
Upon taking the time derivatives on these para-coefficients, the leading terms of their time derivatives are para-coefficients $T_{R_\alpha}$ or $T_{\bar{R}_\alpha}$. 
\item The transport derivative $T_{b}\partial_\alpha$ acts on the main factors of low-high cubic energy, introducing a non-perturbative quartic integral.
When the time derivative acts on the main factors of low-high cubic energy, the leading parts of order $\frac{3}{2}$ interact with other terms to eliminate the non-perturbative cubic integrals.
The sub-leading factors of order $1$, namely, $T_b \W_\alpha$, $T_b R_\alpha$, $T_b \bar{\W}_\alpha$ or $T_b \bar{R}_\alpha$ produce new non-perturbative quartic integral terms.
\end{itemize}
For the second case, we can always use integration by parts to shift the $\alpha$-derivative from the high-frequency variables to para-coefficients of low frequencies.
For instance, consider one of the integral term 
\begin{align*}
 &\Re \int T_{J^{-\frac{3}{2}}(1-\bar{Y})} \langle D \rangle^{s+\Half}  \W T_{\bar{\W}}\langle D \rangle^{s+\Half} T_b\bar{\W}_\alpha \, d\alpha = -  \Re \int T_{J^{-\frac{3}{2}}(1-\bar{Y})} \langle D \rangle^{s+\Half}  \W_\alpha  T_{\bar{\W}}\langle D \rangle^{s+\Half} T_b\bar{\W} \, d\alpha \\
 &- \Re \int T_{J^{-\frac{3}{2}}(1-\bar{Y})} \langle D \rangle^{s+\Half}  \W \cdot T_{\bar{\W}_\alpha}\langle D \rangle^{s+\Half} T_b\bar{\W} + T_{J^{-\frac{3}{2}}(1-\bar{Y})} \langle D \rangle^{s+\Half}  \W  \cdot T_{\bar{\W}}\langle D \rangle^{s+\Half} T_{b_\alpha}\bar{\W} \, d\alpha \\
 & + \text{perturbative integrals},
\end{align*}
using integration by parts.
The first term on the right will cancel with another integral
\begin{equation*}
 \Re \int T_{J^{-\frac{3}{2}}(1-\bar{Y})} \langle D \rangle^{s+\Half}  T_b \W_\alpha \cdot  T_{\bar{\W}}\langle D \rangle^{s+\Half} \bar{\W} \, d\alpha   
\end{equation*}
at the leading order.
For the remaining two integral terms, the $\alpha$-derivatives are on the variables with low frequencies.

Another observation is that two variables at low frequencies cannot have comparable frequencies, and $\alpha$-derivatives always fall on variables with relatively high frequencies.
This is because if  $\alpha$-derivatives fall on variables with the lowest or comparable frequencies, one can shift half a derivative to the variables with medium frequencies and the quartic integral terms are perturbative.
Let the frequencies of four arguments of quartic integrals be $\xi_1, \xi_2, \xi_3, \xi_4$, then one can always arrange such that,
\begin{equation}
    0<|\xi_1| \ll |\xi_3|\ll |\xi_2| \approx |\xi_4|. \label{XiOnetoFour}
\end{equation}

Note that these non-perturbative quartic integral terms need $\frac{1}{2}$ less derivative to be perturbative.
More precisely, the quartic integral terms are of the following type:
\begin{align*}
\Re &\int L_{2s+1}^{lhmh}( \bar{\W}, \W,  R_\alpha,\bar{\W})+ L_{2s+1}^{lhmh}(\bar{\W}, \W, \bar{R}_\alpha, \bar{\W})  \\
+& L_{2s+1}^{lhmh}(R, \W, \bar{\W}_\alpha, \bar{\W})+ L_{2s+1}^{lhmh}(\bar{R}, \W, \bar{\W}_\alpha, \bar{\W})  \, d\alpha \\
+ \Re &\int L_{2s+\frac{1}{2}}^{lhmh}( \bar{R},\W, R_\alpha, \bar{R}) +  L_{2s+\frac{1}{2}}^{lhmh}( \bar{R}, \W,  \bar{R}_\alpha, \bar{R})  +  L_{2s+\frac{1}{2}}^{lhmh}(R, \W, \bar{R}_\alpha, \bar{R})  \\
+&  L_{2s+\frac{1}{2}}^{lhmh}(\bar{R}, R, \bar{R}_\alpha, \bar{\W})   + L_{2s+\frac{1}{2}}^{lhmh}(R, R, \bar{R}_\alpha, \bar{\W})   \,d\alpha \\
+ \Re &\int L_{2s}^{lhmh}( \bar{\W}, R,  \bar{R}_\alpha,\bar{R})  + L_{2s}^{lhmh}(R, R, \bar{\W}_\alpha, \bar{R})  + L_{2s}^{lhmh}(\bar{R}, R, \bar{\W}_\alpha, \bar{R}) \\
+& L_{2s}^{lhmh}( \bar{\W}, R,  R_\alpha,\bar{R}) \, d\alpha.
\end{align*}
Here $L^{lhmh}_{s}$ is some quadrilinear form where each of its arguments has relatively low, high, medium, and high frequencies respectively, and $s$ derivatives in total are distributed to two variables with high frequencies.
The $\alpha$-derivatives always fall on variables with medium frequencies because of the observation we made above.

To eliminate these non-perturbative quartic terms, we consider the following quartic integral corrections:
\begin{align*}
 \Re \int& L^{lhmh}(\bar{R}, \W, R, \bar{\W}) + L^{lhmh}(\bar{\W}, \bar{\W}, R, R) + L^{lhmh}(\bar{R}, \bar{\W}, \W, R) \\
 +& L^{lhmh}(\bar{R}, \W, R, \bar{R})+ L^{lhmh}(\bar{R}, \W, \bar{R}, \bar{\W}) + L^{lhmh}(\bar{\W}, \bar{\W}, \bar{R}, R) \\
 +&  L^{lhmh}(\bar{R}, \W, \bar{R}, \bar{R}) + L^{lhmh}(\W, \W, \bar{\W}, \bar{\W})+ L^{lhmh}(\bar{\W}, \W, \W, \bar{\W})\\
 +&  L^{lhmh}(R, \W, \bar{\W}, \bar{R}) + L^{lhmh}(\bar{\W}, \W, \bar{\W}, \bar{\W})  +  L^{lhmh}(\bar{R}, R, R, \bar{R}) \\
 +&L^{lhmh}(\bar{R}, \W, \bar{\W}, \bar{R}) +  L^{lhmh}(\bar{R}, R, \bar{R}, \bar{R}) + L^{lhmh}(R, R, \bar{R}, \bar{R}) \\
 + &L^{lhmh}(\W, \bar{\W}, \bar{R}, R) \,d\alpha.
\end{align*}
We need to solve two $8\times 8$ linear systems to determine $16$ symbols for quadrilinear forms.
The computations are long but are just solving linear systems.
The detailed analysis are thus omitted, and we refer interested readers to Section $3.2$ in \cite{MR3667289} for a similar computation for cubic normal form corrections.
Here we briefly discuss why these quartic energies are bounded.

Consider just the leading parts of the water wave system 
\begin{equation*}
\left\{
             \begin{array}{lr}
             \W_t  = -R_\alpha &\\
           R_t  = - i\W_{\alpha \alpha}.&  
             \end{array}
\right.
\end{equation*}
Eliminating the $R$ (or $\W$) variable yields the scalar equation
\begin{equation*}
    (\partial_t^2 + i \partial_\alpha^3) \W = 0  \left( \text{or } (\partial_t^2 + i \partial_\alpha^3) R = 0\right),
\end{equation*}
which then gives the linear dispersion relation
\begin{equation*}
    \tau = \pm |\xi|^{\frac{3}{2}}, \quad \xi <0.
\end{equation*}
The resonance condition
\begin{equation*}
    |\xi_1|^{\frac{3}{2}} \pm |\xi_2|^{\frac{3}{2}} \pm |\xi_3|^{\frac{3}{2}} \pm |\xi_4|^{\frac{3}{2}} = 0
\end{equation*}
is satisfied if and only if there are two pairs of equal frequencies and matching signs in the last relation, which is not possible since the frequencies satisfy \eqref{XiOnetoFour}.
This indicates that the interactions are non-resonant and symbols of quadrilinear forms cannot be unbounded.

The corresponding quartic energy corrections satisfy the norm equivalence property.
Upon applying the time derivative to each factor of quartic energy corrections, a leading order $\frac{3}{2}$ term is generated and cancels a part of the non-perturbative quartic integral terms.
 This process leaves behind sub-leading terms of order $1$, featuring $\frac{1}{2}$ fewer total derivatives compared to the non-perturbative quartic integral terms.
Since these non-perturbative quartic integral terms have $\frac{1}{2}$ more derivatives to be perturbative, the remaining quartic and higher integral terms after cancellation are perturbative, obviating the need for further quintic energy correction.

To conclude this section, let $E_s^{(4)}$ be the quartic energy correction constructed above, and
\begin{equation*}
    E_s: = E^{(\leq 3)}_s + E_s^{(4)} = E^1_s(\W + \W_{[2]}^{hh}, R+ R_{[2]}^{hh}) + I_1^s+ I_2^s + I_3^s +I_4^s + I_5^s+ E_s^{(4)}.
\end{equation*}
Then the paradifferential modified energy $E_s$ satisfies the norm equivalence \eqref{normEquivalence} and modified energy estiamte \eqref{strongcubicest}.
This finishes the proof of Theorem \ref{t:MainEnergyEstimate}.

\section{Estimate for the linearized equations} \label{s:LinearEstimate}
In this section, we derive the balanced cubic  modified energy estimate for the linearized water wave system.
Let the solutions for the linearized
water wave equations around a solution $(W, Q)$ to the system \eqref{e:CWW} by $(w, q)$ and $r: = q-Rw$.
It is computed in Section 6 of \cite{MR3667289}  that the linearized unknowns $(w,r)$ solve the system
\begin{equation}
\left\{
\begin{array}{lr}
(\partial_t + \mathfrak{M}_{b}\partial_\alpha)w +\mathbf{P}\left[\frac{r_\alpha}{1+\Bar{\mathbf{W}}}\right]+\mathbf{P}b_\alpha w   = \mathbf{P}(\mathcal{G}_0(w,r)- T_w [R\bar{Y}_\alpha]  - \Pi(w, \nP[R\bar{Y}_\alpha]))&\\
(\partial_t + \mathfrak{M}_{b}\partial_\alpha)r + i\nP\left[\frac{Lw}{1+\W}\right] -i\nP[\frac{gw}{1+\W}] =\mathbf{P}\mathcal{K}_0(w,r)  +i\nP[\frac{aw}{1+\W}]-iT_{1-Y}T_{a}w.&  \label{linearizedeqn}
\end{array}
\right.
\end{equation}
Here $\mathfrak{M}_b f = \nP[b f]$, and the self-adjoint operator $L$ is defined in \eqref{LwDef}.
The source terms $\mathcal{G}_0(w,r), \mathcal{K}_0(w,r)$ are given by
\begin{equation}
\begin{aligned}
 \mathcal{G}_0(w,r)& = (1+\mathbf{W})(\mathbf{P}\Bar{m}+\Bar{\mathbf{P}}m) + w\bar{\nP}[R_\alpha \bar{Y}]-T_{\nP[R\bar{Y}_\alpha]}w, \\ 
 \mathcal{K}_0(w,r)& = \Bar{\mathbf{P}}n - \mathbf{P}\Bar{n}+i\nP \bar{p} +iT_{1-Y}T_{a}w,    
\end{aligned}
\label{GKZeroDef}
\end{equation}
where
\begin{align*}
  &m := \frac{r_\alpha + R_\alpha w}{J}+\frac{\Bar{R}w_\alpha}{(1+\mathbf{W})^2}, \qquad n : = \frac{\Bar{R}(r_\alpha +R_\alpha w)}{1+\mathbf{W}}\\   
  &p := \dfrac{w_{\alpha\alpha}}{J^{\frac{1}{2}}(1+\W)} - \left(\dfrac{2\W_\alpha}{2J^{\frac{1}{2}}(1+\W)^2}-\dfrac{\bar{\W}_\alpha}{2J^{\frac{3}{2}}}\right)w_\alpha.
\end{align*}

In this section, we prove the following modified energy estimate for  the linearized system \eqref{linearizedeqn}.
\begin{theorem} \label{t:LinearizedWellposed}
Assume that for some fixed $s>1$, on the time interval $I = [0,T]$ for some time $T$, $N^s: = \|(\W, R)\|_{L^\infty(I; \mathcal{H}^s)}<\infty$, and $\mathcal{A}_1 \lesssim 1$.
If $(w,r)$ solve the linearized system \eqref{linearizedeqn} on $I = [0,T]$, there exists an energy functional $E^{\Half}_{lin}(w,r)$ such that on $I = [0,T]$, we have the following two properties:
\begin{enumerate}
    \item Norm equivalence:
    \begin{equation*}
        E^{\Half}_{lin}(w,r) \approx_{\CalAO} \| (w,r)\|^2_{\maH^\Half}.
    \end{equation*}
    \item Energy estimate:
    \begin{equation*}
        \frac{d}{dt} E^{\Half}_{lin}(w,r) \lesssim_{\CalAO, N^s} (1+ \mathcal{A}_1^2) E^{\Half}_{lin}(w,r).
    \end{equation*}
\end{enumerate} 
\end{theorem}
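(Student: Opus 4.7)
The plan is to mirror the balanced cubic modified energy construction of Section \ref{s:ModifiedEnergy} for the linearized system, replacing one copy of $(\W, R)$ by the linearized unknown $(w, r)$ throughout. The background $(\W, R) \in \mathcal{H}^s$ supplies the para-coefficients, and the role played by $\CalAZ$ in Theorem \ref{t:MainEnergyEstimate} is here played by $\CalAO$ together with $N^s$.

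First, I would paralinearize \eqref{linearizedeqn}. Using Lemma \ref{t:LwLeading} to expand $Lw = T_{J^{-\Half}} w_{\alpha\alpha} - T_w T_{J^{-\Half}(1-Y)}\W_{\alpha\alpha} + E$, replacing $\mathfrak{M}_b \partial_\alpha$ by the para-material derivative $T_{D_t}$ modulo perturbative error, and paraexpanding the factors $1/(1+\W)$ and $1/(1+\bar{\W})$ as in the full-system derivation, the system takes the form
\begin{equation*}
\left\{
\begin{array}{l}
T_{D_t} w + T_{1-\bar{Y}}\, r_\alpha = \mathcal{G}_{lin}(w,r) + G_{lin}, \\
T_{D_t} r + i\, T_{J^{-\Half}(1-Y)^2}\, w_{\alpha\alpha} = \mathcal{K}_{lin}(w,r) + K_{lin},
\end{array}
\right.
\end{equation*}
with $(G_{lin}, K_{lin})$ perturbative in $\mathcal{H}^{\Half}$ and $(\mathcal{G}_{lin}, \mathcal{K}_{lin})$ collecting the non-perturbative \emph{bilinear} interactions between $(\W, R)$ and $(w, r)$. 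These terms are of exactly the same algebraic type as the quadratic source terms in \eqref{WRSystemParaLin}, with one factor drawn from the background and one from the linearized unknown, and they admit the same splitting into holomorphic/mixed and balanced/low-high classes. The leading para-coefficients on $r_\alpha$ and $w_{\alpha\alpha}$ match those of \eqref{WRSystemParaLin} exactly, so the symmetrizing energy
\begin{equation*}
E^1_{lin}(w,r) = \int T_{J^{-\frac{3}{2}}} \langle D\rangle w \cdot \langle D\rangle \bar{w} + |\langle D \rangle^{\Half} r|^2 \, d\alpha
\end{equation*}
satisfies the norm equivalence, and its time derivative, after the integration-by-parts cancellation of \eqref{EOneSIntegral} and the application of \eqref{JsTimeDerivative}, produces precisely the same non-perturbative structure in $(\mathcal{G}_{lin}, \mathcal{K}_{lin})$ and in the $\partial_t J^{-\frac{3}{2}}$-type integrals as in Section \ref{s:ModifiedEnergy}.

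The third step is the bilinear analogue of Proposition \ref{t:SymbolWRTwo}: for each of the eight normal form classes one solves the identical $4\times 4$ algebraic system \eqref{BilinearHLH}--\eqref{BilinearAHH}, now producing bilinear symbols in $((\W, R), (w, r))$. The denominators $9\xi^2 + 14\xi\eta + 9\eta^2$ and $4\eta^2 - 4\eta\zeta + 9\zeta^2$ remain strictly positive away from the origin, since the capillary triad non-resonance $\tau = \pm|\xi|^{\frac{3}{2}}$ is unchanged by linearization, so the symbols are well-defined. The balanced portions $(w_{[2]}^{hh}, r_{[2]}^{hh})$ are bounded in $\mathcal{H}^{\Half}$ by $N^s$ and are used to form modified unknowns, while the low-high portions define five cubic corrections $I_1^{\Half}, \ldots, I_5^{\Half}$ that remove every non-perturbative bilinear integral in $\frac{d}{dt} E^1_{lin}$ at leading order.

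Finally, the quartic step follows Section \ref{s:QuarticEnergy} verbatim: the only residual non-perturbative terms are $lhmh$-type quadrilinears with two background slots at low and medium frequencies and two linearized slots at high frequencies, coming from $\partial_t$ acting on the para-coefficients of the cubic corrections and from $T_b\partial_\alpha$ hitting their low-frequency arguments. The non-resonance of the four-wave relation $|\xi_1|^{\frac{3}{2}} \pm |\xi_2|^{\frac{3}{2}} \pm |\xi_3|^{\frac{3}{2}} \pm |\xi_4|^{\frac{3}{2}} = 0$ under \eqref{XiOnetoFour} yields bounded quadrilinear symbols, and one constructs the quartic correction $E^{(4)}_{lin}$ by the same $8\times 8$ linear system approach. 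The total modified energy $E^{\Half}_{lin} = E^1_{lin}(\tilde{w}, \tilde{r}) + \sum_{j=1}^5 I_j^{\Half} + E^{(4)}_{lin}$ then satisfies both claims. I expect the main obstacle to be the bookkeeping required to verify that every residue, after the three successive cancellations, is controlled by $(1+\mathcal{A}_1^2)\|(w,r)\|^2_{\mathcal{H}^{\Half}}$ with implicit constant depending only on $\CalAO$ and $N^s$; this is precisely where the paradifferential reformulation and the $\mathcal{A}_1^2$-type bounds on $a$, $M$, $T_{D_t}\W$ and $T_{D_t}R$ from Section \ref{s:Bounds} (rather than the coarser $\CalAT$-bounds of \cite{MR3667289}) become indispensable and force the regularity threshold $s > 1$ for the background.
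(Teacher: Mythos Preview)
Your proposal contains a concrete error that blocks the argument as written. You claim that the leading para-coefficients on $r_\alpha$ and $w_{\alpha\alpha}$ in the paralinearized system match those of \eqref{WRSystemParaLin} exactly, but they do not: the linearized equation \eqref{linearizedeqn} has $\nP[\frac{r_\alpha}{1+\bar{\W}}]$ and $i\nP[\frac{Lw}{1+\W}]$, which paralinearize to $T_{1-\bar{Y}}\,r_\alpha$ and $iT_{1-Y}\mathcal{L}w \approx iT_{J^{-\Half}(1-Y)}w_{\alpha\alpha}$, whereas the full system has $T_{(1-\bar{Y})(1+\W)}R_\alpha$ and $iT_{J^{-\Half}(1-Y)^2}\W_{\alpha\alpha}$. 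The extra factors $(1+\W)$ and $(1-Y)$ are absent in the linearized case. Consequently your symmetrizing weight $T_{J^{-\frac{3}{2}}}$ is wrong: with it, the leading-order integration-by-parts cancellation analogous to \eqref{EOneSIntegral} fails, because $J^{-\frac{3}{2}}(1-\bar{Y}) \neq \overline{J^{-\Half}(1-Y)}$. The correct weight is $T_{J^{-\Half}}$, and this is not a cosmetic change: it propagates through every subsequent normal form computation, so the symbol systems you would have to solve are \emph{not} those of \eqref{BilinearHLH}--\eqref{BilinearAHH}.

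Beyond this, the paper's proof is organized rather differently from your ``replace one copy of $(\W,R)$ by $(w,r)$'' template. The paper first isolates the homogeneous paradifferential flow \eqref{ParadifferentialFlow} and builds for it an energy $E_{lin}(w,r) = \int -\Re(\mathcal{L}w\cdot\bar{w}) + \Im(w\cdot T_{J^{-\Half}}\bar{w}_\alpha) + g|w|^2 + \Im(r\bar{r}_\alpha) + |r|^2\,d\alpha$ together with a dedicated cubic correction $E^3_{cor}$ whose sole job is to cancel the commutator term $-\Re\int [T_{D_t},\mathcal{L}]w\cdot\bar{w}\,d\alpha$ (Proposition \ref{t:wellposedflow}). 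Only then does it treat the inhomogeneous source terms $(\mathcal{G}^\sharp,\mathcal{K}^\sharp)$, and it does so by constructing normal form \emph{variables} $(w_{NF}, r_{NF})$ --- balanced quadratic, three separate low-high quadratic pieces, and a cubic piece --- rather than further energy corrections; the final energy is $E^{\Half,para}_{lin}(w_{NF}, r_{NF})$. The low-high normal forms there are genuinely unbounded by $\CalAZ$ alone and require $\CalAO$ (and for the cubic piece, $N^s$), which is where the $\CalAO$- and $N^s$-dependence of the constants in Theorem \ref{t:LinearizedWellposed} originates. Your scheme of pure energy corrections modeled on Section \ref{s:ModifiedEnergy} may be salvageable once the weight is fixed and the asymmetry between $(\W,R)$ and $(w,r)$ is properly accounted for, but it is not the route the paper takes, and the symbol computations would have to be redone from scratch rather than imported from Proposition \ref{t:SymbolWRTwo}.
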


From the linearized equations \eqref{linearizedeqn} we obtain the corresponding paradifferential flow
\begin{equation}
\left\{
             \begin{array}{lr}
             T_{D_t}w+T_{1-\bar{Y}}\partial_\alpha r + \Half T_{b_\alpha }w  = 0 &\\
           T_{D_t}r  + i T_{1-Y}\mathcal{L}w -igT_{1-Y}w =0,&  \label{ParadifferentialFlow}
             \end{array}
\right.
\end{equation}
where $\mathcal{L}: = \partial_\alpha T_{J^{-\Half}}\partial_\alpha$ is the leading paradifferential term of the operator $L$.
$\mathcal{L}$ is also a self-adjoint operator.
Here in the first equation of the paradifferential flow we change $T_{\nP b_\alpha}w$ to $\Half T_{b_\alpha}w$ so that $T_{b_\alpha}$ is a real para-coefficient.

The linearized equations \eqref{linearizedeqn} can be rewritten in the paradifferential form
\begin{equation}
\left\{
             \begin{array}{lr}
             T_{D_t}w+T_{1-\bar{Y}}\partial_\alpha r + \Half T_{b_\alpha }w  = \mathcal{G}^{\sharp}(w,r) &\\
           T_{D_t}r + i T_{1-Y}\mathcal{L}w -igT_{1-Y}w =\mathcal{K}^{\sharp}(w,r),&  \label{ParadifferentialLinearEqn}
             \end{array}
\right.
\end{equation}
where the source terms $(\mathcal{G}^{\sharp}, \mathcal{K}^{\sharp})$ are given by
\begin{equation*}
   \mathcal{G}^{\sharp} = \nP(\mathcal{G}_0+\mathcal{G}_1), \qquad   \mathcal{K}^{\sharp} = \nP(\mathcal{K}_0+\mathcal{K}_1),
\end{equation*}
with $(\mathcal{G}_0, \mathcal{K}_0)$ are as \eqref{GKZeroDef} and 
\begin{align*}
  \mathcal{G}_1 = &\Pi(r_\alpha, \bar{Y}) -(T_{w_\alpha}b + \Pi(w_\alpha, b)) -T_w b_\alpha
 - \Pi(w, \nP b_\alpha ) -T_w (R\bar{Y}_\alpha)- \Pi(w, \nP[R\bar{Y}_\alpha])\\
 &+ \Half T_{\bar{\nP}b_\alpha - \nP b_\alpha}w,\\
 \mathcal{K}_1 = & -(T_{r_\alpha}b + \Pi(r_\alpha, b))+ i(T_{Lw}Y +\Pi(Lw, Y)) - iT_{1-Y}\partial_\alpha T_{w_\alpha} (J^{-\frac{1}{2}}-1) -T_{1-Y}\partial_\alpha T_w \nP c\\
  &  - iT_{1-Y}\partial_\alpha \Pi(w_\alpha, J^{-\Half}-1) -T_{1-Y}\Pi(w_\alpha, c)-T_{1-Y}\Pi(w, \nP c_\alpha)+iT_{1-Y}T_w a\\
  &  + iT_{1-Y}\Pi(w,a)-iT_{(g+a)w}Y-i\Pi((g+a)w, Y)-T_{1-Y}T_c w_\alpha -T_{1-Y}T_{\nP c_\alpha}w
\end{align*}
are the paradifferential truncations.

The rest of this section is devoted to the proof of Theorem \ref{t:LinearizedWellposed}, whose proof  is divided into five steps outlined below.

First, in Section \ref{s:ComputeParawr}, we simplify and find out the leading terms of the source terms.
Then we use these source term bounds together with the linear part of the paradifferential equation \eqref{ParadifferentialLinearEqn} to obtain the leading terms of the para-material derivatives of $(w,r)$.
In the following, we write the perturbative source terms for $(G,K)$ that satisfy
\begin{equation}
 \| (G,K)\|_{\maH^\Half}\lesssim_{\CalAZ}  (1+\mathcal{A}_1^2) \| (w,r)\|_{\maH^\Half}. \label{HalfGK}
\end{equation} 

For the second step, in Section \ref{s:HomoFlow}, we prove the energy estimate of the homogeneous paradifferential flow \eqref{ParadifferentialFlow}:
\begin{proposition} \label{t:wellposedflow}
Assume that $1+\mathcal{A}_1^2  \in L^1_t([0,T])$ for some time $T>0$, then if $(w,r)$ solve the homogeneous paradifferential system \eqref{ParadifferentialFlow} on $[0,T]$, there exists an  energy functional $E^{\Half,para}_{lin}(w, r)$ such that on $[0,T]$, we have the following two properties:
\begin{enumerate}
\item Norm equivalence:
\begin{equation*}
    E^{\Half,para}_{lin}(w, r) \approx_{\CalAZ} \|(w,r)\|_{\maH^\Half}^2.
\end{equation*}
\item The time derivative of $E^{\Half, para}_{lin}(w, r)$ is bounded by
\begin{equation*}
    \frac{d}{dt}  E^{\Half,para}_{lin}(w, r) \lesssim_{\CalAZ} (1+\mathcal{A}_1^2) \|(w,r)\|_{\maH^\Half}^2.
\end{equation*}
\end{enumerate}  
\end{proposition}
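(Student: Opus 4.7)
The plan is to construct a paradifferential energy at the $\H^{\Half}$ level tailored to the structure of the homogeneous flow \eqref{ParadifferentialFlow}. A natural starting point is
\[
E^{\Half,para}_{lin}(w,r) := \int T_{J^{-\Half}}\, w_\alpha \bar w_\alpha \, d\alpha + g\int |w|^2 \, d\alpha - i\int r\,\bar r_\alpha \, d\alpha + \int |r|^2 \, d\alpha,
\]
possibly with weighted adjustments (for example, replacing $T_{J^{-\Half}}$ by $T_{|1-Y|^2 J^{-\Half}} = T_{J^{-3/2}}$ using the identity $|1-Y|^2 = J^{-1}$, and similarly inserting $T_{|1-Y|^2}$-type weights in the other integrals) so as to match the para-coefficients $T_{1-Y}$ and $T_{1-\bar Y}$ appearing in \eqref{ParadifferentialFlow}. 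The first integral equals $-\langle \mathcal{L}w,w\rangle$ via self-adjointness of $\mathcal{L} = \partial_\alpha T_{J^{-\Half}}\partial_\alpha$, while the third equals $\|r\|_{\dot{H}^\Half}^2$ for holomorphic $r$; norm equivalence with $\|(w,r)\|_{\maH^\Half}^2$ then follows from uniform positivity of $J^{-\Half}$ for small $\CalAZ$, combined with the paraproduct bounds of Section \ref{s:Norms}.

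For the energy estimate, I would write $\partial_t = T_{D_t} - T_b \partial_\alpha$, differentiate $E^{\Half,para}_{lin}$ piece by piece, and substitute for $T_{D_t}w$ and $T_{D_t}r$ from \eqref{ParadifferentialFlow}. The key cancellation is between the cross terms: differentiating $\int T_{J^{-\Half}}|w_\alpha|^2\,d\alpha$ produces $2\Re \langle \mathcal{L}w,\, T_{1-\bar Y}r_\alpha \rangle$ via the first equation, while differentiating $-i\int r\bar r_\alpha\,d\alpha$ produces $-2\Re \int (T_{1-Y}\mathcal{L}w)\,\bar r_\alpha\, d\alpha$ via the second. After paraproduct-adjoint identities, these two expressions differ only by corrections of the form $T_{1-\bar Y}^\ast - T_{1-Y}$ and $[\mathcal{L}, T_{1-Y}]$, which are perturbative of size $\CalAO \|(w,r)\|_{\maH^\Half}^2$; if a clean cancellation is desired, it can be arranged by inserting the weight $T_{|1-Y|^2}$ in the energy. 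An analogous argument eliminates the gravity cross terms produced between $-igT_{1-Y}w$ and $g\int|w|^2\,d\alpha$.

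The non-cancelled contributions come from three related sources: the time derivative of the para-coefficient $J^{-\Half}$, giving $\Half J^{-\Half}b_\alpha$ by Lemma \ref{t:JsParaMat}; the commutator $[T_{D_t},\mathcal{L}]$, whose leading part is $-\Half \partial_\alpha T_{J^{-\Half}b_\alpha}\partial_\alpha$ by \eqref{CommutatorBAlpha}; and the explicit $\Half T_{b_\alpha}w$ term in the first equation of \eqref{ParadifferentialFlow}, together with the integration-by-parts contributions of $T_b\partial_\alpha$ against $\mathcal{L}w$. The factor $\Half$ in front of $T_{b_\alpha}w$ is precisely what is needed for the leading $b_\alpha$-contributions from these three sources to combine algebraically into a single expression controlled by $\|b\|_{C^\Half_\ast}\lesssim_{\CalAZ}\CalAO\lesssim \mathcal{A}_1$; after this combination, all remaining terms are bounded by $(1+\mathcal{A}_1^2)\|(w,r)\|_{\maH^\Half}^2$, yielding the desired estimate.

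The main obstacle is the precise bookkeeping of these two cancellations. The para-coefficients $T_{1-Y}$ and $T_{1-\bar Y}$ in \eqref{ParadifferentialFlow} are complex conjugates rather than identical, so the cross-term cancellation above requires either a carefully weighted version of $E^{\Half,para}_{lin}$ or additional perturbative correction terms. Verifying the algebraic matching of the $b_\alpha$-contributions from three structurally different sources — a time derivative of an energy coefficient, an operator commutator, and an additive correction in the equation itself — is equally delicate. Once both matchings are secured, the remaining error terms reduce to routine paraproduct estimates of the form $T_a^\ast - T_{\bar a}$ and $T_aT_b - T_{ab}$, all controlled by the auxiliary-function bounds gathered in Section \ref{s:Bounds}.
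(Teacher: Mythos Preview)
Your base energy and the cross-term cancellation between $\langle\mathcal{L}w,T_{1-\bar Y}r_\alpha\rangle$ and $\langle T_{1-Y}\mathcal{L}w,\bar r_\alpha\rangle$ are essentially what the paper uses. The gap is in the third paragraph: the three $b_\alpha$-contributions you list do \emph{not} combine into something bounded by $\|b\|_{C^{1/2}_*}\|(w,r)\|_{\maH^{1/2}}^2$. After the bookkeeping, what survives is precisely
\[
-\Re\int [T_{D_t},\mathcal{L}]w\cdot\bar w\,d\alpha
=\int T_{\Re T_{J^{-1/2}(1-\bar Y)}R_\alpha}\,w_\alpha\cdot\bar w_\alpha\,d\alpha + \text{perturbative},
\]
and this residual term is genuinely half a derivative too large: the real para-coefficient $\Re T_{J^{-1/2}(1-\bar Y)}R_\alpha$ lies only in $C^{-1/2+\epsilon}_*$, so $T_{R_\alpha}$ maps $L^2$ to $H^{-1/2+\epsilon}$, and pairing with $\bar w_\alpha\in L^2$ would require $w_\alpha\in H^{1/2-\epsilon}$, i.e.\ $w\in H^{3/2-\epsilon}$ rather than $H^1$. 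Because the coefficient is real, no integration by parts can redistribute the derivative; the term is symmetric and cannot be absorbed. (Note also that the time derivative of $J^{-1/2}$ and the $T_b\partial_\alpha$ contribution are already contained in the commutator $[T_{D_t},\mathcal{L}]$, so listing them separately is double-counting rather than providing extra terms to cancel against.)

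The paper closes this gap by adding a cubic energy correction $E^3_{cor}$ of the schematic form
\[
\Re\int A(R,w,\bar r)+B(R,r,\bar w)+C(\W,w,\bar w)+D(\W,r,\bar r)\,d\alpha + \text{conjugates},
\]
whose symbols $\mathfrak{a},\mathfrak{b},\mathfrak{c},\mathfrak{d}$ are determined by a $4\times4$ algebraic system (a normal-form computation at the energy level) so that $\frac{d}{dt}E^3_{cor}$ reproduces exactly the bad integral $\int T_{\Re(\cdots)R_\alpha}w_\alpha\bar w_\alpha$ up to perturbative errors. The denominator $9\xi^2+14\xi\eta+9\eta^2$ of these symbols is nonvanishing (no three-wave resonances for the capillary dispersion), so $E^3_{cor}$ is bounded by $\CalAZ\|(w,r)\|_{\maH^{1/2}}^2$. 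This correction is the essential missing ingredient in your proposal; without it the estimate fails at $\maH^{1/2}$ and only holds at $\maH^1$ or above.
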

Note that here the implicit constant only depends on $\CalAZ$, which is sharper compared to the implicit constant in Theorem \ref{t:LinearizedWellposed}.
We will see that Theorem \ref{t:LinearizedWellposed}  follows  from Proposition \ref{t:wellposedflow} as long as the source terms $(\mathcal{G}^{\sharp}, \mathcal{K}^{\sharp})$ satisfy,
\begin{equation}
    \|(\mathcal{G}^{\sharp}, \mathcal{K}^{\sharp})\|_{\maH^\Half}\lesssim_{\CalAO, N^s} (1+\mathcal{A}_1^2) \|(w,r)\|_{\maH^\Half}. \label{GoodSourceTermBound}
\end{equation}

In Section \ref{s:NormalAnalysis} and Section \ref{s:PartialNormal}, we take into account the nonlinear source terms $(\mathcal{G}^{\sharp}, \mathcal{K}^{\sharp})$ on the right-hand side of \eqref{ParadifferentialLinearEqn}.
Unfortunately, the source terms $(\mathcal{G}^{\sharp}, \mathcal{K}^{\sharp})$ do not satisfy the bound \eqref{GoodSourceTermBound}, because of both the quadratic contributions and the 
non-perturbative cubic contributions.

To eliminate these unfavorable source terms, we use the paradifferential normal form analysis to construct the modified normal form linear variables $(w_{NF}, r_{NF})$. 
However, unlike the pure gravity case \cite{ai2023dimensional, wan2023low}, though the balanced parts of the corrections are bounded, the low-high parts of the modified normal form variables $(w_{NF}, r_{NF})$ do not satisfy the bound 
\begin{equation}
 \|(w_{NF}, r_{NF}) - (w,r) \|_{\mathcal{H}^\Half}\lesssim \CalAZ \|(w,r) \|_{\mathcal{H}^\Half}. \label{GoodQuadratic}
\end{equation}
For the third step, in Section  \ref{s:NormalAnalysis}, we compute the normal form transformations that remove the balanced portion of the quadratic source terms.
This part of balanced normal form transformations satisfy the estimate \eqref{GoodQuadratic}.
Then for the fourth step in Section \ref{s:PartialNormal}, we compute the normal form transformation to eliminate the low-high portion of the source terms.
This part of low-high normal form transformations only satisfy the estimate 
\begin{equation*}
 \|(w_{NF}, r_{NF}) - (w,r) \|_{\mathcal{H}^\Half}\lesssim_{N^s} \CalAO \|(w,r) \|_{\mathcal{H}^\Half}. 
\end{equation*}
Finally, in Section \ref{s:CubicNormal}, we construct the cubic normal forms to remove the remaining non-perturbative cubic source terms. 
Combining all normal form transformations together, we construct the paradifferential modified energy $ E^{\Half}_{lin}(w,r) := E^{\Half, para}_{lin}(w_{NF}, r_{NF})$ that finalizes the proof of Theorem \ref{t:LinearizedWellposed}.

\subsection{Para-material derivatives of $(w,r)$} \label{s:ComputeParawr}
In this subsection, we first compute the leading terms of the source terms $(\nP\mathcal{G}_0, \nP\mathcal{K}_0)$, $(\nP\mathcal{G}_1, \nP\mathcal{K}_1)$. 
Then we compute the leading terms of para-material derivatives of $(w,r)$.
We will also compute the non-perturbative quadratic terms of $(\nP\mathcal{G}_0, \nP\mathcal{K}_0)$, $(\nP\mathcal{G}_1, \nP\mathcal{K}_1)$. 

We begin with the leading terms of $(\nP\mathcal{G}_0, \nP\mathcal{K}_0)$.
\begin{lemma} \label{t:GKZeroZero}
The source terms $(\nP\mathcal{G}_0, \nP\mathcal{K}_0)$ have the decomposition
\begin{equation*}
\nP\mathcal{G}_0 = \mathcal{G}_{0,0} +G, \quad \nP\mathcal{K}_0 = \mathcal{K}_{0,0}+K,  
\end{equation*}
where $(\mathcal{G}_{0,0}, \mathcal{K}_{0,0})$ are given by
\begin{align*}
   &\mathcal{G}_{0,0} = -\nP[T_{1-\bar{Y}}(\bar{r}_\alpha + T_{\bar{w}}\bar{R}_\alpha) T_{1-Y}\W]+\nP[T_{(1-\bar{Y})^2}\bar{w}_\alpha T_{1+\W}R],\\
   &\mathcal{K}_{0,0} = -\nP[T_{1-\bar{Y}}\bar{r}_\alpha R]-\frac{i}{2}\nP[T_{J^{-\frac{3}{2}}} \W \bar{w}_{\alpha \alpha}] + \frac{i}{2}\nP[T_{J^{-\frac{3}{2}}}\W_\alpha \bar{w}_\alpha].
\end{align*}
$(\nP\mathcal{G}_0, \nP\mathcal{K}_0)$ satisfy the bound
\begin{equation}
    \| (\nP\mathcal{G}_0, \nP\mathcal{K}_0)\|_{\mathcal{H}^0} \lesssim_{\CalAZ} \CalAO \| (w,r)\|_{\maH^\Half}, \label{ZeroHalfGKZero}
\end{equation}
and the remainder terms $(G,K)$ are perturbative source terms that satisfy \eqref{HalfGK}.
\end{lemma}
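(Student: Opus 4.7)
The plan is to substitute the explicit formulas for $m$, $n$, $p$ from \eqref{GKZeroDef} into $\mathcal{G}_0$ and $\mathcal{K}_0$, and then to carry out a careful Bony decomposition of every product that appears. Using the algebraic identities $J^{-1} = (1-Y)(1-\bar{Y})$ and $(1+\W)^{-1} = 1-Y$, I would first rewrite $(1+\W)\nP\bar{m}$, $(1+\W)\bar{\nP}m$, $\nP\bar{n}$, $\nP\bar{p}$ and the remaining pieces of $\mathcal{G}_0$, $\mathcal{K}_0$ as sums of terms in which a smooth function of $(Y,\bar{Y},J^{-1/2})$ multiplies a background factor drawn from $\{\W, R, R_\alpha\}$ and a linearized factor drawn from $\{w, w_\alpha, w_{\alpha\alpha}, r_\alpha\}$.

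Next, I would apply Bony's decomposition $AB = T_A B + T_B A + \Pi(A,B)$ to each such product. The goal is to keep only the pieces where the linearized variable carries the highest frequency, i.e.\ the low-high paraproducts $T_{\text{bkgd}}(\text{lin})$ together with the balanced paraproducts $\Pi(\text{bkgd},\text{lin})$, and to fold everything else into the perturbative remainder $(G,K)$. The para-coefficients of the form $1-Y$, $(1-\bar Y)^2$, $J^{-3/2}$ that appear inside $(\mathcal{G}_{0,0},\mathcal{K}_{0,0})$ would be produced by merging nested paraproducts via the composition rule \eqref{ParaProducts}, with the composition errors absorbed into $(G,K)$. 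In particular, the extra $T_{\bar w}\bar R_\alpha$ factor inside $\mathcal{G}_{0,0}$ is the low-high piece of the $R_\alpha w$ contribution from $m$; its high-low counterpart $T_{R_\alpha} w$ pairs a $C_*^{-1/2+\epsilon}$ background factor with $w\in H^1$ and is perturbative.

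For the remainder bound \eqref{HalfGK} in $\mathcal{H}^{1/2}$, I would combine the Sobolev and Zygmund estimates on $Y$, $b$, $c$, $M$, $a$ from Section \ref{s:Bounds} with the paraproduct estimates \eqref{CCCEstimate}, \eqref{HCHEstimate}, \eqref{CsCmStar} and \eqref{HsCmStar}. The quadratic weight $(1+\mathcal{A}_1^2)$ on the right-hand side would arise because every surviving term in $(G,K)$ either contains two background factors controlled in $C_*^\epsilon$ by $\mathcal{A}_1$ (coming, e.g., from $Y_\alpha$, $R_\alpha$, $b_\alpha$, or $M$), or arises from a paraproduct composition or commutator error that gains one derivative. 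The leading bound \eqref{ZeroHalfGKZero} at the coarser $\mathcal{H}^0$ level would then follow from a direct application of the low-high and balanced paraproduct estimates: each leading term of $(\mathcal{G}_{0,0},\mathcal{K}_{0,0})$ contains exactly one background factor whose $C_*^{1+\epsilon}$ or $C_*^{1/2+\epsilon}$ norm is controlled by $\CalAO$, paired with one derivative of $(w,r)$ contributing $\|(w,r)\|_{\mathcal{H}^{1/2}}$.

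The main difficulty will be the algebraic bookkeeping required to arrive exactly at the stated formulas for $\mathcal{G}_{0,0}$ and $\mathcal{K}_{0,0}$. The four contributions $(1+\W)\nP\bar m$, $(1+\W)\bar{\nP}m$, $w\bar{\nP}[R_\alpha \bar Y]$ and $-T_{\nP[R\bar Y_\alpha]}w$ each split into several paraproduct pieces, and partial cancellations occur between them; verifying that the residual sum is exactly the displayed $\mathcal{G}_{0,0}$, and similarly for the three capillary-type contributions in $\mathcal{K}_0$, will be the delicate part. Once the algebra is in place, the estimates themselves are routine and follow the template already used in Section \ref{s:Bounds}.
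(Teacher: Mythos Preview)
Your overall strategy (Bony decomposition of the products appearing in $m$, $n$, $p$ and their conjugates, followed by paraproduct and commutator bounds) matches the paper's, but your description of \emph{which} frequency pieces survive is backwards, and this is not a harmless slip.

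You write that the goal is to ``keep only the pieces where the linearized variable carries the highest frequency, i.e.\ the low-high paraproducts $T_{\text{bkgd}}(\text{lin})$''. But look at the structure of the source terms: $\mathcal{G}_0$ and $\mathcal{K}_0$ are built from $\nP\bar m$, $(1+\W)\bar{\nP}m$, $\nP\bar n$, $\nP\bar p$, so the linearized factors that appear are the \emph{conjugates} $\bar w,\bar w_\alpha,\bar w_{\alpha\alpha},\bar r_\alpha$, which are antiholomorphic. When an antiholomorphic factor is paired with a holomorphic background factor and the product is hit by $\nP$, the piece where the antiholomorphic (linearized) factor sits at high frequency is annihilated, not retained. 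Concretely, for $\nP[\bar r_\alpha\, Y]$ one has $\nP T_{Y}\bar r_\alpha \approx 0$, while $T_{\bar r_\alpha}Y$ and $\nP\Pi(\bar r_\alpha,Y)$ survive. This is exactly why the leading pieces $\mathcal{G}_{0,0},\mathcal{K}_{0,0}$ in the statement all have the linearized factor at \emph{low} frequency relative to $\W$ or $R$, and it is confirmed by the later Lemma~\ref{t:QuadraticGKZero}, where the quadratic content is $T_{\bar r_\alpha}\W$, $T_{\bar w_\alpha}R$, $T_{\bar w_{\alpha\alpha}}\W$, etc. Your own example sentence about $T_{\bar w}\bar R_\alpha$ actually exhibits this correct structure (linearized low, background high), contradicting your stated rule two paragraphs earlier.

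The clean mechanism the paper uses, and which you should make explicit, is the commutator-with-projection identity: since $\nP\bar g=0$ for antiholomorphic $\bar g$, one has $\nP[f\bar g]=[\nP,f]\bar g$, and then \eqref{CommutatorPBMO}--\eqref{CommutatorPBMOTwo} transfer derivatives from $\bar g$ to $f$. This is what produces the $\mathcal{H}^0$ bound \eqref{ZeroHalfGKZero} for terms such as $\nP[(J^{-1/2}(1-\bar Y)-1)\bar w_{\alpha\alpha}]$, and it is equivalent to your Bony decomposition once you observe that the $T_f\bar g$ piece dies under $\nP$. Fix the frequency bookkeeping to reflect this, and the rest of your plan (paralinearizing $J^{-1/2}(1-\bar Y)-1$, merging para-coefficients via \eqref{ParaProducts}, replacing $T_{1+\W}Y$ by $T_{1-Y}\W$ through \eqref{YWExpression}) goes through exactly as in the paper.
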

This Lemma is similar to a corresponding computation obtained in the pure gravity water wave model \cite{ai2023dimensional}, though using a different control norm.
\begin{proof}
We first consider the estimate for the source term $\nP \mathcal{G}_0$.
Writing
\begin{equation*}
m = |1-Y|^2(r_\alpha +R_\alpha w)+(1-Y)^2\bar{R}w_\alpha,
\end{equation*}
we claim that 
\begin{equation}
    \|\bar{\nP} m\|_{L^2}\lesssim_\CalAZ \CalAO \| (w,r)\|_{\maH^\Half}. \label{nPmBound}
\end{equation}
Note that the derivative in $m$ is always  applied to one of the holomorphic variables, which has to be at or below the frequency of the undifferentiated antiholomorphic variable under the projection $\bar{\nP}$.
For instance, we can estimate 
\begin{align*}
 & \| \bar{\nP}(\bar{Y}r_\alpha)\|_{L^2} = \| [\bar{\nP}, \bar{Y}]r_\alpha \|_{L^2}\lesssim \| Y\|_{C^{\Half}_{*}}\|r\|_{H^{\Half}}\lesssim \CalAO \|r\|_{H^{\Half}}, \\
 & \| \bar{\nP}(\bar{R}w_\alpha)\|_{L^2}= \| [\bar{\nP}, \bar{R}]w_\alpha \|_{L^2}\lesssim \| R\|_{C^{\Half}_{*}}\|w\|_{H^\Half}\lesssim \CalAO \|w\|_{H^{1}},
\end{align*}
using the commutator estimate \eqref{CommutatorPBMO}. 
For other terms such as $\bar{\nP}[\bar{Y}R_\alpha w]$, some factors like $\bar{Y}R_\alpha$ must be antiholomorphic.
We again use the commutator estimates \eqref{CommutatorPBMO}, \eqref{CCCEstimate}, and \eqref{CsCmStar},
\begin{align*}
&\|\bar{\nP}[\bar{Y}R_\alpha w]\|_{L^2} = \|[\bar{\nP}, \bar{\nP}[\bar{Y}R_\alpha] ]w\|_{L^2} \lesssim \|\bar{\nP}[\bar{Y}R_\alpha] \|_{C^{\epsilon}_{*}}\| w\|_{L^2}\\
\lesssim &\left(\|T_{R_\alpha}\bar{Y} \|_{C^{\epsilon}_{*}} + \|\bar{\nP}\Pi(R_\alpha, \bar{Y}) \|_{C^{\epsilon}_{*}}\right) \| w\|_{L^2} \lesssim \|R \|_{C_{*}^\Half}\| Y\|_{C^{\Half + \epsilon}_{*}}\| w\|_{L^2} \lesssim \CalAZ \CalAO \| w\|_{H^1}.
\end{align*}
Hence, the bound for $\bar{\nP}m$ \eqref{nPmBound} is proved.
Using \eqref{nPmBound} and  commutator estimates \eqref{CommutatorPBMO}, \eqref{CommutatorPBMOTwo},
\begin{align*}
&\|\nP(\W \bar{\nP}m) \|_{H^1} = \|[\nP, \W]\bar{\nP}m \|_{H^1} \lesssim \|\W\|_{C^{1}_{*}}  \|\bar{\nP} m\|_{L^2}\lesssim \mathcal{A}_1^2 \| (w,r)\|_{\maH^\Half}, \\
& \|\nP[w\bar{\nP}[R_\alpha \bar{Y}]] \|_{H^1} = \|[\nP, w]\bar{\nP}[R_\alpha \bar{Y}] \|_{H^1} \lesssim \| w\|_{H^1} \|\bar{\nP}[R_\alpha \bar{Y}] \|_{C^\epsilon_{*}} \lesssim \mathcal{A}_1^2  \| w\|_{H^1}, \\
& \|T_{\nP[R\bar{Y}_\alpha]}w \|_{H^1} \leq \|w \|_{H^1}\| \nP[R\bar{Y}_\alpha]\|_{C^\epsilon_{*}} \lesssim \mathcal{A}_1^2  \| w\|_{H^1}.
\end{align*}
The terms $\nP((1+\W)\bar{\nP} m)$, $\nP[w\bar{\nP}[R_\alpha \bar{Y}]]$ and $T_{\nP[R\bar{Y}_\alpha]}w$ may be put into $G$.
Similarly, $\nP((1+\W)\nP \bar{m}) = T_{1+\W}\nP \bar{m} +G$.
We write
\begin{equation*}
\nP \bar{m} = -\nP[(1-\bar{Y})(\bar{r}_\alpha + \bar{R}_\alpha \bar{w})Y] + \nP[(1-\bar{Y})^2\bar{w}_\alpha R].    
\end{equation*}
Akin to the proof of Lemma $5.3$ in \cite{ai2023dimensional}, we can peel off frequency components to get
\begin{equation*}
\nP \bar{m} =  -\nP[T_{1-\bar{Y}}(\bar{r}_\alpha+ T_{\bar{w}}\bar{R}_\alpha) Y  ]+\nP[T_{(1-\bar{Y})^2}\bar{w}_\alpha R]+G.
\end{equation*}
Applying the para-associativity \eqref{ParaAssociateTwo}, $T_{1+\W}$ falls on the holomorphic function.
According to the relation \eqref{YWExpression}, one can replace $T_{1+\W}Y$ by $T_{1-Y}\W$.
Then, we get the expression of $\mathcal{G}_{0,0}$.

For the estimate for the second source term $\nP \mathcal{K}_0$, a similar analysis gives the bound
\begin{equation*}
    \|\bar{\nP} n\|_{L^2}\lesssim \CalAO \| (w,r)\|_{\maH^\Half}. 
\end{equation*}
Writing $n = (1-Y)\bar{R}(r_\alpha + R_\alpha w)$, so that by putting the perturbative components into the source term $K$,
\begin{equation*}
\nP \bar{n}= \nP[T_{1-\bar{Y}}\bar{r}_\alpha R] + K.
\end{equation*}
Due to the bound for $a$ \eqref{ACHalf}, 
\begin{equation*}
 \| iT_{1-Y}T_{a}w\|_{H^\Half} \lesssim_\CalAZ \|a \|_{L^\infty} \|w \|_{H^\Half} \lesssim_\CalAZ \mathcal{A}_1^2 \|w\|_{H^1}.
\end{equation*}
The term $iT_{1-Y}T_{a}w$ belongs to $K$.

If only suffices to consider $i\nP \bar{p}$.
We write
\begin{equation*}
\nP \bar{p} = \nP[(J^{-\frac{1}{2}}(1-\bar{Y})-1)\bar{w}_{\alpha \alpha}] - \frac{3}{2}\nP[J^{-\frac{1}{2}}(1-\bar{Y})^2\bar{\W}_\alpha \bar{w}_\alpha] +\frac{1}{2}\nP[J^{-\frac{3}{2}}\W_\alpha \bar{w}_\alpha].  
\end{equation*}
For the second term of $\nP \bar{p}$, we apply the commutator estimate \eqref{CommutatorPBMO} and the Moser type estimate \eqref{MoserTwo},
\begin{align*}
&\|\nP[J^{-\frac{1}{2}}(1-\bar{Y})^2\bar{\W}_\alpha \bar{w}_\alpha]\|_{H^\Half} \\
=& \|[\nP, J^{-\frac{1}{2}}(1-\bar{Y})^2\bar{\W}_\alpha] \bar{w}_\alpha\|_{H^\Half} 
\lesssim \|\nP[J^{-\frac{1}{2}}(1-\bar{Y})^2\bar{\W}_\alpha] \|_{C^{\Half}_{*}} \| w\|_{H^1} \\
\leq& \left(\|T_{(1-\bar{Y})^2\bar{\W}_\alpha} J^{-\Half}-1 \|_{C^{\Half}_{*}} + \|\nP \Pi((1-\bar{Y})^2\bar{\W}_\alpha, J^{-\Half}-1) \|_{C^{\Half}_{*}}\right)\| w\|_{H^1}
\lesssim \mathcal{A}^2_1 \| w\|_{H^1},
\end{align*}
so that it is perturbative and can be put into the error term $K$.
For the first and the third term of $\nP \bar{p}$,
\begin{align*}
 &\|\nP[(J^{-\frac{1}{2}}(1-\bar{Y})-1)\bar{w}_{\alpha \alpha}] \|_{L^2} = \|[\nP, J^{-\frac{1}{2}}(1-\bar{Y})-1] \bar{w}_{\alpha \alpha}\|_{L^2}\\
 \lesssim& \|J^{-\frac{1}{2}}(1-\bar{Y})-1 \|_{C^1_{*}}\| w\|_{H^1} \lesssim_{\mathcal{A}_0} \CalAO \| w\|_{H^1}, \\
 &\|\nP[J^{-\frac{3}{2}}\W_\alpha \bar{w}_\alpha] \|_{L^2} = \|[\nP, J^{-\frac{3}{2}}\W_\alpha]\bar{w}_\alpha \|_{L^2}
 \lesssim \| J^{-\frac{3}{2}}\W_\alpha \|_{C^\epsilon_{*}} \|w\|_{H^1}\lesssim_{\mathcal{A}_0} \CalAO \| w\|_{H^1}.
\end{align*}
These two terms satisfy the bound \eqref{ZeroHalfGKZero}. 
For the first term of $\nP\bar{p}$, we claim that
\begin{equation*}
J^{-\frac{1}{2}}(1-\bar{Y}) -1 = -\frac{1}{2}T_{J^{-\frac{3}{2}}} \W -\frac{3}{2} T_{J^{-\frac{1}{2}}(1-\bar{Y})^2} \bar{\W}+ E,
\end{equation*}
where the error term $E$ satisfies
\begin{equation*}
 \| E\|_{C_{*}^2} \lesssim_\CalAZ \|\W \|_{C^1_{*}}^2.
\end{equation*}
Indeed, we can write
\begin{equation*}
 J^{-\frac{1}{2}}(1-\bar{Y}) -1 = f(\W, \bar{\W}), \quad f(x,y) = (1+x)^{-\frac{1}{2}}(1+y)^{-\frac{3}{2}}-1.   
\end{equation*}
Direct computation of partial derivatives gives
\begin{equation*}
\frac{\partial f}{\partial x}(x,y)= -\Half (1+x)^{-\frac{3}{2}}(1+y)^{-\frac{3}{2}}, \quad  \frac{\partial f}{\partial y}(x,y)= -\frac{3}{2} (1+x)^{-\frac{1}{2}}(1+y)^{-\frac{5}{2}}.
\end{equation*}
The paralinearization in Zygmund spaces Lemma \ref{t:Paralinear} gives
\begin{equation*}
J^{-\frac{1}{2}}(1-\bar{Y}) -1 = -\frac{1}{2}T_{J^{-\frac{3}{2}}} \W -\frac{3}{2} T_{J^{-\frac{1}{2}}(1-\bar{Y})^2} \bar{\W} + E.
\end{equation*}
Hence,
\begin{equation*}
\nP[(J^{-\frac{1}{2}}(1-\bar{Y})-1)\bar{w}_{\alpha \alpha}] = -\Half \nP[T_{J^{-\frac{3}{2}}} \W \bar{w}_{\alpha \alpha}] + K,
\end{equation*}
since $T_{J^{-\frac{1}{2}}(1-\bar{Y})^2} \bar{\W}\bar{w}_{\alpha \alpha}$ is antiholomorphic, which vanishes after applying the projection $\nP$, and 
\begin{equation*}
 \|\nP[E \bar{w}_{\alpha\alpha}] \|_{H^\Half}  = \|[\nP, E] \bar{w}_{\alpha\alpha} \|_{H^\Half} \lesssim \|E \|_{C^{\frac{3}{2}}_{*}}\|w\|_{H^1} \lesssim_\CalAZ  \mathcal{A}^2_1 \|w\|_{H^1}. 
\end{equation*}

As for the last term of $\nP\bar{p}$, we write
\begin{equation*}
\nP[J^{-\frac{3}{2}}\W_\alpha \bar{w}_\alpha] = \nP[T_{J^{-\frac{3}{2}}}\W_\alpha \bar{w}_\alpha] + K.
\end{equation*}
Setting 
\begin{equation*}
\mathcal{K}_{0,0} = -\nP[T_{1-\bar{Y}}\bar{r}_\alpha R] -\frac{i}{2}\nP[T_{J^{-\frac{3}{2}}} \W \bar{w}_{\alpha \alpha}] + \frac{i}{2}\nP[T_{J^{-\frac{3}{2}}}\W_\alpha \bar{w}_\alpha],
\end{equation*}
it is the leading term
of $\nP \mathcal{K}_0$.
\end{proof}

Next we turn our attention to the source terms $(\nP\mathcal{G}_1, \nP\mathcal{K}_1)$.
\begin{lemma}
The source terms $(\nP\mathcal{G}_1, \nP\mathcal{K}_1)$ have the representation
\begin{equation}
\begin{aligned}
 &\nP\mathcal{G}_1 = - T_{1-\bar{Y}} T_wR_\alpha +\tilde{G}, \\
 &\nP\mathcal{K}_1 = i T_{J^{-\Half} (1-Y)^2}T_w\W_{\alpha \alpha} +\tilde{K},
\end{aligned} \label{PGKOneEqn}
\end{equation}
where $(\tilde{G}, \tilde{K})$ satisfies the estimate
\begin{equation}
    \| (\tilde{G}, \tilde{K})\|_{\mathcal{H}^0} \lesssim_{\CalAZ} (1+\mathcal{A}_1^2) \| (w,r)\|_{\maH^{\Half}}. \label{HalfGKZeroTwo}
\end{equation}
\end{lemma}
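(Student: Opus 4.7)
The plan is to handle the two components of \eqref{PGKOneEqn} in parallel: for each term in the definitions of $\mathcal{G}_1$ and $\mathcal{K}_1$, I will either identify it as the source of the stated leading non-perturbative symbol, or absorb it into $\tilde{G}$ (resp.\ $\tilde{K}$) in the sense of \eqref{HalfGKZeroTwo}. The main tools are the Bony decomposition, the paraproduct composition rule \eqref{ParaProducts} (so that $T_fT_g=T_{fg}$ modulo lower order), the commutator bounds \eqref{CommutatorPBMO}--\eqref{CommutatorPBMOTwo}, the Moser estimates \eqref{MoserOne}--\eqref{MoserTwo}, and the auxiliary bounds \eqref{ACHalf}, \eqref{BCOneStar}, \eqref{CBound}, \eqref{MBound}, \eqref{YMoser}, together with Lemma~\ref{t:LwLeading} and the representation \eqref{nPCRepresentation}. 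I will also repeatedly invoke the Sobolev embedding $H^1(\mathbb{R})\hookrightarrow L^\infty$ to turn the linearized regularity $w\in H^1$ into a pointwise bound.

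For $\nP\mathcal{G}_1$, the non-perturbative contribution will come from $-T_w b_\alpha$. Using $b=2\Re\nP[(1-\bar Y)R]$ and $\nP\bar{\nP}=0$, one has $\nP b_\alpha=\partial_\alpha\nP[(1-\bar Y)R]$; Bony-decomposing the bracket and differentiating, the leading term is $T_{1-\bar Y}R_\alpha$, while the residuals $\partial_\alpha\nP T_R(-\bar Y)$ and $\partial_\alpha\nP\Pi(\bar Y,R)$ fall into $\tilde G$ thanks to $\|Y\|_{C^{1+\epsilon}_{*}}\lesssim\CalAO$ and $\|R\|_{C^{\Half+\epsilon}_{*}}\lesssim\CalAO$. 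Swapping $T_wT_{1-\bar Y}=T_{1-\bar Y}T_w$ via \eqref{ParaProducts} and absorbing the $[\nP,T_w]$-commutator, one recovers $-T_{1-\bar Y}T_wR_\alpha$. The remaining terms in $\mathcal{G}_1$ should all be perturbative in $H^\Half$: the balanced pieces $\Pi(r_\alpha,\bar Y)$, $\Pi(w_\alpha,b)$, $\Pi(w,\nP b_\alpha)$, $\Pi(w,\nP[R\bar Y_\alpha])$ by Bony's estimate with $\|b\|_{C^{\Half}_{*}}\lesssim\CalAO$ and $\|R\bar Y_\alpha\|_{C^\epsilon_{*}}\lesssim\mathcal{A}_1^2$; the low-high pieces $T_{w_\alpha}b$, $T_w(R\bar Y_\alpha)$ and $\Half T_{(\bar\nP-\nP)b_\alpha}w$ by the same Zygmund bounds together with $\|w\|_{L^\infty}\lesssim\|w\|_{H^1}$.

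For $\nP\mathcal{K}_1$, the leading piece $iT_{J^{-\Half}(1-Y)^2}T_w\W_{\alpha\alpha}$ will arise from the single term $-T_{1-Y}\partial_\alpha T_w\nP c$. Substituting \eqref{nPCRepresentation}, i.e.\ $\nP c=-iT_{J^{-\Half}(1-Y)}\W_\alpha+K$ with $K\in C^{1+\epsilon}_{*}$ of size $\mathcal{A}_1^2$, yields $T_w\nP c=-iT_wT_{J^{-\Half}(1-Y)}\W_\alpha+T_wK$; after $\partial_\alpha$, the main contribution is $-iT_wT_{J^{-\Half}(1-Y)}\W_{\alpha\alpha}$, the companions $-iT_{w_\alpha}T_{J^{-\Half}(1-Y)}\W_\alpha$ and $-iT_wT_{(J^{-\Half}(1-Y))_\alpha}\W_\alpha$ being perturbative in $L^2$ via $\|T_{w_\alpha}g\|_{L^2}\lesssim\|w\|_{H^1}\|g\|_{C^\epsilon_{*}}$ combined with $\|\W_\alpha\|_{C^\epsilon_{*}}\lesssim\CalAO$. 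Applying the outer $-T_{1-Y}$ and contracting the para-symbols via \eqref{ParaProducts} then produces the announced term. The remaining contributions are perturbative: $T_{r_\alpha}b+\Pi(r_\alpha,b)$ by \eqref{BCOneStar}; $i(T_{Lw}Y+\Pi(Lw,Y))$ by Lemma~\ref{t:LwLeading}, which replaces $Lw$ by $T_{J^{-\Half}}w_{\alpha\alpha}$ plus an $L^2$ remainder, after which $Y\in C^{1+\epsilon}_{*}$ closes the estimate; the capillary remainders $-iT_{1-Y}\partial_\alpha T_{w_\alpha}(J^{-\Half}-1)$ and $-iT_{1-Y}\partial_\alpha\Pi(w_\alpha,J^{-\Half}-1)$ by Moser on $J^{-\Half}-1$; the $a$-pieces by \eqref{ACHalf} paired with \eqref{YMoser}; finally $-T_{1-Y}T_cw_\alpha-T_{1-Y}T_{\nP c_\alpha}w$ by \eqref{CBound} and a second application of \eqref{nPCRepresentation} to $\nP c_\alpha$.

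The hard part will be the careful bookkeeping of commutator remainders. Every application of $[\nP,T_f]$, every composition $T_fT_g\to T_{fg}$, and every transfer of $\partial_\alpha$ across a paraproduct produces a lower-order piece, and I must verify that each such remainder lies in $H^\Half$ or $L^2$ with the factor $1+\mathcal{A}_1^2$ and an implicit constant depending only on $\CalAZ$. The real delicacy is that $w$ lives only in $H^1$, so $w_\alpha\notin L^\infty$ and every occurrence of $w_\alpha$ must be routed through an $H^{-\epsilon}$--$C^\epsilon_{*}$ duality whose $C^\epsilon_{*}$ partner is controlled by $\CalAO$ via $\|\W\|_{C^{1+\epsilon}_{*}}$ or $\|R\|_{C^{\Half+\epsilon}_{*}}$; correspondingly, any derivative that would \emph{a priori} land on $w$ must be shown, after the cancellations above, to actually land on $\W$ or $R$.
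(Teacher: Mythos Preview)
Your plan is correct and follows the paper's argument almost verbatim: the paper isolates $-T_wb_\alpha$ as the only non-perturbative piece of $\nP\mathcal{G}_1$ and $-T_{1-Y}\partial_\alpha T_w\nP c$ as the only non-perturbative piece of $\nP\mathcal{K}_1$, then reduces these to the stated leading terms via \eqref{nPCRepresentation} and the para-composition rules, exactly as you outline.

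One small correction: your invocation of Lemma~\ref{t:LwLeading} for the term $i(T_{Lw}Y+\Pi(Lw,Y))$ misstates what that lemma says. The lemma gives
\[
Lw = T_{J^{-\Half}}w_{\alpha\alpha} - T_wT_{J^{-\Half}(1-Y)}\W_{\alpha\alpha} + E,\qquad \|E\|_{L^2}\lesssim_\CalAZ\CalAO\|w\|_{H^1},
\]
so there is a \emph{second} leading term $B:=-T_wT_{J^{-\Half}(1-Y)}\W_{\alpha\alpha}$ which is not in $L^2$ in terms of control norms alone. You can still close: $B\in H^{-1}$ with $\|B\|_{H^{-1}}\lesssim\|w\|_{L^2}\CalAO$ via \eqref{HsHmCStar}, and then $\|T_BY\|_{L^2}+\|\Pi(B,Y)\|_{L^2}\lesssim\|B\|_{H^{-1}}\|Y\|_{C^{1+\epsilon}_{*}}\lesssim\mathcal{A}_1^2\|w\|_{H^1}$. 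The paper avoids this detour by expanding $Lw$ directly from the definition \eqref{LwDef} and bounding each of the resulting five pieces via \eqref{HsHmCStar}, shifting one derivative onto $Y$; either route works.
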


\begin{proof}
We begin with the analysis for $\nP\mathcal{G}_1$. 
We compute using \eqref{HCHEstimate}, \eqref{HsHmCStar}, 
\begin{align*}
&\|\nP \Pi(r_\alpha, \bar{Y}) \|_{H^\Half} \lesssim \|r\|_{\dot{H}^\Half} \|Y \|_{C^{1+\epsilon}}\lesssim \CalAO \|r\|_{H^\Half},\\
&\|\nP T_{w_\alpha} b \|_{H^\Half} + \|\nP \Pi(w_\alpha, b) \|_{H^\Half} \lesssim \|w_{\alpha} \|_{L^2} \|b \|_{C^{\Half}_{*}}\lesssim_\CalAZ \CalAO \|w\|_{H^1}, \\
&\|\nP \Pi(w, \nP b_\alpha) \|_{H^\Half} \lesssim \|w_{\alpha} \|_{L^2} \|b \|_{C^{\Half}_{*}}\lesssim_\CalAZ \CalAO \|w\|_{H^1} ,\\
&\|T_w \nP[R\bar{Y}_\alpha]\|_{H^\Half} + \|\nP \Pi(w, \nP[R\bar{Y}_\alpha]) \|_{H^\Half} \lesssim \|w \|_{L^2} \| \nP[R\bar{Y}_\alpha]\|_{C_{*}^\Half} \lesssim \mathcal{A}_1^2 \|w\|_{H^1}, \\
& \|T_{\bar{\nP}b_\alpha - \nP b_\alpha}w \|_{H^\Half} \lesssim \| w\|_{H^1}\|b\|_{C^\Half_{*}}\lesssim_\CalAZ \CalAO \|w\|_{H^1}.
\end{align*}
All terms except $- \nP T_w b_\alpha$ can be put into $\tilde{G}$. 
For this term, we write
\begin{equation*}
 \nP T_w b_\alpha = T_w \nP[(1-\bar{Y})R]_\alpha = T_w T_{1-\bar{Y}}R_\alpha +G = T_{1-\bar{Y}}T_w R_\alpha +\tilde{G}.
\end{equation*}
We continue with the analysis for $\nP\mathcal{K}_1$.
Similarly, 
\begin{align*}
&\|\nP \Pi(r_\alpha, b) \|_{L^2} + \|\nP T_{r_\alpha} b \|_{L^2} \lesssim \|r \|_{H^{\Half}} \|b\|_{C^\Half_{*}} \lesssim_\CalAZ \CalAO \|r \|_{\dot{H}^\Half}, \\
& \|\nP T_{1-Y}\partial_\alpha T_{w_\alpha}(J^{-\Half}-1)\|_{L^2} \lesssim (1+\| Y\|_{C^\epsilon_{*}})\|T_{w_\alpha}(J^{-\Half}-1) \|_{H^1} \lesssim_\CalAZ \CalAO \|w\|_{H^1}, \\
& \|\nP T_{1-Y}\Pi(w_\alpha, c) \|_{L^2} + \|\nP T_{1-Y}\Pi(w, \nP c_\alpha) \|_{L^2} \lesssim_\CalAZ \|w \|_{H^1} \| c\|_{C^\epsilon_{*}} \lesssim_\CalAZ \CalAO \|w \|_{H^1}, \\
& \|\nP T_{1-Y}T_w a\|_{L^2} + \|\nP T_{1-Y}\Pi(w, a) \|_{L^2} \lesssim \|w\|_{L^2}\|a\|_{C^\epsilon_{*}} \lesssim \mathcal{A}^2_1 \|w\|_{H^1}, \\
& \|\nP T_{g+a}w Y\|_{L^2} + \| \nP \Pi((g+ a)w, Y) \|_{L^2} \lesssim (g+\|a \|_{L^\infty}) \|w\|_{L^2} \|Y\|_{C^\epsilon_{*}} \lesssim_\CalAZ (1+ \mathcal{A}^2_1) \|w\|_{H^1}, \\
&\|T_{1-Y}T_c w_\alpha\|_{L^2}+ \|T_{1-Y}T_{\nP c_\alpha}w \|_{L^2}\lesssim_\CalAZ \|c\|_{C^\epsilon_{*}} \|w\|_{H^1}\lesssim_\CalAZ \CalAO \|w\|_{H^1}.
\end{align*}
For $T_{L w}Y$ and $\Pi(L w, Y)$ terms, we expand $L w$ and use \eqref{HCHEstimate},  \eqref{HsHmCStar}  to shift the derivative for the leading term $\partial_\alpha (J^{-\Half}w_\alpha)$.
\begin{align*}
 \| T_{L w}Y\|_{L^2} &\leq \|T_{\partial_\alpha (J^{-\Half}w_\alpha)} Y\|_{L^2} + \|T_{c w_\alpha}Y \|_{L^2} + \|T_{T_{\nP c_\alpha} w} Y\|_{L^2} +  \|T_{T_{w} \nP c_\alpha} Y\|_{L^2} + \|T_{ \Pi(w, \nP c_\alpha)} Y\|_{L^2}  \\
 & \lesssim \|J^{-\Half}w_\alpha \|_{L^2} \|Y_\alpha \|_{C^\epsilon_{*}} + \| c\|_{L^\infty} \|w_\alpha\|_{L^2} \|Y\|_{C^\epsilon_{*}} + \|w\|_{L^2} \|c\|_{C^{\epsilon}_{*}} \|Y\|_{C^1_{*}}\| \lesssim \mathcal{A}_1^2 \|w\|_{H^1}.
\end{align*}
The estimate for $\Pi(L w, Y)$ term is similar.
Hence, all terms except $-T_{1-Y}\partial_\alpha T_w \nP c$ can be absorbed into $\tilde{K}$.
For this term, we use the representation for $\nP c$ \eqref{nPCRepresentation} to write
\begin{align*}
 T_{1-Y}\partial_\alpha T_w \nP c &= T_{1-Y} T_w \nP c_\alpha + T_{1-Y}T_{w_\alpha} \nP c =  T_{1-Y} T_w \nP c_\alpha +\tilde{K} \\
 &= -iT_{1-Y} T_w (T_{J^{-\Half}(1-Y)}\W_\alpha)_\alpha +K = -iT_{1-Y}  T_w T_{J^{-\Half}(1-Y)} \W_{\alpha\alpha} +\tilde{K}\\
 &= -iT_{1-Y} T_{J^{-\Half}(1-Y)}T_w \W_{\alpha\alpha} +K =  -iT_{J^{-\Half} (1-Y)^2}T_w\W_{\alpha \alpha} +\tilde{K}.
\end{align*}
Here on the last line we use para-products \eqref{ParaProductsTwo} and para-commutator \eqref{ParaCommutatorThree} to simplify the para-coefficient.
\end{proof}

Putting the above two lemmas for the source terms together, we obtain the following formulas for the para-material derivatives of $(w,r)$:
\begin{lemma}
Suppose  $(w,r)$ solve the paradifferential linearized equations \eqref{ParadifferentialLinearEqn}, then we have the formulas 
\begin{equation}
\left\{
             \begin{array}{lr}
             T_{D_t}w  = -T_{1-\bar{Y}}(r_\alpha + T_w R_\alpha)+ G_2 =:G_1 + G_2 &\\
           T_{D_t}r  = - iT_{J^{-\frac{1}{2}}(1-Y)}(w_{\alpha \alpha}-T_{1-Y}T_w \W_{\alpha \alpha})+K_2 =:K_1 + K_2,&  \label{wrParaMaterial}
             \end{array}
\right.
\end{equation}
where $(G_2, K_2)$ satisfy the estimate
\begin{equation*}
    \| (G_2, K_2)\|_{\mathcal{H}^0} \lesssim_{\CalAZ} (1+\mathcal{A}_1^2) \| (w,r)\|_{\mathcal{H}^{\Half}},
\end{equation*}
and $(G_1, K_1)$ satisfy the linear bound
\begin{equation*}
\|(G_1, K_1) \|_{\mathcal{H}^{-1}} \lesssim_\CalAO \|(w, r) \|_{\mathcal{H}^{\Half}}. 
\end{equation*}
\end{lemma}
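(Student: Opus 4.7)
The plan is to derive each identity by solving \eqref{ParadifferentialLinearEqn} algebraically for $T_{D_t}w$ and $T_{D_t}r$, substituting the decompositions of $(\nP\mathcal{G}_0,\nP\mathcal{K}_0)$ and $(\nP\mathcal{G}_1,\nP\mathcal{K}_1)$ provided by Lemma \ref{t:GKZeroZero} and \eqref{PGKOneEqn}, and then collecting every contribution that is not one of the explicit leading terms into the perturbative remainders $G_2, K_2$. The two preceding lemmas have already isolated the only genuinely non-perturbative pieces of the sources: the term $-T_{1-\bar Y}T_w R_\alpha$ in $\nP\mathcal{G}_1$ and the term $iT_{J^{-\Half}(1-Y)^2}T_w\W_{\alpha\alpha}$ in $\nP\mathcal{K}_1$. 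The $\mathcal{G}_{0,0},\mathcal{K}_{0,0}$ leading parts of $\nP\mathcal{G}_0,\nP\mathcal{K}_0$ fall into $G_2,K_2$ directly because their $\mathcal{H}^0$ bound \eqref{ZeroHalfGKZero} is already perturbative, using $\CalAO \leq 1+\mathcal{A}_1^2$.

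For the first equation I would write $T_{D_t}w = -T_{1-\bar Y}r_\alpha - \Half T_{b_\alpha}w + \mathcal{G}^\sharp$; the term $\Half T_{b_\alpha}w$ is $H^{\Half}$-perturbative via a standard paraproduct bound together with \eqref{BCOneStar}, and substituting the decompositions of $\nP\mathcal{G}_0$ and $\nP\mathcal{G}_1$ shows that the only remaining non-perturbative contribution is $-T_{1-\bar Y}T_w R_\alpha$, which combines with $-T_{1-\bar Y}r_\alpha$ into the compact form $-T_{1-\bar Y}(r_\alpha+T_w R_\alpha)$. For the second equation I first expand $\mathcal{L}w = T_{J^{-\Half}}w_{\alpha\alpha} + T_{(J^{-\Half})_\alpha}w_\alpha$, discarding the commutator piece as $L^2$-perturbative through the $C^1_*$ Moser bound on $J^{-\Half}-1$; paraproduct composition then converts $-iT_{1-Y}T_{J^{-\Half}}w_{\alpha\alpha}$ into $-iT_{J^{-\Half}(1-Y)}w_{\alpha\alpha}$ modulo perturbative errors, while the gravity term $igT_{1-Y}w$ is trivially $L^2$-bounded. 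After substituting $\mathcal{K}^\sharp$, the only remaining non-perturbative term is $iT_{J^{-\Half}(1-Y)^2}T_w\W_{\alpha\alpha}$; refactoring via $T_{J^{-\Half}(1-Y)^2}=T_{J^{-\Half}(1-Y)}T_{1-Y}$ (again modulo perturbative paraproduct errors) and pulling out the common $T_{J^{-\Half}(1-Y)}$ produces the stated form.

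The dual $\mathcal{H}^{-1}$ bound on $(G_1,K_1)$ follows by shifting derivatives: on the $G_1$ side one uses $\|T_{1-\bar Y}r_\alpha\|_{H^{-\Half}}\lesssim\|r\|_{H^{\Half}}$ and $\|T_{1-\bar Y}T_w R_\alpha\|_{H^{-\Half}}\lesssim_{\CalAO}\|w\|_{H^1}$ from the $L^\infty$ bound on $T_w$; on the $K_1$ side one moves two derivatives off $w_{\alpha\alpha}$ to get $H^{-1}$-control by $\|w\|_{H^1}$, and absorbs the $T_w\W_{\alpha\alpha}$ factor using the $\CalAO$-norm of $\W$. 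The main obstacle in this proof is not analytical but bookkeeping: one must verify carefully that, among the many paraproduct and paracommutator terms in $\mathcal{G}^\sharp = \nP(\mathcal{G}_0+\mathcal{G}_1)$ and $\mathcal{K}^\sharp = \nP(\mathcal{K}_0+\mathcal{K}_1)$, the only pieces failing the $\mathcal{H}^0$-level perturbative bound are the two explicit contributions identified above, after which the algebraic rearrangement into \eqref{wrParaMaterial} is immediate.
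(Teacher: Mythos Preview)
Your proposal is correct and follows essentially the same approach as the paper: solve \eqref{ParadifferentialLinearEqn} for $T_{D_t}w$ and $T_{D_t}r$, invoke Lemma~\ref{t:GKZeroZero} and \eqref{PGKOneEqn} to isolate the two non-perturbative source pieces, absorb $\tfrac12 T_{b_\alpha}w$ and $igT_{1-Y}w$ into the remainders, expand $\mathcal{L}w$ and merge para-coefficients via \eqref{ParaProducts} exactly as the paper does. Your treatment of the $(G_1,K_1)$ bounds is more explicit than the paper's one-line ``straightforward,'' but the content is the same; the only minor imprecision is that the $T_w R_\alpha$ and $T_w\W_{\alpha\alpha}$ estimates are really obtained via \eqref{HsHmCStar} (putting $w$ in a negative Sobolev space and $R_\alpha$, $\W_{\alpha\alpha}$ in $C^\epsilon_*$) rather than an $L^\infty$ bound on $T_w$.
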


\begin{proof}
For the para-material derivative of $w$, according to the conclusion of the previous two lemmas,
\begin{equation*}
T_{D_t} w = -T_{1-\bar{Y}}r_\alpha - \Half T_{ b_\alpha} w - T_{1-\bar{Y}} T_wR_\alpha +G_2.
\end{equation*}
Using \eqref{BCOneStar} and \eqref{HsCmStar}, the term $- \Half T_{ b_\alpha} w$ can also be put into $G_2$. 
This gives the formula for $T_{D_t} w$.

As for the para-material derivative of $r$, similarly, we have
\begin{equation*}
 T_{D_t} r =  -i T_{1-Y}\mathcal{L}w + i T_{J^{-\Half} (1-Y)^2}T_w\W_{\alpha \alpha} +K_2.
\end{equation*}
It suffices to simplify the first term of $T_{D_t}r$.
We write,
\begin{equation*}
T_{1-Y}\mathcal{L}w = T_{1-Y}   \partial_\alpha \left(T_{J^{-\frac{1}{2}}}w_\alpha\right) = T_{1-Y}T_{J^{-\frac{1}{2}}}w_{\alpha \alpha}+ K_2 = T_{J^{-\frac{1}{2}}(1-Y)}w_{\alpha\alpha} + K_2.
\end{equation*}
Hence, we get the formula for $T_{D_t} r$ after applying the para-products \eqref{ParaProducts}.

The estimates for $(G_1, K_1)$ are straightforward.
\end{proof}

To facilitate the normal form computation later, we also compute here expressions for the quadratic terms of the source terms $(\nP\mathcal{G}_0, \nP\mathcal{K}_0)$ and $(\nP\mathcal{G}_1, \nP\mathcal{K}_1)$.

\begin{lemma} \label{t:QuadraticGKZero}
The quadratic terms of $(\nP\mathcal{G}_0, \nP\mathcal{K}_0)$ are given by
\begin{align*}
 \nP\mathcal{G}_0^{[2]} =&   -T_{J^{-1}}T_{\bar{r}_\alpha}\W +  T_{(1-\bar{Y})^2(1+\W)}T_{\bar{w}_\alpha} R - T_{J^{-1}} \nP \Pi(\bar{r}_\alpha, \W) +  T_{(1-\bar{Y})^2(1+\W)} \nP \Pi(\bar{w}_\alpha, R) + G,\\
  \nP\mathcal{K}_0^{[2]} =&  - T_{1-\bar{Y}}T_{\bar{r}_\alpha} R -\frac{i}{2} T_{J^{-\frac{3}{2}}} T_{\bar{w}_{\alpha \alpha}}\W + \frac{i}{2} T_{J^{-\frac{3}{2}}} T_{\bar{w}_{\alpha}}\W_\alpha - T_{1-\bar{Y}}\nP\Pi(\bar{r}_\alpha, R) \\
  &-\frac{i}{2} T_{J^{-\frac{3}{2}}} \nP \Pi(\bar{w}_{\alpha \alpha},\W)+ \frac{i}{2} T_{J^{-\frac{3}{2}}} \nP\Pi(\bar{w}_\alpha, \W_\alpha)+ K.
 \end{align*}
 where $(G,K)$ are perturbative terms that satisfy \eqref{HalfGK}.
\end{lemma}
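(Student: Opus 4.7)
The plan is to start from the leading-term expressions for $\mathcal{G}_{0,0}$ and $\mathcal{K}_{0,0}$ established in Lemma \ref{t:GKZeroZero} and perform one further paraproduct decomposition on each of the remaining binary products. Inside every $\nP[\,\cdot\,]$ we still have a product between a differentiated linearized variable ($\bar r_\alpha$, $\bar w_\alpha$, or $\bar w_{\alpha\alpha}$, possibly already paradifferentially truncated) and a background quantity ($\W$, $R$, or $\W_\alpha$, likewise possibly truncated), so we apply the paraproduct decomposition $fg = T_f g + T_g f + \Pi(f,g)$ to each such product. The target is to isolate the low-high piece where the linearized variable sits at low frequency together with the balanced piece, and to absorb the remaining high-low piece (where $\W$ or $R$ sits at low frequency) into the perturbative errors $(G,K)$ satisfying \eqref{HalfGK}.

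For $\nP\mathcal{G}_0$, the $T_{\bar w}\bar R_\alpha$ term in $\mathcal{G}_{0,0}$ already carries three background factors and is manifestly cubic, hence goes into $G$ using the $C^{\epsilon}_{*}$ bound on $Y$ and the estimates from Section \ref{s:Bounds}. Applying the paraproduct decomposition to $\bar r_\alpha \cdot \W$ and $\bar w_\alpha \cdot R$ and then combining the outer paracoefficients via the composition rules \eqref{ParaProducts}--\eqref{ParaProductsTwo} together with the identity $|1-Y|^2 = J^{-1}$, the low-high and balanced pieces collapse to $-T_{J^{-1}}T_{\bar r_\alpha}\W$, $T_{(1-\bar Y)^2(1+\W)}T_{\bar w_\alpha}R$, and their $\nP\Pi$ counterparts, exactly as stated. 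The high-low remainders are absorbed into $G$ after shifting derivatives off the linearized variable and invoking $\|\W\|_{C^{1+\epsilon}_{*}}+\|R\|_{C^{1/2+\epsilon}_{*}}\lesssim \mathcal{A}_1$. For $\nP\mathcal{K}_0$, the same procedure applied to each of the three terms of $\mathcal{K}_{0,0}$ produces the listed low-high and balanced pieces; for the two capillary products one first pulls $T_{J^{-3/2}}$ outside the binary product, at the cost of a paracommutator error controlled by the Moser bound \eqref{MoserTwo} on $J^{-3/2}-1$ and hence bounded by $\mathcal{A}_1^2$.

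The main bookkeeping obstacle is verifying that every residual term --- the high-low remainders of the paraproduct decompositions, the cubic errors generated when replacing $T_{1-Y}\W$ by $\W$ or when simplifying nested paraproducts of background coefficients, and the commutators between $\nP$ and smooth functions provided by the paralinearization Lemma \ref{t:Paralinear} together with \eqref{CommutatorPBMO} --- genuinely fits in the perturbative class \eqref{HalfGK}. None of these estimates are new: each reduces to the paraproduct and paracommutator bounds gathered in Appendix \ref{s:Norms} combined with the Zygmund-level background bounds from Section \ref{s:Bounds}. Once this careful accounting is complete the statement follows, so no genuine analytic difficulty is expected beyond what is already present in the proof of Lemma \ref{t:GKZeroZero}.
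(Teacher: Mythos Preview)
Your proposal is correct and follows essentially the same approach as the paper: starting from the leading terms $(\mathcal{G}_{0,0},\mathcal{K}_{0,0})$ of Lemma \ref{t:GKZeroZero}, applying the paraproduct decomposition to each remaining binary product, discarding the cubic $T_{\bar w}\bar R_\alpha$ contribution, and absorbing the high-low and paracommutator remainders into $(G,K)$ via the bounds from Appendix \ref{s:Norms} and Section \ref{s:Bounds}. Your write-up is in fact somewhat more explicit than the paper's about the bookkeeping of the error terms.
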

\begin{proof}
 According to the computation in Lemma \ref{t:GKZeroZero}, it suffices to compute the quadratic terms of  $(\mathcal{G}_{0,0}, \mathcal{K}_{0,0})$.
 We write
 \begin{align*}
     -\nP[T_{1-\bar{Y}}\bar{r}_\alpha T_{1-Y}\W] &= -T_{J^{-1}}T_{\bar{r}_\alpha}\W - T_{J^{-1}} \nP \Pi(\bar{r}_\alpha, \W) + G, \\
    \nP[T_{(1-\bar{Y})^2}\bar{w}_\alpha T_{1+\W}R] &= T_{(1-\bar{Y})^2(1+\W)}T_{\bar{w}_\alpha} R +  T_{(1-\bar{Y})^2(1+\W)} \nP \Pi(\bar{w}_\alpha, R) + G, \\
    -\nP[T_{1-\bar{Y}}\bar{r}_\alpha R] &= - T_{1-\bar{Y}}T_{\bar{r}_\alpha} R - T_{1-\bar{Y}}\nP\Pi(\bar{r}_\alpha, R) + K, \\
    -\frac{i}{2}\nP[T_{J^{-\frac{3}{2}}} \W \bar{w}_{\alpha \alpha}]  &= -\frac{i}{2} T_{J^{-\frac{3}{2}}} T_{\bar{w}_{\alpha \alpha}}\W -\frac{i}{2} T_{J^{-\frac{3}{2}}} \nP \Pi(\bar{w}_{\alpha \alpha},\W) + K,   \\
    \frac{i}{2}\nP[T_{J^{-\frac{3}{2}}}\W_\alpha \bar{w}_\alpha] & =  \frac{i}{2} T_{J^{-\frac{3}{2}}} T_{\bar{w}_{\alpha}}\W_\alpha + \frac{i}{2} T_{J^{-\frac{3}{2}}} \nP\Pi(\bar{w}_\alpha, \W_\alpha) + K.
 \end{align*}
 The remaining term in $\mathcal{G}_{0,0}$ is cubic.
\end{proof}

\begin{lemma} \label{t:QuadraticGKOne}
 The quadratic terms of $(\nP\mathcal{G}_1, \nP\mathcal{K}_1)$ are given by
 \begin{align*}
 \nP\mathcal{G}_1^{[2]} =& - T_{w_\alpha}T_{1-\bar{Y}}R - T_{w}T_{1-\bar{Y}}R_\alpha + \Half T_{T_{1-Y}\bar{R}_\alpha}w-\frac{1}{2}T_{T_{1-\bar{Y}}R_\alpha}w  \\
-& T_{1-\bar{Y}} \partial_\alpha \Pi(w, R) - \nP \Pi(w_\alpha, T_{1-Y}\bar{R}) + \nP \Pi(r_\alpha, T_{(1-\bar{Y})^2}\bar{\W}) + G,\\
  \nP\mathcal{K}_1^{[2]} =&  -T_{r_\alpha}T_{1-\bar{Y}}R +\frac{3}{2}i T_{J^{-\Half}(1-Y)^2} \partial_\alpha T_{w_{\alpha}} \W +i T_{J^{-\Half}(1-Y)^2} T_w \W_{\alpha \alpha} + i T_{J^{-\Half}(1-Y)^2} \partial_\alpha T_{\W_{\alpha }} w\\
   -& iT_{J^{-\frac{3}{2}}}T_{\bar{\W}_\alpha} w_\alpha -\Pi(r_\alpha, T_{1-\bar{Y}}R) - \nP \Pi(r_\alpha, T_{1-Y}\bar{R})  + \frac{3}{2}i T_{J^{-\Half}(1-Y)^2}\partial_\alpha \Pi(w_{\alpha}, \W) \\
   + & i T_{J^{-\Half}(1-Y)^2}\partial_\alpha \Pi(w, \W_{\alpha}) +\frac{i}{2}T_{J^{-\frac{3}{2}}} \nP \Pi(w_{\alpha \alpha}, \bar{\W}) -\frac{i}{2}T_{J^{-\frac{3}{2}}} \nP \Pi(w_\alpha, \bar{\W}_\alpha) + K.
 \end{align*}
 where $(G,K)$ are perturbative terms that satisfy \eqref{HalfGK}.
\end{lemma}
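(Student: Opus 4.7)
The strategy parallels the proof of Lemma \ref{t:QuadraticGKZero}. For each of the seven terms of $\mathcal{G}_1$ and the twelve terms of $\mathcal{K}_1$, I would substitute the explicit formulas for the auxiliary quantities $b$, $\nP c$, $\bar{\nP} c$, $J^{-\Half}-1$, $Y$, $R\bar{Y}_\alpha$ and $a$; expand each product via paraproducts; and retain only those contributions that are bilinear in the dynamical variables $(w,r,\W,R)$, allowing para-coefficients that depend on $\W, \bar{\W}$. Everything cubic or higher, together with the commutator errors generated when $\nP$ meets a paraproduct whose high-frequency factor is already holomorphic, will be absorbed into the perturbative remainders $G, K$ via \eqref{HalfGK} combined with the Zygmund and Sobolev bounds on $a, b, c, M, Y$ established in Section \ref{s:Bounds}.

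The central substitutions I rely on are the decomposition $b = \nP[(1-\bar{Y})R] + \bar{\nP}[(1-Y)\bar{R}]$ and its $\alpha$-derivative into leading paradifferential pieces $T_{1-\bar{Y}}R + T_{1-Y}\bar{R}$ together with balanced and high--low remainders (the latter supplying the $\Pi$-contributions in the final formula); the representation \eqref{nPCRepresentation} for $\nP c$ and its conjugate for $\bar{\nP} c$; the paralinearization \eqref{YWExpression} for $Y$; an analogous paralinearization of $J^{-\Half}-1$ around $\W = 0$; and a paraproduct expansion of $R\bar{Y}_\alpha$. Since $w, r$ are holomorphic, $\nP$ acts as the identity on $T_w(\cdot)$ and $T_{w_\alpha}(\cdot)$ up to paracommutator errors, so the anti-holomorphic pieces drop out after projection. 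For example, $-\nP(T_w b_\alpha)$ becomes $-T_w T_{1-\bar{Y}}R_\alpha + G$ via \eqref{ParaProducts}; $-T_{1-Y}\partial_\alpha T_w \nP c$ turns into $i T_{J^{-\Half}(1-Y)^2}T_w \W_{\alpha\alpha} + K$ after using \eqref{nPCRepresentation} and recombining para-coefficients with \eqref{ParaProductsTwo}; the $\Pi$-type terms involving $c$ contribute the $\frac{i}{2} T_{J^{-\frac{3}{2}}}\nP\Pi(w_{\alpha\alpha},\bar{\W})$ and $-\frac{i}{2}T_{J^{-\frac{3}{2}}}\nP\Pi(w_\alpha,\bar{\W}_\alpha)$ pieces; while the source terms containing $a$ or $gwY$ are already cubic (since $a$ itself is quadratic by its definition) and drop directly into $K$.

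The main obstacle is bookkeeping rather than any conceptual difficulty: each substitution generates many paraproducts, balanced pieces, and commutator errors that must be sorted either into the stated bilinear formula or into the perturbative remainder, and several distinct terms of $\mathcal{K}_1$ feed the same coefficient in $\nP\mathcal{K}_1^{[2]}$ (for instance, both the $-iT_{1-Y}\partial_\alpha T_{w_\alpha}(J^{-\Half}-1)$ contribution and the $iT_{Lw}Y$ contribution supply the $\frac{3}{2}i T_{J^{-\Half}(1-Y)^2}\partial_\alpha T_{w_\alpha}\W$ piece, after expanding $L$ and using the formulas for $\partial_\alpha J^{-\Half}$). One must therefore carefully combine and re-split para-coefficients so that the precise forms $J^{-\Half}(1-Y)^2$, $J^{-\frac{3}{2}}$, and $(1-\bar{Y})^2(1+\W)$ appear in the final formula; this is done via paraproduct associativity and para-commutator rules from Appendix \ref{s:Norms}, with the errors at each step controlled by $\CalAZ$-multiples of $(1 + \mathcal{A}_1^2)\|(w,r)\|_{\maH^\Half}$, yielding \eqref{HalfGK} for $\tilde G, \tilde K$. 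Once these term-by-term manipulations are carried out, the stated formulas for $\nP\mathcal{G}_1^{[2]}$ and $\nP\mathcal{K}_1^{[2]}$ follow.
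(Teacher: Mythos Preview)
Your proposal is correct and follows essentially the same approach as the paper: substitute the paradifferential representations of $b$, $\nP c$, $J^{-\Half}-1$, and $Y$, expand each term of $\mathcal{G}_1$ and $\mathcal{K}_1$ via paraproducts, discard the cubic contributions (in particular those involving $a$), and recombine para-coefficients using the rules in Appendix~\ref{s:Norms}. One small correction to your bookkeeping: the $\frac{i}{2}T_{J^{-3/2}}\nP\Pi(w_{\alpha\alpha},\bar{\W})$ piece actually comes from the $\Pi(w_\alpha, J^{-\Half}-1)$ term via the paralinearization of $J^{-\Half}-1$, not directly from the $c$-terms, but this is precisely the kind of detail that gets sorted out in the explicit term-by-term computation you describe.
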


\begin{proof}
We begin with the computation for $\nP\mathcal{G}_1^{[2]}$.
Using the identity for $Y$ \eqref{YWExpression},
\begin{equation*}
\nP \Pi(r_\alpha, \bar{Y}) = \nP \Pi(r_\alpha, T_{(1-\bar{Y})^2}\bar{\W}) + G.
\end{equation*}
We write for other terms
\begin{align*}
& -\nP T_{w_\alpha} b - \nP T_w b_\alpha = - T_{w_\alpha}T_{1-\bar{Y}}R - T_{w}T_{1-\bar{Y}}R_\alpha + G, \\
 & -\nP \Pi(w_\alpha, b) - \Pi(w, \nP b_\alpha) = -\nP \Pi(w_\alpha, T_{1-\bar{Y}}R + T_{1-Y}\bar{R}) - \Pi(w, T_{1-\bar{Y}}R_\alpha)+ G,\\
 & \frac{1}{2}T_{\bar{\nP}b_\alpha - \nP b_\alpha}w =  \Half T_{T_{1-Y}\bar{R}_\alpha}w-\frac{1}{2}T_{T_{1-\bar{Y}}R_\alpha}w + G. 
 \end{align*}
Other terms in $\nP \mathcal{G}_1$ are cubic.

Then we compute the expression for $ \nP\mathcal{K}_1^{[2]}$.
The first two terms are straightforward,
\begin{equation*}
    -\nP T_{r_\alpha}b - \nP \Pi(r_\alpha, b) = - T_{r_\alpha}T_{1-\bar{Y}}R - \nP \Pi(r_\alpha, T_{1-\bar{Y}}R) - \nP \Pi(r_\alpha, T_{1-Y}\bar{R}) +K.
\end{equation*}
Recall Lemma \ref{t:LwLeading} for the leading term of $Lw$, 
\begin{equation*}
iT_{Lw}Y + i\nP \Pi(Lw, Y) = i T_{J^{-\Half}(1-Y)^2}T_{ w_{\alpha \alpha}} \W + i T_{J^{-\Half}(1-Y)^2}\nP\Pi(w_{\alpha \alpha}, \W)+ \text{cubic terms} + K.
\end{equation*}
Using the paralinearization Lemma \ref{t:Paralinear},
\begin{equation*}
    J^{-\Half} - 1 = -\Half T_{J^{-\Half}(1-Y)} \W -\Half T_{J^{-\Half}(1-\bar{Y})} \bar{\W} + E, \quad \|E\|_{C^2_{*}} \lesssim \mathcal{A}^2_1.
\end{equation*}
We can then write
\begin{align*}
-iT_{1-Y}\partial_\alpha \nP T_{w_\alpha}(J^{-\Half}-1) &= \frac{i}{2} T_{J^{-\Half}(1-Y)^2}\partial_\alpha T_{w_\alpha} \W + K, \\
-iT_{1-Y}\partial_\alpha \nP \Pi(w_\alpha, J^{-\Half}-1) &= \frac{i}{2} T_{J^{-\Half}(1-Y)^2}\partial_\alpha \Pi(w_\alpha, \W)+\frac{i}{2}T_{J^{-\frac{3}{2}}}\partial_\alpha \nP \Pi(w_\alpha, \bar{\W}) + K.
\end{align*}
For the five terms with $c$, we use \eqref{CDef} and \eqref{nPCRepresentation} to compute
\begin{align*}
    -T_{1-Y} \partial_\alpha T_w \nP c &= i T_{J^{-\Half}(1-Y)^2}\partial_\alpha T_w \W_\alpha + K,\\
    -T_{1-Y}\nP \Pi(w_\alpha, c) &= i T_{J^{-\Half}(1-Y)^2}  \Pi(w_\alpha, \W_\alpha) -i T_{J^{-\frac{3}{2}}}\nP \Pi(w_\alpha, \bar{\W}_\alpha) + K, \\
    -T_{1-Y}\Pi(w, \nP c_\alpha) & = iT_{J^{-\Half}(1-Y)^2} \Pi(w, \W_{\alpha \alpha}) + K,\\
    -T_{1-Y}T_c w_\alpha & = i T_{J^{-\Half}(1-Y)^2} T_{\W_{\alpha}} w_\alpha - iT_{J^{-\frac{3}{2}}}T_{\bar{\W}_\alpha} w_\alpha + K,\\
    -T_{1-Y}T_{\nP c_\alpha} w & = i T_{J^{-\Half}(1-Y)^2} T_{\W_{\alpha \alpha}} w + K.
\end{align*}
The other terms of $\nP \mathcal{K}_1$ are perturbative and can be put into $K$.
\end{proof}

\subsection{$\maH^\Half$ energy estimate for the homogeneous paradifferential flow} \label{s:HomoFlow}
In this subsection we prove Proposition \ref{t:wellposedflow}.
Consider the paradifferential systems with general source terms $(G,K)$ that satisfy \eqref{HalfGK},
\begin{equation}
\left\{
             \begin{array}{lr}
             T_{D_t}w+T_{1-\bar{Y}} r_\alpha + \Half T_{ b_\alpha }w  = G &\\
           T_{D_t}r + i T_{1-Y}\mathcal{L}w -igT_{1-Y}w =K.&  \label{SourceParadifferential}
             \end{array}
\right.
\end{equation}
Inspired by the linearized energy defined in \cite{MR3667289}, we define the paradifferential linearized energy
\begin{equation*}
    E_{lin}(w,r) = \int -\Re(\mathcal{L}w \cdot \bar{w})+\Im \left(w\cdot T_{J^{-\Half}}\bar{w}_\alpha \right)+ gw\cdot \bar{w} + \Im(r\cdot\bar{r}_\alpha)+ \Re \left(r\cdot \bar{r}\right)\,d\alpha.
\end{equation*}
Clearly, $E_{lin}(w,r) \approx_\CalAZ \|(w,r)\|_{\maH^\Half}^2$.
We compute its time derivative
\begin{align*}
&\frac{d}{dt} E_{lin}(w,r) = -2\Re\int \mathcal{L} w\cdot \bar{w}_t\, d\alpha +2\Im \int T_{J^{-\Half}}\bar{w}_\alpha \cdot w_t \,d\alpha  + 2g\Re \int w\cdot \bar{w}_t \, d\alpha \\
+&  2\Im \int \bar{r}_\alpha \cdot r_t \,d\alpha + 2\Re \int r\cdot \bar{r}_t \, d\alpha - \Re\int [\partial_t, \mathcal{L}]w\cdot \bar{w} \,d\alpha + \Im \int T_{\partial_t J^{-\Half}}w\cdot \bar{w}_\alpha - w\cdot T_{(J^{-\Half})_\alpha}\bar{w}_t \,d\alpha.
\end{align*} 
Note that using integration by parts, 
\begin{align*}
&-2\Re \int T_b \mathcal{L}w \cdot \bar{w}_\alpha\,d\alpha = 2\Re \int T_{b_\alpha}\mathcal{L}w \cdot \bar{w}\,d\alpha + 2\Re \int T_b \partial_\alpha(\mathcal{L}w)\cdot \bar{w}\,d\alpha \\
=&  2\Re \int T_{b_\alpha}\mathcal{L}w \cdot \bar{w}\,d\alpha + 2\Re \int [T_b\partial_\alpha, \mathcal{L}]w\cdot \bar{w}\,d\alpha + 2\Re \int T_b \partial_\alpha w \cdot \mathcal{L}\bar{w}\,d\alpha,\\
&2\Re \int w\cdot T_b\bar{w}_\alpha \, d\alpha = -2\Re \int T_{b_\alpha}w\cdot \bar{w}\, d\alpha - 2\Re \int T_{b}w_\alpha \cdot \bar{w}\, d\alpha,
\end{align*}
 we have the following identities:
\begin{align*}
 &-2\Re \int \mathcal{L}w\cdot T_{b} \bar{w}_\alpha \,d\alpha    = \Re \int T_{b_\alpha}\mathcal{L}w\cdot \bar{w} + [T_b \partial_\alpha, \mathcal{L}]w\cdot \bar{w}\,d\alpha, \\
 &2\Im \int T_{J^{-\Half}}\bar{w}_\alpha\cdot T_b w_\alpha \,d\alpha  =0,\quad 2\Re \int w\cdot T_b\bar{w}_\alpha \, d\alpha = -\Re \int T_{b_\alpha}w\cdot \bar{w}\, d\alpha, \\
 &2\Im \int \bar{r}_\alpha\cdot T_b r_\alpha \,d\alpha  =0.
\end{align*}
We use these identities to replace the time derivatives by para-material derivatives.
Adding the above identities and applying \eqref{SourceParadifferential}, we get
\begin{align*}
&\frac{d}{dt} E_{lin}(w,r) = -2\Re\int \mathcal{L} w\cdot T_{D_t}\bar{w}\, d\alpha +2\Im \int T_{J^{-\Half}}\bar{w}_\alpha \cdot T_{D_t}w \,d\alpha + 2g\Re \int w\cdot T_{D_t}\bar{w} \, d\alpha \\
&+ 2\Im \int \bar{r}_\alpha T_{D_t}r \,d\alpha + 2\Re \int \bar{r}\cdot T_{D_t}r \, d\alpha -\Re\int T_{b_\alpha}\mathcal{L}w\cdot \bar{w}\, d\alpha - \Re\int [T_{D_t}, \mathcal{L}]w\cdot \bar{w} \,d\alpha\\
&+ g\Re \int T_{b_\alpha}w\cdot \bar{w}\, d\alpha - 2\Re \int \bar{r}\cdot T_{b}r_{\alpha} \, d\alpha + \Im \int T_{\partial_t J^{-\Half}}w\cdot \bar{w}_\alpha - w\cdot T_{(J^{-\Half})_\alpha}\bar{w}_t \,d\alpha\\
=& -2\Re\int \mathcal{L} w \cdot \bar{G}+i \bar{r}_\alpha \cdot 
K \,d\alpha +2\Re\int \mathcal{L}w \cdot T_{1-Y}\bar{r}_\alpha \,d\alpha-\Re\int [T_{D_t}, \mathcal{L}]w\cdot \bar{w} \,d\alpha \\
&-2\Im \int i \bar{r}_\alpha \cdot T_{1-Y}\mathcal{L}w \,d\alpha +\Re \int \mathcal{L}w\cdot T_{b_\alpha}\bar{w}-T_{b_\alpha}\mathcal{L}w\cdot \bar{w}\,d\alpha -2\Im \int T_{J^{-\Half}}\bar{w}_\alpha \cdot T_{1-\bar{Y}} r_\alpha \,d\alpha\\
&-\Im \int T_{J^{-\Half}}\bar{w}_\alpha\cdot  T_{b_\alpha}w \,d\alpha + 2g\Im i\int \bar{r}_\alpha \cdot T_{1-Y}w \,d\alpha-2g\Re \int w\cdot T_{1-Y}\bar{r}_\alpha \, d\alpha \\
&- g\Re \int w \cdot T_{b_\alpha} \bar{w} \, d\alpha + g\Re \int T_{b_\alpha}w\cdot \bar{w}\, d\alpha-2\Im \int r\cdot T_{1-\bar{Y}}\mathcal{L}\bar{w}\,d\alpha \\
& +2\Re \int ig\bar{r}\cdot T_{1-Y}w\,d\alpha+ 2g\Re \int w \cdot \bar{G}\, d\alpha+2\Im \int T_{J^{-\Half}}\bar{w}_\alpha\cdot G \,d\alpha\\
&-2\Re\int \bar{r} \cdot K \,d\alpha - 2\Re \int \bar{r}\cdot T_{b}r_{\alpha} \, d\alpha + \Im \int T_{\partial_t J^{-\Half}}w\cdot \bar{w}_\alpha\ - w\cdot T_{(J^{-\Half})_\alpha}\bar{w}_t,d\alpha\\
=& -2\Re\int \mathcal{L} w \cdot \bar{G} -gw\cdot \bar{G}+\bar{r} \cdot 
K \,d\alpha +2\Im \int \bar{r}_\alpha \cdot 
K + T_{J^{-\Half}}\bar{w}_\alpha \cdot G -\frac{1}{2}T_{J^{-\Half}}\bar{w}_\alpha \cdot T_{b_\alpha}w\,d\alpha\\
&+2\Re\int \mathcal{L}w \cdot T_{1-Y}\bar{r}_\alpha \,d\alpha -2\Im \int i \bar{r}_\alpha T_{1-Y}\mathcal{L}w \,d\alpha +\Re \int \mathcal{L}w\cdot T_{b_\alpha - b_\alpha}\bar{w}\,d\alpha \\
&- 2\Re \int \bar{r}\cdot T_{b}r_{\alpha} \, d\alpha + \Im \int T_{\partial_t J^{-\Half}}w\cdot \bar{w}_\alpha - w\cdot T_{(J^{-\Half})_\alpha}\bar{w}_t \,d\alpha -2g\Im \int \bar{r}\cdot T_{1-Y}w\,d\alpha\\
&-\Re\int [T_{D_t}, \mathcal{L}]w\cdot \bar{w} \,d\alpha -2\Im \int T_{J^{-\Half}}\bar{w}_\alpha T_{1-\bar{Y}}\cdot r_\alpha \,d\alpha +2\Im \int(T_{1-\bar{Y}}r)_\alpha \cdot T_{J^{-\Half}}\bar{w}_\alpha \,d\alpha\\
=&2\Re \int T_{J^{-\Half}}w_\alpha \cdot \bar{G}_\alpha + gw\cdot \bar{G} -\bar{r}\cdot K \,d\alpha +2\Im\int \bar{r}_\alpha \cdot 
K +T_{J^{-\Half}}\bar{w}_\alpha \cdot G -\frac{1}{2}T_{J^{-\Half}}\bar{w}_\alpha \cdot T_{b_\alpha}w \,d\alpha \\
&- 2\Re \int \bar{r}\cdot T_{b}r_{\alpha} \, d\alpha  -2g\Im \int \bar{r}\cdot T_{1-Y}w\,d\alpha + \Im \int T_{\partial_t J^{-\Half}}w\cdot \bar{w}_\alpha - w\cdot T_{(J^{-\Half})_\alpha}\bar{w}_t \,d\alpha\\
&-2\Im \int T_{J^{-\Half}}\bar{w}_\alpha \cdot T_{1-\bar{Y}} r_\alpha \,d\alpha +2\Im \int\left(T_{1-\bar{Y}}r\right)_\alpha \cdot T_{J^{-\Half}}\bar{w}_\alpha \,d\alpha -\Re\int [T_{D_t}, \mathcal{L}]w\cdot \bar{w} \,d\alpha.
\end{align*}
The first four integrals of $\frac{d}{dt} E_{lin}(w,r)$, namely,
\begin{align*}
&2\Re \int T_{J^{-\Half}}w_\alpha \cdot \bar{G}_\alpha + gw\cdot \bar{G} -\bar{r}\cdot K \,d\alpha +2\Im\int \bar{r}_\alpha \cdot 
K + T_{J^{-\Half}}\bar{w}_\alpha \cdot G -\frac{1}{2}T_{J^{-\Half}}\bar{w}_\alpha \cdot T_{b_\alpha}w \,d\alpha \\
-& 2\Re \int \bar{r}\cdot T_{b}r_{\alpha} \, d\alpha  -2g\Im \int \bar{r}\cdot T_{1-Y}w\,d\alpha,
\end{align*}
can be directly controlled by $(1+\mathcal{A}_1^2) \| (w,r)\|^2_{\maH^{\Half}}$.
For the fifth integral term on the right, we recall \eqref{JsTimeDerivative},
\begin{equation*}
   \partial_t J^{-\Half} =  \Half T_{J^s(1-\bar{Y})}R_\alpha +\Half T_{J^s (1-Y)}\bar{R}_\alpha +E, \quad   \| E\|_{L^\infty} \lesssim_\CalAZ 1+\mathcal{A}^2_1.
\end{equation*}
This integral  can also be  controlled by $(1+\mathcal{A}_1^2) \| (w,r)\|^2_{\maH^{\Half}}$.
The next two integrals have the cancellation, 
\begin{align*}
&-2\Im \int T_{J^{-\Half}}\bar{w}_\alpha T_{1-\bar{Y}}\cdot r_\alpha \,d\alpha +2\Im \int\left(T_{1-\bar{Y}}r\right)_\alpha \cdot T_{J^{-\Half}}\bar{w}_\alpha \,d\alpha \\
=& -2\Im \int T_{J^{-\Half}}\bar{w}_\alpha T_{1-\bar{Y}}\cdot r_\alpha \,d\alpha +2\Im \int T_{1-\bar{Y}}r_\alpha \cdot T_{J^{-\Half}}\bar{w}_\alpha \,d\alpha - 2\Im \int T_{\bar{Y}_\alpha}r \cdot \bar{w}_\alpha \,d\alpha\\
=&- 2\Im \int T_{\bar{Y}_\alpha}r \cdot \bar{w}_\alpha \,d\alpha.
\end{align*}
They can also be  controlled by $(1+\mathcal{A}_1^2) \| (w,r)\|^2_{\maH^{\Half}}$.
It suffices to find an energy correction to eliminate the last integral term $-\Re\int [T_{D_t}, \mathcal{L}]w\cdot \bar{w} \,d\alpha$.
In other words, using \eqref{CommutatorBAlpha}, we need to find an energy correction $E_{cor}$ such that $|E_{cor}|\lesssim_\CalAZ \|(w,r) \|^2_{\mathcal{H}^\Half}$ and its time derivative satisfies
\begin{equation*}
    \frac{d}{dt} E_{cor} = \int T_{\Re T_{ J^{-\Half}(1-\bar{Y})}R_\alpha}w_\alpha\cdot \bar{w}_\alpha \,d\alpha + O\left((1+\mathcal{A}_1^2) \| (w,r)\|_{\maH^{\Half}}^2 \right).
\end{equation*}

To construct this delicate energy correction $E_{cor}$, we first consider a simplified toy model and its linearized system
\begin{equation}
\left\{
             \begin{array}{lr}
             \W_t  = -R_\alpha &\\
           R_t  = - i\W_{\alpha \alpha},&  
             \end{array}
\right.
\quad 
\left\{
             \begin{array}{lr}
             w_t  = -r_\alpha &\\
           r_t  = - iw_{\alpha \alpha}.&  
             \end{array}
\right.  \label{ToyModel}
\end{equation}
Our aim is to find an energy $\tilde{E}_{cor}$ such that for some constant $C$ that depends on $\CalAZ$,
\begin{equation*}
    \frac{d}{dt} \tilde{E}_{cor} = \Re \int T_{ R_\alpha}w_\alpha\cdot \bar{w}_\alpha \,d\alpha + C(1+\mathcal{A}_1^2) \| (w,r)\|_{\maH^{\Half}}^2.
\end{equation*}
Our candidate for the  energy correction is of the form
\begin{equation*}
    \tilde{E}^3_{cor} = \int A(R, w, \bar{r}) + B(R, r, \bar{w}) + C(\W, w, \bar{w}) + D(\W, r, \bar{r}) \, d\alpha + \text{complex conjugates},
\end{equation*}
where $A,B,C,D$ are paradifferential cubic forms. 
Here, we  write $\mathfrak{a}(\xi, \eta, \zeta)$ for the symbol of $A(R, w, \bar{r})$, and the other three symbols are defined in the same way.
The time derivative of this cubic energy correction is given by
\begin{align*}
\frac{d}{dt}  \tilde{E}^3_{cor} &= \int A(R, w, i\bar{w}_{\alpha \alpha}) + B(R, -iw_{\alpha\alpha}, \bar{w})+ C(-R_\alpha, w, \bar{w}) \, d\alpha \\
&+ \int B(-i \W_{\alpha \alpha}, r, \bar{w}) + C(\W, -r_{\alpha}, \bar{w})+ D(\W, r, i\bar{w}_{\alpha\alpha}) \, d\alpha \\
&+ \int A(-i\W_{\alpha \alpha }, w, \bar{r}) + C(\W, w, -\bar{r}_\alpha)+ D(\W, -iw_{\alpha \alpha}, \bar{r}) \, d\alpha \\
&+ \int A(R, -r_\alpha, \bar{r}) + B(R, r, -\bar{r}_\alpha)+ D(-R_\alpha, r, \bar{r}) \, d\alpha + \text{complex conjugates}.
\end{align*}
Taking the Fourier transform of the integral and comparing the symbols for each cubic integral, the symbols  $\mathfrak{a}, \mathfrak{b}, \mathfrak{c}, \mathfrak{d}$ solve the algebraic linear system
\begin{equation*}
\left\{
    \begin{array}{lr}
    \zeta^2 \mathfrak{a} - \eta^2 \mathfrak{b} + \xi \mathfrak{c} = -\frac{1}{2}\xi \eta\zeta\chi_1(\xi, \eta) &\\
    \xi^2 \mathfrak{b} - \eta \mathfrak{c} - \zeta^2 \mathfrak{d} = 0  &\\
    \xi^2 \mathfrak{a} + \zeta \mathfrak{c} + \eta^2 \mathfrak{d} = 0  &\\
     -\eta \mathfrak{a} +\zeta \mathfrak{b} - \xi \mathfrak{d} =  0,&  
    \end{array}
\right.
\end{equation*}
where the $\chi_1(\theta_1, \theta_2)$ is a non-negative smooth bump function defined in \eqref{ChiOnelh}.
Using integration by parts, the derivative on the third factor is equal to the minus of derivative of the first two factors in the integral, which shows the symbolic relation $\zeta = \xi + \eta$.
This algebraic system has the solution
\begin{align*}
&\mathfrak{a} = \dfrac{-3\zeta^3 \chi_1(\xi, \eta)}{2(9\xi^2 + 14\xi \eta + 9 \eta^2)}, \quad \mathfrak{b} = \dfrac{-\zeta(2\xi^2 + 3\xi \eta + 3\eta^2)\chi_1(\xi, \eta)}{2(9\xi^2 + 14\xi \eta + 9 \eta^2)}, \\
&\mathfrak{c} = \dfrac{\xi \zeta (3\xi^2 + 3\xi \eta + 2\eta^2)\chi_1(\xi, \eta)}{2(9\xi^2 + 14\xi \eta + 9 \eta^2)}, \quad \mathfrak{d} = \dfrac{-\xi\zeta^2\chi_1(\xi, \eta)}{9\xi^2 + 14\xi \eta + 9 \eta^2}.
\end{align*}
Since $|\xi|\ll |\eta|$,  $\zeta \approx \eta$, so that at the leading order
\begin{equation*}
\mathfrak{a} \approx -\frac{1}{6}\eta \chi_1(\xi, \eta), \quad \mathfrak{b} \approx -\frac{1}{6}\eta \chi_1(\xi, \eta), \quad \mathfrak{c} \approx \frac{1}{9}\xi\zeta \chi_1(\xi, \eta), \quad \mathfrak{d} \approx -\frac{1}{9}\xi \chi_1(\xi, \eta).
\end{equation*}
Hence, adding the complex conjugates, the leading part of the cubic energy correction is given by
\begin{equation*}
 \tilde{E}^3_{high} = -\frac{2}{3}\Im \int T_{\Re R}w_\alpha \cdot\bar{r}- \frac{1}{3}T_{\Im \W_\alpha}w \cdot\bar{w}_\alpha +\frac{1}{3}T_{\Im \W_\alpha}r\cdot \bar{r} \, d\alpha.  
\end{equation*}
And the remainder part has the form
\begin{equation*}
 E^3_{low} = \Re \int L_0^{lhh}(R_\alpha, w, \bar{r}) +  L_0^{lhh}(R_\alpha, r, \bar{w}) + L_0^{lhh}(\W_\alpha, w, \bar{w}) + L_{-1}^{lhh}(\W_\alpha, r, \bar{r})\,d\alpha, 
\end{equation*}
where $L_k^{lhh}$ is some cubic forms of order $k$ such that the first component is at low frequency compared to the second component.
The energy correction $\tilde{E}^3_{cor} = \tilde{E}^3_{high}+\tilde{E}^3_{low}$ removes the $\Re \int T_{ R_\alpha}w_\alpha\cdot \bar{w}_\alpha \,d\alpha$ integral for the model problem.

We now go back to the paradifferential system \eqref{SourceParadifferential}, which can be rewritten as
\begin{equation*}
\left\{
             \begin{array}{lr}
             w_t= -T_{1-\bar{Y}} r_\alpha -T_{b}w_\alpha - \Half T_{ b_\alpha }w  + G &\\
           r_t =- i T_{(1-Y)J^{-\Half}}w_{\alpha\alpha} -T_{b}r_\alpha -iT_{(1-Y)(J^{-\Half})_\alpha}w_\alpha+ K.&  
             \end{array}
\right.
\end{equation*}
The original unknowns $(\W, R)$ satisfy \eqref{WParaMat} and \eqref{RParaMat}, which we recall here
\begin{equation*}
\left\{
             \begin{array}{lr}
             \W_t= -T_{(1+\W)(1-\bar{Y})} R_\alpha -T_{b}\W_\alpha  + \tilde{G} &\\
           R_t =- i T_{(1-Y)^2J^{-\Half}}\W_{\alpha\alpha} -T_{b}R_\alpha + \tilde{K},&  
             \end{array}
\right.
\end{equation*}
where $(\tilde{G}, \tilde{K})$ satisfies
\begin{equation*}
    \|(\tilde{G}, \tilde{K}) \|_{C^\Half_{*}\times C^\epsilon_{*}} \lesssim_\CalAZ 1+ \mathcal{A}^2_1.
\end{equation*}
Compared to the model systems \eqref{ToyModel}, the full systems are quasilinear rather than linear, and they also have lower order terms.

We define the cubic part of the energy correction to be $E^3_{cor}: = E^3_{high}+ E^3_{low}$, where 
\begin{equation*}
 E^3_{high} = -\frac{2}{3}\Im \int T_{\Re R}w_\alpha \cdot\bar{r}- \frac{1}{3}T_{\Im \W_\alpha}w \cdot T_{(1-Y)J^{-\Half}}\bar{w}_\alpha +\frac{1}{3}T_{\Im \W_\alpha}r\cdot T_{1-Y}\bar{r} \, d\alpha. 
 \end{equation*}
$E^3_{high}$ is the quasilinear modification of $\tilde{E}^3_{high}$ so that it takes into account the quasilinear nature of the system.
For the bound for the energy, clearly we have
\begin{equation*}
    |E^3_{cor}| \lesssim_\CalAZ \| (w,r)\|_{\maH^{\Half}}.
\end{equation*}
Then we consider the time derivative of $E^3_{cor}$.
When the time derivative applies to a para-coefficient, the term is perturbative and is controlled by $(1+\mathcal{A}_1^2) \| (w,r)\|^2_{\maH^{\Half}}$.
When the time derivative acts on $w$, $r$ terms, we can freely  combine or switch the order of para-coefficients using the para-commutators or para-products rules in Lemma \ref{t:ParaCoefficient}.
As a consequence, the time derivative of $E^3_{cor}$ removes the desired cubic terms.
Due to the lower order terms in the systems,  the time derivative of $E^3_{cor}$ also introduces extra quartic integral terms.
Fortunately, these quartic terms are perturbative after the cancellation.
For instance, using integration by parts,
\begin{align*}
 &\left|\frac{2}{3}\Im \int T_{\Re R} (T_b w_\alpha)_\alpha \cdot\bar{r} + T_{\Re R}w_\alpha \cdot T_b \bar{r}_\alpha  \, d\alpha \right|\\
 =& \left|\frac{2}{3}\Im \int T_{\Re R} T_b w_\alpha \cdot\bar{r}_\alpha - T_{\Re R}w_\alpha \cdot T_b \bar{r}_\alpha + T_{\Re R_\alpha} T_b w_\alpha \cdot\bar{r} \, d\alpha\right| \\
 = & \left|\frac{2}{3}\Im \int (T_{\Re R} T_b -T_b T_{\Re R})\bar{r}_\alpha \cdot w_\alpha+  T_b w_\alpha \cdot T_{\Re R_\alpha}\bar{r} \, d\alpha \right| \\
 \lesssim& (1+\mathcal{A}_1^2) \| (w,r)\|_{\maH^{\Half}}^2.
\end{align*}
In other words, for some constant $C$ that depends on $\CalAZ$,
\begin{equation*}
    \frac{d}{dt}E^3_{cor} = \int T_{\Re T_{ J^{-\Half}(1-\bar{Y})}R_\alpha}w_\alpha\cdot \bar{w}_\alpha \,d\alpha + C(1+\mathcal{A}_1^2) \| (w,r)\|_{\maH^{\Half}}^2.
\end{equation*}
Therefore, choosing energy functional
\begin{equation*}
    E^{\Half, para}_{lin}(w,r): = E_{lin}(w,r)+ E^3_{cor}(w,r)
\end{equation*}
gives the desired norm equivalence and energy estimates.
This finishes the proof of Proposition \ref{t:wellposedflow}.

\subsection{Normal form analysis for the balanced source terms} \label{s:NormalAnalysis}
In this subsection, we compute the normal form transformations that eliminate the balanced part of the non-perturbative quadratic source terms of  $(\mathcal{G}^\sharp, \mathcal{K}^\sharp)$ in \eqref{ParadifferentialLinearEqn}, and show that these balanced normal form  transformations are bounded.
We will first compute the normal form transformations that remove the non-perturbative quadratic parts of $(\nP\mathcal{G}_0, \nP\mathcal{K}_0)$.
Then we will perform the normal form analysis for $(\nP\mathcal{G}_1, \nP\mathcal{K}_1)$.

\subsubsection{Balanced quadratic normal form analysis for $(\nP\mathcal{G}_0, \nP\mathcal{K}_0)$} 
On account of the quadratic terms computed in Lemma \ref{t:QuadraticGKZero},  we seek to show that there exist balanced quadratic normal form variables $(w_{1}^{bal}, r_{1}^{bal})$ such that
\begin{align*}
&\partial_t w_{1}^{bal}+T_{1-\bar{Y}} \partial_\alpha r_{1}^{bal} + \text{cubic and higher terms} \\
=&  T_{J^{-1}} \nP \Pi(\bar{r}_\alpha, \W) -  T_{(1-\bar{Y})^2(1+\W)} \nP \Pi(\bar{w}_\alpha, R) + G, \\
&\partial_t r_{1}^{bal} + i T_{(1-Y)J^{-\Half}} \partial_\alpha^2 w_{1}^{bal}+ \text{cubic and higher terms} \\
=& T_{1-\bar{Y}}\nP\Pi(\bar{r}_\alpha, R) 
  +\frac{i}{2} T_{J^{-\frac{3}{2}}} \nP \Pi(\bar{w}_{\alpha \alpha},\W)- \frac{i}{2} T_{J^{-\frac{3}{2}}} \nP\Pi(\bar{w}_\alpha, \W_\alpha)+K,
\end{align*}
where $(G,K)$ are perturbative source terms that satisfy \eqref{HalfGK}.
We consider normal form transformations as the sum of bilinear forms of the following type:
\begin{equation} \label{wrBalOne}
\begin{aligned}
w_{1}^{bal} &= B^{bal}_1\left(\bar{w}, T_{1-\bar{Y}}\W \right) + C^{bal}_1\left(\bar{r}, T_{J^{-\frac{1}{2}}(1+\W)^2}R \right),\\
r_{1}^{bal} &= A^{bal}_1 \left(\bar{r}, T_{1-Y}\W \right) + D^{bal}_1\left(\bar{w}, T_{(1-\bar{Y})(1+\W)}R\right).
\end{aligned}
\end{equation}
Direct computation gives 
\begin{align*}
&\partial_t w_{1}^{bal}+T_{1-\bar{Y}} \partial_\alpha r_{1}^{bal} + \text{cubic and higher terms} \\
=& T_{J^{-1}}\partial_\alpha A^{bal}_1(\bar{r}, \W) -T_{J^{-1}}B^{bal}_1(\bar{r}_\alpha, \W) -  T_{J^{-1}}C^{bal}_1(\bar{r}, i\W_{\alpha \alpha}) \\
 -&T_{(1-\bar{Y})^2(1+\W)}B^{bal}_1(\bar{w}, R_\alpha)+ T_{(1-\bar{Y})^2(1+\W)}C^{bal}_1(i\bar{w}_{\alpha \alpha}, R) + T_{(1-\bar{Y})^2(1+\W)}\partial_\alpha D^{bal}_1(\bar{w}, R),\\
&\partial_t r_{1}^{bal} + i T_{(1-Y)J^{-\Half}} \partial_\alpha^2 w_{1}^{bal}+ \text{cubic and higher  terms} \\
=& -T_{1-\bar{Y}}A^{bal}_1(\bar{r}, R_\alpha) + iT_{1-\bar{Y}}\partial_\alpha^2 C^{bal}_1(\bar{r}, R) - T_{1-\bar{Y}}D^{bal}_1(\bar{r}_\alpha, R)\\
+& iT_{J^{-\frac{3}{2}}}A^{bal}_1(\bar{w}_{\alpha \alpha}, \W) +iT_{J^{-\frac{3}{2}}}\partial_\alpha^2 B^{bal}_1(\bar{w}, \W) - iT_{J^{-\frac{3}{2}}}D^{bal}_1(\bar{w}, \W_{\alpha \alpha}).
\end{align*}
We write $\mathfrak{a}^{bal}_1(\eta, \zeta)$ for the symbol of $A^{bal}_1(\bar{r}, T_{1-Y}\W)$, and similarly for other balanced bilinear forms.
In order to match the balanced source terms, one has the following algebraic system for the symbols: 
\begin{equation*}
\left\{
    \begin{array}{lr}
    (\zeta -\eta) \mathfrak{a}^{bal}_{1} + \eta \mathfrak{b}^{bal}_{1} + \zeta^2 \mathfrak{c}^{bal}_{1} = -\eta \chi_2(\eta, \zeta)1_{\zeta<\eta} &\\
    \zeta \mathfrak{b}^{bal}_{1} + \eta^2 \mathfrak{c}^{bal}_{1} - (\zeta-\eta) \mathfrak{d}^{bal}_{1} =  -\eta \chi_2(\eta, \zeta) 1_{\zeta<\eta}   &\\
    \zeta \mathfrak{a}^{bal}_{1} + ( \zeta -\eta)^2 \mathfrak{c}^{bal}_{1} - \eta \mathfrak{d}^{bal}_{1} = \eta\chi_2(\eta, \zeta)1_{\zeta<\eta}  &\\
     \eta^2 \mathfrak{a}^{bal}_{1} +( \zeta -\eta)^2 \mathfrak{b}^{bal}_{1} - \zeta^2 \mathfrak{d}^{bal}_{1} =  \frac{1}{2}\eta(\eta +\zeta) \chi_2(\eta, \zeta)1_{\zeta<\eta},&  
    \end{array}
\right.
\end{equation*}
where the symbol $\chi_{2}(\eta, \zeta)$ is defined in \eqref{ChiTwohh} to select the balanced frequencies, and $1_{\zeta<\eta}$ is the symbol for holomorphic cutoff.
The solutions of this system are given by
\begin{align*}
&\mathfrak{a}^{bal}_1(\eta, \zeta) = \dfrac{3(2\eta^2-\eta \zeta +2\zeta^2) \chi_2(\eta, \zeta)1_{\zeta<\eta}}{2(4\eta^2 - 4\eta\zeta + 9 \zeta^2)}, \quad \mathfrak{b}^{bal}_1(\eta, \zeta) = -\dfrac{(2\eta^2 +\eta \zeta + 9\zeta^2)\chi_2(\eta, \zeta)1_{\zeta<\eta}}{2(4\eta^2 - 4\eta\zeta + 9 \zeta^2)}, \\
&\mathfrak{c}^{bal}_1(\eta, \zeta) = -\dfrac{ 3\zeta \chi_2(\eta, \zeta)1_{\zeta<\eta}}{(4\eta^2 - 4\eta\zeta + 9 \zeta^2)}, \quad \mathfrak{d}^{bal}_1(\eta, \zeta) = -\dfrac{(8\eta^2-8\eta \zeta + 9\zeta^2)\chi_2(\eta, \zeta)1_{\zeta<\eta}}{2(4\eta^2 - 4\eta\zeta + 9 \zeta^2)}.
\end{align*}
The leading terms of these symbols are
\begin{equation*}
\mathfrak{a}^{bal}_1 \approx \frac{1}{3}\chi_{2}(\eta, \zeta)1_{\zeta<\eta}, \quad \mathfrak{b}^{bal}_1 \approx -\frac{1}{2}\chi_{2}(\eta, \zeta)1_{\zeta<\eta},  \quad \mathfrak{c}^{bal}_1 \approx -\frac{1}{3\zeta}\chi_{2}(\eta, \zeta)1_{\zeta<\eta}, \quad \mathfrak{d}^{bal}_1 \approx -\frac{1}{2}\chi_{2}(\eta, \zeta)1_{\zeta<\eta}.   
\end{equation*}
From the leading terms of these balanced symbols, we get that using Lemma \ref{t:SymbolPara}, balanced quadratic normal form variables $(w_{1}^{bal}, r_{1}^{bal})$ are bounded in the sense that
\begin{align*}
 \|(w_{1}^{bal}, r_{1}^{bal}) \|_{\mathcal{H}^\Half} \lesssim \CalAZ \|(w,r) \|_{\mathcal{H}^\Half}.
\end{align*}

\subsubsection{Balanced quadratic normal form analysis for $(\nP\mathcal{G}_1, \nP\mathcal{K}_1)$}
In view of the computations for quadratic terms  Lemma \ref{t:QuadraticGKOne}, we aim to show that there exist balanced quadratic normal form variables $(w_{2}^{bal}, r_{2}^{bal})$ such that
\begin{align*}
&\partial_t w_{2}^{bal}+T_{1-\bar{Y}} \partial_\alpha r_{2}^{bal} + \text{cubic higher terms} \\
=&  T_{1-\bar{Y}} \partial_\alpha \Pi(R, w) + \nP \Pi(T_{1-Y}\bar{R},w_\alpha) - \nP \Pi(T_{(1-\bar{Y})^2}\bar{\W},r_\alpha) + G, \\
&\partial_t r_{2}^{bal} + i T_{J^{-\Half}(1-Y)} \partial_\alpha^2 w_{2}^{bal}+ \text{cubic and higher terms} \\
=&    \Pi(T_{1-\bar{Y}}R, r_\alpha)   - \frac{3}{2}i T_{J^{-\Half}(1-Y)^2}\partial_\alpha \Pi(\W, w_{\alpha}) -  i T_{J^{-\Half}(1-Y)^2}\partial_\alpha \Pi(\W_{\alpha}, w)\\
&+ \nP \Pi(T_{1-Y}\bar{R}, r_\alpha) -\frac{i}{2}T_{J^{-\frac{3}{2}}} \nP \Pi(\bar{\W},w_{\alpha \alpha}) +\frac{i}{2}T_{J^{-\frac{3}{2}}} \nP \Pi(\bar{\W}_\alpha, w_\alpha)+K,
\end{align*}
where $(G,K)$ are perturbative source terms that satisfy \eqref{HalfGK}.
We choose normal form transformations as the sum of bilinear forms of the following type:
\begin{equation} \label{wrBalTwo}
\begin{aligned}
w_{2}^{bal} &= B^{bal}_{2, h}(T_{1-Y}\W, w) + C^{bal}_{2, h}(T_{J^{\frac{3}{2}}(1-\bar{Y})^2}R, r)+ B^{bal}_{2, a}(T_{1-\bar{Y}}\bar{\W}, w) + C^{bal}_{2, a}( T_{J^{\Half}}\bar{R}, r),\\
r_{2}^{bal} &= A^{bal}_{2,h}(T_{1-Y}\W, r) + D^{bal}_{2,h}(R, w)+ A^{bal}_{2,a}(T_{1-\bar{Y}}\bar{\W}, r) + D^{bal}_{2,a}(T_{(1+\bar{\W})(1-Y)}\bar{R}, w).
\end{aligned}
\end{equation}
Then one can compute
\begin{align*}
&\partial_t w_{2}^{bal}+T_{1-\bar{Y}} \partial_\alpha r_{2}^{bal} + \text{cubic and higher  terms} \\
=& -T_{1-\bar{Y}}B^{bal}_{2, h}(R_\alpha, w) -i T_{1-\bar{Y}}C^{bal}_{2, h}(R, w_{\alpha \alpha}) + T_{1-\bar{Y}}\partial_\alpha D^{bal}_{2, h}(R, w)\\
&+ T_{J^{-1}}\partial_\alpha A^{bal}_{2, h}(\W, r) - T_{J^{-1}} B^{bal}_{2, h}(\W, r_\alpha) -i T_{J^{-1}} C^{bal}_{2, h}(\W_{\alpha \alpha}, r)\\
& -T_{1-Y}B^{bal}_{2, a}(\bar{R}_\alpha, w) -i T_{1-Y}C^{bal}_{2, a}(\bar{R}, w_{\alpha \alpha}) + T_{1-Y}\partial_\alpha D^{bal}_{2, a}(\bar{R}, w)\\
&+ T_{(1-\bar{Y})^2}\partial_\alpha A^{bal}_{2, a}(\bar{\W}, r) - T_{(1-\bar{Y})^2} B^{bal}_{2, a}(\bar{\W}, r_\alpha) +i T_{(1-\bar{Y})^2} C^{bal}_{2, a}(\bar{\W}_{\alpha \alpha}, r),\\
&\partial_t r_{2}^{bal} + i T_{(1-Y)J^{-\Half}} \partial_\alpha^2 w_{2}^{bal}+ \text{cubic and higher  terms} \\
=& -T_{1-\bar{Y}} A^{bal}_{2, h}(R_\alpha, r) + i\partial_\alpha^2 T_{1-\bar{Y}} C^{bal}_{2, h}(R, r) - T_{1-\bar{Y}} D^{bal}_{2, h}(R, r_\alpha)\\
& -iT_{J^{-\Half}(1-Y)^2} A^{bal}_{2, h}(\W, w_{\alpha \alpha}) + i T_{J^{-\Half}(1-Y)^2} \partial_\alpha^2 B^{bal}_{2, h}(\W, w) - iT_{J^{-\Half}(1-Y)^2} D^{bal}_{2, h}(\W_{\alpha \alpha}, w) \\
& -T_{1-Y} A^{bal}_{2, a}(\bar{R}_\alpha, r) + i\partial_\alpha^2 T_{1-Y} C^{bal}_{2, a}(\bar{R}, r) - T_{1-Y} D^{bal}_{2, a}(\bar{R}, r_\alpha)\\
& -iT_{J^{-\frac{3}{2}}} A^{bal}_{2, a}(\bar{\W}, w_{\alpha \alpha}) + i T_{J^{-\frac{3}{2}}} \partial_\alpha^2 B^{bal}_{2, a}(\bar{\W}, w) + iT_{J^{-\frac{3}{2}}} D^{bal}_{2, a}(\bar{\W}_{\alpha \alpha}, w).
\end{align*}
We write $\mathfrak{a}^{bal}_{2,h}(\xi, \eta)$ for the symbol of $A^{bal}_{2,h}(T_{1-Y}\W, r)$, and $\mathfrak{a}^{bal}_{2,a}(\eta, \zeta)$ for the symbol of $A^{bal}_{2,a}(T_{1-\bar{Y}}\bar{\W}, r)$.
Other symbols for balanced bilinear forms are defined similarly.
Taking the Fourier transform and matching the quadratic source terms on the Fourier side, the balanced holomorphic symbols $\mathfrak{a}^{bal}_{2,h}, \mathfrak{b}^{bal}_{2,h}, \mathfrak{c}^{bal}_{2,h}, \mathfrak{d}^{bal}_{2,h}$ solve the linear system:
\begin{equation*}
\left\{
    \begin{array}{lr}
    (\xi+\eta) \mathfrak{a}^{bal}_{2,h} - \eta \mathfrak{b}^{bal}_{2,h} + \xi^2 \mathfrak{c}^{bal}_{2,h} = 0 &\\
    \xi \mathfrak{b}^{bal}_{2,h} - \eta^2 \mathfrak{c}^{bal}_{2,h} - (\xi+\eta) \mathfrak{d}^{bal}_{2,h} = -(\xi+\eta) \chi_2(\xi, \eta)  &\\
    \xi \mathfrak{a}^{bal}_{2,h} + (\xi + \eta)^2 \mathfrak{c}^{bal}_{2,h} + \eta \mathfrak{d}^{bal}_{2,h} = -\eta\chi_2(\xi, \eta)  &\\
     \eta^2 \mathfrak{a}^{bal}_{2,h} -(\xi+ \eta)^2 \mathfrak{b}^{bal}_{2,h} + \xi^2 \mathfrak{d}^{bal}_{2,h} =(\xi+\eta)(\xi + \frac{3}{2}\eta) \chi_2(\xi, \eta) .&  
    \end{array}
\right.
\end{equation*}
The expressions for balanced holomorphic symbols are
 \begin{align*}
&\mathfrak{a}^{bal}_{2,h}(\xi, \eta) = \dfrac{(2\xi^3- 9\xi^2\eta -16\xi\eta^2 -9\eta^3) \chi_2(\xi, \eta)}{2\xi (9\xi^2 + 14\xi \eta + 9 \eta^2)}, \\
&\mathfrak{b}^{bal}_{2,h}(\xi, \eta) = -\dfrac{(15\xi^3+ 31\xi^2\eta +25\xi\eta^2 +9 \eta^3)\chi_2(\xi, \eta)}{2 \xi(9\xi^2 + 14\xi \eta + 9 \eta^2)}, \\
&\mathfrak{c}^{bal}_{2,h}(\xi, \eta) = -\dfrac{ (\xi^2 + 4\xi \eta +3\eta^2)\chi_2(\xi, \eta)}{\xi 
(9\xi^2 + 14\xi \eta + 9 \eta^2)}, \\
&\mathfrak{d}^{bal}_{2,h}(\xi, \eta) = \dfrac{(3\xi^3+ 12\xi^2\eta +11\xi\eta^2 +6 \eta^3)\chi_2(\xi, \eta)}{2\xi(9\xi^2 + 14\xi \eta + 9 \eta^2)}.
\end{align*}   
The leading terms of these symbols are
\begin{equation*}
\mathfrak{a}^{bal}_{2,h} \approx -\frac{1}{2}\chi_{2}(\xi, \eta), \quad \mathfrak{b}^{bal}_{2,h} \approx -\frac{5}{4}\chi_{2}(\xi, \eta),  \quad \mathfrak{c}^{bal}_{2,h} \approx -\frac{1}{4\xi}\chi_{2}(\xi, \eta), \quad \mathfrak{d}^{bal}_{2,h} \approx \frac{1}{2}\chi_{2}(\xi, \eta).   
\end{equation*}

Similarly, the balanced mixed symbols $\mathfrak{a}^{bal}_{2,a}, \mathfrak{b}^{bal}_{2,a}, \mathfrak{c}^{bal}_{2,a}, \mathfrak{d}^{bal}_{2,a}$ solve the linear system: 
\begin{equation*}
\left\{
    \begin{array}{lr}
    (\zeta -\eta) \mathfrak{a}^{bal}_{2,a} - \zeta \mathfrak{b}^{bal}_{2,a} - \eta^2 \mathfrak{c}^{bal}_{2,a} =- \zeta \chi_2(\eta, \zeta)1_{\zeta<\eta}  &\\
    \eta \mathfrak{b}^{bal}_{2,a} + \zeta^2 \mathfrak{c}^{bal}_{2,a} + (\zeta-\eta) \mathfrak{d}^{bal}_{2,a} =  \frac{1}{2}\zeta \chi_2(\eta, \zeta)1_{\zeta<\eta}   &\\
    \eta \mathfrak{a}^{bal}_{2,a} - ( \zeta -\eta)^2 \mathfrak{c}^{bal}_{2,a} - \zeta \mathfrak{d}^{bal}_{2,a} = \zeta\chi_2(\eta, \zeta)1_{\zeta<\eta}  &\\
     \zeta^2 \mathfrak{a}^{bal}_{2,a} -( \zeta -\eta)^2 \mathfrak{b}^{bal}_{2,a} - \eta^2 \mathfrak{d}^{bal}_{2,a} =  \frac{1}{2}\zeta(\eta +\zeta) \chi_2(\eta, \zeta)1_{\zeta<\eta},&  
    \end{array}
\right.
\end{equation*}
Then solving the system, these balanced mixed symbols are given by
\begin{align*}
&\mathfrak{a}^{bal}_{2,a}(\eta, \zeta) = \dfrac{\zeta(8\eta^2-8\eta \zeta +9\zeta^2) \chi_2(\eta, \zeta)1_{\zeta<\eta}}{2\eta(4\eta^2 - 4\eta\zeta + 9 \zeta^2)}, \quad \mathfrak{b}^{bal}_{2,a}(\eta, \zeta) = \dfrac{\zeta(2\eta^2 +\eta \zeta + 9\zeta^2)\chi_2(\eta, \zeta)1_{\zeta<\eta}}{2\eta(4\eta^2 - 4\eta\zeta + 9 \zeta^2)}, \\
&\mathfrak{c}^{bal}_{2,a}(\eta, \zeta) = \dfrac{ 3\zeta^2 \chi_2(\eta, \zeta)1_{\zeta<\eta}}{\eta(4\eta^2 - 4\eta\zeta + 9 \zeta^2)}, \quad \mathfrak{d}^{bal}_{2,a}(\eta, \zeta) = -\dfrac{\zeta(2\eta^2-\eta \zeta + 2\zeta^2)\chi_2(\eta, \zeta)1_{\zeta<\eta}}{2\eta(4\eta^2 - 4\eta\zeta + 9 \zeta^2)}.
\end{align*}
The leading terms of these symbols are
\begin{equation*}
\mathfrak{a}^{bal}_{2,a} \approx \frac{1}{2}\chi_{2}(\eta, \zeta)1_{\zeta<\eta}, \quad \mathfrak{b}^{bal}_{2,a} \approx \frac{1}{2}\chi_{2}(\eta, \zeta)1_{\zeta<\eta},  \quad \mathfrak{c}^{bal}_{2,a} \approx \frac{1}{3\eta}\chi_{2}(\eta, \zeta)1_{\zeta<\eta}, \quad \mathfrak{d}^{bal}_{2,a} \approx \frac{1}{9}\chi_{2}(\eta, \zeta)1_{\zeta<\eta}.   
\end{equation*}
From the leading terms of these balanced holomorphic or mixed symbols, we get that using Lemma \ref{t:SymbolPara}, balanced quadratic normal form variables $(w_{2}^{bal}, r_{2}^{bal})$ are bounded in the sense that
\begin{equation*}
 \|(w_{2}^{bal}, r_{2}^{bal}) \|_{\mathcal{H}^\Half} \lesssim \CalAZ \|(w,r) \|_{\mathcal{H}^\Half}.
\end{equation*}

To conclude this subsection, we have shown the following result:
\begin{lemma} \label{t:wrbalNF}
Suppose that $(w,r)$ solve the linearized water wave system \eqref{ParadifferentialLinearEqn}, then the normal form variables $(w_{NF}^{bal}, r_{NF}^{bal}) : = (w,r) + (w_{1}^{bal}, r_{1}^{bal}) + (w_{2}^{bal}, r_{2}^{bal})$, where $(w_{1}^{bal}, r_{1}^{bal})$ are defined in \eqref{wrBalOne} and  $(w_{2}^{bal}, r_{2}^{bal})$ are defined in \eqref{wrBalTwo}, satisfy the norm equivalence:
\begin{equation*}
 \|(w_{NF}^{bal}, r_{NF}^{bal})-(w,r) \|_{\mathcal{H}^\Half} \lesssim \CalAZ \|(w,r) \|_{\mathcal{H}^\Half}.
\end{equation*}
Moreover, $(w_{NF}^{bal}, r_{NF}^{bal})$ solve \eqref{SourceParadifferential} with source terms $(G, K)$ having no non-perturbative balanced quadratic terms. 
\end{lemma}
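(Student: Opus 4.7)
The proof proceeds in two parts, addressing the norm equivalence and the cancellation of balanced quadratic source terms separately.

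For the norm equivalence, the plan is to exploit the structure of the balanced bilinear symbols computed in the preceding subsection. Specifically, each of the leading symbols $\mathfrak{a}^{bal}_1, \mathfrak{b}^{bal}_1, \mathfrak{c}^{bal}_1, \mathfrak{d}^{bal}_1$ and $\mathfrak{a}^{bal}_{2,h}, \mathfrak{b}^{bal}_{2,h}, \mathfrak{c}^{bal}_{2,h}, \mathfrak{d}^{bal}_{2,h}$ (and their mixed counterparts) is supported on the balanced frequency region via $\chi_2$ and, modulo lower order corrections, behaves like a multiple of $\chi_2$ (after factoring out one $\zeta$ or $\eta$ as appropriate). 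Invoking the symbol-to-paradifferential-operator bound of Lemma \ref{t:SymbolPara}, each bilinear form $A^{bal}_*, B^{bal}_*, C^{bal}_*, D^{bal}_*$ then maps boundedly its arguments into the appropriate Sobolev space. Since $(w_1^{bal}, r_1^{bal})$ and $(w_2^{bal}, r_2^{bal})$ are sums of such balanced bilinear forms with one argument being a paraproduct of $(\W, R)$ and the other being $(w,r)$, they are controlled by $\mathcal{A}_0 \|(w,r)\|_{\mathcal{H}^{1/2}}$, producing the required estimate.

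For the cancellation statement, the plan is to compute $\partial_t (w_1^{bal} + w_2^{bal}) + T_{1-\bar{Y}}\partial_\alpha(r_1^{bal} + r_2^{bal})$ and $\partial_t (r_1^{bal} + r_2^{bal}) + i T_{J^{-\Half}(1-Y)} \partial_\alpha^2(w_1^{bal} + w_2^{bal})$, and verify that the resulting balanced quadratic contributions exactly match the balanced quadratic parts of $(\nP\mathcal{G}_0 + \nP\mathcal{G}_1, \nP\mathcal{K}_0 + \nP\mathcal{K}_1)$ identified in Lemma \ref{t:QuadraticGKZero} and Lemma \ref{t:QuadraticGKOne}. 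To carry out this computation, I would use the paradifferential equations for $(\W, R)$ from \eqref{WRSystemParaLin} and the paradifferential equations \eqref{ParadifferentialLinearEqn} for $(w, r)$ to replace each time derivative by its leading paradifferential expression. On the Fourier side, this reduction is precisely the linear system for the symbols that was solved to define $(w_1^{bal}, r_1^{bal})$ and $(w_2^{bal}, r_2^{bal})$. By construction, the resulting balanced quadratic source terms on the right-hand side cancel exactly.

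The terms left over from this substitution fall into three categories: perturbative quadratic terms involving para-commutators, which can be absorbed into $(G,K)$ using the Zygmund estimates on $b, Y, a, c$ from Section \ref{s:Bounds}; contributions coming from the $\Half T_{b_\alpha}w$ and $-ig T_{1-Y}w$ lower-order terms in \eqref{ParadifferentialFlow}, which produce only cubic or lower-order quadratic terms controlled by \eqref{HalfGK}; and genuine cubic and higher-order contributions that arise when the time derivative hits the para-coefficients $T_{1-Y}, T_{J^{-\Half}}, T_{1+\W}$ etc., which by the material derivative estimates of Section \ref{s:Bounds} are likewise controlled by the right-hand side of \eqref{HalfGK}.

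The main obstacle will be bookkeeping: verifying that the algebraic systems solved in the previous subsection precisely match the balanced quadratic terms appearing in Lemmas \ref{t:QuadraticGKZero} and \ref{t:QuadraticGKOne}, while simultaneously ensuring that the schematic replacement of $\partial_t$ by the leading paradifferential operator does not generate uncontrolled errors when the para-coefficients (such as $T_{1-Y}$ or $T_{J^{-\frac{3}{2}}}$) vary. The key tool for managing this is the para-associativity and para-commutator estimates from Lemma \ref{t:ParaCoefficient}, which allow us to freely rearrange the order of para-coefficients at the cost of perturbative errors bounded by $\mathcal{A}_0$ or $\mathcal{A}_1^2$.
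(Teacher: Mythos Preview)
Your proposal is correct and follows essentially the same approach as the paper: the lemma is a summary of the computations carried out in Section~\ref{s:NormalAnalysis}, where the balanced bilinear symbols are explicitly solved from linear systems, their leading terms are identified, and boundedness is deduced from Lemma~\ref{t:SymbolPara}. The cancellation of balanced quadratic source terms is by construction, and the residual error control via para-commutator and para-associativity estimates is exactly as you describe.
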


\subsection{Quadratic normal form analysis for low-high source terms.} \label{s:PartialNormal}
In this subsection, we take into account the quadratic low-high inhomogeneous source terms of $(\mathcal{G}^\sharp, \mathcal{K}^\sharp)$ in \eqref{ParadifferentialLinearEqn}.
These low-high quadratic source terms are divided into three parts:   low-high quadratic source terms of $(\nP\mathcal{G}_1, \nP\mathcal{K}_1)$ where  linearized variables are at high frequencies,  the other low-high quadratic source terms of $(\nP\mathcal{G}_1, \nP\mathcal{K}_1)$ where  linearized variables are at low frequencies, and  low-high quadratic source terms of $(\nP\mathcal{G}_0, \nP\mathcal{K}_0)$.
We will construct normal form transformations to eliminate each part of these low-high quadratic source terms.

\subsubsection{Normal forms for low-high quadratic terms in $(\nP\mathcal{G}_1, \nP\mathcal{K}_1)$ part one}
Here we  show that there exist low-high quadratic normal
form variables $(w_1^{lh}, r_1^{lh})$ that can remove the part of low-high quadratic terms in $(\nP\mathcal{G}_1, \nP\mathcal{K}_1)$ where linearized variables are at high frequencies, namely, 
\begin{align*}
&\partial_t w_1^{lh}+T_{1-\bar{Y}} \partial_\alpha r_1^{lh} + \text{cubic and higher terms} \\
=& -\Half T_{T_{1-Y}\bar{R}_\alpha}w+\frac{1}{2}T_{T_{1-\bar{Y}}R_\alpha}w  + G, \\
&\partial_t r_1^{lh} + i T_{(1-Y)J^{-\Half}} \partial_\alpha^2 w_1^{lh}+ \text{cubic and higher terms} \\
=&  iT_{J^{-\frac{3}{2}}}T_{\bar{\W}_\alpha} w_\alpha -iT_{J^{-\Half}(1-Y)^2}\partial_\alpha T_{\W_\alpha}w +K,
\end{align*}
where $(G,K)$ are perturbative source terms that satisfy \eqref{HalfGK}.

We choose normal form transformations as the sum of bilinear forms of the following type:
\begin{equation*} 
\begin{aligned}
w^{lh}_1 &= B^{lh}_{h,1}(T_{1-Y}\W, w) + C^{lh}_{h,1}(T_{J^{\frac{3}{2}}(1-\bar{Y})^2}R, r)+ B^{lh}_{a,1}(T_{1-\bar{Y}}\bar{\W}, w) + C^{lh}_{a,1}( T_{J^{\Half}}\bar{R}, r),\\
r^{lh}_1 &= A^{lh}_{h,1}(T_{1-Y}\W, r) + D^{lh}_{h,1}(R, w)+ A^{lh}_{a,1}(T_{1-\bar{Y}}\bar{\W}, r) + D^{lh}_{a,1}(T_{(1+\bar{\W})(1-Y)}\bar{R}, w).
\end{aligned}
\end{equation*}
Plugging these bilinear forms into the system, 
\begin{align*}
&\partial_t w_1^{lh}+T_{1-\bar{Y}} \partial_\alpha r_1^{lh} + \text{cubic and higher terms} \\
=& -T_{1-\bar{Y}}B^{lh}_{h,1}(R_\alpha, w) -i T_{1-\bar{Y}}C^{lh}_{ h,1}(R, w_{\alpha \alpha}) + T_{1-\bar{Y}}\partial_\alpha D^{lh}_{h,1}(R, w)\\
&+ T_{J^{-1}}\partial_\alpha A^{lh}_{h,1}(\W, r) - T_{J^{-1}} B^{lh}_{h,1}(\W, r_\alpha) -i T_{J^{-1}} C^{lh}_{ h,1}(\W_{\alpha \alpha}, r)\\
& -T_{1-Y}B^{lh}_{a,1}(\bar{R}_\alpha, w) -i T_{1-Y}C^{lh}_{a,1}(\bar{R}, w_{\alpha \alpha}) + T_{1-Y}\partial_\alpha D^{lh}_{a,1}(\bar{R}, w)\\
&+ T_{(1-\bar{Y})^2}\partial_\alpha A^{lh}_{a,1}(\bar{\W}, r) - T_{(1-\bar{Y})^2} B^{lh}_{a,1}(\bar{\W}, r_\alpha) +i T_{(1-\bar{Y})^2} C^{lh}_{a,1}(\bar{\W}_{\alpha \alpha}, r),\\
&\partial_t r_1^{lh} + i T_{(1-Y)J^{-\Half}} \partial_\alpha^2 w_1^{lh}+ \text{cubic and higher terms} \\
=& -T_{1-\bar{Y}} A^{lh}_{h,1}(R_\alpha, r) + i\partial_\alpha^2 T_{1-\bar{Y}} C^{lh}_{h,1}(R, r) - T_{1-\bar{Y}} D^{lh}_{h,1}(R, r_\alpha)\\
& -iT_{J^{-\Half}(1-Y)^2} A^{lh}_{h,1}(\W, w_{\alpha \alpha}) + i T_{J^{-\Half}(1-Y)^2} \partial_\alpha^2 B^{lh}_{h,1}(\W, w) - iT_{J^{-\Half}(1-Y)^2} D^{lh}_{h,1}(\W_{\alpha \alpha}, w) \\
& -T_{1-Y} A^{lh}_{a,1}(\bar{R}_\alpha, r) + i\partial_\alpha^2 T_{1-Y} C^{lh}_{a,1}(\bar{R}, r) - T_{1-Y} D^{lh}_{a,1}(\bar{R}, r_\alpha)\\
& -iT_{J^{-\frac{3}{2}}} A^{lh}_{a,1}(\bar{\W}, w_{\alpha \alpha}) + i T_{J^{-\frac{3}{2}}} \partial_\alpha^2 B^{lh}_{a,1}(\bar{\W}, w) + iT_{J^{-\frac{3}{2}}} D^{lh}_{a,1}(\bar{\W}_{\alpha \alpha}, w).
\end{align*}
 The low-high holomorphic symbols $\mathfrak{a}^{lh}_{h,1}, \mathfrak{b}^{lh}_{h,1}, \mathfrak{c}^{lh}_{h,1}, \mathfrak{d}^{lh}_{h,1}$ for bilinear forms $A^{lh}_{h,1}, B^{lh}_{h,1}, C^{lh}_{h,1}, D^{lh}_{h,1}$ solve the linear system
\begin{equation*}
\left\{
    \begin{array}{lr}
    (\xi+\eta) \mathfrak{a}^{lh}_{h,1} - \eta \mathfrak{b}^{lh}_{h,1} + \xi^2 \mathfrak{c}^{lh}_{h,1} = 0 &\\
    \xi \mathfrak{b}^{lh}_{h,1} - \eta^2 \mathfrak{c}^{lh}_{h,1} - (\xi+\eta) \mathfrak{d}^{lh}_{h,1} = -\Half \xi \chi_1(\xi, \eta)  &\\
    \xi \mathfrak{a}^{lh}_{h,1} + (\xi + \eta)^2 \mathfrak{c}^{lh}_{h,1} + \eta \mathfrak{d}^{lh}_{h,1} = 0  &\\
     \eta^2 \mathfrak{a}^{lh}_{h,1} -(\xi+ \eta)^2 \mathfrak{b}^{lh}_{h,1} + \xi^2 \mathfrak{d}^{lh}_{h,1} = \xi(\xi+\eta) \chi_1(\xi, \eta),&  
    \end{array}
\right.
\end{equation*}
where  the symbol $\chi_{1}(\xi, \eta)$ is defined in \eqref{ChiOnelh} to select the low-high frequencies.
The expressions for the holomorphic symbols are given by
\begin{align*}
&\mathfrak{a}^{lh}_{h,1}(\xi, \eta) = -\dfrac{(\xi^3 +3\xi^2\eta+5\xi \eta^2 + 3 \eta^3) \chi_1(\xi, \eta)}{(9\xi^2 + 14\xi \eta+9\eta^2)\eta}, \\
&\mathfrak{b}^{lh}_{h,1}(\xi, \eta) = -\dfrac{(3\xi^3 +15\xi^2 \eta + 16\xi\eta^2 +6\eta^3)\chi_1(\xi, \eta)}{2(9\xi^2 + 14\xi \eta+9\eta^2)\eta}, \\
&\mathfrak{c}^{lh}_{h,1}(\eta, \zeta) = \dfrac{ (2\xi^2 +5 \xi \eta +\eta^2) \chi_1(\xi, \eta)}{2(9\xi^2 + 14\xi \eta+9\eta^2)\eta}, \\
&\mathfrak{d}^{lh}_{h,1}(\xi, \eta) = -\dfrac{(3\xi^3 +3\xi^2 \eta+\xi \eta^2 + \eta^3)\chi_1(\xi, \eta)}{2(9\xi^2 + 14\xi \eta+9\eta^2)\eta}.
\end{align*}
The leading terms of these symbols are
\begin{equation*}
\mathfrak{a}^{lh}_{h,1} \approx -\frac{1}{3}\chi_{1}(\xi, \eta), \quad \mathfrak{b}^{lh}_{h,1} \approx -\frac{1}{3}\chi_{1}(\xi, \eta),  \quad \mathfrak{c}^{lh}_{h,1} \approx \frac{1}{18\eta}\chi_{1}(\xi, \eta), \quad \mathfrak{d}^{lh}_{h,1} \approx -\frac{1}{18}\chi_{1}(\xi, \eta).   
\end{equation*}

Similarly,  the low-high mixed symbols $\mathfrak{a}^{lh}_{a,1}, \mathfrak{b}^{lh}_{a,1}, \mathfrak{c}^{lh}_{a,1}, \mathfrak{d}^{lh}_{a,1}$ for bilinear forms $A^{lh}_{a,1}, B^{lh}_{a,1}, C^{lh}_{a,1}, D^{lh}_{a,1}$ solve the linear system
\begin{equation*}
\left\{
    \begin{array}{lr}
    (\zeta -\eta) \mathfrak{a}^{lh}_{a,1} - \zeta \mathfrak{b}^{lh}_{a,1} - \eta^2 \mathfrak{c}^{lh}_{a,1} =0  &\\
    \eta \mathfrak{b}^{lh}_{a,1} + \zeta^2 \mathfrak{c}^{lh}_{a,1} + (\zeta-\eta) \mathfrak{d}^{lh}_{a,1} =  \Half\eta \chi_1(\eta, \zeta)   &\\
    \eta \mathfrak{a}^{lh}_{a,1} - ( \zeta -\eta)^2 \mathfrak{c}^{lh}_{a,1} - \zeta \mathfrak{d}^{lh}_{a,1} = 0  &\\
     \zeta^2 \mathfrak{a}^{lh}_{a,1} -( \zeta -\eta)^2 \mathfrak{b}^{lh}_{a,1} - \eta^2 \mathfrak{d}^{lh}_{a,1} =  \eta \zeta \chi_1(\eta, \zeta) .&  
    \end{array}
\right.
\end{equation*}
The solutions of the system are
\begin{align*}
&\mathfrak{a}^{lh}_{a,1}(\eta, \zeta) = \dfrac{ \zeta (\eta^2 -\eta \zeta + 3\zeta^2) \chi_1(\eta, \zeta)}{(\eta - \zeta)(4\eta^2 - 4\eta\zeta + 9 \zeta^2)}, \quad \mathfrak{b}^{lh}_{a,1}(\eta, \zeta) = \dfrac{(2\eta^3 -5\eta^2 \zeta + 8\eta \zeta^2 -6 \zeta^3)\chi_1(\eta, \zeta)}{2(\eta -\zeta)(4\eta^2 - 4\eta\zeta + 9 \zeta^2)}, \\
&\mathfrak{c}^{lh}_{a,1}(\eta, \zeta) = \dfrac{ 3\zeta^2 \chi_1(\eta, \zeta)}{2(\eta -\zeta)(4\eta^2 - 4\eta\zeta + 9 \zeta^2)}, \quad \mathfrak{d}^{lh}_{a,1}(\eta, \zeta) = -\dfrac{(2\eta^3-\eta^2 \zeta+ 4\eta\zeta^2 +\zeta^3)\chi_1(\eta, \zeta)}{2(\eta - \zeta)(4\eta^2 - 4\eta\zeta + 9 \zeta^2)}.
\end{align*}
The leading terms of these symbols are
\begin{equation*}
\mathfrak{a}^{lh}_{a,1} \approx \frac{1}{3}\chi_{1}(\eta, \zeta), \quad \mathfrak{b}^{lh}_{a,1} \approx \frac{1}{3}\chi_{1}(\eta, \zeta),  \quad \mathfrak{c}^{lh}_{a,1} \approx -\frac{1}{18\zeta}\chi_{1}(\eta, \zeta), \quad \mathfrak{d}^{lh}_{a,1} \approx \frac{1}{18}\chi_{1}(\eta, \zeta).  
\end{equation*}
Hence, we have
\begin{align*}
&w^{lh}_1 = -\frac{1}{3}T_{T_{1-Y}\W}w + \frac{i}{18}T_{T_{J^{\frac{3}{2}}(1-\bar{Y})^2}R}\partial_\alpha^{-1}r+ \frac{1}{3}T_{T_{1-\bar{Y}}\bar{\W}}w -\frac{i}{18} T_{T_{J^{\frac{1}{2}}}\bar{R}} \partial_\alpha^{-1}r + \text{lower order terms}, \\
&r^{lh}_1 = -\frac{1}{3}T_{T_{1-Y}\W}r -\frac{1}{18}T_R w + \frac{1}{3}T_{T_{1-\bar{Y}}\bar{\W}}r + \frac{1}{18}T_{T_{(1+\bar{\W})(1-Y)}\bar{R}}w  + \text{lower order terms}.
\end{align*}
 Low-high quadratic normal form variables $(w_1^{lh}, r_1^{lh})$ are bounded in the sense that
\begin{equation*}
 \|(w_1^{lh}, r_1^{lh}) \|_{\mathcal{H}^\Half} \lesssim \CalAZ \|(w,r) \|_{\mathcal{H}^\Half}.
\end{equation*}

\subsubsection{Normal forms for low-high quadratic terms in $(\nP\mathcal{G}_1, \nP\mathcal{K}_1)$ part two}
Here we  show that there exist low-high quadratic normal
form variables $(w_{h,2}^{lh}, r_{h,2}^{lh})$ that can remove the part of low-high quadratic terms in $(\nP\mathcal{G}_1, \nP\mathcal{K}_1)$ where linearized variables are at low frequencies.
More precisely,
\begin{align*}
&\partial_t w_{h,2}^{lh}+T_{1-\bar{Y}} \partial_\alpha r_{h,2}^{lh} + \text{cubic and higher terms} \\
=& T_{w_\alpha}T_{1-\bar{Y}}R + T_{w}T_{1-\bar{Y}}R_\alpha   + G, \\
&\partial_t r_{h,2}^{lh} + i T_{(1-Y)J^{-\Half}} \partial_\alpha^2 w_{h,2}^{lh}+ \text{cubic and higher terms} \\
=& T_{r_\alpha}T_{1-\bar{Y}}R -\frac{3}{2}i T_{J^{-\Half}(1-Y)^2} \partial_\alpha T_{w_{\alpha}} \W -i T_{J^{-\Half}(1-Y)^2} T_w \W_{\alpha \alpha} +K,
\end{align*}
where $(G,K)$ are perturbative source terms that satisfy \eqref{HalfGK}.

We choose normal form transformations as the sum of bilinear forms of the following type:
\begin{equation*} 
\begin{aligned}
w^{lh}_{h,2} &= B^{lh}_{h,2}(w, T_{1-Y}\W) + C^{lh}_{h,2}(r, T_{ J^{\frac{3}{2}}(1-\bar{Y})^2}R),\\
r^{lh}_{h,2} &= A^{lh}_{h,2}(r,T_{ 1-Y}\W) + D^{lh}_{h,2}(w, R).
\end{aligned}
\end{equation*}

Plugging these bilinear forms into the system, we have
\begin{align*}
&\partial_t w_{h,2}^{lh}+T_{1-\bar{Y}} \partial_\alpha r_{h,2}^{lh} + \text{cubic and higher terms} \\
=& -T_{1-\bar{Y}}B^{lh}_{h,2}(w, R_\alpha) -i T_{1-\bar{Y}}C^{lh}_{h,2}(w_{\alpha \alpha}, R) + T_{1-\bar{Y}}\partial_\alpha D^{lh}_{h,2}(w, R)\\
&+ T_{J^{-1}}\partial_\alpha A^{lh}_{h,2}(r, \W) - T_{J^{-1}} B^{lh}_{h,2}( r_\alpha, \W) -i T_{J^{-1}} C^{lh}_{h,2}(r, \W_{\alpha \alpha}),\\
&\partial_t r_{h,2}^{lh} + i T_{(1-Y)J^{-\Half}} \partial_\alpha^2 w_{h,2}^{lh}+ \text{cubic and higher terms} \\
=& -T_{1-\bar{Y}} A^{lh}_{h,2}(r, R_\alpha) + i\partial_\alpha^2 T_{1-\bar{Y}} C^{lh}_{h,2}(r, R) - T_{1-\bar{Y}} D^{lh}_{h,2}( r_\alpha, R)\\
& -iT_{J^{-\Half}(1-Y)^2} A^{lh}_{h,2}( w_{\alpha \alpha},\W) + i T_{J^{-\Half}(1-Y)^2} \partial_\alpha^2 B^{lh}_{h,2}(w, \W) - iT_{J^{-\Half}(1-Y)^2} D^{lh}_{h,2}(w, \W_{ \alpha \alpha}).
\end{align*}
The low-high holomorphic symbols $\mathfrak{a}^{lh}_{h,2}(\xi, \eta), \mathfrak{b}^{lh}_{h,2}(\xi, \eta), \mathfrak{c}^{lh}_{h,2}(\xi, \eta), \mathfrak{d}^{lh}_{h,2}(\xi, \eta)$ for bilinear forms $A^{lh}_{h,2}, B^{lh}_{h,2}, C^{lh}_{h,2}, D^{lh}_{h,2}$ solve the linear system
\begin{equation*}
\left\{
    \begin{array}{lr}
    (\xi+\eta) \mathfrak{a}^{lh}_{h,2} - \xi \mathfrak{b}^{lh}_{h,2} + \eta^2 \mathfrak{c}^{lh}_{h,2} = 0 &\\
    \eta \mathfrak{b}^{lh}_{h,2} - \xi^2 \mathfrak{c}^{lh}_{h,2} - (\xi+\eta) \mathfrak{d}^{lh}_{h,2} = - (\xi + \eta) \chi_1(\xi, \eta)  &\\
    \eta\mathfrak{a}^{lh}_{h,2} + (\xi + \eta)^2 \mathfrak{c}^{lh}_{h,2} + \xi \mathfrak{d}^{lh}_{h,2} = -\xi \chi_1(\xi, \eta)   &\\
     \xi^2 \mathfrak{a}^{lh}_{h,2} -(\xi+ \eta)^2 \mathfrak{b}^{lh}_{h,2} + \eta^2 \mathfrak{d}^{lh}_{h,2} = (\frac{3}{2}\xi(\xi+\eta) + \eta^2) \chi_1(\xi, \eta),&  
    \end{array}
\right.
\end{equation*}
where  the symbol $\chi_{1}(\xi, \eta)$ is defined in \eqref{ChiOnelh} to select the low-high frequencies.
The expressions for the holomorphic symbols are given by
\begin{align*}
&\mathfrak{a}^{lh}_{h,2}(\xi, \eta) = -\dfrac{(9\xi^3 +10\xi^2\eta+3\xi \eta^2 -6 \eta^3) \chi_1(\xi, \eta)}{2\eta(9\xi^2 + 14\xi \eta+9\eta^2)}, \\
&\mathfrak{b}^{lh}_{h,2}(\xi, \eta) = -\dfrac{(9\xi^3 +19\xi^2 \eta + 19\xi\eta^2 +9\eta^3)\chi_1(\xi, \eta)}{2\eta(9\xi^2 + 14\xi \eta+9\eta^2)}, \\
&\mathfrak{c}^{lh}_{h,2}(\eta, \zeta) = -\dfrac{ 3(\xi+\eta)^2 \chi_1(\xi, \eta)}{\eta(9\xi^2 + 14\xi \eta+9\eta^2)}, \\
&\mathfrak{d}^{lh}_{h,2}(\xi, \eta) = \dfrac{3(2\xi^3 +5\xi^2 \eta+6\xi \eta^2 + 3\eta^3)\chi_1(\xi, \eta)}{2\eta(9\xi^2 + 14\xi \eta+9\eta^2)}.
\end{align*}
Since $|\xi| \ll |\eta |$,  the leading terms of these symbols are
\begin{equation*}
\mathfrak{a}^{lh}_{h,2} \approx \frac{1}{3}\chi_{1}(\xi, \eta), \quad \mathfrak{b}^{lh}_{h,2} \approx -\frac{1}{2}\chi_{1}(\xi, \eta),  \quad \mathfrak{c}^{lh}_{h,2} \approx -\frac{1}{3\eta}\chi_{1}(\xi, \eta), \quad \mathfrak{d}^{lh}_{h,2} \approx \frac{1}{2}\chi_{1}(\xi, \eta).   
\end{equation*}
Hence, we have
\begin{align*}
&w_{h,2}^{lh} = -\frac{1}{2}T_{w}T_{1-Y}\W - \frac{i}{3} T_rT_{J^{\frac{3}{2}}(1-\bar{Y})^2}\partial_\alpha^{-1}R + \text{lower order terms}, \\
&r_{h,2}^{lh} = \frac{1}{3}T_{r}T_{1-Y}\W +\Half T_w R  + \text{lower order terms}.
\end{align*}
Low-high quadratic normal form variables $(w_{h,2}^{lh}, r_{h,2}^{lh})$ satisfy the estimates 
\begin{equation*}
 \|(w_{h,2}^{lh}, r_{h,2}^{lh}) \|_{\mathcal{H}^\Half} \lesssim \CalAO \|(w,r) \|_{\mathcal{H}^\Half}.
\end{equation*}

\subsubsection{Normal forms for low-high quadratic terms in $(\nP\mathcal{G}_0, \nP\mathcal{K}_0)$}
Here we  show that there exist low-high quadratic normal
form variables $(w_{a,2}^{lh}, r_{a,2}^{lh})$ that can remove the part of low-high quadratic terms in $(\nP\mathcal{G}_0, \nP\mathcal{K}_0)$.
Precisely, we show that
\begin{align*}
&\partial_t w_{a,2}^{lh}+T_{1-\bar{Y}} \partial_\alpha r_{a,2}^{lh} + \text{cubic and higher terms} \\
=& T_{J^{-1}}T_{\bar{r}_\alpha}\W -  T_{(1-\bar{Y})^2(1+\W)}T_{\bar{w}_\alpha} R  + G, \\
&\partial_t r_{a,2}^{lh} + i T_{(1-Y)J^{-\Half}} \partial_\alpha^2 w_{a,2}^{lh}+ \text{cubic and higher terms} \\
=& T_{1-\bar{Y}}T_{\bar{r}_\alpha} R +\frac{i}{2} T_{J^{-\frac{3}{2}}} T_{\bar{w}_{\alpha \alpha}}\W - \frac{i}{2} T_{J^{-\frac{3}{2}}} T_{\bar{w}_{\alpha}}\W_\alpha+K,
\end{align*}
where $(G,K)$ are perturbative source terms that satisfy \eqref{HalfGK}.

We choose normal form transformations as the sum of bilinear forms of the following type:
\begin{equation*} 
\begin{aligned}
w^{lh}_{a,2} &=  B^{lh}_{a,2}\left(\bar{w}, T_{1-Y}\W \right) + C^{lh}_{a,2}(\bar{r}, T_{J^{-\Half}(1+\W)^2}R ),\\
r^{lh}_{a,2} &=  A^{lh}_{a,2}\left(\bar{r}, T_{1-Y}\W \right) + D^{lh}_{a,2}\left(\bar{w}, T_{(1+\W)(1- \bar{Y})}R \right).
\end{aligned}
\end{equation*}

Plugging these bilinear forms into the system, we get
\begin{align*}
&\partial_t w_{a,2}^{lh}+T_{1-\bar{Y}} \partial_\alpha r_{a,2}^{lh} + \text{cubic and higher terms} \\
=& -T_{(1-\bar{Y})^2(1+\W)}B^{lh}_{a,2}(\bar{w}, R_\alpha) +i T_{(1-\bar{Y})^2(1+\W)}C^{lh}_{a,2}(\bar{w}_{\alpha \alpha}, R) + T_{(1-\bar{Y})^2(1+\W)}\partial_\alpha D^{lh}_{a,2}( \bar{w}, R)\\
&+ T_{J^{-1}}\partial_\alpha A^{lh}_{a,2}(\bar{r}, \W) - T_{J^{-1}} B^{lh}_{a,2}( \bar{r}_\alpha, \W) -i T_{J^{-1}} C^{lh}_{a,2}(\bar{r}, \W_{\alpha \alpha}),\\
&\partial_t r_{a,2}^{lh} + i T_{(1-Y)J^{-\Half}} \partial_\alpha^2 w_{a,2}^{lh}+ \text{cubic and higher terms} \\
=& -T_{1-\bar{Y}} A^{lh}_{a,2}( \bar{r}, R_\alpha) + iT_{1-\bar{Y}}\partial_\alpha^2  C^{lh}_{a,2}(\bar{r}, R) - T_{1-\bar{Y}} D^{lh}_{a,2}(\bar{r}_\alpha, R)\\
& +iT_{J^{-\frac{3}{2}}} A^{lh}_{a,2}( \bar{w}_{\alpha \alpha}, \W) + i T_{J^{-\frac{3}{2}}} \partial_\alpha^2 B^{lh}_{a,2}(\bar{w}, \W) - iT_{J^{-\frac{3}{2}}} D^{lh}_{a,2}(\bar{w}, \W_{\alpha \alpha}).
\end{align*}

The low-high mixed symbols $\mathfrak{a}^{lh}_{a,2}(\eta, \zeta),\mathfrak{b}^{lh}_{a,2}(\eta, \zeta), \mathfrak{c}^{lh}_{a,2}(\eta, \zeta), \mathfrak{d}^{lh}_{a,2}(\eta, \zeta)$ for bilinear forms $A^{lh}_{a,2},$ $B^{lh}_{a,2}, C^{lh}_{a,2}, D^{lh}_{a,2}$ solve the linear system
\begin{equation*}
\left\{
    \begin{array}{lr}
    (\zeta -\eta) \mathfrak{a}^{lh}_{a,2} + \eta \mathfrak{b}^{lh}_{a,2} + \zeta^2 \mathfrak{c}^{lh}_{a,2} = -\eta \chi_1(\eta, \zeta)  &\\
    \zeta \mathfrak{b}^{lh}_{a,2} + \eta^2 \mathfrak{c}^{lh}_{a,2} - (\zeta-\eta) \mathfrak{d}^{lh}_{a,2} =  -\eta \chi_1(\eta, \zeta)   &\\
    \zeta \mathfrak{a}^{lh}_{a,2} + ( \zeta -\eta)^2 \mathfrak{c}^{lh}_{a,2} - \eta \mathfrak{d}^{lh}_{a,2} = \eta \chi_1(\eta, \zeta)  &\\
     \eta^2 \mathfrak{a}^{lh}_{a,2} +( \zeta -\eta)^2 \mathfrak{b}^{lh}_{a,2} - \zeta^2 \mathfrak{d}^{lh}_{a,2} =  \Half\eta(\eta + \zeta) \chi_1(\eta, \zeta) .&  
    \end{array}
\right.
\end{equation*}
The solutions of this system are
\begin{align*}
&\mathfrak{a}^{lh}_{a,2}(\eta, \zeta) = \dfrac{ 3 (2\eta^2 -\eta \zeta + 2\zeta^2) \chi_1(\eta, \zeta)}{2(4\eta^2 - 4\eta\zeta + 9 \zeta^2)}, \quad \mathfrak{b}^{lh}_{a,2}(\eta, \zeta) = -\dfrac{(2\eta^2 +\eta \zeta +9\zeta^2)\chi_1(\eta, \zeta)}{2(4\eta^2 - 4\eta\zeta + 9 \zeta^2)}, \\
&\mathfrak{c}^{lh}_{a,2}(\eta, \zeta) = -\dfrac{ 3\zeta \chi_1(\eta, \zeta)}{4\eta^2 - 4\eta\zeta + 9 \zeta^2}, \quad \mathfrak{d}^{lh}_{a,2}(\eta, \zeta) = -\dfrac{(8\eta^2-8\eta \zeta+ 9\zeta^2 )\chi_1(\eta, \zeta)}{2(4\eta^2 - 4\eta\zeta + 9 \zeta^2)}.
\end{align*}
Since $|\xi| \ll |\eta |$, the leading terms of these symbols are
\begin{equation*}
\mathfrak{a}^{lh}_{a,2} \approx \frac{1}{3}\chi_{1}(\eta, \zeta), \quad \mathfrak{b}^{lh}_{a,2} \approx -\frac{1}{2}\chi_{1}(\eta, \zeta),  \quad \mathfrak{c}^{lh}_{a,2} \approx -\frac{1}{3\zeta}\chi_{1}(\eta, \zeta), \quad \mathfrak{d}^{lh}_{a,2} \approx -\frac{1}{2}\chi_{1}(\eta, \zeta).   
\end{equation*}
Consequently,
\begin{align*}
&w_{a,2}^{lh} = -\frac{1}{2}T_{\bar{w}}T_{1-Y}\W - \frac{i}{3} T_{\bar{r}}T_{J^{-\frac{1}{2}}(1+\W)^2}\partial_\alpha^{-1}R + \text{lower order terms}, \\
&r_{a,2}^{lh} = \frac{1}{3}T_{\bar{r}}T_{1-Y}\W -\Half T_{\bar{w}} T_{(1+\W)(1-\bar{Y})}R  + \text{lower order terms}.
\end{align*}
Low-high quadratic normal form variables $(w_{a,2}^{lh}, r_{a,2}^{lh})$ satisfy the estimates 
\begin{equation*}
 \|(w_{a,2}^{lh}, r_{a,2}^{lh}) \|_{\mathcal{H}^\Half} \lesssim \CalAO \|(w,r) \|_{\mathcal{H}^\Half}.
\end{equation*}

\subsection{Cubic normal form analysis} \label{s:CubicNormal}
In this subsection, we compute the cubic normal form transformations to eliminate the non-perturbative cubic source terms of $(\mathcal{G}^\sharp, \mathcal{K}^\sharp)$ in \eqref{ParadifferentialLinearEqn} as well as those produced by quadratic normal form transformations.

We consider each part of the cubic source terms:
\begin{itemize}
\item The time derivative acts on the para-coefficients of the quadratic source terms such as $T_{1-Y}$, $T_{(1-Y)(1+\bar{\W})}$, and so on.
This results in an additional $T_{R_\alpha}$ or $T_{\bar{R}_\alpha}$.
In this scenario, we estimate $R_\alpha$ or $\bar{R}_\alpha$ in $L^2$, which introduces an $N_s$ factor.
 Furthermore, we estimate $w$ or $r$ in $L^\infty$ or Zygmund spaces, and use Sobolev embedding to further bound them by $\|(w,r) \|_{\mathcal{H}^\Half}$.
The cubic source terms of this type satisfy \eqref{GoodSourceTermBound} as desired.
\item Cubic and quartic terms in $(\nP\mathcal{G}_0, \nP\mathcal{K}_0)$ and $(\nP\mathcal{G}_1, \nP\mathcal{K}_1)$.
These terms include the cubic term $-\nP[T_{1-\bar{Y}}T_{\bar{w}}\bar{R}_\alpha T_{1-Y}\W]$ in $\mathcal{G}_{0,0}$, cubic terms $-T_w \nP[R\bar{Y}_\alpha]-\Pi(w, \nP[R\bar{Y}_\alpha])$ in $\nP\mathcal{G}_1$,  $iT_{1-Y}T_w a+ iT_{1-Y}\Pi(w,a)-iT_{aw}Y-i\Pi(aw,Y)$, and the cubic part of $i(T_{Lw}Y +\Pi(Lw, Y))$ in $\nP\mathcal{K}_1$.
These terms also satisfy  \eqref{GoodSourceTermBound} by estimating $w$ in $L^\infty$ and using Sobolev embeddings.
\item The sub-leading term of the time derivative acts on the main factors of balanced quadratic normal form variables, leading to additional cubic terms. 
Cubic terms of this type always satisfy \eqref{GoodSourceTermBound} as one can freely distribute derivatives for balanced paraproducts.
\item The sub-leading term of the time derivative acts on the main factors of low-high quadratic normal form variables, introducing non-perturbative cubic terms.
When this occurs at variables at low frequencies, the cubic terms satisfy  \eqref{GoodSourceTermBound}.
Hence, we only need to consider the case when the sub-leading term of the time derivative acts on  high-frequency variables of low-high quadratic normal forms.
\end{itemize}

Consequently, the non-perturbative cubic source terms are of the following type:
\begin{align*}
&\mathcal{G}_{[3]} = L^{llh}(T_{1-Y}\W, b, w_\alpha) +L^{llh}(T_{J^{\frac{3}{2}}(1-\bar{Y})^2}R, b, r) + L^{llh}(T_{1-\bar{Y}}\bar{\W}, b, w_\alpha) +L^{llh}(T_{J^\Half}\bar{R}, b, r) , \\
+& L^{llh}(T_{1-Y}\W, w, R_\alpha) +L^{llh}(T_{J^{\frac{3}{2}}(1-\bar{Y})^2}R, w, \W_\alpha) + L^{llh}(T_{1-\bar{Y}}\bar{\W}, w, R_\alpha) +L^{llh}(T_{J^\Half}\bar{R}, w, \W_\alpha)\\
+&  L^{llh}(w, b, T_{1-Y}\W_\alpha)+ L^{llh}(r, b, T_{J^{\frac{3}{2}}(1-\bar{Y})^2} R) +  L^{llh}(\bar{w}, b, T_{1-Y}\W_\alpha)+ L^{llh}(\bar{r}, b, T_{J^{-\frac{1}{2}}(1+\W)^2} R), \\
&\mathcal{K}_{[3]} = L^{llh}(T_{1-Y}\W, b, r_\alpha) +L^{llh}(R, b, w_\alpha) + L^{llh}(T_{1-\bar{Y}}\bar{\W}, b, r_\alpha) + L^{llh}(T_{(1+\bar{\W})(1-Y)}\bar{R}, b, w_\alpha)\\
+& L^{llh}(T_{1-Y}\W, w, \W_{\alpha \alpha}) +L^{llh}(R, w, R_\alpha) + L^{llh}(T_{1-\bar{Y}}\bar{\W}, w, \W_{\alpha \alpha}) + L^{llh}(T_{(1+\bar{\W})(1-Y)}\bar{R}, w, R_\alpha)\\
+&  L^{llh}(r, b, T_{1-Y}\W_\alpha)+ L^{llh}(w, b, R_\alpha) +  L^{llh}(\bar{r}, b, T_{1-Y}\W_\alpha)+ L^{llh}(\bar{w}, b, T_{(1+\W)(1-\bar{Y})}R_\alpha),
\end{align*}
where $L^{llh}$ are trilinear forms where each of its arguments has relatively low, low and high frequencies respectively.
These non-perturbative cubic source terms need $\Half$ less derivative to satisfy the estimate \eqref{GoodSourceTermBound}.

To give an outline of how to compute the symbols for these trilinear forms, we follow the idea of Section 3.2 of \cite{MR3667289} by using auxiliary variables.
Consider auxiliary variables $Z_{\pm}$ and $z_{\pm}$ defined by
\begin{equation*}
    Z_{\pm} := R \pm i |D|^\Half \W, \quad z_{\pm} := r \pm i |D|^\Half w.
\end{equation*}
 $Z_{\pm}$ and $z_{\pm}$ solve the scalar equations
\begin{equation*}
\partial_t Z_\pm = \pm i |D|^{\frac{3}{2}}Z_\pm + \text{lower order terms}, \quad \partial_t z_\pm = \pm i |D|^{\frac{3}{2}}z_\pm + \text{lower order terms}.
\end{equation*}
Setting the non-perturbative cubic source term
\begin{equation*}
    \mathcal{Z}_{[3]}^{\pm} = \mathcal{K}_{[3]} \pm i|D|^\Half \mathcal{G}_{[3]} = \sum_{\pm} L^1_{\pm \pm \pm}(Z_\pm, Z_\pm, z_\pm),
\end{equation*}
we will need to find the cubic normal form transformation 
\begin{align*}
 Z_{[3]}^\pm = \sum_{\pm} L^2_{\pm \pm \pm}(Z_\pm, Z_\pm, z_\pm)   
\end{align*}
such that $Z_{[3]}^\pm$ solve the equation
\begin{equation*}
    (\partial_t \mp i|D|^{\frac{3}{2}}) Z^\pm_{[3]} = \mathcal{Z}_{[3]}^\pm.
\end{equation*}
By arranging the frequencies of each variable and the outcome frequency, from small to large in absolute value, the frequencies $\xi_1, \xi_2, \xi_3, \xi_4$ satisfy \eqref{XiOnetoFour}, and the denominators of the symbols of trilinear forms $L^2_{\pm \pm \pm}$ satisfy the non-resonance condition
\begin{equation*}
  |\xi_1|^{\frac{3}{2}} \pm |\xi_2|^{\frac{3}{2}} \pm |\xi_3|^{\frac{3}{2}} \pm |\xi_4|^{\frac{3}{2}} \neq 0.
\end{equation*}
Similar to the discussion in Section \ref{s:QuarticEnergy}, the symbols of trilinear forms $L^2_{\pm \pm \pm}$ are bounded and the normal form transformation $Z^{[3]}_\pm$ satisfy
\begin{equation*}
    \|Z_{[3]}^\pm \|_{H^\Half}\lesssim_{\| Z\|_{H^s}} (1+\| Z\|_{C^{1+\epsilon}_{*}}^2) \|z\|_{H^\Half}.
\end{equation*}
Switching back to $(\W, R)$ and $(w,r)$ as in Section 3.2 of \cite{MR3667289} and adding appropriate para-coefficients as quasilinear corrections, we obtain the cubic normal form variables $(w_{[3]}, r_{[3]})$, which satisfy the estimate
\begin{equation*}
 \|(w_{[3]}, r_{[3]}) \|_{\mathcal{H}^\Half} \lesssim \CalAO \|(w,r) \|_{\mathcal{H}^\Half}.
\end{equation*}
The extra quartic terms produced by $(w_{[3]}, r_{[3]})$ satisfy \eqref{GoodSourceTermBound} because normal form transformations replace the leading term of time derivatives by sub-leading terms, and thus reduce $\Half$ derivative so that quartic terms become perturbative.

At the end of this section, we use  the paradifferential energy $E^{\Half, para}_{lin}$ for the linear paradifferential system constructed in Section \ref{s:HomoFlow} as well as all the paradifferential normal form transformations to finish the proof of Theorem \ref{t:LinearizedWellposed}.
Define normal form variables
\begin{equation*}
    (w_{NF}, r_{NF}): = (w,r)+ (w^{bal}_1, r^{bal}_1)+ (w^{bal}_2, r^{bal}_2) + (w^{lh}_1, r^{lh}_1)+ (w^{lh}_{h,2}, r^{lh}_{h,2}) + (w^{lh}_{a,2}, r^{lh}_{a,2}) + (w_{[3]}, r_{[3]}),
\end{equation*}
where each part of the normal form variables are constructed in the above subsections.
Setting
\begin{equation*}
   E^{\Half}_{lin}(w,r): = E^{\Half, para}_{lin}(w_{NF}, r_{NF}),
\end{equation*}
then the energy $E^{\Half}_{lin}$ satisfies the norm equivalence and the energy estimate in Theorem \ref{t:LinearizedWellposed}.

\section{The proof of local well-posedness} \label{s:Wellposed}
In this section, we finish the proof of the the main result of this article, namely, the low regularity well-posedness Theorem \ref{t:MainWellPosed}.
Recall that it was already shown by Alazard-Burq-Zuily \cite{MR2805065} that the system \eqref{e:WR} is locally well-posed in $\H^s$ for where $s> \frac{3}{2}$ (Note that the proof of Theorem \ref{t:Previous} uses Strichartz estimate, and does not hold in the periodic case).
In the following, we will just consider $s$ as a fixed Sobolev index $1<s\leq \frac{3}{2}$.

Our proof below follows the sequence of steps laid out  in the expository article of Ifrim-Tataru \cite{MR4557379}.
We will first establish the $\mathcal{H}^{\tau}$ bounds for  regular solutions, with $\tau >\frac{3}{2}$.
Then, we use these  regular solutions  to construct rough solutions in $\mathcal{H}^{s}$.
More precisely, regular solutions are obtained by truncating the rough initial data in frequency,  so that we get a continuous family of solutions, thereby estimating only a solution for  linearized equations at each step.
Finally, we prove the continuous dependence on the initial data in $\mathcal{H}^{s}$.

Note that we can make  use of the  scaling \eqref{Scaling}.
By choosing the scaling parameter $\lambda$ small enough, we can make the $\mathcal{H}^s$ norm of initial data and the control norm $\CalAO \ll 1$, at the price to turn the gravity $g$ into $\frac{g}{\lambda^2}$.
For simplicity, we only give an outline of the proof below.
We refer the interested readers to Section 7 of \cite{ai2023dimensional} for the details of the proof, especially the definition and the use of \textit{frequency envelopes}.

\begin{proof}[Outline of proof of Theorem \ref{t:MainWellPosed}]
 The  proof of the theorem is divided into the following three steps:
 
 \textbf{$(1)$ $\mathcal{H}^\tau$ bounds for regular solutions.}
 Suppose we have an $\mathcal{H}^\tau$ solution $(\W, R)$ that satisfies  the initial condition
\begin{equation*}
\|(\W_0, R_0)\|_{\mathcal{H}^s}\leq \mathcal{M}_0 \ll 1,
\end{equation*}
for some small constant $\mathcal{M}_0$.
In order to show that there exists a time $T = T(\mathcal{M}_0)$ such that the solution exists in $C([0,T]; \mathcal{H}^\tau)$ and satisfies the bounds
\begin{align}
&\|(\W, R)\|_{L^\infty(0,T;\mathcal{H}^s)}<  \mathcal{M}(\mathcal{M}_0), \label{SEstimate} \\
&\|(\W, R)\|_{L^\infty(0,T;\mathcal{H}^\tau)}\leq  \mathcal{C}(\mathcal{M}_0) \|(\W_0,R_0
)\|_{\mathcal{H}^\tau}, \label{TauEstimate}
\end{align}
for constants $\mathcal{C}$ and $\mathcal{M}$ that depend on $\mathcal{M}_0$,
we make the bootstrap assumption
\begin{equation*}
    \|(\W, R)\|_{L^\infty(0,T; \mathcal{H}^s)} <  2\mathcal{M}.
\end{equation*}
By the bootstrap assumption and Sobolev embedding, the control norms satisfy 
\begin{equation*}
    \CalAZ, \CalAO \lesssim \mathcal{M}.
\end{equation*}
 Using the paradifferential modified energy estimates Theorem \ref{t:MainEnergyEstimate} and Gronwall’s inequality, we get the energy estimate
\begin{equation*}
    \|(\W, R)(t)\|_{\mathcal{H}^s} <  e^{Ct}  \|(\W_0, R_0)\|_{\mathcal{H}^s} \lesssim e^{Ct}\mathcal{M}_0.
\end{equation*}
By choosing constant $\mathcal{M}$ large and time $T$ small enough, we get the bound \eqref{SEstimate}.
Similarly, applying Theorem \ref{t:MainEnergyEstimate} for $\tau$ and Gronwall’s inequality,
\begin{equation*}
\| (\W ,R)(t)\|_{\mathcal{H}^\tau} \lesssim e^{Ct} \| (\W_0 ,R_0)\|_{\mathcal{H}^\tau},
\end{equation*}
which gives the bound \eqref{TauEstimate}.
Therefore we have the $\mathcal{H}^\tau$ bounds for regular solutions up to time $T$.

\textbf{$(2)$  Construction of rough solutions, $(\W, R)\in \mathcal{H}^{s}$.}
We truncate the frequency the initial data $(W_{<k}(0), Q_{<k}(0))$ and $(\W_{<k}(0), R_{<k}(0))$ at $2^{k}$.
We can then establish the bound of regularized initial data using frequency envelopes.
The corresponding solutions will be regular, with a uniform lifespan bound as shown in Step (1).
Here $k$ can be viewed as a continuous parameter rather than a discrete parameter.
Then
\begin{equation*}
    (w^k, r^k) = (\partial_k W_{<k}, \partial_k Q_{<k} - R_{<k} \partial_k W_{<k} )
\end{equation*}
solve the corresponding linearized equations around $(\W_{<k}, R_{<k})$.
For the high-frequency part of the regularized solutions, we apply the energy estimates for the full equations Theorem \ref{t:MainEnergyEstimate}.
Next, using Theorem \ref{t:LinearizedWellposed} for the linearized variables $(w^k, r^k)$, one can establish the difference bound $(\W_{<k+1}-\W_{<k}, R_{<k+1}-R_{<k})$ in $\mathcal{H}^{s}$.
Summing up with respect to $k$, it follows that the sequence $(\W_{<k}, R_{<k})$ converges to a solution $(\W, R)$ with uniform $\mathcal{H}^{s}$ bound in time interval $[0,T]$.
This further shows that the solution is unique in the sense that it is the unique limit of regular solutions.

\textbf{$(3)$  Continuous dependence on the data for rough solutions.}
We consider an arbitrary sequence $(\W_j, R_j)(0)$ that converges to the initial data $(\W_0, R_0)$ in $\mathcal{H}^{s}$ topology.
Again by truncating the initial data at frequency $2^k$ and obtaining the corresponding regular solutions, we get the sequence of solutions $(\W_j^k, R_j^k)$, respectively $(\W^k, R^k)$.
Due to the continuous dependence for the regular solutions which is proved in  \cite{MR2805065}, we have for each $k$,
\begin{equation*}
(\W^k_j, R^k_j)- (\W^k, R^k) \rightarrow 0 \quad \text{in } \mathcal{H}^\tau, \quad \tau> \frac{3}{2}.
\end{equation*}
On the other hand, for the initial data, we let $k$ go to infinity,
\begin{equation*}
(\W^k_j, R^k_j)(0)- (\W_j, R_j)(0) \rightarrow 0 \quad \text{in } \mathcal{H}^s, \quad \text{uniformly in } j.
\end{equation*}
With the help of frequency envelopes, we get the uniform convergence for the solution:
\begin{equation*}
(\W^k_j, R^k_j)- (\W_j, R_j) \rightarrow 0 \quad \text{in } \mathcal{H}^s, \quad \text{uniformly in } j.
\end{equation*}
 We can again let $k$ go to infinity, and conclude that
 \begin{equation*}
  (\W_j, R_j)-(\W, R)\rightarrow 0 \quad \text{in } \mathcal{H}^{s}.
 \end{equation*}
 This shows the continuous dependence on the data for solutions in $\mathcal{H}^{s}$.

 \end{proof}

\appendix

\section{Paradifferential calculus and related estimates} \label{s:Norms}
In this section, we list the definition of norms and recall paraproducts and paradifferential estimates we have used in previous sections.
On one hand, the normal forms are not explicit bilinear forms; they are expressed using multi-linear Fourier multipliers.
On the other hand, in order to get the paradifferential modified energy estimate, we need to work at the paradifferential level to rebalance the fractional derivatives.
Many of these definitions and estimates are relatively standard.
They can be found in for instance \cite{ai2023dimensional,MR2931520, MR3260858,MR3585049} or the textbooks \cite{MR2768550, MR2418072}.

\subsection{Norms and function spaces}
We recall the  Littlewood-Paley frequency decomposition,
\begin{equation*}
    I = \sum_{k\in \mathbb{N}} P_k, 
\end{equation*}
where for each $k\geq 1$, the symbols of $P_k$ are smooth and  localized at $2^k$,  and $P_0$ selects the low frequency components $|\xi|\leq 1$.
\begin{enumerate}
\item Let $s\in \mathbb{R}$, and $p,q \in [1, \infty]$, the non-homogeneous Besov space $B^s_{p,q}(\mathbb{R})$ is defined as the space of all  tempered distributions $u$ such that
\begin{equation*}
\| u\|_{B^s_{p,q}} : = \left\|(2^{ks}\|P_k u \|_{L^p})_{k=0}^\infty \right\|_{l^q} < +\infty
\end{equation*}
\item When $p = q = \infty$, the Besov space $B^s_{\infty, \infty}$ becomes the \textit{Zygmund space} $C^s_{*}$.
When $p = q =2$, the Besov space $B^s_{2,2}$ becomes the \textit{Sobolev space} $H^s$.
\item Let $1\leq p_1 \leq p_2 \leq \infty$, $1\leq r_1 \leq r_2 \leq \infty$, then for any real number $s$,
\begin{equation*}
    B^s_{p_1, p_2}(\mathbb{R}) \hookrightarrow B^{s-(\frac{1}{p_1} - \frac{1}{p_2})}_{p_2, r_2}(\mathbb{R}).
\end{equation*}
As a special case when $p_1 = r_1 =2$ and $p_2 = r_2 = \infty$, 
\begin{equation*}
 H^{s+\frac{1}{2}}(\mathbb{R}) \hookrightarrow C^s_{*}(\mathbb{R}) \quad \forall s. 
\end{equation*}
The Sobolev space $H^{s+\frac{1}{2}}(\mathbb{R})$ can be embedded into the Zygmund space $C^s_{*}(\mathbb{R})$.
\item Let $k\in \mathbb{N}$, we let $W^{k,\infty}(\mathbb{R})$ be the space of all functions such that $\partial_x^j u \in L^\infty(\mathbb{R})$, $0\leq j \leq k$. 
For $\rho = k+ \sigma$ with $k\in \mathbb{N}$ and $\sigma \in (0,1)$, we denote $W^{\rho, \infty}(\mathbb{R})$  the
space of all function $u\in W^{k,\infty}(\mathbb{R})$ such that $\partial_x^k u$ is $\sigma$- H\"{o}lder continuous on $\mathbb{R}$. 
\item The Zygmund space $C^s_{*}(\mathbb{R})$ is just the the H\"{o}lder space $W^{s, \infty}(\mathbb{R})$ when $s\in (0,\infty)\backslash \mathbb{N}$.
One has the embedding properties
\begin{align*}
  &C_{*}^s(\mathbb{R}) \hookrightarrow L^\infty(\mathbb{R}), \quad s>0; \qquad L^\infty(\mathbb{R}) \hookrightarrow C_{*}^s, \quad s<0;\\
  &C_{*}^{s_1}(\mathbb{R})\hookrightarrow C_{*}^{s_2}(\mathbb{R}), \quad H^{s_1}(\mathbb{R})\hookrightarrow H^{s_2}(\mathbb{R}), \qquad s_1>s_2.
\end{align*}
\end{enumerate}

\subsection{ Paradifferential and  Moser type estimates}
\begin{definition}
\begin{enumerate}
\item Let $\rho\in [0,\infty)$, $m\in \mathbb{R}$. 
$\Gamma^m_\rho(\mathbb{R})$ denotes the space of locally bounded functions $a(x, \xi)$ on $\mathbb{R}\times (\mathbb{R}\backslash \{0\})$, which are $C^\infty$ with respect to $\xi$ for $\xi \neq 0$ and such that for all $k \in \mathbb{N}$ and $\xi \neq 0$, the function $x\mapsto \partial_\xi^k a(x,\xi)$ belongs to $W^{\rho,\infty}(\mathbb{R})$ and there exists a constant $C_k$ with
\begin{equation*}
\forall |\xi|\geq \frac{1}{2}, \quad \|\partial_\xi^k a(\cdot,\xi) \|_{W^{\rho,\infty}} \leq C_k (1+ |\xi|)^{m-k}.
\end{equation*}
Let $a\in \Gamma^m_\rho$,  we define the semi-norm
\begin{equation*}
M^m_{\rho}(a) = \sup_{k \leq \frac{3}{2}+\rho} \sup_{|\xi|\geq \frac{1}{2}} \|(1+ |\xi|)^{k-m}\partial_\xi^k a(\cdot,\xi)  \|_{W^{\rho,\infty}}.
\end{equation*}
\item Given $a\in \Gamma^m_\rho(\mathbb{R})$, let $C^\infty$ functions $\chi(\theta, \eta)$ and $\psi(\eta)$ be such that for some $0<\epsilon_1 < \epsilon_2<1$,
\begin{align*}
    &\chi(\theta, \eta) = 1,  \text{ if } |\theta| \leq \epsilon_1(1+ |\eta|), \qquad \chi(\theta, \eta) = 0,  \text{ if } |\theta| \geq \epsilon_2(1+ |\eta|),\\
    &\psi(\eta) = 0, \text{ if } |\eta|\leq \frac{1}{5}, \qquad \psi(\eta) =1, \text{ if } |\eta|\geq \frac{1}{4}.
\end{align*}
We define the paradifferential operator $T_a$ by
\begin{align*}
    \widehat{T_a u}(\xi) = \frac{1}{2\pi}\int \chi(\xi -\eta, \eta) \hat{a}(\xi-\eta, \eta)\psi(\eta)\hat{u}(\eta) d\eta,
\end{align*}
where $\hat{a}(\theta, \xi)$ is the Fourier transform of a with respect to the  variable x.
\item Let $m\in \mathbb{R}$, an operator  is said to be of order $m$ if, for all $s\in \mathbb{R}$, it is bounded from $H^s$ to $H^{s-m}$.
\end{enumerate}
\end{definition}

We recall the basic symbolic calculus for paradifferential operators in the following result.
\begin{lemma}
Let $m\in \mathbb{R}$ and $\rho\in [0, +\infty)$.
 If $a\in \Gamma^m_0$, then the paradifferential operator $T_a$ is of order m. Moreover, for all $s\in \mathbb{R}$, there exists a positive constant $K$ such that
\begin{equation*}
\|T_a\|_{H^s\rightarrow H^{s-m}} \leq K M^m_0(a). 
\end{equation*}
\end{lemma}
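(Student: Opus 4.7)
The plan is to reduce the boundedness of $T_a$ to a dyadic sum of localized pieces via the Littlewood--Paley decomposition, using the frequency cutoff $\chi(\theta,\eta)$ built into the definition of $T_a$ to obtain almost orthogonality. Concretely, I would write $\widehat{T_a u}(\xi) = \sum_k \widehat{T_a^k u}(\xi)$ where $T_a^k u$ is the piece coming from restricting the $\eta$-integration to a dyadic annulus $|\eta|\sim 2^k$ (i.e.\ replacing $\psi(\eta)\hat u(\eta)$ by $\widehat{P_k u}(\eta)$). The support of $\chi(\xi-\eta,\eta)$ in the first variable is then contained in $|\xi-\eta|\lesssim 2^k$, which forces the Fourier support of $T_a^k u$ into an annulus $|\xi|\sim 2^k$ (with a uniformly bounded number of overlaps between different $k$'s). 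This frequency localization is the structural reason the estimate holds and is the mechanism that will eventually let us sum in $k$ with an $\ell^2$ Littlewood--Paley square function.

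Next I would prove the single-block estimate
\[
\|T_a^k u\|_{L^2} \lesssim M^m_0(a)\, 2^{km}\, \|P_k u\|_{L^2}.
\]
To see this, view $T_a^k$ as the operator with kernel $K_k(x,y) = \frac{1}{(2\pi)^2}\iint e^{i(x-y)\eta + i x(\xi-\eta)}\chi(\xi-\eta,\eta)\hat a(\xi-\eta,\eta)\psi(\eta)\,d\xi\, d\eta$ restricted to $|\eta|\sim 2^k$. The seminorm $M^m_0(a)$ controls $\|\partial_\xi^j a(\cdot,\xi)\|_{L^\infty}$ for $j \le \lfloor 3/2\rfloor+1=2$ by $C(1+|\xi|)^{m-j}$, which translates, after integration by parts in the $(\xi-\eta)$ variable against $\chi$, to the pointwise bound $\|a(\cdot,\eta)\|_{L^\infty}\lesssim M^m_0(a)(1+|\eta|)^m$ and hence a Schur-type / Plancherel bound for the symbol of $T_a^k$ in $L^\infty_\xi L^2_\eta$. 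I would then combine these to deduce the single-block estimate.

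With the single-block estimate in hand, almost orthogonality finishes the proof: since each $T_a^k u$ has Fourier support essentially in $|\xi|\sim 2^k$, we have
\[
\|T_a u\|_{H^{s-m}}^2 \approx \sum_k 2^{2k(s-m)}\|T_a^k u\|_{L^2}^2 \lesssim M^m_0(a)^2 \sum_k 2^{2ks}\|P_k u\|_{L^2}^2 \approx M^m_0(a)^2 \|u\|_{H^s}^2,
\]
which is the claim. The role of the low-frequency cutoff $\psi$ is to ensure that the $k=0$ contribution is harmless and can be absorbed into the $k\ge 1$ sum.

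The main obstacle is the single-block symbol bound: one must use the $(x,\xi)$-regularity encoded in $M^m_0(a)$ and the smoothing effect of the spectral cutoff $\chi$ to show that the kernel of $T_a^k$ behaves like a Calder\'on--Zygmund kernel at scale $2^{-k}$ with the correct $2^{km}$ weight, independently of how rough $a$ is in $x$ (only $L^\infty$ is assumed in the $\rho=0$ case). The key technical point is to integrate by parts sufficiently many times in the $(\xi-\eta)$ variable against $\chi$, using that $\chi$ is smooth and compactly supported in $|\xi-\eta|\lesssim |\eta|$; this is precisely why the number of $\xi$-derivatives required in the definition of $M^m_0(a)$ is $\lfloor 3/2\rfloor+\rho = 2$ (when $\rho=0$), giving enough decay in $|\xi-\eta|$ to make the kernel integrable uniformly in $k$.
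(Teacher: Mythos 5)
The paper does not prove this lemma at all: it is recalled as a standard result of symbolic calculus and attributed to the textbook literature (M\'etivier, Alazard--Burq--Zuily, etc.), so there is no in-paper argument to compare against. Your overall strategy --- dyadic decomposition in the $\eta$-frequency, observing that the cutoff $\chi(\xi-\eta,\eta)$ confines the output spectrum of each block to $|\xi|\sim 2^k$, proving a single-block bound $\|T_a^k u\|_{L^2}\lesssim M^m_0(a)2^{km}\|P_ku\|_{L^2}$, and summing by almost orthogonality --- is exactly the standard textbook proof, and the reduction and the final summation step are correct as written.

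The gap is in the single-block estimate, which you yourself identify as the crux but do not actually close. The claimed ``Schur-type / Plancherel bound for the symbol of $T_a^k$ in $L^\infty_\xi L^2_\eta$'' does not yield an $L^2\to L^2$ operator bound: a variable-coefficient multiplier whose symbol is merely $L^\infty$ in $x$ is not bounded on $L^2$ in general, and Plancherel in $x$ is unavailable. What actually makes the block bounded is the kernel estimate $\sup_x\int|K_k(x,z)|\,dz\lesssim M^m_0(a)2^{km}$ for $K_k(x,z)=\frac{1}{2\pi}\int e^{iz\eta}\sigma(x,\eta)\varphi_k(\eta)\,d\eta$, where $\sigma=\chi\ast a$ is the regularized symbol; this is obtained by integrating by parts in $\eta$ (not in $\xi-\eta$ as you suggest --- decay in the kernel variable $z$ is dual to $\eta$), using the bounds $\|\partial_\eta^j\sigma(\cdot,\eta)\|_{L^\infty}\lesssim M^m_0(a)(1+|\eta|)^{m-j}$ inherited from the definition of $M^m_0$. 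Note also that the paper's seminorm only controls $j\le \frac32+\rho$, i.e.\ $j\in\{0,1\}$ when $\rho=0$, not $j\le 2$ as you assert; with a single $\eta$-derivative the crude bound $\int\min\bigl(2^{k(m+1)},2^{km}|z|^{-1}\bigr)dz$ diverges logarithmically, so you must instead run Cauchy--Schwarz, $\int|K_k|\,dz\le\|(1+2^k|z|)K_k\|_{L^2_z}\|(1+2^k|z|)^{-1}\|_{L^2_z}\lesssim M^m_0(a)2^{km}$, which is precisely why the threshold $\frac{d}{2}+1=\frac32$ appears in the definition. With that correction the argument closes.
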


In Besov spaces, we have similar results for symbolic calculus.
\begin{lemma}[\hspace{1sp}\cite{MR3585049}]
Let $m, m^{'}, s\in \mathbb{R}$, $q\in [1,\infty]$ and $\rho\in [0,1]$.
\begin{enumerate}
\item If $a\in \Gamma^m_0$, then there exists a positive constant $K$ such that
\begin{equation*}
\|T_a\|_{B^s_{\infty, q}\rightarrow B^{s-m}_{\infty, q}} \leq KM^m_0(a).
\end{equation*}
\item If $a\in \Gamma^m_\rho$, and $b\in \Gamma^m_\rho$, then there exists a positive constant $K$ such that
\begin{equation}
  \|T_a T_b-T_{a b} \|_{B^s_{\infty, q} \rightarrow B^{s-m-m^{'}+\rho}_{\infty, q}} \leq K\left(M^m_\rho(a)M^{m^{'}}_0(b) + M^m_0(a)M^{m^{'}}_\rho(b)\right). \label{CompositionTwo}
\end{equation}
\end{enumerate}
In particular, when $q = \infty$, above symbolic calculus results hold for Zygmund spaces $C^s_{*}$.
\end{lemma}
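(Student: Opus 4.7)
The plan is to follow the classical Bony paradifferential calculus approach, which was originally developed in Sobolev spaces and extends naturally to Besov spaces via Littlewood–Paley theory. Since this lemma is cited from \cite{MR3585049} as a standard fact, the argument should mirror the well-known Sobolev proof, with the main task being to verify that the Littlewood–Paley characterization of Besov norms $B^s_{\infty,q}$ accommodates the same frequency-localization arguments used in the $H^s$ setting.

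For part (1), I would begin by observing that the admissible cutoff $\chi(\xi-\eta,\eta)\psi(\eta)$ in the definition of $T_a$ forces, for each $k$, the spectral localization
\[
\text{supp}\,\widehat{P_k(T_a u)}\subset\{|\xi|\sim 2^k\},
\]
with the Fourier transform of $P_k(T_a u)$ essentially supported where the input frequency $\eta$ of $u$ also sits in a band comparable to $2^k$. Concretely, writing $T_a u=\sum_k T_a \tilde P_k u$ where $\tilde P_k$ is a slightly enlarged frequency projector, one shows that $P_j T_a \tilde P_k u=0$ unless $|j-k|\le C$, and then bounds $\|T_a\tilde P_k u\|_{L^\infty}$ by $M_0^m(a)\,2^{km}\|\tilde P_k u\|_{L^\infty}$ using Young's inequality on the kernel, whose $L^1$ norm is controlled by the semi-norm $M_0^m(a)$. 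Taking the $\ell^q$ norm in $k$ weighted by $2^{k(s-m)}$ yields the claimed bound.

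For part (2), the strategy is to write the composition kernel of $T_a T_b$ and compare it to that of $T_{ab}$, exploiting the standard symbolic expansion. One fixes the block decompositions $a=\sum_j S_j a\cdot \psi_j$, $b=\sum_k S_k b\cdot \psi_k$ adapted to $\xi$, and uses the almost-orthogonality inherent in the paraproduct cutoffs to reduce $T_aT_b-T_{ab}$ to a sum of block operators where the two symbols are localized at comparable $\xi$-frequencies. The gain of $\rho$ derivatives comes from a first-order Taylor expansion of $a$ (or $b$) in the $x$-variable: writing $a(x,\xi)-a(y,\xi)=(x-y)\int_0^1 \partial_x a(y+t(x-y),\xi)\,dt$ gives a remainder whose kernel picks up a factor of $(x-y)$, which after integration by parts in $\xi$ converts into a factor of $2^{-k\rho}$ weighted by $\|\partial_x^\rho a\|_{L^\infty}\sim M_\rho^m(a)$ (with the standard Zygmund interpretation when $\rho\in(0,1)$). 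Symmetrizing between $a$ and $b$ yields the bilinear bound $M_\rho^m(a)M_0^{m'}(b)+M_0^m(a)M_\rho^{m'}(b)$.

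The main obstacle and technical step is the careful handling of the remainder in the symbolic expansion when $\rho\in(0,1)$ is non-integer, since Taylor's theorem must be replaced by a Zygmund-type difference estimate $\|a(\cdot+h,\xi)-a(\cdot,\xi)\|_{L^\infty}\le |h|^\rho M^m_\rho(a)(1+|\xi|)^m$, and one must verify that after integration against the kernel this produces the correct $2^{-k\rho}$ gain at each frequency block. Once this pointwise-in-frequency bound is in hand, summing over dyadic blocks in the $\ell^q$ norm is routine, and the specialization to Zygmund spaces $C^s_*$ is simply the case $q=\infty$. Finally, because the paraproduct cutoff functions $\chi,\psi$ are fixed and universal, the constant $K$ depends only on $s,m,m',\rho,q$ and on fixed multipliers of the Littlewood–Paley partition, as required.
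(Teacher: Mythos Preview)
The paper does not prove this lemma at all: it is stated with a citation to \cite{MR3585049} and no proof is given, as it is treated as a standard fact from the paradifferential calculus literature. Your sketch is a reasonable outline of the classical argument (frequency localization plus kernel bounds for part (1), and a Taylor/H\"older remainder estimate in $x$ for the symbolic composition in part (2)), so there is nothing to compare against in the paper itself.
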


When $a$ is only a function of $x$, $T_a u$ is the low-high paraproduct.
We then define 
\begin{equation*}
\Pi(a, u) := au -T_a u -T_u a
\end{equation*}
to be the balanced paraproduct.
For later use, we record below some estimates for paraproducts.

\begin{lemma} \label{t:ParaProductEst}
\begin{enumerate}
\item Let $\alpha, \beta \in \mathbb{R}$. 
If $\alpha+ \beta >0$, then
\begin{align}
&\|\Pi(a, u)\|_{H^{\alpha + \beta}(\mathbb{R})} \lesssim \| a\|_{C_{*}^\alpha(\mathbb{R})} \| u\|_{H^\beta(\mathbb{R})}, \label{HCHEstimate}\\
& \|\Pi(a, u)\|_{C_{*}^{\alpha + \beta}(\mathbb{R})} \lesssim \| a\|_{C_{*}^\alpha(\mathbb{R})} \| u\|_{C_{*}^\beta(\mathbb{R})}  \label{CCCEstimate}
\end{align}
\item Let $m > 0$ and $s\in \mathbb{R}$, then
\begin{align}
&\|T_{a} u \|_{H^{s-m}(\mathbb{R})} \lesssim \|a\|_{C_{*}^{-m}(\mathbb{R})} \| u \|_{H^s(\mathbb{R})} \label{HsCmStar}, \\
&\|T_{a} u \|_{H^{s}(\mathbb{R})} \lesssim \|a\|_{L^\infty(\mathbb{R})} \| u \|_{H^s(\mathbb{R})}, \label{HsLinfty}\\
&\|T_{a} u \|_{H^{s-m}(\mathbb{R})} \lesssim \|a\|_{H^{-m}(\mathbb{R})} \| u \|_{C_{*}^s(\mathbb{R})}, \label{HsHmCStar}\\
&\|T_{a} u \|_{C_{*}^{s-m}(\mathbb{R})} \lesssim \|a\|_{C_{*}^{-m}(\mathbb{R})} \| u \|_{C_{*}^s(\mathbb{R})}, \label{CsCmStar}\\
&\|T_{a} u \|_{C_{*}^{s}(\mathbb{R})} \lesssim \|a\|_{L^\infty(\mathbb{R})} \| u \|_{C_{*}^s(\mathbb{R})}. \label{CsLInfty}
\end{align}
\end{enumerate}
\end{lemma}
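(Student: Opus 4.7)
My plan is to reduce every estimate to a one-line H\"older/Bernstein bound on each Littlewood--Paley piece, using the paraproduct decompositions
\[
T_a u = \sum_k S_{k-N_0} a \cdot P_k u, \qquad \Pi(a,u) = \sum_{|j-k|\leq N_0} P_j a \cdot P_k u,
\]
where $S_{k-N_0} = \sum_{j<k-N_0} P_j$ and $N_0$ is a large fixed constant. The structural distinction between the two decompositions is what drives the different regimes in the lemma: each low--high summand $S_{k-N_0}a\cdot P_k u$ is frequency-localized in the annulus $\{|\xi|\sim 2^k\}$, whereas each balanced summand $P_j a\cdot P_k u$ with $j\sim k$ is merely supported in the ball $\{|\xi|\lesssim 2^{\max(j,k)}\}$.

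For the low--high bounds \eqref{HsCmStar}--\eqref{CsLInfty}, the target Besov norm of $T_a u$ reduces to a weighted sequence norm of $\|S_{k-N_0} a\cdot P_k u\|_{L^p}$ because each summand lies in a single annulus. Applying H\"older to factor as $\|S_{k-N_0}a\|_{L^\infty}\cdot\|P_k u\|_{L^p}$ and controlling the first factor by Bernstein --- either $\|S_{k-N_0}a\|_{L^\infty}\lesssim 2^{km}\|a\|_{C_*^{-m}}$ for $m>0$, or $\|S_{k-N_0}a\|_{L^\infty}\leq \|a\|_{L^\infty}$, or $\|S_{k-N_0}a\|_{L^\infty}\lesssim 2^{k(m+\frac12)}\|a\|_{H^{-m}}$ via the additional $2^{j/2}$ from $L^2\to L^\infty$ Bernstein --- the $2^k$ factors cleanly cancel the deficit $s-m$ in the target norm, and the remaining $\ell^2$ or $\ell^\infty$ summation returns $\|u\|_{H^s}$ or $\|u\|_{C_*^s}$. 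No sign condition on $s$ is needed because the output frequencies sit in disjoint annuli, so no summation across output scales is involved.

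For the balanced bounds \eqref{HCHEstimate}--\eqref{CCCEstimate}, the proof is more delicate. Organizing by output Littlewood--Paley frequency $\ell$, the contributing input pairs satisfy $\max(j,k)\geq \ell - N_0$ together with $|j-k|\leq N_0$. H\"older gives
\[
\|P_j a\cdot P_k u\|_{L^p}\lesssim 2^{-j\alpha-k\beta}\|a\|_{C_*^\alpha}\|u\|_{B^\beta_{p,p}},
\]
and summing over $j\sim k\geq \ell - N_0$ the weighted series in $\ell$ becomes a geometric series in $\max(j,k)-\ell\geq -N_0$ with ratio $2^{-(\alpha+\beta)}$. This converges \emph{precisely} when $\alpha+\beta>0$, and that is the only place where the hypothesis is used. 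The main (and really only) obstacle is this summation: one has to track carefully how low output frequencies accumulate contributions from all higher input frequencies and verify that $\alpha+\beta>0$ is exactly what makes the geometric series convergent. Everything else is a bookkeeping exercise with Bernstein's inequality and the Littlewood--Paley characterizations of $H^s$ and $C^s_*$, and in fact the proofs are classical and can be extracted from the textbooks \cite{MR2768550, MR2418072} cited by the author.
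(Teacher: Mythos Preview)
The paper does not prove this lemma; it is stated in the appendix as a standard result with citations to \cite{MR2768550, MR2418072} and related references. Your Littlewood--Paley/Bernstein sketch is precisely the classical argument found in those textbooks, so in spirit you are doing exactly what the paper defers to.

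There is one slip worth flagging. In your treatment of \eqref{HsHmCStar} you factor every low--high piece as $\|S_{k-N_0}a\|_{L^\infty}\cdot\|P_k u\|_{L^p}$ and then invoke $\|S_{k-N_0}a\|_{L^\infty}\lesssim 2^{k(m+\frac12)}\|a\|_{H^{-m}}$. That bound on $S_{k-N_0}a$ is fine, but the factorization forces $p=2$ for an $H^{s-m}$ output, and $\|P_k u\|_{L^2}$ is not controlled by $\|u\|_{C_*^s}$ on $\mathbb{R}$. For this estimate you must reverse the H\"older split: use
\[
\|S_{k-N_0}a\cdot P_k u\|_{L^2}\leq \|S_{k-N_0}a\|_{L^2}\,\|P_k u\|_{L^\infty}
\lesssim \|S_{k-N_0}a\|_{L^2}\,2^{-ks}\|u\|_{C_*^s},
\]
and then check that $\sum_k 2^{-2km}\|S_{k-N_0}a\|_{L^2}^2\lesssim \|a\|_{H^{-m}}^2$ by almost-orthogonality and Fubini (swapping the $j$ and $k$ sums, using $m>0$ for the geometric series in $k$). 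With this correction the rest of your outline is correct.
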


Using the above paraproducts estimates, we get the following result.
\begin{lemma}
\begin{enumerate}
\item If $s \geq 0$, then
\begin{align}
&\|uv\|_{H^s} \lesssim \|u\|_{H^s}\|v\|_{L^\infty}+ \|u\|_{L^\infty}\|v\|_{H^s},\label{HsProduct} \\
&\|uv\|_{C_{*}^s} \lesssim \|u\|_{C_{*}^s}\|v\|_{L^\infty}+ \|u\|_{L^\infty}\|v\|_{C_{*}^s}. \label{CsProduct}
\end{align}    
\item Let  a smooth function $F\in C^\infty(\mathbb{C}^N)$ satisfying $F(0) = 0$.
There exists a nondecreasing function $\mathcal{F}: \mathbb{R}_{+} \rightarrow \mathbb{R}_{+}$ such that,
\begin{align}
&\|F(u) \|_{H^s} \leq \mathcal{F}(\|u\|_{L^\infty}) \|u\|_{H^s},\quad s\geq 0,  \label{MoserOne}\\
&\|F(u) \|_{C_{*}^s} \leq \mathcal{F}(\|u\|_{L^\infty}) \|u\|_{C_{*}^s},\quad s>0. \label{MoserTwo}
\end{align}
\item Let $s_1 > s_2 > 0$, then
\begin{equation}
    \|uv\|_{C^{-s_2}_*}\lesssim \|u\|_{C^{s_1}_*} \|v\|_{C^{s_2}_*}. \label{CNegativeAlpha}
\end{equation}
\end{enumerate}
\end{lemma}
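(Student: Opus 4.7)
All three items reduce to Bony's paraproduct decomposition
\[
uv = T_u v + T_v u + \Pi(u,v)
\]
combined with the paraproduct bounds already collected in Lemma \ref{t:ParaProductEst} and, for the Moser estimates, a standard telescoping argument on top of the Littlewood--Paley decomposition. No new harmonic-analytic machinery beyond what is already set up is needed.

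For \eqref{HsProduct}--\eqref{CsProduct}, I would apply \eqref{HsLinfty} to $T_u v$ and again (with the roles exchanged) to $T_v u$, which immediately produces the two summands on the right of \eqref{HsProduct}; the Zygmund version follows identically from \eqref{CsLInfty}. The balanced term $\Pi(u,v)$ is handled by its usual Littlewood--Paley representation $\Pi(u,v) \sim \sum_k \Delta_k u \,\widetilde{\Delta}_k v$: each summand has Fourier support in a ball of radius $\lesssim 2^k$, so Bernstein combined with $\|\widetilde{\Delta}_k v\|_{L^\infty} \lesssim \|v\|_{L^\infty}$ and the characterization $\|\cdot\|_{H^s} \sim \bigl\|2^{ks}\|\Delta_k \cdot\|_{L^2}\bigr\|_{\ell^2_k}$ yields $\|\Pi(u,v)\|_{H^s} \lesssim \|u\|_{H^s}\|v\|_{L^\infty}$ as soon as $s \geq 0$, and symmetrically with the roles swapped; the $C^s_*$ version is the analogous $\ell^\infty_k$ bookkeeping.

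For the Moser estimates \eqref{MoserOne}--\eqref{MoserTwo}, I would use the telescoping identity
\[
F(u) = \sum_{k \geq 0} \bigl(F(S_{k+1}u) - F(S_k u)\bigr) + F(S_0 u),
\]
with $S_k$ the low-frequency cutoff, and invoke $F(0) = 0$ together with a Lipschitz bound on $F$ restricted to $\{|\zeta| \leq C\|u\|_{L^\infty}\}$ to control the boundary term $F(S_0 u)$. Each difference rewrites via the fundamental theorem of calculus as $\bigl(\int_0^1 F'(S_k u + t\Delta_k u)\, dt\bigr)\Delta_k u$, so setting $\mathcal{F}(\rho) := \sup_{|\zeta| \leq C\rho}(|F'(\zeta)| + \ldots)$ the amplitude is uniformly bounded by $\mathcal{F}(\|u\|_{L^\infty})$. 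Since each summand is spectrally concentrated at scale $2^k$, weighted $\ell^2_k$ (resp.\ $\ell^\infty_k$) summation against $2^{ks}$ recovers $\|u\|_{H^s}$ (resp.\ $\|u\|_{C^s_*}$) on the right.

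Finally, \eqref{CNegativeAlpha} is a quick consequence of Bony once more. Since $s_1 > 0$, the embedding $C^{s_1}_* \hookrightarrow L^\infty$ and \eqref{CsLInfty} give $\|T_u v\|_{C^{s_2}_*} \lesssim \|u\|_{C^{s_1}_*}\|v\|_{C^{s_2}_*}$, and symmetrically for $T_v u$, while \eqref{CCCEstimate} furnishes $\|\Pi(u,v)\|_{C^{s_1+s_2}_*} \lesssim \|u\|_{C^{s_1}_*}\|v\|_{C^{s_2}_*}$ (allowed since $s_1+s_2 > 0$). Each of the three pieces lies in a space that embeds continuously into $C^{-s_2}_*$, which closes the estimate. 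The only real subtlety along the way is the $H^s$ Moser bound at non-integer $s$, where one must be careful with the almost-orthogonality of the dyadic pieces and the fact that $F'(S_k u + t \Delta_k u)$ is not spectrally localized; this is however standard and amounts to re-expanding the amplitude into its own Littlewood--Paley pieces and absorbing the high-frequency contributions via the $\mathcal{F}(\|u\|_{L^\infty})$ factor. Everything else is routine bookkeeping on top of Lemma \ref{t:ParaProductEst}.
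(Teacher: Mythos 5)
Your proof is correct and follows exactly the route the paper itself gestures at (Bony's decomposition $uv=T_uv+T_vu+\Pi(u,v)$ combined with the paraproduct bounds of Lemma \ref{t:ParaProductEst}, plus Meyer's telescoping argument for the Moser estimates); the paper gives no actual proof of this lemma, simply deferring to the preceding paraproduct estimates and the standard references. The one point worth tightening is the balanced term at the endpoint $s=0$ of \eqref{HsProduct}, where the dyadic bookkeeping you describe involves a frequency-shift kernel that is not summable — but that endpoint is immediate from H\"older's inequality, so nothing is lost.
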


\begin{lemma}[Commutator estimates \cite{ai2023dimensional}] 
The following commutator estimates hold for $1< p< \infty$:
\begin{align}
 &\| [\nP, f] \langle D \rangle^{s_2} g\|_{W^{s_1, p}} \lesssim \| f\|_{C^{s_1+s_2}_{*}}\|g\|_{L^p}, \quad s_1, s_2\geq 0,  \label{CommutatorPBMO}\\
 &\| [\nP, f] \langle D \rangle^{s_2} g\|_{W^{s_1, p}} \lesssim \| f\|_{W^{s_1+s_2,p}}\|g\|_{C_{*}^0}, \quad s_1\geq 0,\; s_2> 0.  \label{CommutatorPBMOTwo}
\end{align}
\end{lemma}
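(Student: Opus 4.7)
The two commutator estimates both rest on the following elementary frequency-support observation: since $\nP$ is a Fourier multiplier, the Fourier representation
\[
\widehat{[\nP, f]u}(\xi) \;=\; \frac{1}{2\pi}\int \hat f(\xi - \eta)\bigl[\mathbf{1}_{\xi<0} - \mathbf{1}_{\eta<0}\bigr]\hat u(\eta)\,d\eta
\]
has its integration kernel supported on $\{\xi\eta<0\}$, where $|\xi-\eta| = |\xi|+|\eta| \geq \max(|\xi|,|\eta|)$. In other words, the frequency of $f$ dominates both the input and output frequencies of the commutator, and this is precisely the structural source of the gain of $s_1+s_2$ derivatives claimed by the estimates.

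The plan is to perform a Bony paraproduct decomposition $fv = T_f v + T_v f + \Pi(f, v)$ with $v = \langle D\rangle^{s_2} g$, yielding
\[
[\nP, f] v \;=\; [\nP, T_f] v \;+\; \bigl(\nP T_v f - T_{\nP v} f\bigr) \;+\; \bigl(\nP \Pi(f,v) - \Pi(f, \nP v)\bigr).
\]
The first piece is essentially zero at high frequencies: inside $T_f v = \sum_k S_{k-3}(f)\,P_k v$ the low-frequency factor $S_{k-3}(f)$ has spectral radius $\leq 2^{k-3}$, much smaller than the $\sim 2^k$ spectral annulus of $P_k v$, so the frequency sign of the product is inherited from $P_k v$ and $\nP$ commutes with $S_{k-3}(f)$ up to a smoothing operator supported at frequencies $\lesssim 1$. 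For the remaining pieces one writes $\nP = I - \bar\nP$, so that each reduces to either a balanced paraproduct $\Pi$ or a paraproduct where a holomorphic/antiholomorphic projection can be absorbed at no cost, invoking the $L^p$-boundedness of $\nP$ for $1 < p < \infty$. The standard paraproduct bounds of Lemma~\ref{t:ParaProductEst} then take over.

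For \eqref{CommutatorPBMO} we use the Zygmund bound $\|P_j f\|_{L^\infty}\leq 2^{-j(s_1+s_2)}\|f\|_{C^{s_1+s_2}_*}$ and apply Bernstein to turn $\langle D\rangle^{s_2}$ on each block $P_k g$ into a factor $2^{k s_2}$ (permissible since $s_2 \geq 0$). Combined with the Littlewood-Paley square function characterization of $W^{s_1, p}$ for $1 < p < \infty$, this reduces the problem to a Schur/Young $\ell^1$-$\ell^2$ summation in the Littlewood-Paley indices of $f$ and $g$. The estimate \eqref{CommutatorPBMOTwo} is proved in parallel, with $L^p$ placed on $f$ through $\|P_j f\|_{L^p}\leq 2^{-j(s_1+s_2)}\|f\|_{W^{s_1+s_2, p}}$ and $L^\infty$ placed on $g$ through $\|P_k g\|_{L^\infty}\lesssim \|g\|_{C^0_*}$; the strict inequality $s_2>0$ is used precisely to make the resulting Schur kernel $\ell^1$-summable, the borderline case $s_2=0$ being a critical BMO-type statement. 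The only real technical obstacle is the balanced regime where $P_j f$ and $P_k v$ sit at comparable frequencies and the output frequency $k_2$ is unconstrained from below; this case forces a genuine three-parameter Schur bookkeeping over $(j, k_1, k_2)$. All of this is already carried out in detail in Ai-Ifrim-Tataru \cite{ai2023dimensional}, from which the lemma is imported.
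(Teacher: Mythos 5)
Your proposal is correct in structure and matches the argument underlying the cited result: the paper itself offers no proof of this lemma, importing it verbatim from Lemma 2.1 of Ai--Ifrim--Tataru with only cosmetic changes ($BMO\to$ Zygmund, homogeneous $\to$ non-homogeneous spaces), so there is no in-paper proof to diverge from. Your key observation — that the kernel of $[\nP,f]$ lives on $\{\xi\eta<0\}$, forcing $|\xi-\eta|\ge\max(|\xi|,|\eta|)$ so all derivatives can be unloaded onto $f$ — is exactly the mechanism behind the gain of $s_1+s_2$ derivatives, and your Bony splitting plus Littlewood--Paley/Schur summation (with the square-function characterization of $W^{s_1,p}$ handling the borderline $s_1=0$ balanced case) is the standard route. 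The only caveat worth noting is that Lemma \ref{t:ParaProductEst} as stated in the paper covers only $p=2$, so you implicitly need its routine $L^p$ analogues; this does not affect correctness.
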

Here we change the $BMO$ spaces to Zygmund spaces as in Lemma 2.1 of \cite{ai2023dimensional}.
We also change the homogeneous spaces to non-homogeneous spaces.
Although no assumptions are made on $f$ or $g$ in this lemma, later it will be applied to $f$ or $g$ which are either holomorphic or anti-holomorphic.

Finally, we need the following result of paralinearization in Besov spaces.
\begin{lemma}[Paralinearization \cite{MR2768550}] \label{t:Paralinear}
Let $s, \rho>0$, and $F(u)$ be a smooth function of $u$.
Assume that $\rho$ is not an integer.
Let $p, r_1, r_2 \in [1, \infty]$ and such that $r_2 \geq r_1$.
Let $r \in [1,\infty]$ be defined by $\frac{1}{r} = \min \{1, \frac{1}{r_1}+\frac{1}{r_2} \}$.
Then for any $u\in B^s_{p, r_1}\cap B^\rho_{\infty, r_2}$,
\begin{equation*}
    \|F(u)-F(0)-T_{F^{'}(u)}u\|_{B^{s+\rho}_{p,r}} \leq C(\|u\|_{L^\infty})\|u\|_{B^\rho_{\infty, r_2}}\|u\|_{B^s_{p. r_1}}.
\end{equation*}
\end{lemma}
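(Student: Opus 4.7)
The strategy is Meyer's classical paralinearization telescoping argument, adapted to the Besov scale $B^s_{p,r}$; the difficulty is concentrated in one step, the $\rho$-regularity of a certain symbol.

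First, I would set up a Littlewood–Paley telescoping decomposition. Let $S_k = \sum_{j<k} P_j$ denote the low-frequency cutoff and write, using $F(0)=0$ and smoothness of $F$,
\[
F(u) - F(0) \;=\; F(S_0 u) \;+\; \sum_{k\ge 0}\bigl(F(S_{k+1}u) - F(S_k u)\bigr).
\]
The first term is an elementary low-frequency remainder, handled by the Besov Moser estimate applied to $F(S_0 u)$, with a factor $C(\|u\|_{L^\infty})$. For each telescoped piece I apply the fundamental theorem of calculus,
\[
F(S_{k+1}u) - F(S_k u) \;=\; m_k\,\Delta_k u, \qquad m_k := \int_0^1 F'(S_k u + t\,\Delta_k u)\,dt,
\]
so that $F(u)-F(0) = \sum_k m_k \Delta_k u$ up to a harmless low-frequency term. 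Since $T_{F'(u)}u = \sum_k (S_{k-N}F'(u))\,\Delta_k u$ for some fixed $N$, the target quantity to estimate is
\[
R \;:=\; \sum_k \bigl(m_k - S_{k-N}F'(u)\bigr)\,\Delta_k u,
\]
and each summand is, up to a bounded frequency shift coming from the low-frequency factor, spectrally localized near $2^k$.

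The main obstacle is showing that the coefficient $\sigma_k := m_k - S_{k-N}F'(u)$ has a $2^{-k\rho}$ gain measured in $L^\infty$, weighted by $\|u\|_{B^\rho_{\infty,r_2}}$. I would split
\[
\sigma_k \;=\; \bigl(m_k - F'(S_k u)\bigr) \;+\; \bigl(F'(S_k u) - F'(u)\bigr) \;+\; \bigl(F'(u) - S_{k-N}F'(u)\bigr).
\]
The first piece is handled by Taylor expanding $F'$ around $S_k u$ and using $\|\Delta_k u\|_{L^\infty}\lesssim 2^{-k\rho}\|u\|_{B^\rho_{\infty,r_2}}$ (the key Bernstein/Besov embedding, valid because $\rho>0$). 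The second uses $u - S_k u = \sum_{j\ge k}\Delta_j u$, which again has the same $2^{-k\rho}$ gain in $L^\infty$ through $\|\,\cdot\,\|_{B^\rho_{\infty,r_2}}$. The third piece follows from the Besov Moser/composition estimate for $F'$ applied to $u$, giving $F'(u) \in B^\rho_{\infty,r_2}$ with norm $C(\|u\|_{L^\infty})\|u\|_{B^\rho_{\infty,r_2}}$; here the assumption that $\rho$ is not an integer is used to avoid logarithmic losses in the Moser estimate on the Zygmund scale. Altogether,
\[
\|\sigma_k\|_{L^\infty} \;\lesssim\; C(\|u\|_{L^\infty})\,2^{-k\rho}\,\|u\|_{B^\rho_{\infty,r_2}}.
\]

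Finally, with the pointwise gain in hand and each term of $R$ spectrally localized near $2^k$, the Besov norm is estimated by the standard $\ell^r$-summation of dyadic blocks,
\[
\|R\|_{B^{s+\rho}_{p,r}} \;\lesssim\; \Bigl\|\, 2^{k(s+\rho)}\,\|\sigma_k\|_{L^\infty}\,\|\Delta_k u\|_{L^p}\,\Bigr\|_{\ell^r_k},
\]
followed by Hölder's inequality in $k$ with exponents $(r_1,r_2,r)$ linked by $\tfrac{1}{r} = \min\{1,\tfrac{1}{r_1}+\tfrac{1}{r_2}\}$. The $2^{-k\rho}$ factor from $\sigma_k$ reabsorbs the additional $\rho$ derivatives, and the remaining $\ell^{r_1}$-summation of $2^{ks}\|\Delta_k u\|_{L^p}$ produces exactly $\|u\|_{B^s_{p,r_1}}$ while the coefficient $\|u\|_{B^\rho_{\infty,r_2}}$ is already pulled out. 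This yields the claimed bound with prefactor $C(\|u\|_{L^\infty})$, completing the proof.
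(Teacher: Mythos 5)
The paper does not prove this lemma at all: it is quoted verbatim from the textbook \cite{MR2768550} (Bahouri--Chemin--Danchin), so there is no internal proof to compare against. Your proposal reconstructs the standard Meyer/Bony telescoping argument that underlies the cited result, and its overall architecture --- the decomposition $F(u)-F(S_0u)=\sum_k m_k\Delta_k u$ with $m_k=\int_0^1F'(S_ku+t\Delta_ku)\,dt$, the three-way splitting of $\sigma_k=m_k-S_{k-N}F'(u)$, and the $2^{-k\rho}$ gain in $L^\infty$ --- is the right one and is essentially what the reference does.

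There is, however, one genuine gap. You assert that each summand $\sigma_k\Delta_k u$ is ``spectrally localized near $2^k$'' and then estimate the Besov norm of $R$ by summing dyadic blocks. This localization is false for the $m_k\Delta_k u$ part: $m_k$ is a \emph{nonlinear} function of the band-limited function $S_ku+t\Delta_ku$, so its Fourier transform is not compactly supported, and $m_k\Delta_k u$ has frequency content arbitrarily far from $2^k$. (Only the paraproduct piece $S_{k-N}F'(u)\,\Delta_k u$ is genuinely localized in an annulus.) Without repairing this, the displayed inequality $\|R\|_{B^{s+\rho}_{p,r}}\lesssim\|2^{k(s+\rho)}\|\sigma_k\|_{L^\infty}\|\Delta_ku\|_{L^p}\|_{\ell^r_k}$ does not follow. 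The standard fix is to supplement the zeroth-order gain $\|\sigma_k\|_{L^\infty}\lesssim 2^{-k\rho}$ with derivative bounds $\|\partial^\alpha m_k\|_{L^\infty}\le C_\alpha(\|u\|_{L^\infty})\,2^{k|\alpha|}$ (Bernstein plus Fa\`a di Bruno applied to the band-limited argument of $F'$), yielding $\|\partial^\alpha(\sigma_k\Delta_ku)\|_{L^p}\lesssim 2^{k(|\alpha|-s-\rho)}c_k$, and then to invoke the characterization of $B^{\sigma}_{p,r}$ for $\sigma>0$ by non-localized decompositions with such derivative bounds (this is where $s+\rho>0$ is used). A secondary, smaller issue: your bound $\|\sigma_k\|_{L^\infty}\lesssim 2^{-k\rho}\|u\|_{B^\rho_{\infty,r_2}}$ only gives $\ell^{r_1}$ summability of the blocks, i.e.\ the conclusion in $B^{s+\rho}_{p,r_1}$; to reach the stronger index $1/r=\min\{1,1/r_1+1/r_2\}$ you must retain an $\ell^{r_2}$-summable sequence $(c_k)$ in the $\sigma_k$ estimate before applying H\"older in $k$. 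Neither point affects the paper's use of the lemma, which only invokes the case $p=r=r_1=r_2=\infty$.
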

We remark that this lemma also works for multivariable functions $F$. 
We simply need to replace $F^{'}$ by partial derivatives of $F$.
See for instance Lemma $3.26$ in ~\cite{MR2805065}.
We will apply this paralinearization result with $p=r=r_1 =r_2 = \infty$, so that this is an estimate in Zygmund spaces.

\subsection{Paradiffential estimates for bilinear forms}
In the following, we consider the estimates for the bilinear forms.
Let $\chi_1(\theta_1, \theta_2), \chi_2(\theta_1, \theta_2)$ be two non-negative smooth bump functions 
\begin{equation}
    \chi_1(\theta_1, \theta_2) = \left\{
\begin{aligned}
1, \text{ when } |\theta_1|\leq \frac{1}{20}|\theta_2|\\
0, \text{ when } |\theta_1|\geq \frac{1}{10}|\theta_2|,
\end{aligned}
\right.  \label{ChiOnelh}
\end{equation}
\begin{equation}
  \chi_2(\theta_1, \theta_2) = \left\{
\begin{aligned}
&1, \text{ when } \frac{1}{10}\leq \frac{|\theta_1|}{|\theta_2|} \leq 10\\
&0, \text{ when } |\theta_1|\leq \frac{1}{20}|\theta_2| \text{ or } |\theta_2|\leq \frac{1}{20}|\theta_1|,
\end{aligned}
\right.  \label{ChiTwohh}
\end{equation}
and such that $\chi_1(\theta_1, \theta_2) + \chi_1(\theta_2, \theta_1) + \chi_2(\theta_1, \theta_2) =1$.
For bilinear forms $B(u,v)$ with symbol $m(\xi, \eta)$, we  define  the paradifferential bilinear forms:
\begin{itemize}
\item Low-high part and balanced part of the holomorphic bilinear forms:
\begin{align*}
 \widehat{B_{lh}(u,v)}(\zeta) &= \int_{\zeta = \xi +\eta} \chi_1\left(\xi, \eta+\xi\right) m(\xi, \eta)\hat{u}(\xi)\hat{v}(\eta) d\xi,\\
  \widehat{B_{hh}(u,v)}(\zeta) &= \int_{\zeta = \xi +\eta} \chi_2\left(\xi, \eta+\xi\right) m(\xi, \eta)\hat{u}(\xi)\hat{v}(\eta) d\xi.
\end{align*}
\item Low-high part and balanced part of the mixed bilinear forms:
\begin{align*}
 \widehat{B_{lh}(u,v)}(\eta) &= 1_{\eta<0}\int_{\eta = \zeta-\xi} \chi_1\left(\xi, \zeta-\xi\right) m(\xi, \zeta)\bar{\hat{u}}(\xi)\hat{v}(\zeta) d\xi,\\
  \widehat{B_{hh}(u,v)}(\eta) &=1_{\eta<0}\int_{\eta = \zeta-\xi} \chi_2\left(\xi, \zeta-\xi\right) m(\xi, \zeta)\bar{\hat{u}}(\xi)\hat{v}(\zeta) d\xi.
\end{align*}
\end{itemize}

These represent low-high and balanced paradifferential parts of the bilinear forms $B(u,v)$, respectively $\nP B(\bar{u},v)$, restricted to the holomorphic class. 
We will always assume that bilinear symbols $m$ are homogeneous, and smooth away from $(0,0)$.

When the bilinear symbol $m$  is homogeneous, we have the following direct generalization of Lemma \ref{t:ParaProductEst} for bilinear forms, see Coifman-Meyer \cite{MR0518170}, Kenig-Stein \cite{MR1682725},  Muscalu \cite{MR2371442}, and Muscalu-Tao-Thiele \cite{MR1887641}.
\begin{lemma} \label{t:SymbolPara}
Let $B^\mu(f,g)$ be a homogeneous  bilinear form of order $\mu\geq 0$ as above. 
For the balanced bilinear forms, when $\alpha+\beta+\mu>0$, $\mu_1 + \mu_2 = \mu$
\begin{align}
&\|B^\mu_{hh}(f, g)\|_{H^{\alpha + \beta}} \lesssim \| f\|_{C_{*}^{\alpha + \mu_1}} \| g\|_{H^{\beta + \mu_2}}, \label{HHBilinearHCH}\\
& \|B^\mu_{hh}(f, g)\|_{C_{*}^{\alpha + \beta}} \lesssim \| f\|_{C_{*}^{\alpha+\mu_1}} \| g\|_{C_{*}^{\beta+\mu_2}}.  \label{HHBilinearCCC}
\end{align}

For the estimates of low-high bilinear forms,
\begin{align}
&\|B^\mu_{lh}(f, g) \|_{H^{s-m}} \lesssim \|f\|_{C_{*}^{-m}} \| g \|_{H^{s+\mu}} \label{BFHsCmStar}, \\
&\|B^\mu_{lh}(f, g) \|_{H^{s}} \lesssim \|f\|_{L^\infty(\mathbb{R})} \| g \|_{H^{s+\mu}}, \label{BFHsLinfty}\\
&\|B^\mu_{lh}(f, g) \|_{C_{*}^{s-m}} \lesssim \|f\|_{C_{*}^{-m}} \| g \|_{C_{*}^{s+\mu}}, \label{BFCsCmStar}\\
&\|B^\mu_{lh}(f, g) \|_{C_{*}^{s}} \lesssim \|f\|_{L^\infty} \| g \|_{C_{*}^{s+\mu}}. \label{BFCsLInfty}     
\end{align}
\end{lemma}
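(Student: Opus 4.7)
\textbf{Proof proposal for Lemma \ref{t:SymbolPara}.} The plan is to reduce all eight bilinear estimates to the paraproduct/paradifferential bounds of Lemma \ref{t:ParaProductEst} via Littlewood-Paley analysis, exploiting the homogeneity of degree $\mu$ of the symbol $m$ together with the localization enforced by the cutoffs $\chi_1$ and $\chi_2$. The homogeneity means $m(\xi,\eta)=(|\xi|+|\eta|)^\mu\,\tilde m(\xi,\eta)$ with $\tilde m$ smooth of order zero away from the origin, so on each dyadic shell the symbol contributes a clean power-weight factor that can be split as $2^{k\mu}=2^{k\mu_1}\cdot 2^{k\mu_2}$ and assigned to the two inputs.

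For the balanced piece $B^\mu_{hh}$, I would decompose $f=\sum_{k_1}P_{k_1}f$ and $g=\sum_{k_2}P_{k_2}g$. The support condition imposed by $\chi_2(\xi,\xi+\eta)$ forces $|k_1-k_2|\le C$, and the output frequency satisfies $|\zeta|\lesssim 2^{k_1}$. By Bernstein's inequality and the pointwise bound on the symbol,
\[
\|B^\mu_{hh}(P_{k_1}f,P_{k_2}g)\|_{L^2}\lesssim 2^{k_1\mu_1}2^{k_2\mu_2}\|P_{k_1}f\|_{L^\infty}\|P_{k_2}g\|_{L^2}.
\]
Multiplying by $2^{k_1(\alpha+\beta)}$ and using $\|P_{k_1}f\|_{L^\infty}\lesssim 2^{-k_1(\alpha+\mu_1)}\|f\|_{C^{\alpha+\mu_1}_*}$ and similarly for $g$, one sums an $\ell^2$ sequence because $\alpha+\beta+\mu>0$, giving \eqref{HHBilinearHCH}. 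The parallel computation in $\ell^\infty$ yields \eqref{HHBilinearCCC}.

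For the low-high piece $B^\mu_{lh}$, the cutoff $\chi_1(\xi,\xi+\eta)$ confines the interaction to $|\xi|\ll|\eta|$, so the homogeneous symbol is approximately $|\eta|^\mu\tilde m(\xi/\eta)$ in the effective regime. I would rewrite
\[
B^\mu_{lh}(f,g)(x)=T_{a(x,\eta)}g(x),\qquad a(x,\eta)=\int e^{ix\xi}\chi_1(\xi,\eta)\,m(\xi,\eta)\,\widehat f(\xi)\,d\xi,
\]
and show that $a\in\Gamma^\mu_0$ with seminorm bounded by $\|f\|_{L^\infty}$ (giving \eqref{BFHsLinfty}, \eqref{BFCsLInfty}), while if $f\in C^{-m}_*$ then each differentiation of $a$ in $\eta$ costs a factor $|\eta|^{-1}$ and one obtains $|\partial_\eta^k a(x,\eta)|\lesssim |\eta|^{\mu-k-m}\|f\|_{C^{-m}_*}$, producing a symbol of order $\mu-m$ and yielding \eqref{BFHsCmStar}, \eqref{BFCsCmStar} via \eqref{HsCmStar}, \eqref{CsCmStar} after replacing $s+\mu$ by the new regularity. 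The same argument works in the Zygmund category.

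The main technical point is the passage from the frequency-side bilinear form to a genuine paradifferential operator in the sense used in the paper: one must verify that the kernel cutoff $\chi_1$ matches (up to standard remainders of better order) the $\chi(\xi-\eta,\eta)\psi(\eta)$ cutoff in the definition of $T_a$, and that the $M^\mu_\rho$ seminorms are correctly dominated in each of the four output-input pairings. Since this is the standard Coifman-Meyer framework, I would appeal to \cite{MR0518170,MR1682725,MR2371442,MR1887641} for the core harmonic-analytic content and restrict the proof to the symbolic reduction and reconciliation of conventions.
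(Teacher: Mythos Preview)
Your proposal is correct and aligns with the paper's own treatment: the paper does not give a proof of this lemma at all, but simply states it as a ``direct generalization of Lemma \ref{t:ParaProductEst}'' and refers to Coifman--Meyer \cite{MR0518170}, Kenig--Stein \cite{MR1682725}, Muscalu \cite{MR2371442}, and Muscalu--Tao--Thiele \cite{MR1887641}. Your sketch supplies exactly the standard Littlewood--Paley reduction that those references encode, and you cite the same sources, so there is nothing to compare.
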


Below we have the following para-commutator, paraproduct, and para-associativity lemmas where $f$ and $g$ are functions.
\begin{lemma} \label{t:ParaCoefficient}
\begin{enumerate}
\item (Para-commutators)
Assume that $s_1, s_2<1$, $s_1 +s_2>0$ then
\begin{align}
& \|T_fT_g -T_gT_f\|_{H^s \rightarrow H^{s+s_1+s_2}} \lesssim \|f\|_{C^{s_1}_{*}}\|g\|_{C^{s_2}_{*}},\label{ParaCommutator} \\
&  \|T_fT_g -T_gT_f\|_{C_{*}^s \rightarrow C_{*}^{s+s_1+s_2}} \lesssim \|f\|_{C^{s_1}_{*}}\|g\|_{C^{s_2}_{*}},\label{ParaCommutatorTwo} \\
&  \|T_fT_g -T_gT_f\|_{C_{*}^s \rightarrow H_{*}^{s+s_1+s_2}} \lesssim \|f\|_{H^{s_1}}\|g\|_{C^{s_2}_{*}}.\label{ParaCommutatorThree} 
\end{align}
\item (Para-products)
Assume that $s_1, s_2<1$, $s_1 +s_2>0$ then 
\begin{align}
&\|T_fT_g -T_{fg}\|_{H^s \rightarrow H^{s+s_1+s_2}} \lesssim \|f\|_{C^{s_1}_{*}}\|g\|_{C^{s_2}_{*}},\label{ParaProducts} \\
&\|T_fT_g -T_{fg}\|_{C_{*}^s \rightarrow C_{*}^{s+s_1+s_2}} \lesssim \|f\|_{C^{s_1}_{*}}\|g\|_{C^{s_2}_{*}}.\label{ParaProductsTwo} 
\end{align}
 \item (Para-associativity) For $s+s_2 > 0$,  and $0<s_1<1$, 
\begin{align}
&\|T_f\Pi(v,u) - \Pi(v, T_f u)\|_{H^{s+s_1+s_2}} \lesssim \|f\|_{C^{s_1}_{*}}\|v\|_{C^{s_2}_{*}} \| u\|_{H^{s}},\label{ParaAssociateOne} \\
&\|T_f\Pi(v,u) - \Pi(v, T_f u)\|_{C_{*}^{s+s_1+s_2}} \lesssim \|f\|_{C^{s_1}_{*}}\|v\|_{C^{s_2}_{*}} \| u\|_{C_{*}^{s}}.\label{ParaAssociateThree}
\end{align}
\item For $0<s_1, s_2< 1$,  $s+s_2 \geq 0$ and $\bar{v} = \bar{P}\bar{v}$, 
\begin{align}
&\|T_fP(\bar{v}u) - P(\bar{v}T_f u)\|_{H^{s+s_1+s_2}} \lesssim \|f\|_{C^{s_1}_{*}}\|v\|_{C^{s_2}_{*}}\| u\|_{H^{s}}, \label{ParaAssociateTwo} \\
&\|T_fP(\bar{v}u) - P(\bar{v}T_f u)\|_{C_{*}^{s+s_1+s_2}} \lesssim \|f\|_{C^{s_1}_{*}}\|v\|_{C^{s_2}_{*}}\| u\|_{C_{*}^{s}}. \label{ParaAssociateFour}
\end{align}
\end{enumerate}
\end{lemma}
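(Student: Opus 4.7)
The plan is to derive all four groups of estimates from the standard Bony paradifferential calculus, following the strategy used e.g.\ in M\'etivier's book and in \cite{MR2805065,MR2768550}, adapted to the non-homogeneous Sobolev and Zygmund scales on the line. The only novelty is bookkeeping: the same proof templates work in $H^s$ and $C^s_*$ because both scales fit into the Besov framework $B^s_{p,\infty}$, and the Littlewood--Paley characterization plus almost-orthogonality is all that is really used. The plan, in order, is the following.

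First I would prove the paraproduct bound \eqref{ParaProducts}--\eqref{ParaProductsTwo}. Writing $T_f u = \sum_k S_{k-N} f\,\Delta_k u$ for a fixed large $N$ (and likewise for $T_g$ and $T_{fg}$), one has
\[
(T_f T_g - T_{fg})u \;=\; \sum_k \bigl(S_{k-N} f\,S_{k-N} g - S_{k-N}(fg)\bigr)\Delta_k u \;+\; \sum_k S_{k-N} f\,[\Delta_k,\,S_{k-N} g]u \;+\;\cdots,
\]
plus analogous commutator pieces. The first sum is estimated by rewriting
$S_{k-N}f\,S_{k-N}g-S_{k-N}(fg) = -S_{k-N}\Pi_{>k-N}(f,g) - \sum_{j,\ell\ge k-N}$ (high--high and high--low remainders); each block is Fourier-supported in $|\xi|\lesssim 2^k$ and its $L^\infty$ norm carries the factor $2^{-(s_1+s_2)k}\|f\|_{C^{s_1}_*}\|g\|_{C^{s_2}_*}$ because $s_1,s_2<1$ prevents the $C^{s_i}_*$ norms from saturating and $s_1+s_2>0$ makes the tail sum converge. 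Combined with the $\Delta_k$-localization of $u$, a standard Littlewood--Paley square-function / $\ell^\infty$ argument delivers the gain of $s_1+s_2$ derivatives simultaneously in the $H^s$ and $C^s_*$ topologies. The commutator pieces of the form $[\Delta_k,S_{k-N}g]$ are handled by the usual kernel estimate $\|[\Delta_k,a]\|_{L^p\to L^p}\lesssim 2^{-k}\|a'\|_{L^\infty}$, which gains one derivative and is therefore more than enough in the range $s_1,s_2<1$.

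Second, the para-commutator estimates \eqref{ParaCommutator}--\eqref{ParaCommutatorThree} follow immediately by writing
\[
T_f T_g - T_g T_f \;=\; (T_f T_g - T_{fg}) \;-\; (T_g T_f - T_{gf}),
\]
since $fg=gf$, and applying \eqref{ParaProducts}--\eqref{ParaProductsTwo}. Variant \eqref{ParaCommutatorThree} with an $H^{s_1}$ factor is obtained by running the same block decomposition but estimating the $f$-blocks in $L^2$ rather than in $L^\infty$ and the $g$-blocks in $C^{s_2}_*$; the frequency-envelope calculation is identical.

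Third, for para-associativity I would expand $\Pi(v,u)=\sum_{|k-\ell|\le 2}\Delta_k v\,\Delta_\ell u$ and compare
\[
T_f\Pi(v,u)-\Pi(v,T_f u)=\sum_{|k-\ell|\le 2}\Bigl(T_f(\Delta_k v\,\Delta_\ell u)-\Delta_k v\cdot T_f(\Delta_\ell u)\Bigr)
\;+\; \text{bulk mismatch terms}.
\]
Each summand has Fourier support near $2^{\max(k,\ell)}$, and the bracket differs from $[T_f,\Delta_k v]\Delta_\ell u$ by a term in which the low-frequency truncation of $f$ is cut at $\max(k,\ell)-N$ on both sides; the remaining commutator is of the form $[T_f,\Delta_k v]$ which, by the same paraproduct calculus and since $s_1<1$, is bounded on $H^{s+\mu}\to H^{s+\mu+s_1+s_2}$ with the stated constants. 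Item (4) with the holomorphic projection $P$ is obtained from item (3) by the identity $P(\bar v u)=P(\Pi(\bar v,u)+T_{\bar v}u)$: the paraproduct $T_u\bar v$ drops out because it is anti-holomorphic, and the term $P T_{\bar v}u - T_{\bar v}P u$ is controlled by the commutator estimate \eqref{CommutatorPBMO} on the holomorphic projection.

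The main technical obstacle is the \emph{sharpness} of the index range $s_1,s_2<1$, $s_1+s_2>0$. The upper bound $s_i<1$ is needed to keep the $C^{s_i}_*$ norm equivalent to a sup over dyadic blocks (with no derivative loss), which is what makes the composition remainder fit into a summable geometric series; the lower bound $s_1+s_2>0$ is needed for the diagonal sum $\sum_k 2^{-k(s_1+s_2)}$ arising from the high--high piece $\Pi(f,g)$ to converge. I expect item (3) with $s+s_2>0$ to be the most delicate of the four, because there one has to track the balanced frequency configuration of $\Pi$ and simultaneously check that the commutator with the low-frequency truncation of $f$ does not break the near-orthogonality; this will be handled by splitting the indices $(k,\ell,j)$ of $(v,u,f)$ into three regimes ($j\ll k\sim\ell$, $k\sim\ell\lesssim j$, transition) and applying the para-product estimate of Lemma \ref{t:ParaProductEst} in each regime separately.
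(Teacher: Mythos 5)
Your plan follows essentially the same route as the paper's proof (which itself only sketches the argument and defers to Lemmas 2.4--2.7 of Ai--Ifrim--Tataru): a Littlewood--Paley decomposition, direct treatment of the comparable-frequency blocks, and a one-derivative gain on the low-frequency factor via a commutator kernel bound --- the paper phrases this last step as rewriting each block as $\lambda^{-1}L(f_\lambda,\partial_\alpha g_{\ll\lambda},u_\lambda)$ with $L$ a translation-invariant trilinear form with integrable kernel, which is the same mechanism as your estimate $\|[\Delta_k,a]\|_{L^p\to L^p}\lesssim 2^{-k}\|a'\|_{L^\infty}$. Your identification of where $s_1,s_2<1$ and $s_1+s_2>0$ enter also matches the paper's, so the proposal is correct and essentially identical in approach.
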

The proof of this lemma is identical to Lemma 2.4-2.7 in \cite{ai2023dimensional}.
We roughly sketch the idea of proof for $(1)$ and $(2)$.
By orthogonality, it suffices to consider the case where the frequency of $u$ is $\lambda$.
If both  $f$ and $g$ are at frequency $O(\lambda)$, the estimates are direct.
If $f$ is at frequency $O(\lambda)$, and  $g$ is at frequency $\ll \lambda$, then we rewrite the commutator,
\begin{equation*}
    [T_{f_\lambda}, T_{g_{\ll \lambda}}]u_{\lambda} = [T_{f_\lambda}, g_{\ll \lambda}]u_{\lambda} =  \lambda^{-1}L(f_{\lambda}, \partial_x g_{\ll \lambda}, u_{\lambda}), \quad (T_{f_\lambda}g_{\ll \lambda}- T_{f_\lambda g_{\ll \lambda}})u_\lambda = \lambda^{-1}L(f_{\lambda}, \partial_x g_{\ll \lambda}, u_{\lambda}),
\end{equation*}
where $L$ is a translation invariant trilinear form with integrable kernel.
These estimates follow by estimating the trilinear form.

Finally, we record here the para-Leibniz
rule.
Recall the definition of \textit{para-material derivative},
\begin{equation*}
    T_{D_t} := \partial_t + T_{b}\partial_\alpha.
\end{equation*}
We then consider the four versions of para-Leibniz errors.
The first two are unbalanced para-Leibniz errors.
\begin{align*}
    &E^p_L(u,v) = T_{D_t}T_u v - T_{T_{D_t}u}v -T_uT_{D_t}v,\\
    & \tilde{E}^p_L(u,v) = T_{D_t}T_u v - T_{D_{t} u}v -T_uT_{D_t}v.
\end{align*}
The other two are the balanced para-Leibniz errors.
\begin{align*}
    E^\pi_L(u,v) &= T_{D_t}\Pi(u,v) -\Pi(T_{D_t}u,v) -\Pi(u, T_{D_t}v), \\
    \tilde{E}^\pi_L(u,v) &= T_{D_t}\Pi(u,v) -\Pi(D_t u,v) -\Pi(u, T_{D_t}v).
\end{align*}
With the above notation, the Leibniz error can be bounded according to the following lemma.
\begin{lemma} \label{t:Leibniz}
\begin{enumerate}
\item For the unbalanced para-Leibniz error $E^p_L(u,v)$, we have the bounds
\begin{align}
    &\|E^p_{L}(u,v)\|_{H^s} \lesssim \CalAO \|u\|_{C^{\Half-\sigma}_{*}}\|v\|_{H^{s+\sigma}}, \quad \sigma >0, \label{UnbalancedParaLeibnizOne}\\
    &\|E^p_{L}(u,v)\|_{H^s} \lesssim \CalAO \|u\|_{H^{\frac{1}{2}-\sigma}}\|v\|_{C_{*}^{s+\sigma}}, \quad \sigma >0, \label{UnbalancedParaLeibnizTwo} \\
    &\|E^p_{L}(u,v)\|_{C_{*}^s} \lesssim \CalAO \|u\|_{C_{*}^{\frac{1}{2}-\sigma}}\|v\|_{C_{*}^{s+\sigma}}, \quad \sigma >0. \label{UnbalancedParaLeibnizThree}
\end{align}
In the case $\sigma=0$ the same bounds \eqref{UnbalancedParaLeibnizOne} and \eqref{UnbalancedParaLeibnizTwo} hold for $\tilde{E}^p_L(u,v)$ with $\sigma =0$.
\item For the balanced para-Leibniz error  $E^\pi_L(u,v)$, we have the estimates
\begin{align}
    &\|E^\pi_L(u,v) \|_{H^s} \lesssim \CalAO \|u\|_{C^{\frac{1}{2}-\sigma}_{*}}\|v\|_{H^{s+\sigma}}, \qquad \sigma \in \mathbb{R},\quad s> 0, 
    \label{BalancedLeibniz}\\
     &\|E^\pi_L(u,v) \|_{C_{*}^s} \lesssim \CalAO \|u\|_{C^{\frac{1}{2}-\sigma}_{*}}\|v\|_{C_{*}^{s+\sigma}}, \qquad \sigma \in \mathbb{R},\quad s> 0. 
    \label{BalancedLeibnizTwo}
\end{align}
In the case $\sigma =0$, we also have the same bound for $\tilde{E}^\pi_L(u,v)$.
\end{enumerate} 
\end{lemma}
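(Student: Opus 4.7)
The plan is to reduce everything to the Leibniz error of the transport operator $T_b\partial_\alpha$ and then invoke Lemma \ref{t:ParaCoefficient} together with the bound $\|b\|_{C^\Half_*}\lesssim_\CalAZ \CalAO$ from \eqref{BCOneStar}. The key observation is that the paradifferential truncation involves only spatial frequencies, so ordinary $\partial_t$ satisfies an exact Leibniz rule on both $T_u v$ and $\Pi(u,v)$. Expanding $T_{D_t}=\partial_t+T_b\partial_\alpha$ in the definition of $E^p_L$ and using the exact Leibniz identities $\partial_t T_u v = T_{\partial_t u}v + T_u\partial_t v$, the $\partial_t$-contributions cancel, and one is left with
\begin{equation*}
E^p_L(u,v)=\bigl[T_b\,T_{\partial_\alpha u}-T_{T_b\partial_\alpha u}\bigr]v + \bigl[T_b\,T_u-T_u\,T_b\bigr]\partial_\alpha v.
\end{equation*}
For $\tilde E^p_L$ one obtains the same expression with $T_{T_b\partial_\alpha u}$ replaced by $T_{b\partial_\alpha u}$. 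An analogous computation for $E^\pi_L$ and $\tilde E^\pi_L$ produces two associativity-type errors $T_b\Pi(\partial_\alpha u,v)-\Pi(T_b\partial_\alpha u,v)$ and $T_b\Pi(u,\partial_\alpha v)-\Pi(u,T_b\partial_\alpha v)$ (or with $b$ in place of $T_b$ for the tilde versions).

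Next I would estimate each of these two pieces using Lemma \ref{t:ParaCoefficient}. The commutator term $(T_bT_u-T_uT_b)\partial_\alpha v$ is controlled directly by \eqref{ParaCommutator}--\eqref{ParaCommutatorThree} with $s_1=\Half$ applied to $b$ and $s_2=\Half-\sigma$ applied to $u$, which gives a gain of $s_1+s_2=1-\sigma$ derivatives; since $\|\partial_\alpha v\|_{H^{s+\sigma-1}}\lesssim\|v\|_{H^{s+\sigma}}$, this reproduces the desired right-hand side. The paraproduct-type term $T_bT_{\partial_\alpha u}v-T_{T_b\partial_\alpha u}v$ is split as
\begin{equation*}
\bigl(T_bT_{\partial_\alpha u}-T_{b\partial_\alpha u}\bigr)v \; + \; T_{b\partial_\alpha u - T_b\partial_\alpha u}v,
\end{equation*}
where the first factor is handled by \eqref{ParaProducts}--\eqref{ParaProductsTwo} and the second factor uses the identity $b\partial_\alpha u - T_b\partial_\alpha u = T_{\partial_\alpha u}b+\Pi(b,\partial_\alpha u)$ together with \eqref{CCCEstimate}, \eqref{HsCmStar}, \eqref{CsCmStar}. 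For $\tilde E^p_L$ the second piece is absent, so the paraproducts rule applies directly at $s_1=\Half$, $s_2=-\Half$, and the case $\sigma=0$ is admissible. The balanced errors are handled identically, substituting \eqref{ParaAssociateOne}, \eqref{ParaAssociateThree} (or \eqref{ParaAssociateTwo}, \eqref{ParaAssociateFour}) for the associativity terms; the symmetry of $\Pi$ eliminates the positivity constraint on $\sigma$, while the requirement $s>0$ is exactly the one needed for $\Pi(u,v)$ itself to make sense in the target space.

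The main obstacle is the index bookkeeping: each of the eight bounds (unbalanced/balanced, $H^s/C^s_*$, $E/\tilde E$) requires checking that the pair $(s_1,s_2)$ used in Lemma \ref{t:ParaCoefficient} falls inside the admissibility window $s_1,s_2<1$ and $s_1+s_2>0$, and that the distribution of regularity across $b$, $u$, $v$ matches the stated $(\CalAO,\; u\text{-norm},\; v\text{-norm})$ product. The endpoint $\sigma=0$, permitted only for the tilde versions, is precisely where the auxiliary decomposition $b\partial_\alpha u-T_b\partial_\alpha u=T_{\partial_\alpha u}b+\Pi(b,\partial_\alpha u)$ would otherwise produce a contribution failing $s_1+s_2>0$; replacing $T_{D_t}u$ with $D_tu$ in the definition removes exactly this obstruction.
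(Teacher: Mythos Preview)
Your overall reduction---cancelling the $\partial_t$ contributions and decomposing $E^p_L$ into the commutator $[T_b,T_u]\partial_\alpha v$ plus the paraproduct-type piece $T_bT_{\partial_\alpha u}v-T_{T_b\partial_\alpha u}v$---matches the paper exactly, and your treatment of the commutator via \eqref{ParaCommutator}--\eqref{ParaCommutatorThree} and of $E^\pi_L$ via para-associativity is correct.

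The gap is in your handling of the paraproduct-type piece. Your intermediate splitting through $T_{b\partial_\alpha u}$ fails on both sides. For the first half $(T_bT_{\partial_\alpha u}-T_{b\partial_\alpha u})v$, the rule \eqref{ParaProducts} requires $s_1+s_2>0$; with $b\in C^{1/2}_*$ and $\partial_\alpha u\in C^{-1/2-\sigma}_*$ the best available exponents give $s_1+s_2\le -\sigma<0$. For the second half, controlling $T_{\Pi(b,\partial_\alpha u)}v$ in $H^s$ needs $\Pi(b,\partial_\alpha u)\in C^{-\sigma}_*$, but \eqref{CCCEstimate} requires $\alpha+\beta>0$, which again fails with $\alpha=\tfrac12$, $\beta=-\tfrac12-\sigma$. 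These failures are not independent: both halves contain the high--high interaction of $b$ and $\partial_\alpha u$ at frequencies \emph{above} that of $v$, a contribution that is genuinely divergent for the given regularities and cancels only when the two halves are recombined. Your remark that the auxiliary decomposition breaks down at $\sigma=0$ has the obstruction misplaced---it already breaks down for every $\sigma>0$.

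The paper therefore never splits. It observes directly that the full difference $T_bT_{\partial_\alpha u}v-T_{T_b\partial_\alpha u}v$ has the Littlewood--Paley structure $\sum_{j\le l<k}(\partial_\alpha u)_j\,b_l\,v_k$: in other words, $b$ can never sit at the lowest frequency. One then sums the geometric series in $j\le l$ (using $\tfrac12+\sigma>0$) and in $l<k$ (using $\sigma>0$) to obtain a multiplier of size $2^{\sigma k}\,\CalAO\|u\|_{C^{1/2-\sigma}_*}$ at output frequency $k$, which pairs with $\|v\|_{H^{s+\sigma}}$ to give the claim. For $\tilde E^p_L$ at $\sigma=0$ the difference is already $(T_bT_{\partial_\alpha u}-T_{b\partial_\alpha u})v$ itself, with no second piece to worry about, and the paper estimates it directly.
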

The proof of this lemma is almost the same as Lemma $3.6$ in \cite{ai2023dimensional}, we only sketch the proof here.
For the unbalanced para-Leibniz error, when $\sigma >0$, 
\begin{equation*}
 E^p_{L}(u,v) = (T_b T_{\partial_\alpha u}v -T_{T_b \partial_\alpha u}v) -[T_b, T_u]\partial_\alpha v.  
\end{equation*}
For the first difference, $b$ cannot be at the low frequency, so that
\begin{equation*}
T_b T_{\partial_\alpha u}v -T_{T_b \partial_\alpha u}v =  \sum_{j\leq l<k} \partial_\alpha u_j b_l v_k. 
\end{equation*}
When $\sigma = 0$, we write
\begin{equation*}
 E^p_{L}(u,v) = (T_b T_{\partial_\alpha u}v -T_{b \partial_\alpha u}v) -[T_b, T_u]\partial_\alpha v.  
\end{equation*}
Then the bound for the difference follows by using \eqref{BCOneStar}.
The bound for the commutator term follows from the para-commutator estimates \eqref{ParaCommutator}, \eqref{ParaCommutatorTwo}.
As for the balanced para-Leibniz error, we can write
\begin{equation*}
E^\pi_L(u,v) = T_b \Pi(\partial_\alpha u, v)- \Pi(T_b\partial_\alpha u, v)+T_b \Pi(u, \partial_\alpha v)-\Pi(u, T_b \partial_\alpha v).
\end{equation*}
The estimates are the consequence of applying para-associativity bounds \eqref{ParaAssociateOne}, \eqref{ParaAssociateThree}.

\bibliography{ww}
\bibliographystyle{plain}
\end{document}